\numberwithin{equation}{section}
\newtheorem{theorem}{Theorem}[section]
\newtheorem{lemma}[theorem]{Lemma}
\newtheorem{proposition}[theorem]{Proposition}
\newtheorem{corollary}[theorem]{Corollary}
\newtheorem{remark}[theorem]{Remark}
\newtheorem{definition}[theorem]{Definition}
\def\cW{\mathcal{W}}
\def\bb1{{1\!\!1}}
\def\eps{{\epsilon}}
\def\CalB{\mathcal{B}}
\def\CalC{\mathcal{C}}
\def\CalF{\mathcal{F}}
\def\CalL{\mathcal{L}}
\def\CalO{\mathcal{O}}
\def\CalU{\mathcal{U}}
\def\CalR{\mathcal{R}}
\def\CalS{\mathcal{S}}
\def\R{\Re e}
\newcommand{\Trace}{{\text{\rm Trace}}}
\newcommand{\RR}{{\mathbb R}}
\newcommand{\btheta}{{\bar \theta}}
\newcommand{\Span}{{\rm Span }}
\newcommand{\sgn}{{\rm sgn }}
\newcommand\br{\begin{remark}}
\newcommand\er{\end{remark}}
\newcommand\bp{\begin{pmatrix}}
\newcommand\ep{\end{pmatrix}}
\newcommand\be{\begin{equation}}
\newcommand\ee{\end{equation}}
\newcommand\ba{\begin{equation}\begin{aligned}}
\newcommand\ea{\end{aligned}\end{equation}}
\date{Last Updated:  December 11, 2009}
\title{Existence and stability of viscous shock profiles for 2-D isentropic 
MHD with infinite electrical resistivity}
\author[Barker, Lafitte, and Zumbrun]{Blake Barker, Olivier Lafitte, and Kevin Zumbrun}
\thanks{ This work was supported in part by the National Science Foundation award numbers DMS-0607721 and DMS-0300487.}
\address{Department of Mathematics, Indiana University, Bloomington, IN 47402}
\email{bhbarker@gmail.com}
\address{LAGA, Institut Galilee, Universite Paris 13, 93 430 Villetaneuse and CEA Saclay, DM2S/DIR, 91 191 Gif sur Yvette Cedex}
\email{lafitte at math.univ-paris13.fr}
\address{Department of Mathematics, Indiana University, Bloomington, IN 47402}
\email{kzumbrun at indiana.edu}
\begin{document}
\begin{abstract}
For the two-dimensional Navier--Stokes equations of
isentropic magnetohydrodynamics (MHD) 
with $\gamma$-law gas equation of state, $\gamma\ge 1$,
and infinite electrical resistivity, we carry out a global
analysis categorizing all possible viscous shock profiles.
Precisely, we show that the phase portrait of the
traveling-wave ODE generically consists of
either two rest points connected by a viscous Lax profile,
or else four rest points, two saddles and two nodes.
In the latter configuration,
which rest points are connected by profiles depends on
the ratio of viscosities, and can involve Lax, overcompressive,
or undercompressive shock profiles. 
For the monatomic and diatomic cases $\gamma=5/3$ and $\gamma= 7/5$,
with standard viscosity ratio for a nonmagnetic gas, 
we find numerically that the
the nodes are connected by a family of overcompressive
profiles bounded by Lax profiles connecting saddles to nodes,
with no undercompressive shocks occurring.
We carry out a systematic numerical Evans function analysis
indicating that all of these two-dimensional shock
 profiles are linearly and nonlinearly
stable, both with respect to two- and three-dimensional perturbations.
For the same gas constants, but different viscosity ratios, we investigate
also cases for which undercompressive shocks appear; these are seen
numerically to be stable as well.
\end{abstract}
\maketitle


\tableofcontents

\section{Introduction}\label{sec:intro}

In this paper, we continue the investigations of
\cite{GZ,ZH,MaZ3,MaZ4,Ra,RZ,Z5,TZ,BeSZ,Br1,Br2,BrZ,HuZ,BHRZ,HLZ,HLyZ1,HLyZ2,BHZ}
on stability and dynamics of large-amplitude viscous shock profiles,
examining 
%
%
classical Lax-type and nonclassical overcompressive 
and undercompressive shocks occurring
in isentropic magnetohydrodynamics (MHD) with infinite electrical
resistivity.

Existence of large-amplitude profiles for full (nonisentropic)
magnetodydrodynamics was 
studied in pioneering works of Germain
and Conley--Smoller \cite{G,CS1,CS2}, making use of properties of
the traveling-wave ODE as a gradient system and of Conley index techniques.
Further investigations have been carried out by
Freist\"uhler--Szmolyan \cite{FS} using geometric singular
perturbation techniques and by Freist\"uhler--Roehde \cite{FR1,FR2}
using a combination of bifurcation analysis and numerical approximation.
In this generality, the traveling-wave ODE for MHD profiles is a
six-variable dynamical system, with up to four rest points corresponding to
endstates of various inviscid shock waves.
For an ideal gas law, it is known that fast and slow Lax shocks
always possess a viscous profile.
In certain special cases, or in certain limiting ratios of viscosity,
heat conduction, etc., it is known that intermediate shocks do or
do not possess profiles; 
however, in general, the profile existence problem for the full
nonisentropic case is accessible at present only numerically.
For further discussion, see \cite{FS,FR1,FR2} and references therein.

In the present work, we examine in detail the restricted case
of isentropic flow 
with infinite electrical resistivity, in 
two dimensions, for which the traveling-wave ODE becomes a
{\it planar dynamical system}.
This example exhibits 
the main features of the general case, 
in a simpler setting conducive to systematic numerical investigation.

Specifically, for a rather general equation of state (convex,
decreasing in specific volume, and blowing up at least linearly
with density as density goes to infinity)
we show in Sections \ref{RH} and \eqref{s:Lax}
that the phase portrait of the traveling-wave ODE generically consists of
either two rest points connected by a viscous Lax profile,
or else four rest points, two saddles and two nodes.
In the latter, four rest point configuration, the Lax shocks
involving consecutive rest points ordered by specific volume always
have connecting profiles.
The remaining, ``intermediate'' shocks may or may not admit profiles,
depending on the ratio of parallel to transverse viscosity. 
Specifically, we show in Section \ref{s:sing}
by phase plane (and, separately, by
singular perturbation) analysis that,
similarly as in the nonisentropic case \cite{FS,G}, any intermediate
shock with decreasing specific volume permits a connection for some
viscosity ratios
and not for others.
By entropy considerations,
shocks with increasing specific volume never have connecting profiles.
Here, and elsewhere, we without loss of generality
restrict discussion to the case of a {\it left-moving shock}.
(For right-going shocks, the ordering would be reversed.)

We supplement this abstract existence discussion by
a systematic numerical existence study for specific parameter values
in physical range.
For the most common cases of monatomic or diatomic gas, 
$\gamma=5/3$ or $\gamma= 7/5$, with standard viscosity ratio
for a nonmagnetic gas (see \eqref{etaform}), 
we find that there occurs only one profile configuration,
with the nodes connected by a family of overcompressive
profiles (intermediate shocks)
bounded by Lax profiles connecting saddles to nodes
in a four-sided configuration (one pair of opposing sides
corresponding to slow and fast Lax connections,
the other to intermediate Lax connections).
Undercompressive profiles do not seem to occur in this parameter range.

Next, restricting to the same parameters $\gamma=5/3, 7/5$,
and standard viscosity ratio,
we carry out numerically a systematic stability analysis of these waves,
using the general numerical Evans function techniques developed in
\cite{Br2,BrZ,HuZ,HLZ,HLyZ1,HLyZ2,BHZ,Z5}.
Our results, carried out up to extremely high Mach number (typically
Mach $20-40$, but in some cases up to Mach $10,000$),
indicate that {\it all of the above profiles}, 
both Lax- and overcompressive type,
{\it are spectrally stable} in the generalized Evans function sense
defined in \cite{ZH,MaZ3}, both with respect to two-dimensional
and three-dimensional perturbations.
These results are described in Sections \ref{s:lin} and \ref{s:num}.
By the abstract framework established in \cite{MaZ3,MaZ4,Z1,Ra,RZ},
this implies linearized and nonlinear time-asymptotic orbital
stability, as described for completeness in Section \ref{s:stabtheory}.
Varying the viscosity ratio, we carry out case studies also for
examples of undercompressive profiles.
Numerically, these are seen to be (Evans, hence linearly
and nonlinearly) stable as well.

Finally, in Section \ref{s:discussion} we discuss our results
and suggest directions for further study.

\section{Preliminaries}\label{prelim}

\subsection{Equations and assumptions}
In Lagrangian coordinates, the equations for compressible isentropic
magnetohydrodynamics (MHD) take the form 
\begin{equation}
\left\{\begin{array}{l}
v_t -u_{1x} = 0,\\
 u_{1t} + (p+ (1/2\mu_0)(B_2^2+B_3^2))_x =(((2\mu+\eta)/v) u_{1x})_x,\\
 u_{2t}  - ((1/\mu_0)I B_2)_x =((\mu/v) u_{2x})_x,\\
 u_{3t}  - ((1/\mu_0)I B_3)_x =((\mu/v) u_{3x})_x,\\
 (vB_2)_{t}  - (I u_2)_x =((1/\sigma\mu_0 v) B_{2x})_x,\\
 (vB_3)_{t}  - (I u_3)_x =((1/\sigma\mu_0 v) B_{3x})_x,\\
\end{array}\right.
\label{MHD}
\end{equation}
where $v$ denotes specific volume, $u=(u_1,u_2,u_3)$ velocity,
$p=p(v)$ pressure, $B=(I,B_2,B_3)$ magnetic induction,
$I$ constant, and
$\mu>0$ and $\eta>0$ the two coefficients of viscosity, 
$\mu_0>0$ the magnetic permeability, and $\sigma>0$ the electrical resistivity;
see \cite{A,C,J} for further discussion.

We restrict mainly to the case of 
an ideal polytropic gas, in which case the pressure 
function takes form
\begin{equation}
p(v)=a v^{-\gamma}
\label{eq:ideal_gas}
\end{equation}
where $a > 0$ and $\gamma \ge 1$ are constants that characterize the gas,
the limiting case $\gamma=1$ corresponding to the barotropic, or
constant-temperature approximation and $\gamma>1$ corresponding
to the isentropic, or constant-entropy approximation, of the ideal
pressure law $p(v,e)= \Gamma v^{-1}e$.
Though we do not specify $\eta$, we have in mind mainly
%
the ratio
\be\label{etaform}
\eta=-2\mu/3
\ee
typically prescribed for (nonmagnetic) gas dynamics \cite{Ba}.
(By rescaling space and time, we can rescale all transport coefficients
 by a common factor; the ratio $\eta/\mu$, however, is invariant.)

In the thermodynamical rarified gas approximation, $\gamma>1$ is the
average over constituent particles of $\gamma=(N+2)/N$, where $N$ is the
number of internal degrees of freedom of an individual particle,
or, for molecules with ``tree'' (as opposed to ring, or other
more complicated) structure,
\begin{equation}\label{gammaformula}
\gamma=\frac{2n+3}{2n + 1},
\end{equation}
where $n$ is the number of constituent atoms \cite{Ba}:
$\gamma= 5/3 \approx 1.66$ for monatomic, $\gamma= 7/5=1.4$ for  
diatomic gas.

An interesting subcase is the limit of infinite electrical resistivity
$\sigma=0$, in which the last two equations of \eqref{MHD} are replaced by
\ba\label{sigreplace}
 (vB_2)_{t}  - (I u_2)_x &=0,\qquad (vB_3)_{t}  - (I u_3)_x =0.,
\ea
and only the velocity variables $u=(u_1,u_2,u_3)$ experience 
parabolic smoothing, through viscosity.
We can restrict further to the two-dimensional case, setting
$u_3, B_3\equiv 0$ and dropping these variables from consideration,
as we shall do for most of our investigations.

\subsubsection{Eigenvalues of the $2$-d inviscid system}
The inviscid version of system \eqref{MHD} in dimension two, is,
introducing the scalar quantities $B=B_2$, $w=u_2$, 
\begin{equation}
\begin{array}{l}
v_t-u_x=0,\\
u_t+p_x+\frac{B}{\mu_0}B_x=0,\\
w_t-\frac{I}{\mu_0}B_x=0,\\
B_t+\frac{B}{v}u_x-\frac{I}{v}w_x=0,\\
\end{array}
\end{equation}
or, in quasilinear form,
\begin{equation}
\left(\begin{array}{l}v\\ B\\ u\\w\end{array}\right)_t
+\left(\begin{array}{ccccc}0&0&-1&0\\
0&0&\frac{B}{v}&-\frac{I}{v}\\
-c^2&\frac{B}{\mu_0}&0&0\\
0&-\frac{I}{\mu_0}&0&0\end{array}\right)
\left(\begin{array}{l}v\\ B\\ u\\w\end{array}\right)_x=0,
\end{equation}
where $-c^2:=p'(v)$.
This system has 
four eigenvalues of the form $\pm\sqrt{r_{\pm}}$, where $r_{\pm}$ are the roots of
\begin{equation}\label{phi}
\phi(r):=r^2-\Big( c^2+\frac{I^2+B^2}{\mu_0 v}\Big)r+\frac{I^2}{\mu_0 v}(
c^2)=0.
\end{equation}
As the discriminant of \eqref{phi}
is positive for $B\ne0$, the two roots $r_+$ and $r_-$ 
are positive real, verifying hyperbolicity.
When $B=0$, the discriminant can be zero for $c^2=\frac{I^2}{\mu_0v}$.
We do not explicitly require this computation in our analysis, but include it
for general interest/orientation.

\subsection{Viscous shock profiles and the rescaled equations}\label{sec:viscous}
A \emph{viscous shock profile} of \eqref{MHD} is a traveling-wave solution,
\begin{equation}
(v,u,B)(x,t)=(\hat v,\hat u, \hat B) (x-st),
\label{eq:tw_ansatz}
\end{equation}
moving with speed $s$ and connecting constant states 
\be\label{endstates}
(v_\pm,u_\pm,B_\pm)=\lim_{z\to \pm \infty} 
(\hat v,\hat u,\hat B)(z).
\ee
Such a solution is a stationary solution of the system of PDEs
\begin{equation}
\left\{\begin{array}{l}
v_t-sv_x -u_{1x} = 0,\\
 u_{1t} -s u_{1x} + (p+ (1/2\mu_0)|\tilde B|^2)_x =(((2\mu+ \eta)/v) 
u_{1x})_x,\\
 \tilde u_{t} -s\tilde u_{x}  - ((1/\mu_0)I\tilde B)_x =
((\mu/v) \tilde u_{x})_x,\\
 (v\tilde B)_{t} -s(v\tilde B)_{x}  - (I\tilde u)_x =
((1/\sigma\mu_0 v) \tilde B_{x})_x,
\end{array}\right.
\label{2dMHDmoving}
\end{equation}
where we have denoted $\tilde u:=(u_2,u_3)$, 
$\tilde B:=(B_2,B_3)$, i.e., a solution of the system of ODEs
\begin{equation}
\left\{\begin{array}{l}
-sv' -u_{1}' = 0,\\
  -s u_{1}' + (p+ (1/2\mu_0)|\tilde B|^2)' =(((2\mu+ \eta)/v) 
u_{1}')',\\
  -s\tilde u'  - ((1/\mu_0)I\tilde B)' =
((\mu/v) \tilde u')',\\
-s(v\tilde B)'  - (I\tilde u)' =
((1/\sigma\mu_0 v) \tilde B')'.
\end{array}\right.
\label{oprof}
\end{equation}
Integrating, we obtain 
\begin{equation}
\left\{\begin{array}{l}
-sv -u_{1} = C_1,\\
  -s u_{1} + (p+ (1/2\mu_0)|\tilde B|^2) =(((2\mu+ \eta)/v) 
u_{1}')+C_2,\\
  -s\tilde u  - ((1/\mu_0)I\tilde B) =
((\mu/v) \tilde u') +C_3,\\
-s(v\tilde B)  - (I\tilde u) =
((1/\sigma\mu_0 v) \tilde B')+C_4
\end{array}\right.
\label{iprof}
\end{equation}
for some constants of integration $C:=(C_1,\dots,C_4)$.

For fixed $C$, the rest points of \eqref{iprof}
comprise the possible endstates $(v_\pm,u_\pm,B_\pm)$
that can be connected by a viscous profile with speed $s$,
which necessarily satisfy the {\it Rankine--Hugoniot conditions}
\ba\label{oRH}
-s[v] &=[u],\\
-s[u_1]&=-\left[p+ \frac{\tilde B^2}{2\mu_0} \right],\\
-s[\tilde u]&=I\left[\frac{\tilde B}{\mu_0} \right],\\
-s[v \tilde B]&=I[\tilde u]\\
\ea
determining pairs of states connected by an inviscid shock wave,
where 
$$
[h]:=h(v_+,u_+,B_+)- h(v_-,u_-,B_-)
$$
 denotes jump in the quantity $h$ across the shock.

\subsubsection{Rescaled evolution equations}\label{rescaled}

Following \cite{HLZ,HLyZ1,HLyZ2,BHZ}, we now rescale
\be\label{scaling}
 (v, u, \mu_0, x, t, B,a)\to
\Big(\frac{v}{\epsilon}, -\frac{u}{\epsilon s}, 
\epsilon \mu_0, -\epsilon s( x-st), \epsilon s^2t,-\frac{B}{s},
\frac{a\epsilon^{-\gamma-1}}{s^2} \Big)
\ee
holding $\mu$, $\sigma$ fixed,
where $\eps:=v_-$, transforming \eqref{MHD}, \eqref{eq:ideal_gas} 
to the form
\begin{small}
\begin{equation}\label{redeqs}
\left\{
\begin{aligned}
    v_t+v_x-u_{1x}&=0\\
    u_{1t}+u_{1x}+\left(av^{-\gamma}+\left(\frac{1}{2\mu_0}\right)\left(B_2^2+B_3^2\right)\right)_x&=(2\mu +\eta)\left(\frac{u_{1x}}{v}\right)_x\\
    u_{2t}+u_{2x}-\left(\frac{1}{\mu_0}I B_2\right)_x&=\ \mu \left(\frac{u_{2x}}{v}\right)_x  \\
    u_{3t}+u_{3x}-\left(\frac{1}{\mu_0}I B_3\right)_x&=\mu \left(\frac{u_{3x}}{v}\right)_x\\
    \left(vB_2\right)_t+\left(vB_2\right)_x-\left(I u_2\right)_x&=\left(\left(\frac{1}{\sigma \mu_0 v}\right)B_{2x}\right)_x\\
    \left(vB_3\right)_t+\left(vB_3\right)_x-\left(I u_3\right)_x&=\left(\left(\frac{1}{\sigma \mu_0 v}\right)B_{3x}\right)_x
\end{aligned}
\right.
\end{equation}
\end{small}
where $p(v)=a v^{-\gamma}$.
There is no change in $\mu$ or $\eta$.

\subsubsection{Rescaled profile equations}\label{rprof}
Viscous shock profiles of \eqref{redeqs}
must satisfy the system of ordinary differential equations
\begin{equation}
\left\{\begin{array}{l}
v' -u_{1}' = 0,\\
 u_{1}' + (p+ (1/2\mu_0)|\tilde B|^2)' =(((2\mu+\eta)/v) u_{1}')',\\
 \tilde u'  - ((1/\mu_0)I\tilde B)' =((\mu/v) \tilde u')',\\
 (v\tilde B)'  - (I\tilde u)' =((1/\sigma\mu_0 v) \tilde B')',\\
\end{array}\right.
\label{2dMHDprof}
\end{equation}
together with the boundary conditions 
\[
(v,u_1,\tilde u,\tilde B) (\pm\infty)
= (v,u_1,\tilde u_2,\tilde B)_\pm.
\]
Evidently, we can integrate each of the differential equations from
$-\infty$ to $x$, and using the boundary conditions (in particular
$v_-=1$ and $u_-=0$), we find, after some elementary manipulations,
the profile equations (after having introduced shorthand 
notation  $u=u_1$, $w:=\tilde u$, $B=\tilde B$):
\begin{align}
(2\mu+\eta) v'&=v(v-1)+
v(p-p_-)+\frac{v}{2\mu_0}(B^2-B_-^2),\label{eq:profile1}\\
\mu w'&=v
w-\frac{vI}{\mu_0}(B-B_-),\label{eq:profile2}\\
\frac{1}{\sigma \mu_0} B'&=v^2B-vB_--Ivw,\label{eq:profile3}
\end{align}
with $u\equiv v-1$.
%

\subsubsection{The case $\sigma=\infty$}
When $\sigma=\infty$, we obtain in place of the final equation of 
\eqref{2dMHDprof},
$$
 (v\tilde B)'  - (I\tilde u)' =0,
$$
 or $(v B)'  - (Iw)' =0$,
yielding after integration the relation
\be\label{Brel}
B=\frac{B_-+Iw}{v}.
\ee
Substituting in \eqref{eq:profile1}--\eqref{eq:profile3},
we obtain a reduced, planar, ODE in $(v,w)$:
\ba\label{redode}
(2\mu+\eta) v'&=v(v-1)+
v(p-p_-)+\frac{1}{2\mu_0 v}((B_-+Iw)^2-v^2 B_-^2),\\
\mu w'&=v w-\frac{I}{\mu_0}\big(B_-(1-v)+Iw\big).
\ea

\subsection{The profile ODE as generalized gradient flow}
We now recall the general fact \cite{G,CS1,CS2,FR1,FR2} concerning a 
hyperbolic--parabolic conservation law
\be\label{visc}
U_t+\CalF(U)_x=(\CalB(U)U_x)_x,
\qquad
U=\bp U_1\\U_2\ep, \,
F=\bp F_1\\F_2\ep, \,
\CalB=\bp 0 & 0 \\ \CalB_{21}&\CalB_{22}\ep,
\ee
$\det \CalB_{22}\ne 0$,
possessing a convex entropy/entropy flux pair
\be\label{pair}
\nonumber
\eta:\, d^2\eta >0;
\qquad
q:\, dq= d\eta dF
\ee
that is viscosity-compatible in the sense that
\be\label{vc}
\R (d^2\eta \CalB)\ge 0,
\ee
that the associated traveling wave ODE
\be\label{twode}
\CalB(U)U'= F(U)-F(U_-) -s(U-U_-)
\ee
may be written always in the form of a generalized gradient
flow
\be\label{gradflowU}
d^2\eta \CalB(U)U'= \nabla_U \phi(U)
\ee
serving to increase $\phi(U)$ in the direction of positive $x$.
Here,
\ba\label{phidef}
\phi(U)&:=s \eta  - q + d\eta \big( F(U)-F(U_-) -s(U-U_-)\big),\\
\ea
so that
\ba\label{dphi}
\nabla_U \phi(U)&= d^2 \eta(U) \big( F(U)-F(U_-) -s(U-U_-)\big)
 = d^2 \eta \CalB(U)U'
\ea
by direct computation.
Likewise, $\phi(U)'=\nabla_U\phi \cdot U'=
  (d^2 \eta \CalB(U))U'\cdot U'\ge 0$ by \eqref{vc}.
We will refer to potential $\phi$ as the {\it relative entropy}
(more properly speaking, entropy production).

\br\label{Hrmk}
\textup{
Evidently, rest points $F(U)-F(U_-) -s(U-U_-)=0$
of the traveling-wave ODE correspond to critical points $\nabla_U \phi=0$ of
the relative entropy.
At a rest point $U_*$, the Hessian is given by
\ba\label{Hphi}
\nabla^2_U \phi(U)&= d^2 \eta(U) \big( dF(U_*) -sI\big),
\ea
so that, in particular, $\sgn \det \nabla_U^2 \phi =\sgn \det (dF-sI)$.
This gives a connection between the number of positive and negative
characteristics $a_j\in \sigma(dF-sI)$ and the type of the 
critical point of $\phi$.
}
\er

\subsubsection{Reduced gradient flow}
Using the assumed structure that
$\CalB$ has constant left kernel, \eqref{visc}, as is often
the case in applications, we may make a further simplification
by the use of entropy coordinates.
Introducing the entropy variable
\be\label{entvar}
W(U)=\bp W_1\\W_2\ep(U):=\nabla_U \eta(U),
\ee
globally invertible, by $d^2\eta>0$, and noting that
$dW/dU=d^2\eta$, we obtain from \eqref{gradflowU} the more
useful version
\be\label{gradflowW}
\tilde \CalB W'= \nabla_W \phi,
\qquad
\tilde \CalB= \CalB(d^2\eta)^{-1} =
\bp 0 & 0\\ 0 & \tilde b\ep,
\ee
\be\label{phicalc}
\nabla_W \phi= \big( F(U)-F(U_-) -s(U-U_-)\big),
\ee
where the block-diagonal form of $\tilde \CalB$ follows
from vanishing of the first row (inherited from left factor $\CalB$)
and compatibility assumption \eqref{vc}, which implies also $\Re \tilde \CalB\ge 0$.

Make, finally, the standard assumption (see, e.g.,
\cite{MaZ3,Z1,Z2}) that relation
\be\label{rel}
F_1(U)-F_1(U_-)-s(U_1-U_{1-})=0
\ee
coming from the traveling-wave
ODE may be solved for $W_1$ as a function $W_1=\cW(W_2)$ of $W_2$,
i.e.
\be\label{astar}
\det \Big(\partial_{W_1}F_1(U(W))-sI\Big) \ne 0.
\ee
Then, defining the reduced potential
\be\label{redpot}
\check \phi(W_2):=\phi(\cW(W_2), W_2),
\ee
and noting from \eqref{phicalc} that $\nabla_{W_1}\phi=0$
for $W_1=\cW(W_2)$, we obtain the relations
\be\label{checkphiprop}
\nabla_{W_2}\check \phi=\nabla_{W_2} \phi=
\Big(F_2(U)-F_2(U_-)-s(U_2-U_{2-})\Big)
\ee
and
\be\label{redflow}
\tilde b W_2'= \nabla_{W_2}\check \phi,
\qquad
\Re \tilde b>0
\ee
expressing \eqref{twode} as a {\it reduced} generalized
gradient flow in the parabolic entropy coordinates $W_2$ alone.
We remark in passing that this implies that $\phi=\check \phi$ is
{\it strictly} increasing in positive $x$ and not only nondecreasing
as shown above.
Moreover, we may find $\check \phi$ directly from \eqref{checkphiprop}
without computing either the full potential $\phi$ or
the entropy flux $q$, which in practice is a great simplification.

\br\label{entrmk}
\textup{
In particular, if there is a viscous profile connecting $U_\pm$,
we have
\be\label{strict}
\phi(U_+)=
\check \phi(U_+)>\check \phi(U_-)=\phi(U_-).
\ee
}
\er

\subsubsection{Application to MHD}
For MHD, we have a viscosity-compatible convex entropy
$$
\eta= \int_v^{+\infty} p(z)dz + |u|^2/2 + v|B|^2/2\mu_0\\
=\int_v^{+\infty} p(z)dz + |u|^2/2+|vB|^2/2\mu_0 v
$$
associated with entropy variables $(-p-|B|^2/2\mu_0, u_1, u_2, u_3, B_1/\mu_0,
 B_2/\mu_0, B_3/\mu_0)$ and denoting by $u=(u_1,u_2,u_3)$ and $B=(B_1,B_2,B_3)$ as in \eqref{MHD}.\footnote{
See \cite{Kaw} for related computations in the nonisentropic case.} The associated flux is $F(U)=(-u, -Iw, p(v)+\frac{|B|^2}{2\mu_0},-\frac{IB}{\mu_0})$.
The entropy variable is thus
$$
W:=d_U\eta= (-p-|B|^2/2\mu_0, u,w,B/\mu_0),
$$
of which the parabolic coordinates are $(u,w,B/\mu_0)$,
exactly the ones appearing already in \eqref{eq:profile1}--\eqref{eq:profile3}.
The corresponding flux density, though we do not need it, 
is $ q=|B|^2u/\mu_0 + IBw/\mu_0 + pu  $. 

Substituting into \eqref{checkphiprop}
 the relation $v=u+1$ obtained by integrating the $v$-equation
in the traveling-wave ODE, we obtain
\be\label{gradcheckphi}
\nabla_{v,w,B/\mu_0}\check\phi=\nabla_{u,w,B/\mu_0}\check\phi=
\bp
p(v)-p_- + \frac{B^2-B_-^2}{2\mu_0}+ v-1\\
-\frac{I(B-B_-)}{\mu_0} + w\\
vB-B_- -Iw
\ep,
\ee
which readily yields 
\ba\label{ischeckphi}
\check\phi(u,w,B)&= \int_1^vp(z)dz-p_-(v-1)
+\frac12
\Big(w^2+(v-1)^2+v\frac{B^2-B_-^2}{\mu_0} \Big)
\\&\quad
-\frac{I}{\mu_0}(B-B_-)w-\frac{BB_-}{\mu_0}.
\ea
One checks that 
$\partial_v\check\phi=\partial_u\check\phi=p(v)-p_-+u+\frac{B^2-B_-^2}{2\mu_0}$, $\partial_B\check\phi= \frac{1}{\mu_0}(vB-Iw-B_-)$, 
and
$\partial_w\check\phi=w-\frac{I}{\mu_0}(B-B_-)$.

\subsubsection{The case $\sigma=+\infty$}
Substituting into \eqref{checkphiprop} \eqref{ischeckphi}
 the relation $v=u+1$ obtained by integrating the $v$-equation
in the traveling-wave ODE, 
and the relation $ B=\frac{B_-+Iw}{v} $
obtained by integrating the $vB$-equation,
we obtain, denoting $\hat \phi(u,w):= \check\phi\Big(u, w, \frac{B_-+Iw}{v}\Big)$,
\be\label{ncheck}
\nabla_{v,w}\hat\phi=\nabla_{u,w} \hat \phi =
\bp
p(v)-p_- + \frac{(B_-+Iw)^2-v^2B_-^2}{2\mu_0v^2}+ v-1\\
-\frac{I(B_-+Iw-B_-v)}{\mu_0v} + w
\ep,
\ee
which readily yields
\be\label{sigphi}
\hat \phi(v,w)=\int_1^vp(z)dz-p_-(v-1)
+\frac12(v-1)^2+\frac{w^2}{2}\Big(1-\frac{I^2}{\mu_0v}\Big)
+\frac{B_-Iw}{\mu_0}\Big(1-\frac{1}{v}\Big)-\frac{B_-^2}{2\mu_0}\Big(v+\frac{1}{v}\Big).
\ee
Alternatively, this may be obtained from the definition, substituting
into \eqref{ischeckphi} the value $ \frac{B_-+Iw}{v} $ for $B$; however,
we wish to point out the simplification afforded by working with the
reduced problem, that is, to emphasize that one need not solve for
$\check \phi$ in order to find $\hat \phi$, or for $\phi$ in order
to find $\check \phi$.

\br\label{fullphirmk}
\textup{
Note that in the above we did not need to compute $q$ or
even $\eta$, but only to know the entropy variable $W$,
in order to determine the reduced potential by integration
of \eqref{checkphiprop}.
Likewise, computing the full potential $\phi$ by integration
of \eqref{phicalc}, we obtain 
$$
\phi(v, u, w, B)=u\Big(p(v)-p(1)+ 
\frac{B^2-B_-^2}{2\mu_0}\Big)+r(v)+\frac{(B-B_-)^2}{2\mu_0}
+\frac{u^2+w^2}{2}-\frac{I(B-B_-)}{\mu_0}w,
$$
where $r$ satisfies $r'(v)=p'(v)(1-v)$, hence 
$(r(v)-\int_1^vp(z)dz)'=r'-p=((1-v)p(v))'$, or
$$
\begin{aligned}
\phi(v, u, w, B)&=u\Big(p(v)-p(1)+ 
\frac{B^2-B_-^2}{2\mu_0}\Big)+
\int_1^vp(z)dz) + (1-v)p(v))\\
&\quad +\frac{(B-B_-)^2}{2\mu_0}
+\frac{u^2+w^2}{2}-\frac{I(B-B_-)}{\mu_0}w
\end{aligned}
$$
in agreement up to constant of integration
with the formula obtained by direct
substitution of $\eta$ and $q$ into \eqref{phidef}. 
A further substitution yields
$$
\begin{aligned}
\phi(u+1, u, w, B)&=u(p(v)-p(1))+\int_1^{v}p(z)dz -u p(v)+ 
\frac12 ((v-1)^2+w^2)\\
&\quad + \frac{B^2}{2\mu_0}(u+1)+\frac{B_-^2}{2\mu_0}(1-u)-\frac{BB_-}{\mu_0}-\frac{I}{\mu_0}(B-B_-)w,
\end{aligned}
$$
directly verifying the relation $\phi(u+1, u, w, B)=\check\phi(u, w, B)$.
}
\er

\subsection{Types of shocks vs. connections}\label{shocktype}
Consider a general system of conservation laws 
$$
U_t+F(U)_x= (\CalB(U)U_x)_x, \quad U\in R^n
$$
as in \eqref{visc}.
Inviscid shock waves correspond to 
triples $(U_-,U_+,s)$ satisfying the Rankine--Hugoniot conditions
\be\label{RH}
[F(U)]-s[U]=0,
\ee
where $[h]:=h(U_+)-h(U_-)$ denotes the jump
in quantity $h$ across the shock.
The {\it type} of the shock wave is defined by the degree of
compressivity
\be\label{shockindex}
\ell:= \dim \CalU(dF(U_-)-sI)+ \dim \CalS(dF(U_+)- sI) - n,
\ee
measuring the number of incoming characteristic modes relative
to the shock,
where $\CalU(M)$ and $\CalS(M)$ denote unstable and stable subspaces
of a matrix $M$, with $\ell=1$ corresponding to the classical
{\it Lax type}, $\ell>1$ nonclassical {\it overcompressive type},
and $\ell\le 0$ corresponding to nonclassical 
{\it undercompressive type}.  See \cite{ZH,MaZ3,Z1} for further discussion.

At a slightly more detailed level, we define a $j$-$k$ shock
as a shock for which $j=n-i_-+1$ and $k=i_+$ are the
indices of the largest positive characteristic speed $a_j^-$
at $U_-$ and the smallest negative characteristic speed $a_k^+$
at $U_+$, where $a_1<\dots<a_n$ denote the eigenvalues of $dF(U)$.
Lax shocks are associated with a single characteristic family
$j=k$, and we refer to them simply as Lax $k$-shocks.
For overcompressive shocks, $j<k$, and for undercompressive shocks,
$j>k$, with the degree of compressivity $\ell=k-j$ measuring the
difference between $j$ and $k$.

Now suppose (as in the present case) that $\CalB$ has a constant left
kernel and constant rank, 
without loss of generality
\be\label{B}
\CalB=\begin{pmatrix} 0 & 0\\ b_1 & b_2\end{pmatrix}
\quad 
\hbox{ \rm with $b_2$ nonsingular},
\ee
and that, if we denote by $A$ the Jacobian matrix of the flux $F$,
\be\label{anon}
A_*:=A_{11}- A_{12}b_1 b_2^{-1}
\quad \hbox{\rm  is nonsingular 
with real eigenvalues}. 
\ee
(In the case that there exist a viscosity compatible convex entropy, 
it may be checked \cite{MaZ3} that $A_*$ necessarily has
real eigenvalues, and $\det A_*\ne 0$ is equivalent to \eqref{astar}.)
It follows that traveling wave ODE \eqref{twode} can be expressed
as a nondegenerate {\it reduced ODE} on a manifold of dimension $r:=\dim U_2$;
in the case that there exist a compatible convex entropy,
it can simply be expressed as the reduced ODE \eqref{redflow} in
$W_2$.

Suppose further that the shock is noncharacteristic,
\be\label{nonchar}
\det (dF(U_\pm -s))\ne 0,
\ee
and the endstates satisfy the dissipativity condition
\be\label{diss}
\Re \sigma \big( -i\xi A -\xi^2 \CalB\big)_\pm \le \frac{-\theta \xi^2}{1+\xi^2},
\; \theta>0,
\; \hbox{ \rm for all }\; \xi \in R, 
\ee
where, here and elsewhere,
$\sigma(M)$ denotes spectrum of a matrix or linearized operator $M$.
In the case that there exist a viscosity-compatible convex entropy
in the vicinity of $U_\pm$, \eqref{diss} is equivalent to the
{\it genuine coupling condition} of Kawashima \cite{Kaw} that
no eigenvector of $A_+$ lie in the kernel of $\CalB_+$, and likewise
for $A_-$ and $\CalB_-$. 

All of these assumptions are satisfied quite generally in applications,
in particular for the equations of isentropic or nonisentropic MHD 
with ideal pressure law.
See \cite{MaZ4,Z1,GMWZ1,GMWZ2} for further discussion and examples.

\begin{lemma}[\cite{MaZ3}]\label{hyp}
Under the standard assumptions 
\eqref{B}, \eqref{anon}, \eqref{nonchar}, \eqref{diss}, 
$U_\pm$ are hyperbolic
rest points of the reduced traveling-wave ODE, i.e., have
stable and unstable but no center manifolds.
In particular, for $F,\CalB\in C^1$, traveling-wave solutions 
exhibit exponential convergence
\be\label{expdecay}
|\bar U(x)-U_\pm | \le Ce^{-\theta |x|}, \quad \theta >0.
\quad \hbox{ \rm for }\; x\gtrless 0.
\ee
\end{lemma}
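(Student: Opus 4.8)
The plan is to reduce the claim to locating the spectrum of the linearization of the reduced traveling-wave ODE at the rest points $U_\pm$, and then to read off that spectrum from the constant-coefficient dispersion relations governed by the dissipativity hypothesis \eqref{diss}.

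First I would use the block structure \eqref{B} together with \eqref{anon} to realize \eqref{twode} as a nondegenerate reduced ODE of dimension $r := \dim U_2$, as indicated in the text: the first block of \eqref{twode} is the algebraic constraint \eqref{rel}, solvable for $U_1$ in terms of $U_2$ by \eqref{astar}--\eqref{anon}, while the remaining block governs $U_2$ on the resulting manifold. (Where a viscosity-compatible convex entropy is present, as it is here, this is just the reduced gradient flow \eqref{redflow}.) Linearizing about $U_* = U_\pm$ gives an $r \times r$ matrix $J_\pm$, and hyperbolicity is exactly the statement $\sigma(J_\pm) \cap i\RR = \emptyset$. The eigenvalues of $J_\pm$ are the finite roots of $\det\big(\mu\CalB_\pm - (A_\pm - sI)\big) = 0$, where $A_\pm := dF(U_\pm)$; assumption \eqref{anon} guarantees this polynomial has full degree $r$, so that all reduced eigenvalues appear as finite roots and the modes forced by the singularity of $\CalB_\pm$ are pushed off to infinity.

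It remains to exclude $\mu \in i\RR$. Setting $\mu = 0$ gives the value $\pm\det(A_\pm - sI) \neq 0$ by the noncharacteristic condition \eqref{nonchar}, so $0$ is not a root. For a nonzero imaginary root $\mu = ik$, $k \in \RR \setminus \{0\}$, there is $V \neq 0$ with $\big(ik\CalB_\pm - (A_\pm - sI)\big)V = 0$; multiplying by $ik \neq 0$ yields
$$
\big(-k^2\CalB_\pm - ik(A_\pm - sI)\big)V = 0,
$$
so that $\lambda = 0$ lies in the spectrum of $-k^2\CalB_\pm - ik(A_\pm - sI) = \big(-ikA_\pm - k^2\CalB_\pm\big) + iksI$. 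The purely imaginary shift $iksI$ does not affect real parts, so this contradicts $\Re\,\sigma\big(-ikA_\pm - k^2\CalB_\pm\big)_\pm \le -\theta k^2/(1+k^2) < 0$ from \eqref{diss}. Hence $\sigma(J_\pm) \cap i\RR = \emptyset$. (In the entropy-coordinate setting the same two exclusions are even more transparent: from \eqref{redflow}, $J_\pm = \tilde b^{-1} H$ with $H = \nabla^2_{W_2}\check\phi(U_*)$ real symmetric; pairing $Hv = ik\tilde b v$ with $\bar v$ and taking imaginary parts forces $k\,\langle v, (\Re\tilde b)v\rangle = 0$, so $\Re\tilde b > 0$ rules out nonzero imaginary $\mu$, while nonsingularity of $H$ — hence exclusion of $\mu = 0$ — follows from \eqref{nonchar}, since $H$ is the Schur complement of the $W_1$-block of $\nabla^2_W\phi = (A_\pm - sI)(d^2\eta)^{-1}$, cf. \eqref{Hphi}.)

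Finally, since $U_\pm$ are hyperbolic rest points of a $C^1$ reduced vector field, the stable manifold theorem provides complementary stable and unstable manifolds and no center manifold, and every trajectory converging to $U_\pm$ does so at the exponential rate fixed by the spectral gap of $J_\pm$, which is \eqref{expdecay}. I expect the main obstacle to be the bookkeeping of the reduction in the second paragraph — verifying via \eqref{anon} that the reduced linearization $J_\pm$ has eigenvalues exactly the finite roots of $\det\big(\mu\CalB_\pm - (A_\pm - sI)\big)$, so that passing to the reduced manifold neither creates nor destroys imaginary eigenvalues and the clean dispersion-relation argument legitimately controls the reduced flow.
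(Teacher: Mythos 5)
Your proposal is correct and takes essentially the same route as the paper, whose entire proof is the citation ``block matrix reduction and standard invariant manifold theory; see Appendix A, \cite{MaZ3}'': your reduction to the pencil $\det\big(\mu\CalB_\pm-(A_\pm-sI)\big)=0$ (full degree $r$ by \eqref{anon}), with $\mu=0$ excluded by \eqref{nonchar} and $\mu=ik$, $k\neq 0$, excluded by multiplying through by $ik$ and invoking \eqref{diss}, followed by the stable manifold theorem for the $C^1$ reduced field, is precisely that block reduction and invariant-manifold argument written out. Your entropy-coordinate variant via \eqref{redflow}, with $H=\nabla^2_{W_2}\check\phi$ nonsingular as the Schur complement controlled by \eqref{nonchar} and \eqref{astar}, is a nice internal consistency check but stays within the same framework.
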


\begin{proof}
Block matrix reduction and standard invariant manifold
theory; see Appendix A, \cite{MaZ3}.
\end{proof}

Denoting by $d_+$ the dimension of the
stable manifold of the $r$-dimensional reduced ODE at
the rest point corresponding to $U_{+}$ and by $d_-$
the dimension of the unstable manifold at the rest point
corresponding to $U_{-}$, define the {\it connection number}
\be\label{odeindex}
d:=d_++ d_-- r
\ee
measuring the type of the potential connection between rest points $U_\pm$
as a connecting orbit of the reduced ODE.
Then, we have the following fundamental relation, generalizing the
corresponding observation of \cite{MP} in the strictly parabolic case.

\begin{lemma}[\cite{MaZ3}]
Under the standard assumptions 
\eqref{B}, \eqref{anon}, \eqref{nonchar}, \eqref{diss}, 
\be\label{led}
\ell=d.
\ee
More precisely,
\be\label{led2}
i_+=d_+ + \dim \CalS(A_*),
\quad
i_-=d_- + \dim \CalU(A_*).
\ee
\end{lemma}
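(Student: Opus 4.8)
The plan is to prove the refined identity \eqref{led2} and then recover \eqref{led} by summation. Since $A_*$ is nonsingular with real eigenvalues by \eqref{anon}, we have $\dim\CalS(A_*)+\dim\CalU(A_*)=n-r$ with $r=\dim U_2$; moreover, along the profile the matrix $A_*(\bar U(x))$ stays nonsingular with real spectrum, so its inertia is constant and the counts $\dim\CalS(A_*)$, $\dim\CalU(A_*)$ are unambiguous at both endstates. Granting \eqref{led2}, adding the two identities and inserting the definition \eqref{shockindex} gives $\ell=i_++i_--n=(d_++d_-)+(n-r)-n=d_++d_--r=d$, which is \eqref{led}. So the entire content is \eqref{led2}, one identity per endstate.

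To prove \eqref{led2} I would work with the reduced gradient flow \eqref{redflow}, $\tilde b W_2'=\nabla_{W_2}\check\phi$, with $\Re\tilde b>0$. Linearizing at an endstate $U_*$ gives $\delta W_2'=\tilde b^{-1}H_*\,\delta W_2$ with $H_*:=\nabla^2_{W_2}\check\phi(U_*)$ symmetric, and $H_*$ is nonsingular by Lemma \ref{hyp}. The first key step is an inertia lemma: if $H=H^\trans$ is nonsingular and $\Re\tilde b>0$, then $\tilde b^{-1}H$ has no imaginary eigenvalues and its number of eigenvalues in $\{\Re\lambda<0\}$ equals the number of negative eigenvalues of $H$. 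This follows by the homotopy $\tilde b_\tau=(1-\tau)I+\tau\tilde b$, $\tau\in[0,1]$: if $Hv=i\omega\,\tilde b_\tau v$ for real $\omega$ and $v\neq0$, then $\langle v,Hv\rangle$ is real while $i\omega\langle v,\tilde b_\tau v\rangle$ has nonzero imaginary part $\omega\langle v,(\Re\tilde b_\tau)v\rangle$ unless $\omega=0$ (excluded since $H$ is nonsingular), so no eigenvalue crosses the imaginary axis and the inertia is constant in $\tau$, equal to that of $H$ at $\tau=0$. Hence $d_+=n_-(H_+)$ and $d_-=n_+(H_-)$, where $n_\mp$ count negative/positive eigenvalues.

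The second step relates $H_*$ to the full Hessian $\nabla^2_W\phi(U_*)$. Since $W_1=\cW(W_2)$ is defined by $\nabla_{W_1}\phi=0$, the reduced Hessian $H_*$ is exactly the Schur complement of $\nabla^2_W\phi(U_*)$ with respect to its $W_1W_1$-block $\Phi_{11}:=\nabla^2_{W_1}\phi(U_*)$, which is invertible by \eqref{astar}. By Haynsworth inertia additivity for symmetric matrices, $n_\mp(\nabla^2_W\phi)=n_\mp(\Phi_{11})+n_\mp(H_*)$. Next, from \eqref{phicalc} one computes $\nabla^2_W\phi=(dF-sI)(d^2\eta)^{-1}$, and because the entropy symmetrizes the flux (i.e. $d^2\eta\,dF$ is symmetric) this product is symmetric and congruent to $dF-sI$; hence its inertia equals that of $dF-sI$, giving $n_-(\nabla^2_W\phi(U_+))=\dim\CalS(dF(U_+)-sI)=i_+$ and $n_+(\nabla^2_W\phi(U_-))=\dim\CalU(dF(U_-)-sI)=i_-$.

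The last, and I expect hardest, step is to identify the inertia of the hyperbolic block: $n_-(\Phi_{11})=\dim\CalS(A_*)$ and $n_+(\Phi_{11})=\dim\CalU(A_*)$. This is where the block structure \eqref{B} and the definition \eqref{anon} of $A_*=A_{11}-A_{12}b_1b_2^{-1}$ must be reconciled with the entropy coordinates: one passes from $W$ back to $U$, uses that $\Phi_{11}$ is the Hessian in the ``frozen'' hyperbolic directions $W_1$ (with $W_2$ held fixed), and checks by a direct computation exploiting the constant left kernel of $\CalB$ that $\Phi_{11}$ is congruent to the reduced flux $A_*$, with the shift by $s$ entering exactly as in the equivalence of $\det A_*\neq0$ with \eqref{astar} recorded after \eqref{anon}. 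Combining the three steps at $U_+$ gives $i_+=n_-(\nabla^2_W\phi(U_+))=n_-(\Phi_{11})+n_-(H_+)=\dim\CalS(A_*)+d_+$, and at $U_-$ the analogous positive-index count gives $i_-=\dim\CalU(A_*)+d_-$, which is \eqref{led2}. The main obstacle is this coordinate reconciliation in the final step, together with checking that it is precisely the genuine-coupling/dissipativity hypothesis \eqref{diss} (via Lemma \ref{hyp}) that keeps $H_*$ nonsingular and rules out imaginary eigenvalues, so that all the inertia counts are clean.
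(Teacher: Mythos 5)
Your proposal is correct in substance, but it takes a genuinely different route from the paper, whose entire proof is a citation: the paper invokes \cite{MaZ3}, where \eqref{led}--\eqref{led2} are proved assuming a connecting profile exists, and merely observes that existence is used there only to transport the inertia of $A_*$ from one endstate to the other by homotopy, which the global hypothesis \eqref{anon} supplies without any profile. You instead prove each identity in \eqref{led2} \emph{locally} at its endstate by symmetric linear algebra on the relative entropy: the homotopy inertia lemma for $\tilde b^{-1}H_*$ (sound as written, since $\Im\langle v, i\omega\,\tilde b_\tau v\rangle=\omega\langle v,(\Re\tilde b_\tau)v\rangle$), Haynsworth additivity for the Schur complement $H_*=\Phi_{22}-\Phi_{21}\Phi_{11}^{-1}\Phi_{12}$ (which is indeed $\nabla^2_{W_2}\check\phi$, because $\cW'=-\Phi_{11}^{-1}\Phi_{12}$), and the identity $\nabla^2_W\phi=(dF-sI)(d^2\eta)^{-1}$. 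Moreover, the step you flag as hardest closes in one line: writing $S:=(d^2\eta)^{-1}$, the block-diagonality of $\tilde\CalB=\CalB S$ asserted after \eqref{gradflowW} gives $b_1S_{11}+b_2S_{21}=0$, whence
\[
\Phi_{11}=(A_{11}-sI)S_{11}+A_{12}S_{21}
=\bigl(A_{11}-sI-A_{12}b_2^{-1}b_1\bigr)S_{11}=(A_*-sI)\,S_{11}
\]
(reading the paper's $b_1b_2^{-1}$ as $b_2^{-1}b_1$, the composition that parses); since $S_{11}>0$, the symmetric matrix $\Phi_{11}$ is congruent to $S_{11}^{-1/2}\Phi_{11}S_{11}^{-1/2}$, which is similar to $A_*-sI$, so $n_\mp(\Phi_{11})$ equal $\dim\CalS$, $\dim\CalU$ of the ($s$-shifted, i.e., wave-frame) $A_*$, exactly as \eqref{led2} requires.

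Two caveats. First, a genuine restriction in generality: your argument uses the gradient-flow structure \eqref{redflow}, i.e., a viscosity-compatible convex entropy, which is \emph{not} among the lemma's hypotheses \eqref{B}, \eqref{anon}, \eqref{nonchar}, \eqref{diss}; so you prove a special case---one that covers MHD, the paper's only application---while the dynamical argument of \cite{MaZ3} remains necessary for the statement as given. Second, a small misattribution: nonsingularity of $H_*$ needs neither \eqref{diss} nor Lemma \ref{hyp}, since $\det\nabla^2_W\phi=\det\Phi_{11}\cdot\det H_*$ together with \eqref{nonchar} and \eqref{astar} already forces $\det H_*\ne 0$. What each approach buys: the paper's citation route is fully general; yours is self-contained, purely local at each rest point (so no profile and no homotopy along one is needed, in harmony with the paper's remark), and it makes transparent why $A_*$ automatically has real spectrum in the entropy case.
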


\begin{proof}
Results \eqref{led}--\eqref{led2} are obtained in \cite{MaZ3} 
under the additional assumption that there exist a connecting profile.  
However, the proof uses existence
only to conclude via homotopy that the number of positive eigenvalues
$\dim \CalU(A_*)$
of $A_*$ is the same at $U_+$ as at $U_-$, under the weaker assumption
that $A_*$ have real 
nonvanishing eigenvalues {\it only along the profile}.
Under our global assumption on $A_*$, we have the same conclusions
also in the absence of a profile.
\end{proof}

That is, the type of the inviscid shock wave determines the type
of the potential connection.  In the simple, planar setting
\eqref{redode} of the $\sigma=\infty$ case, we have the simple
relation that {\it Lax shocks correspond to saddle--node connections,
overcompressive shocks to repellor--attractor connections,
and undercompressive shocks to saddle--saddle connections.}

An important consequence is that viscous profiles associated with
Lax or undercompressive shocks are generically unique up to
translation, while profiles associated with 
overcompressive shocks generically appear as part of an
$\ell$-parameter family (counting translations).

\subsubsection{Type and orientation} \label{orientation}
We point out in passing a similar reduction
 principle at the level of the Rankine--Hugoniot equations,
this time measuring the parity of $d_\pm$, or equivalently
the orientation $\sgn \det(df(U_\pm)-sI)$ of roots $U_\pm$
of the Rankine--Hugoniot relations.
It is sometimes the case that certain of the Rankine--Hugoniot equations
\eqref{RH} can be solved for certain variables in terms of others,
that is, without loss of generality, after relabeling $U=(U_a,U_b)$, 
 $F=(F_a,F_b)$, that $\partial_{U_b} F_b-I$ is invertible,
so that $F_b(U_a, \Psi(U_a))\equiv 0$.  In this case, \eqref{RH}
reduces to
\be\label{redRH}
0=\tilde F_a (U_a):= F_a(U_a,\Psi(U_a)).
\ee
In the present case of isentropic MHD, we will reduce to a scalar
equation in the specific volume $v$.

Evidently, we have in this case 
$
\det(dF(U)-I)= \det(\partial_b \partial F_b(U)- I)
\det(d \tilde F_b(U)- I),
$
whence, since
$\det(\partial_b \partial F_b(U)- I)$ is real and nonvanishing by assumption,
\be\label{signred}
\sgn (\det(dF(U)-I))= \omega \, \sgn (\det(d \tilde F_b(U)- I)),
\qquad \omega \equiv \pm 1.
\ee
That is, {\it the orientation of zeros of the full Rankine--Hugoniot
relations is determined by the orientation of zeros of the
reduced Rankine--Hugoniot relations \eqref{redRH}.}
We make use of this later to help determine the types of rest points by
consideration of a scalar reduced relation.  Similar reasoning is
used in \cite{FR1} for a planar reduced relation, looking at orientations
of intersections of nullcline curves (equivalent to orientation of zeros
of the planar reduced condition).

See Remark \ref{Hrmk} and Appendix \ref{signature} for related
observations.

\subsection{The Evans function and stability}\label{s:stabtheory}
We conclude these preliminaries by a brief discussion of stability
of general traveling-wave profiles, as determined by an {\it Evans function},
or ``generalized spectral stability'' condition.
Throughout this section, we make the general assumptions 
\eqref{B}, \eqref{anon}, \eqref{nonchar}, \eqref{diss} of \cite{MaZ3,MaZ4,Z1},
as hold in particular for the MHD equations studied here.
We add to these the further assumption of {\it symmetric-dissipative
hyperbolic--parabolic form} \cite{Z1,Z2}:
\medskip

(S) \quad There exist coordinates $W$ for which \eqref{visc} becomes
$G(W)_t + F(W)_x= (\tilde \CalB(W)W_x)_x$, with $dG$ symmetric positive
definite and block-diagonal, $dF_{11}$ symmetric and either negative
or positive definite, and $\tilde \CalB$ block-diagonal, with
$\Re \tilde \CalB_{22} $ positive definite.
(Here and elsewhere, $\Re (M):=(1/2)(M+M^*)$ denotes the symmetric
part of a matrix or linear operator $M$.)

\medskip
\noindent
This structure guarantees the minimal properties needed to carry
out an analysis,  in particular that the nonlinear equations
be local well-posed and 
that the linearized equations generate a $C^0$ semigroup;
see \cite{Z2,GMWZ1,GMWZ2} for further discussion.
It is implied by existence of a viscosity-compatible convex entropy
together with the condition that $ \sigma ( A_*)$, real and nonzero
by assumption,
be also strictly positive or strictly negative, a minimal further
requirement since
$A_*$ in most applications is a scalar multiple of the identity.
{\it In particular, (S) and all other hypotheses are
satisfied for the equations of MHD with ideal
gas equation of state \cite{MaZ4,Z1} under the single condition
\eqref{nonchar} of noncharacteristicity.}

Linearizing about a stationary wave $U\equiv \bar U(x)$ of \eqref{visc}
(stationarity may always be achieved by a change to coordinates moving
with the wave), we obtain linearized evolution equations
\be\label{lin}
U_t=\CalL U:=(\CalB U_x)_x-(AU)_x,
\ee
where $A$ and $\CalB$ depend on $x$, converging asymptotically
to values $A(\pm \infty)=dF(U_\pm)$, $\CalB(\pm \infty)=\CalB(U_\pm)$.
By asymptotic convergence \eqref{expdecay} and dissipativity,
\eqref{diss}, we find from a standard result of Henry \cite{He}
equating essential spectrum of asymptotically constant coefficient operators
to that of their limiting constant-coefficient operators, that
$$
\sigma_{ess}(\CalL)\subset \{\lambda: \, \Re \lambda \le -\theta |\Im \lambda|^2\}
$$
for some $\theta>0$, where $\sigma_{ess}$ denotes essential spectrum
(defined as the part of the spectrum not consisting of eigenvalues); 
see \cite{AGJ,GZ,Z1}.
Moreover, this bound is sharp; in particular, {\it $\lambda=0$ is in the limit
of the essential spectrum.}
At the same time, $\lambda=0$ is always an eigenvalue of $\CalL$, due to
translational invariance of the underlying equations \eqref{visc}, 
with associated eigenfunction $\bar U'$.

The fact that there is no gap between the spectrum of $\CalL$ and the
imaginary axis makes this a degenerate case for which
linearized and nonlinear stability analysis is trickier than usual.
In the standard case of a sectorial operator 
for which there exists a spectral gap, one may conclude bounded linear
stability from the {\it spectral stability conditions}
of (i) nonexistence of unstable eigenvalues $\Re \lambda>0$,
and (ii) semisimplicity of neutral eigenvalues $\Re \lambda=0$;
indeed, these are necessary and sufficient.
However, here we have a nonsectorial operator with no spectral gap.
Moreover, for the eigenvalue $\lambda=0$ embedded in the essential
spectrum of $\CalL$, it is not clear even what is the meaning of semisimplicity;
see discussions of \cite{ZH,MaZ3,Z1,Z2}.

Nonetheless, as shown in \cite{GZ,ZH,MaZ3,MaZ4,Z1}, 
one can extract a simple necessary and sufficient condition
for stability analogous to (i)--(ii) 
in terms of the {\it Evans function} $D(\lambda)$
associated with $\CalL$, a Wronskian
\be\label{Ddef1}
D(\lambda):=\det (W_1^-, \dots, W_k^-, W_{k+1}^+, \dots, W_N)|_{x=0}
\ee
defined in terms of analytically-chosen 
bases $ \{W_1^-, \dots, W_k^-\}(\lambda,x)$ 
and $\{ W_{k+1}^+, \dots, W_N)\}(\lambda,x)$ of
the manifolds of solutions decaying as $x\to \infty$ and $x\to +\infty$
of the eigenvalue equations $(\CalL-\lambda)w=0$ written as a first-order
system 
\be\label{firstorder}
W'=A(x,\lambda)W,
\ee
where $W$ is an augmented ``phase variable'' 
including $w$ and suitable derivatives.
By standard considerations, this may be defined on the complement
of $\sigma_{ess}(\CalL)$; a more detailed look shows that $D$ permits an
analytic extension to the boundary of this set-- in particular,
to the nonstable complex half-plane  $\{\Re \lambda \ge 0\}$.
For details of this construction, see, e.g., \cite{AGJ,GZ,Z1,HuZ};
we give some further discussion also in 
Section \ref{s:lin} and Appendix \ref{s:conj}.

Evidently, away from the essential spectrum $\sigma_{ess}(\CalL)$,
the Evans function vanishes at $\lambda$ if and only if $\lambda$
is an eigenvalue of $\CalL$, corresponding to existence of a solution
of the eigenvalue equations decaying at both $x\to \pm \infty$.
Indeed, the multiplicity of the root is equal to the multiplicity
of the eigenvalue \cite{GJ1,GJ2,MaZ3,Z1}.
The meaning of the multiplicity of the root of $D$ at embedded eigenvalue
$\lambda=0$ is less obvious, but is always greater than or equal to the
order of the embedded eigenvalue \cite{MaZ3,Z1}.

By the discussion in Section \ref{shocktype}, in particular relation
\eqref{led}, a traveling-wave profile $\bar U$ lies in an 
$\tilde \ell$-parameter family of nearby solutions, where $\tilde \ell$ is
(by dimensionality) at least $\min\{1,\ell\}$, where $\ell$
is the degree of compressivity defined in \eqref{shockindex},
with equality in the case that the connection is a maximally transversal
intersection of the unstable manifold at $U_-$ with the stable
manifold at $U_+$.
Assume for simplicity the typical case that equality holds,
\be\label{tildel}
\tilde \ell= \min\{1,\ell\},
\ee
and the manifold of nearby solutions is smooth.
Then, the stability condition is
\medskip

(D) $D$ has precisely $\tilde \ell$ roots on the 
nonstable half-plane $\{\lambda: \, \Re \lambda \ge 0\}$,
necessarily at $\lambda=0$.
\medskip

\noindent This is analogous to the stability condition
in the standard sectorial case, with nonvanishing away from $\lambda=0$ 
corresponding to the standard spectral condition (i), and
vanishing to order $\tilde \ell$ at $\lambda=0$ indicating that
the multiplicity of this zero is
accounted for entirely by genuine eigenfunctions corresponding to
variations of the traveling wave connection along the $\tilde \ell$-parameter
family of nearby solutions, a generalized version of
semi-simplicity \cite{ZH,MaZ3,Z1}.

\subsubsection{Linear and nonlinear stability}\label{s:stabsec}
We have the following basic results relating the Evans condition
(D) to stability.

\begin{proposition}[\cite{MaZ3}]
Under the standard assumptions 
\eqref{B}, \eqref{anon}, \eqref{nonchar}, \eqref{diss}, and
assuming an $\tilde \ell$-parameter family of traveling-wave
solutions near $\bar U$, (D) is necessary and sufficient for
linearized stability from $L^1\cap L^\infty \to L^p$ of $\bar U$, all
$1\le p\le \infty$, defined as
$ |e^{Lt}f|_{L^p}\le C|f|_{L^1\cap L^p}.  $ 
\end{proposition}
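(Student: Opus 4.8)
The plan is to derive the linearized bound from sharp pointwise estimates on the Green's function $G(x,t;y)$ of the semigroup $e^{\CalL t}$, obtained through the resolvent and the Evans function. First I would represent the semigroup by the inverse Laplace transform
\[
e^{\CalL t}f(x)=\frac{1}{2\pi i}\oint_{\Gamma} e^{\lambda t}(\lambda-\CalL)^{-1}f(x)\,d\lambda,
\]
with $\Gamma$ a contour lying to the right of $\sigma(\CalL)$, and express the resolvent kernel $G_\lambda(x,y)$ as a Wronskian-type combination of the decaying solutions $W_j^\pm(\lambda,x)$ of the first-order system \eqref{firstorder}, normalized (via Cramer's rule) so that the Evans function $D(\lambda)$ appears in the denominator. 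In this representation the eigenvalues of $\CalL$ off $\sigma_{ess}(\CalL)$ are exactly the zeros of $D$, so condition (D) translates directly into analyticity of $G_\lambda$ on $\{\Re\lambda\ge0\}\setminus\{0\}$ together with a pole of controlled order at $\lambda=0$ whose residue is spanned by the $\tilde\ell$ genuine eigenfunctions arising from variation along the traveling-wave family.

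For sufficiency I would integrate pointwise bounds on $G$ obtained regime by regime. For high frequency, the parabolic smoothing inherited from \eqref{diss} and the symmetric-dissipative structure (S) yields a harmless sectorial-type decay. For intermediate frequency on $\{\Re\lambda\ge0\}$ bounded away from the origin, condition (D) provides a local spectral gap, so I would deform $\Gamma$ into $\{\Re\lambda<0\}$ and collect exponentially decaying contributions. The delicate regime is the neighborhood of $\lambda=0$, where $\sigma_{ess}(\CalL)\subset\{\lambda:\ \Re\lambda\le-\theta|\Im\lambda|^2\}$ is tangent to the imaginary axis. Here I would perform a low-frequency expansion of $G_\lambda$, matching the resolvent of $\CalL$ to those of the limiting constant-coefficient operators at $x\to\pm\infty$; the scattering behavior is governed by the group velocities $a_j^\pm\in\sigma(dF(U_\pm)-sI)$, and deforming the contour to extract the convected-heat-kernel structure produces Gaussian convection--diffusion terms of type $t^{-1/2}e^{-(x-y-a_j t)^2/Mt}$ together with a bounded, time-independent excited term carried by the $\tilde\ell$ zero-eigenfunctions. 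Integrating these kernels in $y$ against $f\in L^1\cap L^\infty$ and applying Young's convolution inequality and interpolation gives the uniform bound $|e^{\CalL t}f|_{L^p}\le C|f|_{L^1\cap L^p}$ for all $1\le p\le\infty$.

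For necessity I would argue by contraposition. If $D$ has a zero at some $\lambda_0$ with $\Re\lambda_0>0$, the corresponding eigenfunction $\psi$ decays at both $\pm\infty$ and produces a solution $e^{\lambda_0 t}\psi$ growing exponentially, violating boundedness of $e^{\CalL t}$. If instead the order of the zero of $D$ at $\lambda=0$ exceeds $\tilde\ell$, then by the relation between the order of this zero and the Jordan structure of the embedded eigenvalue established in \cite{MaZ3,Z1}, the excess cannot be absorbed by the smooth $\tilde\ell$-parameter family normalized in \eqref{tildel}, and must be carried by a generalized eigenfunction; this yields secular, polynomially growing terms $t^k\psi$ that likewise break the uniform-in-time estimate.

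The main obstacle is precisely the low-frequency analysis at $\lambda=0$: because the essential spectrum reaches the imaginary axis there is no spectral gap, and the standard sectorial semigroup theory does not apply, so all decay must be wrung out of the fine behavior of $D$ and of the decaying solutions $W_j^\pm$ as $\lambda\to0$. Controlling the branch structure of these solutions across the essential spectrum and converting the resulting resolvent expansion into \emph{integrable pointwise} Green's function bounds, rather than mere resolvent-norm estimates, is the technical heart of the argument and is exactly where the full construction of \cite{MaZ3} is required.
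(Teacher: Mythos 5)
Your proposal is correct and follows essentially the same route as the proof the paper relies on: the proposition is quoted directly from \cite{MaZ3}, whose argument is exactly the pointwise-semigroup method you outline --- resolvent kernel written in Wronskian form with $D(\lambda)$ in the denominator, contour deformation in the regions where (D) gives a spectral gap, and a low-frequency expansion near $\lambda=0$ producing integrable Gaussian convection--diffusion kernels plus an excited term carried by the $\tilde\ell$ eigenfunctions, from which the $L^1\cap L^\infty\to L^p$ bound follows by Young's inequality. The only point to phrase carefully is necessity at the origin: since $\lambda=0$ is embedded in the essential spectrum there is no classical Jordan chain or decaying generalized eigenfunction yielding literal secular $t^k\psi$ growth (the paper itself notes that semisimplicity is not even well defined there), and \cite{MaZ3} instead extracts failure of the uniform bound directly from the low-frequency Green-function expansion when $D$ vanishes at $\lambda=0$ to order exceeding $\tilde\ell$.
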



\begin{proposition}[\cite{MaZ4,RZ}]\label{orbital}
Under assumptions \eqref{B}--\eqref{diss}, (S),
and definining $\tilde \ell$ as in \eqref{tildel},
the Evans condition (D) implies, first, 
existence of a $C^1$ family of nearby solutions $\{\bar u^\alpha\}$,
$\alpha\in R^{\tilde \ell}$, and, second, nonlinear time-asymptotic orbital stability,
in the following sense:
For any solution 
$\tilde U$ of \eqref{visc} with initial difference
$E_0:=\|(1+|x|^2)^{3/4}( \tilde U(\cdot, 0)- \hat U)\|_{ H^5}$
sufficiently small and some uniform $C>0$, $\tilde U$ exists for all $t\ge 0$, with
\begin{equation}
\label{stabstatement}
\begin{aligned}
\|(1+|x|^2)^{3/4}( \tilde U(\cdot, t)- \hat U(\cdot-st))\|_{ H^5}
&\le CE_0
\quad
\hbox{\rm (stability)}.\\
\end{aligned}
\end{equation}
Moreover, there exist $\alpha(t)$, $\alpha_\infty$ such that
\begin{equation}\label{stabstatement2}
\begin{aligned}
\| \tilde U(\cdot, t)-  \hat U^{\alpha(t)}(\cdot -st))\|_{L^p}
&\le CE_0(1+t)^{-(1/2)(1-1/p)},\\
\end{aligned}
\end{equation}
and
\begin{equation}
\label{phasebd}
|\alpha(t)- \alpha_\infty|\le C E_0 (1+t)^{-1/2}, 
\quad |\dot \alpha(t)| \le C E_0 (1+t)^{-1}, 
\end{equation}
for all $1\le p\le \infty$.
\quad
{\rm (phase-asymptotic orbital stability)}.
\end{proposition}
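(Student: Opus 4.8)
The plan is to deduce nonlinear orbital stability from the spectral information encoded in (D) by the now-standard two-stage route: first convert (D) into sharp pointwise bounds on the Green's function of the linearized operator $\CalL$ of \eqref{lin}, and then feed those bounds into a nonlinear iteration with a suitably chosen time-dependent phase modulation $\alpha(t)$. Throughout, the assumptions \eqref{B}--\eqref{diss} and (S) supply exactly the structural inputs — convergence of coefficients, dissipativity, and symmetrizability — needed at each stage.

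First I would pin down the linearized picture. Combining the essential-spectrum bound $\sigma_{ess}(\CalL)\subset\{\Re\lambda\le-\theta|\Im\lambda|^2\}$ with (D), the only nonstable spectrum of $\CalL$ is the $\tilde\ell$-fold zero of the Evans function at $\lambda=0$, embedded at the tip of the essential spectrum. I would represent the solution operator $e^{\CalL t}$ by the inverse-Laplace contour integral of the resolvent kernel $G_\lambda(x,y)$, which is assembled from the decaying solution bases of \eqref{firstorder} at $\pm\infty$ whose existence and exponential rates are guaranteed by Lemma \ref{hyp}, with $D(\lambda)$ appearing in the denominator. The core linear estimate is to show a decomposition $G=E+\tilde G$, in which $E$ carries the slow, non-$L^\infty$-decaying contribution of the embedded zero eigenvalue (leading term essentially $\bar U'(x)$ times a convected Gaussian, reflecting the $\tilde\ell$-parameter family via \eqref{led}), while $\tilde G$ enjoys genuine convective--diffusive Gaussian decay. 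This rests on a careful low-frequency expansion of $G_\lambda$ near $\lambda=0$, using that the order of vanishing of $D$ is exactly $\tilde\ell$ and matches the algebraic multiplicity, supplemented by mid- and high-frequency resolvent bounds obtained from the parabolic/dissipative structure (S) and \eqref{diss}.

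For the nonlinear problem I would write $\tilde U(x,t)=\hat U^{\alpha(t)}(x-st)+U(x,t)$ in coordinates moving with the wave, derive the perturbation equation $U_t=\CalL U+\partial_x(\text{quadratic})+\dot\alpha\,\partial_\alpha\hat U^{\alpha}$, and choose $\alpha(t)$ to cancel the non-decaying $E$-contribution — concretely, defining $\alpha(t)$ implicitly by projecting the Duhamel formula onto the slow direction. The residual $U$ then obeys an integral equation governed by $\tilde G$ alone. Using the Gaussian bounds on $\tilde G$, convolution (Hausdorff--Young) estimates, and the fact that the nonlinearity is in divergence form so that one derivative always falls on the Green's function, I would close a continuous-induction bootstrap on the weighted template norm, yielding the rates \eqref{stabstatement2} for $U$ and the bounds \eqref{phasebd} for $\alpha$; the weight $(1+|x|^2)^{3/4}$ and the index $H^5$ are dictated by what the pointwise bounds and energy estimates can absorb.

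The main obstacle I expect is the linear step: extracting the sharp pointwise bounds, and especially the clean splitting $G=E+\tilde G$, from the mere counting information in (D). The difficulty is precisely that $\lambda=0$ sits at the edge of the essential spectrum with no spectral gap, so sectorial/analytic-semigroup methods are unavailable; the low-frequency analysis of $G_\lambda$ must be carried out by hand, tracking the branch singularities of the decaying-solution bases and verifying that the excess vanishing order of $D$ beyond the geometric eigenspace produces no secular (polynomially growing) terms. A second genuinely technical ingredient is control of the top-order derivatives, where the Green's function bounds alone do not suffice: here a Kawashima-type damping/energy estimate exploiting (S) is needed, and it is this that forces the $H^5$ regularity. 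Once both are in place the nonlinear iteration is essentially routine, and for the detailed constants I would simply invoke \cite{MaZ3,MaZ4,RZ}.
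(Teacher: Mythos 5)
Your outline is correct and coincides with the proof the paper invokes: Proposition \ref{orbital} is stated with citation to \cite{MaZ4,RZ}, whose argument is exactly your two-stage scheme — low-frequency expansion of the resolvent kernel under (D) yielding the pointwise splitting $G=E+\tilde G$, phase modulation $\alpha(t)$ defined by projection onto the slow ($E$) modes, Kawashima-type energy estimates from (S) to control high derivatives (hence the $H^5$ index), and a weighted nonlinear iteration closing the bounds \eqref{stabstatement}--\eqref{phasebd}. Nothing is missing relative to the cited proof, and your identification of the embedded zero at the edge of the essential spectrum as the principal technical obstacle is accurate.
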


A similar result holds in the 
mixed, under-overcompressive case that the
family of nearby traveling waves has dimension different from
(necessarily greater than) $\tilde \ell$; see \cite{RZ}.

\subsubsection{The integrated Evans condition}
Noting that $\CalL=\partial_x (B \partial x - A)$
is in divergence form, we may conclude for any $\lambda \ne 0$
that satisfaction of the eigenvalue ODE $(\CalL-\lambda)w=0$ for
an solution $w$ decaying exponentially in $x$ up to one derivative implies that
$\tilde w(x):=\int_{-\infty}^x w(y)dy$ is also bounded and exponentially
decaying, and satisfies the {\it integrated eigenvalue equation}
\be\label{inteval1}
(\tilde \CalL- \lambda)\tilde w=0,
\ee
where $\tilde \CalL:= B\partial_x^2 - A\partial_x$.
Associated with $\tilde \CalL$ is an {\it integrated Evans function}
$\tilde D(\lambda)$, which like $D$ may be defined analytically
on the nonstable half-plane $\{\lambda: \, \Re \lambda \ge 0\}$.
This permits the following simplified stability condition,
in practice easier to verify.

\begin{proposition}[\cite{ZH,MaZ3}]\label{intcond}
Under assumptions \eqref{B}--\eqref{diss}, 
in the Lax or overcompressive case,
the Evans condition (D) is equivalent to the {\rm integrated Evans condition}
\medskip

($\tilde D$) $\tilde D$ is nonvanishing on the
nonstable half-plane $\{\lambda: \, \Re \lambda \ge 0\}$,
\medskip

\noindent and in the undercompressive case to
\medskip

($\tilde D$') $\tilde D$ has on the
nonstable half-plane $\{\lambda: \, \Re \lambda \ge 0\}$
a single zero of multiplicity one at $\lambda=0$.
\medskip
\end{proposition}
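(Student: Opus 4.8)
The plan is to reduce both equivalences to a comparison of the zero sets of $D$ and $\tilde D$ on the closed nonstable half-plane $\{\Re\lambda\ge0\}$, exploiting the intertwining relation $\partial_x\tilde\CalL=\CalL\partial_x$ between the integrated operator $\tilde\CalL=\CalB\partial_x^2-A\partial_x$ and the divergence-form operator $\CalL=\partial_x(\CalB\partial_x-A)$. Away from the origin this intertwining gives an exact correspondence of eigenfunctions, so that the entire content of the proposition is concentrated at $\lambda=0$, where the translation-invariance zero of $D$ is modified in passing to $\tilde D$.

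First I would record the eigenfunction correspondence for $\lambda\neq0$. If $(\CalL-\lambda)w=0$ with $w$ and $w'$ exponentially decaying (Lemma \ref{hyp}, \eqref{expdecay}), then integrating $\CalL w=\partial_x(\CalB w'-Aw)=\lambda w$ over $\RR$ gives $\lambda\int_\RR w=[\CalB w'-Aw]_{-\infty}^{+\infty}=0$, whence $\int_\RR w=0$; thus $\tilde w(x):=\int_{-\infty}^x w$ decays at both ends and solves the integrated equation \eqref{inteval1}, while conversely $w=\tilde w'$ recovers a decaying solution of $(\CalL-\lambda)w=0$. The same flux identity, applied inductively, carries a Jordan chain for $\CalL$ to one for $\tilde\CalL$ and back (the integration constants vanishing by decay together with $\int_\RR w_j=0$), so $\CalL$ and $\tilde\CalL$ have identical point spectra with identical algebraic multiplicities on $\{\Re\lambda\ge0\}\setminus\{0\}$. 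Since the order of a root of an Evans function equals the algebraic multiplicity of the associated eigenvalue \cite{GJ1,GJ2,MaZ3,Z1}, $D$ and $\tilde D$ then have precisely the same zeros, with the same multiplicities, throughout the nonstable half-plane except possibly at $\lambda=0$.

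It remains to compare the orders of vanishing at the origin, and this is where the two cases separate. The mechanism is that the translational null solution $\bar U'$ of $\CalL$ integrates to $\bar U-U_-$, which limits to the nonzero jump $U_+-U_-$ and so fails to decay; the flux identity degenerates at $\lambda=0$ and the forced zero of $D$ is converted rather than inherited by $\tilde D$. To make this quantitative I would extend $D$ and $\tilde D$ analytically across the essential-spectrum boundary by the gap-lemma/Kato construction of analytic stable and unstable bases and perform a low-frequency expansion in $\lambda$, tracking the slow modes of the limiting coefficient matrices of \eqref{firstorder}. Using the compressivity relation \eqref{led} and the index identity \eqref{led2} to match the slow-mode contributions to the shock type, one obtains near the origin a factorization $D(\lambda)=\Delta(\lambda)\tilde D(\lambda)$ with $\Delta$ analytic and nonvanishing on $\{\Re\lambda\ge0\}\setminus\{0\}$. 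In the Lax or overcompressive case the $\tilde\ell$ directions counted in \eqref{tildel} (translation together with the nearby-profile variations) all integrate to linearly independent nondecaying limits equal to the shock jumps, so $\mathrm{ord}_0\Delta=\tilde\ell$; hence $D$ vanishing to order precisely $\tilde\ell$ at the origin with no other nonstable zeros is equivalent to $\tilde D$ being nonvanishing, which is $(\tilde D)$. In the undercompressive case the same expansion yields a different value of $\mathrm{ord}_0\Delta$ — the surplus of nondecaying integrated directions being absent — and the outcome is a single simple zero of $\tilde D$ at the origin, which is exactly $(\tilde D')$.

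The hard part is the origin analysis of the previous paragraph. At $\lambda=0$ one sits on the essential spectrum: the stable and unstable dimensions of the asymptotic systems jump as eigenvalues cross the imaginary axis, the Evans functions exist there only by analytic continuation, and neither $\mathrm{ord}_0 D$ nor $\mathrm{ord}_0\tilde D$ can be read off from a naive count of decaying solutions. Establishing the factorization $D=\Delta\tilde D$ and pinning down $\mathrm{ord}_0\Delta$ requires a careful expansion of the continued bases together with the linear-algebraic bookkeeping of \eqref{led2}, and it is precisely here that the Lax/overcompressive and undercompressive cases genuinely diverge. Everything away from the origin, by contrast, follows cleanly from the intertwining bijection and the multiplicity identity for Evans-function roots.
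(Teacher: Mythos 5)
The paper offers no proof of Proposition \ref{intcond} at all: it is quoted verbatim from \cite{ZH,MaZ3}, so the comparison is with the argument in those references. Your off-origin analysis matches that argument and is correct: the flux identity $\lambda\int_{\RR}w=[\CalB w'-Aw]_{-\infty}^{+\infty}=0$ does give a bijection between decaying eigenfunctions (and Jordan chains) of $\CalL$ and $\tilde\CalL$ for $\Re\lambda\ge0$, $\lambda\ne0$, and together with the multiplicity identity for Evans roots this shows $D$ and $\tilde D$ have identical zeros, with multiplicity, away from the origin. You also correctly identify that the entire content distinguishing $(\tilde D)$ from $(\tilde D')$ sits at $\lambda=0$.

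But precisely there your argument has a genuine gap: the factorization $D(\lambda)=\Delta(\lambda)\tilde D(\lambda)$ with $\mathrm{ord}_0\Delta=\tilde\ell$ in the Lax/overcompressive case, and the forced simple zero of $\tilde D$ in the undercompressive case, are asserted rather than proved — and the heuristics you offer for them are wrong in detail. Only the translation direction $\bar U'$ integrates to the jump $[U]=U_+-U_-$; the remaining overcompressive variations $\partial_\alpha\bar U^\alpha$ integrate to masses $\int_{\RR}\partial_\alpha\bar U^\alpha\,dx$ whose nonvanishing and linear independence are \emph{not} automatic. Indeed, their possible degeneracy is exactly one of the ways condition (D) can fail — it is encoded in the low-frequency determinant $\Delta$ of \cite{ZH,MaZ3}, as the paper itself recalls in Section \ref{s:compstab} (``for overcompressive shocks, it involves also certain variations associated with the linearized traveling-wave ODE'') — so invoking it to fix $\mathrm{ord}_0\Delta=\tilde\ell$ unconditionally is circular. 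In the undercompressive case your stated mechanism (``the surplus of nondecaying integrated directions being absent'') points the wrong way: since $\int_{\RR}\bar U'\,dx=[U]\ne0$, the integrated translation mode fails to decay in \emph{every} case, so on your reasoning integration would remove the zero for undercompressive shocks as well; the persistence of a simple zero of $\tilde D$ at the origin has a different source (an excess of slow modes, limiting to constant solutions of the integrated equation at $\lambda=0$, in the analytically continued Kato bases), visible only through the detailed low-frequency expansions of \cite{ZH,MaZ3} — the expansion $D(\lambda)=\gamma\Delta\lambda^{\tilde\ell}+o(\lambda^{\tilde\ell})$ and the companion evaluation of $\tilde D$ near $\lambda=0$ — which you explicitly defer as ``the hard part.'' As written, then, your proposal is a correct reduction of the proposition to its essential difficulty, plus a statement of that difficulty, not a proof of it.
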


In the Lax and overcompressive cases that are the main focus of our
investigation here, the change to integrated coordinates has the
effect of removing the zeros of $D$ at the origin, making the Evans
function easier to compute numerically and the Evans condition easier to verify.

\section{Rankine-Hugoniot Conditions}\label{s:RH}
The Rankine-Hugoniot conditions for isentropic MHD are,
in the notation $u=u_1$, $B=(B_2,B_3)$, $w=(u_2,u_3)$,
\begin{align}
-s[v] &=[u],\label{eq:rh_mass}\\
-s[u]&=-\left[p+ \frac{B^2}{2\mu_0} \right],\label{eq:rh_momentum}\\
-s[w]&=I\left[\frac{B}{\mu_0} \right],\label{eq:rh_momentum2}\\
-s[vB]&=I[w].
\label{eq:rh_magfield}
\end{align}

Under the scaling \eqref{scaling},
we have $s=-1$, $v_-=1$, and without loss of generality
(by translation invariance), we may take $u_-=0$, $w_-=0$.  
Last, we may take without loss of generality (by rotational
invariance) $w_{3-}=0$, whereupon we obtain from
\eqref{eq:rh_momentum2}--\eqref{eq:rh_magfield}
that $[B_3]=[vB_3]=0$, which, so long as
\begin{equation}\label{vdiff}
v_+\ne v_-=1,
\end{equation}
gives, finally,
\begin{equation}\label{Bzero}
B_{3-}= B_{3+}=0.
\end{equation}
Collecting, we have the normalizations
\begin{equation}\label{normalization}
s=-1,\quad v_-=1, \quad
u_-=0, \quad 
w_{2-}= 0,\quad 
w_{3-}= w_{3+}=0,\quad 
B_{3-}= B_{3+}=0.
\end{equation}

To generate all possible shock profiles, up to invariances
of the equations, we shall vary $I$, 
$B_{2+}$, without loss of generality nonnegative, and $v_+$, 
without loss of generality between $0$ and $1$ (since we can
always arrange that $v_-=1$ correspond to the rest point with larger $v$ value),
and solve for the remaining coordinates
$u_+$, $B_{2-}$,
and the parameter $a$ appearing in the pressure law.
Parameters that will be important in the whole study are
\begin{equation}\label{def: magnetic}
J:=\frac{(B_{2-})^2}{2\mu_0} \; \hbox{\rm and } 
K:=\frac{I^2}{\mu_0}.
\end{equation}
(Note that, under the rescaling that we used, 
$I=-\frac{I}{s}$,
$J= \frac{B_{2-}^2}{2\eps s^2 \mu_0}=
 \frac{B_{2-}^2}{2v_- s^2 \mu_0} $,
$K= \frac{(I)^2}{\eps s^2 \mu_0}=
 \frac{(I)^2}{v_- s^2 \mu_0} $ in the original coordinates.)

\begin{remark}
\textup{
In the excluded case $v_+=v_-=1$, profiles are prohibited by
entropy consideration, \eqref{strict}.
}
\end{remark}

\begin{remark}
\textup{
Note that it does {\it not} follow in general that $\hat B_3\equiv 0$
or $\hat w_3\equiv 0$, but does follow when profiles are unique,
i.e., in the Lax or undercompressive case,
and one such profile is known to exist. 
(Recall the discussion of types of shocks
and relation to uniqueness of profiles in Section \ref{shocktype}).
}
\end{remark}

\begin{proposition}\label{rhprop}
Under the normalizations \eqref{normalization},
for each $0<v_+\le 1$ and $I, B_{2+}\ge 0$, 
the Rankine--Hugoniot equations \eqref{eq:rh_mass}--\eqref{eq:rh_magfield} 
have a unique solution
\begin{equation}\label{rhsoln}
u_+=v_+-1, \quad 
B_{2-}=\Big(\frac{v_+-K}{1-K}\Big) B_{2_+},
\quad
w_+=\frac{K}{I}\Big(\frac{1-v_+}{1-K}\Big)B_{2_+},
\quad
\end{equation}
\begin{equation}\label{asoln}
a=
\Big(\frac{1-v_+}{v_+^{-\gamma} -1}\Big)
\Big(
1- \frac{B_{2_+}^2}{2\mu_0}
\frac{(1+v_+-2K)}{(1-K)^2}
\Big)
=
\Big(\frac{1-v_+}{v_+^{-\gamma} -1}\Big)
\Big( 1 - J \frac{(1+v_+-2K)}{(v_+-K)^2} \Big).
\end{equation}
This is physically meaningful if and only if $a>0$, or
\begin{equation}\label{physa}
-1<v_+-1<2(K-1)+(1-K)^2\Big(\frac{2\mu_0}{B_{2_+}^2}\Big).
\end{equation}
(For $K\ge 1/2$, this gives no restriction.
For $K<1/2$, $B_{2_+}^2< \frac{2\mu_0(1-K)^2}{1-2K}$
or $J< \frac{(v_+-K)^2}{1-2K}$.)
\end{proposition}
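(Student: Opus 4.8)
The plan is to exploit the fact that, once the normalizations \eqref{normalization} are imposed (in particular $s=-1$, $v_-=1$, $u_-=w_-=0$), the Rankine--Hugoniot system \eqref{eq:rh_mass}--\eqref{eq:rh_magfield} becomes \emph{affine} in the unknowns $u_+,w_+,B_{2-}$ and in the pressure constant $a$, so that it can be solved by successive elimination rather than by any fixed-point or degree argument. First I would read off $u_+$ directly from the mass equation \eqref{eq:rh_mass}: with $s=-1$ and $v_-=1,\,u_-=0$ it reads $v_+-1=u_+$, giving the first entry of \eqref{rhsoln}. Treating the datum $B_{2+}$ as given, I would then regard \eqref{eq:rh_momentum2}--\eqref{eq:rh_magfield} as two linear equations in the pair $(w_+,B_{2-})$.

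Next I would carry out that elimination. Equation \eqref{eq:rh_momentum2} gives $w_+=(I/\mu_0)(B_{2+}-B_{2-})$; substituting into \eqref{eq:rh_magfield} and using $K=I^2/\mu_0$ from \eqref{def: magnetic} collapses the magnetic equation to the scalar relation $(v_+-K)B_{2+}=(1-K)B_{2-}$. Provided $K\neq1$ (the genuine nondegeneracy hypothesis hidden in the division by $1-K$, which I would flag explicitly), this yields the unique value $B_{2-}=\tfrac{v_+-K}{1-K}B_{2+}$, and back-substitution together with $\tfrac{I}{\mu_0}=\tfrac{K}{I}$ produces the stated $w_+$; this simultaneously establishes the uniqueness claim and the remaining entries of \eqref{rhsoln}. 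With $u_+,B_{2-}$ now known, the momentum equation \eqref{eq:rh_momentum}, written as $v_+-1=-a(v_+^{-\gamma}-1)-\tfrac{1}{2\mu_0}(B_{2+}^2-B_{2-}^2)$, is a single linear equation for $a$. Solving it and factoring the magnetic difference through the identity $(1-K)^2-(v_+-K)^2=(1-v_+)(1+v_+-2K)$ extracts the common factor $(1-v_+)$ and gives the first expression in \eqref{asoln}; the second follows by replacing $\tfrac{B_{2+}^2}{2\mu_0(1-K)^2}$ with $\tfrac{J}{(v_+-K)^2}$ via $J=\tfrac{(v_+-K)^2}{(1-K)^2}\tfrac{B_{2+}^2}{2\mu_0}$, immediate from the formula for $B_{2-}$ and the definition \eqref{def: magnetic} of $J$.

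For the positivity criterion I would first observe that for $0<v_+<1$ and $\gamma\ge1$ the prefactor $\tfrac{1-v_+}{v_+^{-\gamma}-1}$ is strictly positive (numerator positive, and $v_+^{-\gamma}>1$ since $0<v_+<1$), so that $\sgn a$ equals the sign of the bracketed factor. The inequality $1-\tfrac{B_{2+}^2}{2\mu_0}\tfrac{1+v_+-2K}{(1-K)^2}>0$ is then rearranged to $\tfrac{B_{2+}^2}{2\mu_0}(1+v_+-2K)<(1-K)^2$, and dividing by $\tfrac{B_{2+}^2}{2\mu_0}$ and isolating $v_+-1$ gives exactly the right-hand inequality of \eqref{physa}, the left-hand bound $-1<v_+-1$ being merely the standing hypothesis $v_+>0$. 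The parenthetical refinements I would obtain by examining the coefficient $1+v_+-2K$, most transparently in the strong-shock limit $v_+\to0^+$ where it reduces to $1-2K$: when $K\ge1/2$ this is nonpositive and the bound on $B_{2+}$ is vacuous (``no restriction''), whereas for $K<1/2$ solving $\tfrac{B_{2+}^2}{2\mu_0}(1-2K)<(1-K)^2$ for $B_{2+}^2$ gives the displayed threshold $\tfrac{2\mu_0(1-K)^2}{1-2K}$, and the same $B_{2-}$-to-$J$ substitution converts it into $J<\tfrac{(v_+-K)^2}{1-2K}$.

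The computations are entirely elementary, so the ``hard part'' is not any single deduction but the careful bookkeeping of signs and the handling of the degenerate loci. I expect the two points needing genuine care to be: (i) the degeneracies $K=1$ and $v_+=1$, where the denominators $1-K$ and $v_+^{-\gamma}-1$ vanish and uniqueness (indeed solvability for $a$) breaks down, the case $v_+=1$ being precisely the one excluded on entropy grounds in \eqref{strict} and the preceding remark; and (ii) pinning down the exact logical status of the parenthetical conditions, that is, making explicit whether each is the full equivalent of $a>0$ for the given $v_+$ or only its strong-shock limit, since the parenthetical threshold $\tfrac{(v_+-K)^2}{1-2K}$ and the sharp threshold $\tfrac{(v_+-K)^2}{1+v_+-2K}$ (for $a>0$ at fixed $v_+$) differ precisely by the $v_+$ appearing in the denominator, and coincide only as $v_+\to0$.
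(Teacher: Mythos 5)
Your proposal is correct and follows essentially the same successive-elimination route as the paper's own proof: the mass equation gives $u_+=v_+-1$, the transverse momentum and magnetic-flux equations combine to $[vB_2]=K[B_2]$, yielding $B_{2-}$ and then $w_+$, and the normal momentum equation is solved linearly for $a$ using the factorization $(1-K)^2-(v_+-K)^2=(1-v_+)(1+v_+-2K)$. Your explicit flagging of the hidden nondegeneracies $K\ne 1$ and $v_+\ne 1$, and your observation that the parenthetical thresholds (with $1-2K$ in place of the sharp $1+v_+-2K$) are exact only in the strong-shock limit $v_+\to 0$, are accurate refinements of points the paper leaves implicit.
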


\begin{proof}
From $[u]=[v]$, we obtain immediately $u_+=v_+-1$.
Expanding $[Bv]=I[w]=K[B_2]$ and solving, we obtain
$$
B_{2-}=\Big(\frac{v_+-K}{1-K}\Big) B_{2_+}.
$$
From $[w]=(K/I)[B_2]$, we then obtain
$w_+=\frac{K}{I}\Big(\frac{1-v_+}{1-K}\Big)B_{2_+}$.
Finally, from the remaining condition 
$[u]=-\left[p+ \frac{B^2}{2\mu_0} \right]$, 
we obtain
\be\label{keyrh}
[p]=
(1-v_+) - (1/2\mu_0)(B_{2+}^2-B_{2-}^2),
\ee
yielding \eqref{asoln} and \eqref{physa}.
\end{proof}

\br\label{gen1}
\textup{
So far, we have made no restriction on dimension or
$\sigma$, so our analysis of the Rankine--Hugoniot conditions
holds for the general three-dimensional isentropic case.
}
\er

\subsection{Global rest point configuration}
Proposition \ref{rhprop} gives a convenient means for
stepping through the possible shock connections, and is the
main method we will use to generate shocks
in our numerical investigations of shock stability.
For the study of the existence problem it is more useful
to take a global point of view, fixing a left state and
speed in the unrescaled coordinates, and studying the 
configuration of rest points (possible right states) in the resulting
traveling-wave ODE.
In the rescaled coordinates, this amounts to fixing $I$, $B_{2-}$,
and $a$, or, equivalently, the more convenient parameters $(J,K,a)$,
and solving for all possible $v_+$.

\begin{proposition}\label{parprop}
In the parallel case $J=0$, for $0<a\ne \gamma^{-1}$ and $0\le K\ne 1$,
there exists a unique parallel solution $v_*\ne 1$ satisfying
$0=g(v):=p(v)-p(1)+ v-1$, with associated magnetic field $B_{2*}=0$.
If $K$ is not between $v_*$ and $1$, then these are the only rest points,
with $v_*$ corresponding to a saddle and $1$ to a repellor if $K<v_*<1$ and 
$1$ corresponding to a saddle and $v_*$ to an attractor if $v_*<1<K$.
If $K$ lies between $v_*$ and $1$, then $1$ corresponds to a repellor
and $v_*$ to an attractor and there are two additional
nonparallel saddle-type rest points 
$$
v=K, \quad u=K-1,\quad  B=\pm \sqrt{2\mu_o (p(K)-p(1)+ K-1)}, \quad w=KB/I.
$$
\end{proposition}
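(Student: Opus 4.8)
The plan is to specialize the reduced planar flow \eqref{redode} to the parallel case $J=0$, i.e.\ $B_-=0$, and then read off and classify the rest points directly. Writing $g(v):=p(v)-p(1)+v-1$ and using $v(v-1)+v(p-p_-)=vg(v)$ together with $I^2=K\mu_0$, the system \eqref{redode} with $B_-=0$ becomes $(2\mu+\eta)v'=vg(v)+\tfrac{I^2w^2}{2\mu_0 v}$ and $\mu w'=(v-K)w$. Since $2\mu+\eta$ and $\mu$ are positive, the type of each rest point is governed by the linearization of the right-hand vector field $(F_1,F_2):=\big(vg(v)+\tfrac{I^2w^2}{2\mu_0 v},\,(v-K)w\big)$, the positive viscosity factors affecting neither the sign of $\det$ nor, for a diagonal linearization, the signs of the eigenvalues. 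The rest points are found from $(v-K)w=0$, splitting into a parallel branch $w=0$ and a nonparallel branch $v=K$.

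On the parallel branch $F_1=0$ reduces to $vg(v)=0$, i.e.\ $g(v)=0$ since $v>0$, with magnetic field $B=(B_-+Iw)/v=0$. First I would establish that $g$ is strictly convex ($g''(v)=a\gamma(\gamma+1)v^{-\gamma-2}>0$), blows up as $v\to0^+$ (through $p$) and as $v\to+\infty$ (through the linear term), and satisfies $g(1)=0$ with $g'(1)=1-a\gamma\ne0$ precisely because $a\ne\gamma^{-1}$. A strictly convex function vanishing at a noncritical point has exactly one further zero, giving a unique $v_*\ne1$; by the normalization of Section~\ref{s:RH} we may take $v_*$ to be the smaller root, so $v_*<1$ and hence $g'(1)>0>g'(v_*)$ (the derivative of a convex function is negative at its smaller and positive at its larger zero).

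On the nonparallel branch $v=K$, substituting into $F_1=0$ and using $I^2=K\mu_0$ gives $Kg(K)+\tfrac{w^2}{2}=0$, so $w^2=-2Kg(K)$; this has a real nonzero solution if and only if $g(K)<0$, which by convexity holds exactly when $K$ lies strictly between the two roots $v_*$ and $1$. The corresponding field is $B=Iw/K$, so $B^2=-2\mu_0 g(K)$, positive exactly because $g(K)<0$, yielding the two stated nonparallel rest points (and $w=KB/I$). To classify them I would compute the Jacobian of $(F_1,F_2)$ at $(K,w_0)$: using $\tfrac{I^2w_0^2}{2\mu_0K^2}=-g(K)$ one finds $\partial_vF_1=2g(K)+Kg'(K)$, $\partial_wF_1=w_0$, $\partial_vF_2=w_0$, $\partial_wF_2=0$, whence $\det=-w_0^2<0$. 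A negative determinant forces real eigenvalues of opposite sign, so both nonparallel points are saddles irrespective of the trace.

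It remains to classify the two parallel points, the only place requiring case-by-case bookkeeping. At $w=0$ the Jacobian of $(F_1,F_2)$ is diagonal, equal to $\mathrm{diag}\big(vg'(v),\,v-K\big)$ on the zero set of $g$, so the eigenvalue signs are $\sgn g'(v)$ and $\sgn(v-K)$. Feeding in $g'(1)>0$, $g'(v_*)<0$ and the three positions of $K$ relative to $v_*<1$ then yields exactly the asserted types: for $K<v_*$ the point $1$ has both signs positive (repellor) and $v_*$ has opposite signs (saddle); for $K>1$ the point $1$ is a saddle and $v_*$ has both signs negative (attractor); and for $v_*<K<1$ the point $1$ is a repellor and $v_*$ an attractor, accompanied by the two nonparallel saddles. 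I expect the genuine content to be the convexity/uniqueness argument of the second paragraph and the careful alignment of the normalization $v_*<1$ with the three orderings of $K$; the remaining linear algebra is routine, and as a cross-check the resulting index pattern agrees with Remark~\ref{Hrmk} and with the saddle--node/repellor--attractor/saddle--saddle dictionary of Section~\ref{shocktype}.
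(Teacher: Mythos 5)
Your proof is correct and takes essentially the same route as the paper: convexity of $g$ with $g'(1)=1-a\gamma\ne 0$ gives the unique second parallel root, and the nonparallel branch $v=K$ gives $w^2=-2Kg(K)$, real and nonzero precisely when $K$ lies strictly between $v_*$ and $1$ (your $B^2=-2\mu_0\,g(K)$ matches the paper's own proof, $B^2/2\mu_0=-g(K)$, and incidentally exposes a sign typo in the proposition's displayed formula for $B$). The only difference is that the paper defers the classification of rest-point types to a ``straightforward computation'' citing \cite{BHZ}, which your explicit Jacobian analysis — diagonal linearization $\mathrm{diag}\big(vg'(v),\,v-K\big)$ at the parallel points and determinant $-w_0^2<0$ at the nonparallel ones, with the positive viscosity factors correctly noted as harmless — carries out in full.
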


\begin{proof}
We have $g(1)=0$ and $g'(1)=p'(v)+1=-\gamma a+1\ne 0$ by assumption.
Since $g$ is evidently convex, and $g\to +\infty$ as
$v\to 0, +\infty$, we find that there is precisely one other
root $v_*\ne 1$.
There are a further two solutions $v=K$ $w=-KB_{2}/I$, 
$B^2/2\mu_0= -([p]+[v])=-g(K)$, which are physically relevant
only if $g(K)\le 0$, or (by convexity) $K$ lies between
$1$ and $v_*$.
The types of the rest points may be obtained by straightforward
computation \cite{BHZ}.
\end{proof}

\begin{proposition}\label{globalprop}
In the nonparallel case
$J>0$, for $a>0$ and $0\le K\ne 1$, rest points of traveling-wave
ODE \eqref{iprof}, or, equivalently, right states satisfying
the Rankine--Hugoniot equations \eqref{eq:rh_mass}--\eqref{eq:rh_magfield} 
with $s=-1$ and $v_-=1$, 
correspond to roots $v=v_+$ of
\ba\label{tildef}
\tilde f(v)&:= 
 p(v)-p(1) + J\Big( \frac{(1-K)^2}{(v-K)^2} -1\Big) + v -1,
\ea
of which there are at most two greater than $K$ and at most
two less than $K$.
For all except a measure-zero set of parameters, 
there are exactly two or four roots in total, consisting
of an attractor $v_1$ and a saddle $v_2$ ordered as $v_1<v_2<K$, 
a saddle $v_3$ and a repellor $v_4$ ordered as $K<v_3<v_4$, or both,
with $(u,w,B)$ values determined by \eqref{rhsoln}.
Moreover, the relative entropy $\phi(v_j)$ decreases with $j$.
\end{proposition}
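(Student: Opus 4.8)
The plan is to read everything off the scalar reduced Rankine--Hugoniot function $\tilde f$ of \eqref{tildef}, which is obtained exactly as in Proposition \ref{rhprop} but now holding $(J,K,a)$ fixed and solving for $v_+=v$: eliminating $u_+,w_+,B_{2+}$ via \eqref{rhsoln} and substituting into the momentum jump gives $\tilde f(v)=0$. The first thing I would record is that $\tilde f$ is \emph{strictly convex} on each of the intervals $(0,K)$ and $(K,\infty)$ (the former absent when $K=0$), since $\tilde f''(v)=p''(v)+6J(1-K)^2/(v-K)^4>0$ there by $p''>0$ for the ideal law. A strictly convex function has at most two zeros on an interval, which is precisely the ``at most two greater than $K$ and at most two less than $K$'' bound.

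For the exact count I would use that $v=1$ is always a root (it is the left state) and that $\tilde f\to+\infty$ at each of $0^+$, $K^\pm$, and $+\infty$. On whichever interval contains $v=1$, strict convexity plus blow-up at both ends forces the minimum to be $\le\tilde f(1)=0$, hence negative off the measure-zero tangency set, giving exactly two roots there; on the other interval the convex minimum is negative (two roots) or positive (none), a single root occurring only on a measure-zero set. This yields exactly two or four roots generically, ordered $v_1<v_2<K$ and/or $K<v_3<v_4$, with $(u,w,B)$ fixed by \eqref{rhsoln}.

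To assign types I would work with the reduced potential $\hat\phi(v,w)$ of \eqref{sigphi}. Writing $w=w(v)$ for the root of $\partial_w\hat\phi=0$ and $\psi(v):=\hat\phi(v,w(v))$, the chain rule together with implicit differentiation of $\partial_w\hat\phi=0$ gives the Schur-complement identities $\psi'(v)=\partial_v\hat\phi(v,w(v))=\tilde f(v)$ and $\tilde f'(v)=\det\nabla^2\hat\phi/\partial_{ww}\hat\phi$, where from \eqref{ncheck} one reads $\partial_{ww}\hat\phi=(v-K)/v$. Since the planar profile ODE \eqref{redode} is, up to the positive factor $1/v$, the gradient-ascent flow $\mathrm{diag}(2\mu+\eta,\mu)(v',w')=v\nabla\hat\phi$, the linearization at a rest point has the inertia of $\nabla^2\hat\phi$ (cf.\ Remark \ref{Hrmk}); thus $\det\nabla^2\hat\phi<0$ gives a saddle, while $\det\nabla^2\hat\phi>0$ gives a node, a repellor when $\partial_{ww}\hat\phi>0$ and an attractor when $\partial_{ww}\hat\phi<0$. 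Combining $\det\nabla^2\hat\phi=\tilde f'\cdot(v-K)/v$ with the convexity sign of $\tilde f'$ (negative at a left root, positive at a right root) produces exactly attractor $v_1$, saddle $v_2$ on $(0,K)$ and saddle $v_3$, repellor $v_4$ on $(K,\infty)$.

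The entropy monotonicity $\phi(v_j)=\psi(v_j)$ is where I expect the real work. Within a single branch it is immediate: $\tilde f<0$ strictly between its two roots (convexity), so $\psi'=\tilde f<0$ yields $\phi(v_1)>\phi(v_2)$ and $\phi(v_3)>\phi(v_4)$. The delicate point is the cross-branch comparison $\phi(v_2)>\phi(v_3)$, since $\psi\to+\infty$ as $v\to K^-$ and $\psi\to-\infty$ as $v\to K^+$, so one cannot integrate $\tilde f$ across the pole. My plan is to split off the singularity explicitly: substituting $w=w(v)$ into \eqref{sigphi} and simplifying gives $\phi(v)=\tilde\Phi(v)-J(K-1)^2/(v-K)$ with $\tilde\Phi(v):=\int_1^v p-p_-(v-1)+\tfrac12(v-1)^2-J(v+K)$ smooth on $(0,\infty)$. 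The singular term contributes $J(K-1)^2\big[(v_3-K)^{-1}-(v_2-K)^{-1}\big]>0$ to $\phi(v_2)-\phi(v_3)$ because $v_2-K<0<v_3-K$. For the smooth term I would use $\tilde\Phi'(v)=g(v)-J$, with $g$ as in Proposition \ref{parprop}, which is convex, and the root relation $\tilde f(v_j)=0\Rightarrow\tilde\Phi'(v_j)=-J(1-K)^2/(v_j-K)^2<0$; convexity of $\tilde\Phi'$ then forces $\tilde\Phi'<0$ on all of $[v_2,v_3]$ (a convex function lies below the chord joining two negative values), so $\tilde\Phi(v_2)>\tilde\Phi(v_3)$. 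Both contributions positive gives $\phi(v_2)>\phi(v_3)$, completing the ordering. The main obstacle is precisely this cross-branch step; the decomposition $\phi=\tilde\Phi-J(K-1)^2/(v-K)$ together with the root identity for $\tilde\Phi'$ is what renders it tractable.
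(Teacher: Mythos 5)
Your proposal is correct, and for the root count and the rest-point classification it travels essentially the paper's road: convexity of $\tilde f$ on $(0,K)$ and $(K,\infty)$, with blowup at $0$, $K^\pm$, and $+\infty$, gives at most two roots per side, and genericity of the exact count follows (the paper pins down the exceptional set a bit more explicitly than you do, noting that $\tilde f$ is strictly monotone in $a$ on each interval so that for fixed $(J,K)$ only finitely many $a$ produce a double root; you simply assert the tangency set is null, which deserves that one line). Your inertia argument is likewise within the paper's circle of ideas: the gradient structure $\mathrm{diag}(2\mu+\eta,\mu)\,(v',w')^T=v\,\nabla\hat\phi$, Sylvester's law, and the Schur-complement identity $\det\nabla^2\hat\phi=\tilde f'(v)\,(v-K)/v$ are exactly the content of Remark \ref{Hrmk} and Appendix \ref{signature} (which computes $\det\nabla^2\check\phi=\mu_0(v-K)\tilde f'(v)$ in the three-variable setting), and correspond to the paper's ``alternative, much more simple'' route, its primary route being the orientation principle \eqref{signred} with the type relation \eqref{led2}. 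The genuine divergence is the cross-branch comparison $\phi(v_2)>\phi(v_3)$, and there your argument is not merely different but more complete than the paper's. The paper disposes of this step in one line, comparing ``limiting values'' of $\check\phi$ along the two nullcline branches as $v\to K^\mp$; but since $\psi(v):=\check\phi(v,w(v))$ satisfies $\psi'=\tilde f>0$ on $(v_2,K)\cup(K,v_3)$ while $\psi\to+\infty$ as $v\to K^-$ and $\psi\to-\infty$ as $v\to K^+$, the asserted limit inequality is vacuous and does not chain with the within-branch monotonicity: $\psi(v_2)$ is a one-sided minimum and $\psi(v_3)$ a one-sided maximum, so actual values, not limits at $K$, must be compared. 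Your decomposition $\psi(v)=\tilde\Phi(v)-J(K-1)^2/(v-K)$ supplies exactly this (I verified the algebra: the identity $K+\frac{(K-1)^2}{v-K}=\frac{Kv-2K+1}{v-K}$ makes the singular split exact, and differentiation recovers $\psi'=\tilde f$); the pole term contributes positively since $v_2<K<v_3$, and the smooth part decreases because $\tilde\Phi'=g-J$ is convex with $\tilde\Phi'(v_j)=-J(1-K)^2/(v_j-K)^2<0$ at both endpoints, hence negative on all of $[v_2,v_3]$ by the chord bound. In short: same skeleton as the paper for existence, count, and type, but your explicit singular decomposition turns the paper's compressed (and, as written, insufficient) cross-branch entropy step into a complete proof.
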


\begin{proof}
Combining \eqref{rhsoln}(ii) and \eqref{keyrh}, we
obtain \eqref{tildef}.
Noting that $\tilde f$, since $p$ is convex, is convex on
$(0,K) $ and $(K,+\infty)$, with $\tilde f(v)\to +\infty$
as $v\to 0, K,  +\infty$, we find that $\tilde f$ can have
at most two roots on each of the intervals $(0,K)$ and
$(K,+\infty)$.
Noting that $\tilde f$ is monotone increasing in $a$ on $(0,K)$
we find for each
fixed $(J,K)$ that there are at most two values of $a$ for which
$\tilde f$ has a double root, hence, for all except this measure
zero set of parameters, there are exactly two or four.
Applying the reduced orientation principle 
\eqref{signred} together with the reduced type relation \eqref{led2}, 
we find using the fact that $f'$ changes sign between two roots
on one side of $K$ that one must be of saddle type and the other of node type.

Finally, tracking down the orientations of intermediate transformations,
which change sign as $v$ crosses $K$, by 
the relation $w=\frac{K}{I}\Big(\frac{1-v_+}{1-K}\Big)B_{2}$,
or, more simply, directly computing the sign of the determinant
of the $2\times 2$ matrix arising from the linearization of the
planar ODE \eqref{redode} about the rest points $(v,w)$,
we find that the largest root $>K$ and the smallest root $<K$ are nodes,
and the others saddles.
Computing the trace of the $2\times 2$ coefficient matrix of
the linearized system, we find that the largest root $>K$ is
a repellor and the smallest root $<K$ an attractor.

Alternatively, and much more simply, recalling the formula \eqref{ncheck}
for $\nabla_{v,w} \check \phi$, solving 
\be\label{nablaw}
0=\nabla_w \check \phi= -\frac{I(B_-+Iw-B_-v)}{\mu_0v} + w
=
\frac{w(v-K)}{v}-\frac{IB_-(1-v)}{\mu_0v} 
\ee
for $w= \frac{IB-(1-v)}{\mu_0(v-K)}$, and substituting
into $\nabla_v\check \phi$, we find after a brief
computation that, along this nullcline,
$d\check \phi/dv= \tilde f(v)$, hence the relative entropy $\check \phi$
is {\it decreasing} with respect to $v$
between rest points lying on the same side of $K$, again identifying
nodes $>K$ as repellors and nodes $<K$ as attractors for the flow
of the planar traveling-wave ODE.  (Recall that $\check \phi$ increases
along the flow, with rest points of the flow
corresponding to critical point of $\check \phi$.)
Finally, taking without loss of generality $B_-<0$,
note that, by \eqref{nablaw}, at $v=K$,
$\nabla_w\check \phi\equiv -\frac{IB_-(1-v)}{\mu_0v} >0$ for all $w$,
so that the limiting value of $\check \phi$ as $v\to K^+$ 
on the negative-$w$ nullcline branch for $v>K$ is less than
the limiting value of $\check \phi$ as $v\to K^-$ 
on the positive-$w$ nullcline branch for $v<K$, verifying
decrease with $j$ of $\phi(v_j)$ for all $j$ and completing the proof.
%
%
\end{proof}

\br\label{genp}
\textup{
Note that the above argument depends only on the general properties
of the pressure law $p$ of convexity, blowup at $0$ at rate at least $1/v$ 
and decay as $v\to +\infty$, and not on the specific form of a polytropic
gas law, hence our conclusions extend to general pressure laws of
this type.
}
\er

\br\label{comb}
\textup {
The parallel and nonparallel cases can be combined,
associating rest points to roots of the continuous function
$\hat f(v):=(v-K)^2\tilde f(v)$.
In all cases, there is a two-parameter bifurcation at
$K=1$, with three rest points collapsing at $v=K=1$.
}
\er

\br\label{gen2}
\textup{
Though we carried out our analyis for the planar system
arising through the choice $\sigma=\infty$ and
the restriction to two dimensions, 
our conclusions on the number and type of states
satisfying the Rankine--Hugoniot conditions
apply to the general three-dimensional isentropic case.
That is, through the relation \eqref{led2} we are able to make 
quite general conclusions on types of shocks by examination of
the simple {\it planar realization} of the two-dimensional
traveling-wave ODE.  Indeed, our final argument determining
the type of rest points for the planar system by looking
along nullclines of $\nabla_w\check \phi$ amounts to
a further reduction to the {\it scalar realization}
obtained by setting $\mu=0$ as well as $\sigma=\infty$.
}
\er

Factoring out the root $v=1$, we may examine instead roots of
\begin{equation}\label{f}
f(v):= \frac{\tilde f(v)}{v-1}= 
 \frac{p(v)-p(1)}{v-1} + 
\frac{J(1+v-2K)}{(v-K)^2}  + 1.
\end{equation}

\br\label{largermk}
\textup{
In describing the possible four rest point configurations in the
nonparallel case $J>0$,
we may (by rescaling if necessary) without loss of generality
consider only the case that $v_-=1$ is the largest rest point:
that is,
$\tilde f'(1)>0$ and there is a rest point $v_+<K<1$.
Fixing $v_+$ and $K$, and letting $J$ vary, we obtain by
\eqref{asoln} that
\be\label{rels}
0<a=c-dJ,\;
\hbox{\rm  where }\;
c= \Big(\frac{1-v_+}{v_+^{-\gamma} -1}\Big)>0
\; \hbox{\rm  and }\;
d= \Big(\frac{1-v_+}{v_+^{-\gamma} -1}\Big) \frac{(1+v_+-2K)}{(v_+-K)^2}. 
\ee
Thus, since $K<1$,
\be\label{largest}
0< \tilde f'(1)= -\gamma a+1-\frac{2J}{1-K}=
(1-\gamma c) +J\Big( \gamma d - \frac{2}{1-K}\Big)
\ee
implies either 
$ d>0$, in which case $J<c/d= \frac {(v_+-K)^2} {1+v_+-2K} $ 
by \eqref{rels}, or else
$d\le 0$, in which case
$ J <\frac{1-\gamma c}{  -\gamma d + 2/(1-K)}.  $
The same considerations hold whenever there exist Lax $1$-shocks,
or, without loss of generality (by rescaling the largest rest point
to value $v_4=1$) $K<1$.
That is, {\it it is sufficient to consider a bounded parameter range
$(a,J)$
in studying four rest point configurations or Lax $1$-shocks, 
for 
$K$ bounded away from $1$.}
This is important for numerical explorations, in which the parameter
range is necessarily finite.
}
\er

\subsection{Four rest-point configurations}\label{4rest}
To aid our later numerical investigations,
we give a simple description of the set of parameters $(J,K,a)$ for
which four-rest point configurations appear, and with them
the possibility of intermediate, overcompressive, and undercompressive shocks,
without loss of generality taking $K<1$ by rescaling if necessary
so that $v_-=1$ is the largest root of $\tilde f$.

\begin{proposition}\label{4desc}
For $0\le K<1$, the set of  $J\ge 0$ and $a>0$ for which
there exist four solutions of the 
Rankine--Hugoniot equations \eqref{eq:rh_mass}--\eqref{eq:rh_magfield} 
(equivalently, four rest points of traveling-wave ODE
\eqref{eq:profile1}--\eqref{eq:profile3} or \eqref{redode}),
except for the measure-zero set of values $a=a_*(J,K):=\frac{1-K-2J}{\gamma
(1-K)}$ for
which $\tilde f'(1)=0$,
consists of a connected set
\be\label{4graph}
\{(J,K)\in \CalR:=\CalR_1\cup \CalR_2, \,  0<a<A(J,K)\}
\ee
for some $A(J,K)> 0$, where
\be\label{Rs}
\CalR_1:=\Big\{ 0\le K\le \frac{1}{2}, \, 0\le J < \frac{K^2}{2K-1}\Big\}
\quad  \hbox{\rm and }\quad
\CalR_2:=\Big\{ \frac{1}{2}\le K\le 1, \, 0\le J\Big\}.
\ee
\end{proposition}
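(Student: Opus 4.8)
The plan is to collapse the four-rest-point count to a single scalar inequality on $\tilde f$ and then extract the admissible parameters by a monotonicity-and-limit argument in $a$; throughout I take the nonparallel case $J>0$, the face $J=0$ being governed by Proposition \ref{parprop}. By Proposition \ref{globalprop}, rest points are the roots of $\tilde f$ from \eqref{tildef}, which is convex on each of $(0,K)$ and $(K,\infty)$ with $\tilde f\to+\infty$ at $0,K,+\infty$, and four rest points occur exactly when there are two roots on each side of $K$. Since $\tilde f(1)=0$ identically and $K<1$, the value $v=1$ is always a root in $(K,\infty)$; convexity together with $\tilde f(1)=0$ forces exactly two roots in $(K,\infty)$ whenever $\tilde f'(1)\ne0$, i.e.\ off the curve $a=a_*(J,K)=\frac{1-K-2J}{\gamma(1-K)}$ (which is precisely the locus $\tilde f'(1)=0$). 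Hence, away from this measure-zero set, four rest points is equivalent to the single requirement $\min_{(0,K)}\tilde f<0$.

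First I would isolate the $a$-dependence by writing $\tilde f(v)=\tilde f_0(v)+a(v^{-\gamma}-1)$, where $\tilde f_0(v):=J\big(\tfrac{(1-K)^2}{(v-K)^2}-1\big)+(v-1)$. On $(0,K)\subset(0,1)$ one has $v^{-\gamma}-1>0$, so raising $a$ raises $\tilde f$ pointwise and the unique interior minimum over $(0,K)$ is strictly increasing in $a$. Moreover $\tilde f_0'(v)=1-2J(1-K)^2(v-K)^{-3}>0$ on $(0,K)$, so $\tilde f_0$ is increasing there and $\inf_{(0,K)}\tilde f_0=\tilde f_0(0^+)=\frac{J(1-2K)}{K^2}-1$. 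Because $\tilde f>\tilde f_0\ge\tilde f_0(0^+)$ for every $a>0$, while $\min_{(0,K)}\tilde f\to\tilde f_0(0^+)$ as $a\to0^+$, the inequality $\min_{(0,K)}\tilde f<0$ is solvable for some $a>0$ if and only if $\tilde f_0(0^+)<0$, that is $J(1-2K)<K^2$. This is automatic for $K\ge\tfrac12$ and becomes the stated bound on $J$ for $K<\tfrac12$, so the admissible base is exactly $\CalR=\CalR_1\cup\CalR_2$.

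For $(J,K)\in\CalR$, strict monotonicity in $a$ together with the limit $\min_{(0,K)}\tilde f\to\tilde f_0(0^+)<0$ yields a unique threshold $A(J,K)>0$, at which $\tilde f$ develops a double root in $(0,K)$ (equivalently $\min_{(0,K)}\tilde f=0$); four rest points then hold for all $0<a<A(J,K)$, with the single value $a=a_*$ removed. Connectedness follows by regarding the admissible set as the subgraph $\{(J,a):0<a<A(J,K)\}$ over the base $\CalR$: each fibre is the interval $(0,A(J,K))$, $A$ is continuous in $(J,K)$, and $\CalR$ is connected since $\CalR_1$ and $\CalR_2$ overlap along $K=\tfrac12$; hence the whole set is connected.

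The hard part will be the two analytic inputs of the second step: that the interior minimizer migrates to the left endpoint as $a\to0^+$, so that $\min_{(0,K)}\tilde f\to\tilde f_0(0^+)$ (a boundary-layer estimate controlling the term $av^{-\gamma}$ near $v=0$), and that $\tilde f_0$ is monotone, so that its infimum is the endpoint value and not an interior dip. Once these are secured, the threshold $A$, the region $\CalR$, and connectedness are immediate. The parallel case $J=0$ is excluded from this analysis because the singularity of $\tilde f$ at $v=K$ then disappears and the rest-point structure is instead that of Proposition \ref{parprop}; it appears only as the boundary face $J=0$ of $\CalR$.
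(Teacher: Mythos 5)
Your proof is correct and, at the skeleton level, runs along the same lines as the paper's: reduce the four--rest-point count to existence of two roots of $\tilde f$ in $(0,K)$, use pointwise monotonicity of $\tilde f$ in $a$ (from $v^{-\gamma}-1>0$ on $(0,1)$) to obtain an interval $(0,A(J,K))$ in $a$, and decide $A>0$ by a sign condition at the limit $a=0$, arriving at $J(1-2K)<K^2$ and connectedness of the resulting region. The one substantive difference is how the $a=0$ criterion is extracted: the paper multiplies $\tilde f|_{a=0}$ by $(v-K)^2/(1-v)$, which factors the problem into root-counting for the explicit quadratic $q(v)=-(v-K)^2+J(1+v-2K)$, with $q(K)=J(1-K)>0$ and $q'>J>0$ on $(0,K)$ forcing a root iff $q(0)=-K^2+J(1-2K)<0$; you instead show directly that $\tilde f_0'(v)=1-2J(1-K)^2(v-K)^{-3}>1$ on $(0,K)$, so $\inf_{(0,K)}\tilde f_0=\tilde f_0(0^+)=J(1-2K)/K^2-1$, which is the same criterion obtained by a slightly more elementary monotonicity computation. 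Note also that the step you flag as ``the hard part''--- convergence of $\min_{(0,K)}\tilde f$ to $\tilde f_0(0^+)$ as $a\to 0^+$ --- is not actually needed: if $\tilde f_0(v_0)<0$ at a single interior point $v_0$, then $\tilde f(v_0)<0$ once $a(v_0^{-\gamma}-1)<|\tilde f_0(v_0)|$, and since $\tilde f\to+\infty$ at both ends of $(0,K)$ for $a>0$ this already produces two roots, while the converse direction follows from $\tilde f>\tilde f_0>\tilde f_0(0^+)$ exactly as you say; this pointwise test-point evaluation is what the paper's one-line passage to the limiting parameters $(J,K,0)$ implicitly uses, so no boundary-layer estimate is required. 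Your subgraph argument for connectedness supplies detail the paper dismisses as ``easily seen''; the continuity of $A$ it invokes does follow from strict monotonicity of the minimum in $a$ together with its joint continuity in $(J,K,a)$, so this is sound. Finally, your condition $J(1-2K)<K^2$, i.e.\ $J<K^2/(1-2K)$ for $K<1/2$, agrees with the paper's proof and with Appendix C; the denominator $2K-1$ printed in the statement of $\CalR_1$ is evidently a sign typo for $1-2K$, so your version is the correct one.
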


\begin{proof}
As there is always a rest point with $v_-=1$, existence of four
rest points is equivalent (except on the measure zero set of parameters
for which degenerate roots appear), to
existence of a second rest point with $v\in (0,K)$, i.e., a
root of $\tilde f$.
Since $\tilde f$ is monotone increasing in $a$ for $0<v<K$,
this consists of an open interval $a\in (0,A(J,K))$,
with $A>0$ only if $\tilde f$ has a root in $(0,K)$
for the limiting values $(J,K,0)$.

Multiplying $\tilde f$ by $(v-K)^2/(1-v)$ reduces this
question to existence of a root $v\in (0,K)$ of the quadratic
$q(v)=-(v-K)^2 + J(1+v-2K)$,
$q'(v)= -2(v-K) + J$.
We readily compute that $q(K)= J(1-K)>0$ for $J>0$ and $q'(K)=J >0$
and that
$ q(0)= -K^2 + J(1-2K)$
and $q'(0)= 2K+J>0$, 
so that the only way there can be a root of $q$ on $0<v<K$ is if $q(0)<0$,
or
$J(1-2K)<K^2$.
The set of $(J,K)$ satisfying this condition
is easily seen to correspond to the connected set $\CalR_1\cup \CalR_2$.
\end{proof}

\br\label{Kg1}
\textup{
In the case $v>K>1$, $\tilde f$ is monotone decreasing with $a$,
and so we find that the set of parameters generating four
rest point configurations (ignoring the measure-zero set
corresponding to $f'(1)=0$) is, rather, of form $a>\tilde A(J,K)$
for arbitrary $J\ge 0$, $K\ge 1$.
The set of $(J,K)$ for which four point configurations appear
{\it for all $a>0$}, i.e., $\tilde A=0$,
is readily seen to be
$\CalR_3:=\{  K\ge 1 \; \hbox{\rm and } J> 4(K-1)\}$.
For, in this case, $q(K)=J(1-K)<0$ but $q'(K)=J>0$.
Meanwhile, $q(+\infty)<0$ as well, so that the only chance for
a root $v>K$ is that the maximum value of q be positive.
Solving $q'(v)=0$ for the critical point $v_{max}= J/2+K$,
we find that $q(v_{max})= J(J/4 + 1-K)$,
which is positive precisely for $J>4(K-1)$.
}

\textup{
Recall, for four rest point configurations, we may take without
loss of generality $K\le 1$.
}
\er

\subsection{Two-dimensional shock types}
Restricting to two dimensions, 
we find that shocks connecting rest points in decreasing order
of $v$ are of Lax $2$-type for both values $<K$, of
Lax $1$-type for both values $>K$.  Shocks connecting the
largest $v$ value to the smallest are overcompressive $1$-$2$
type, while shocks connecting the largest $v$-value $>K$ to the
largest $v$-value $<K$ are Lax $1$-type and shocks connecting
the smallest $v$-value $>K$ to the smallest $<K$ are Lax
$2$-type.
Shocks connecting the two middle (nonextremal) values of $k$
are undercompressive $2$-$1$ type.
In the terminology of the literature \cite{G,CS1,CS2,FS}, all
shocks bridging across the value $K$ are called {\it intermediate
shocks}; as shown above, these may in principle be of
Lax, overcompressive, or undercompressive type.

In our main parameter range (monatomic
or diatomic gas with standard viscosity ratio for
nonmagnetic gas),
only Lax and overcompressive
type appear to have profiles for the two-dimensional $\sigma=\infty$ case
considered here.  


\subsection{The three-dimensional case}\label{s:3dexist}
The results of Propositions \ref{rhprop}, \ref{parprop},
and \ref{globalprop} 
extend by rotation to the full, three-dimensional case to
yield the same basic $2$-$4$ rest point configuration,
with all rest points confined to a rotation of the planar case.
The single exception is in the parallel case $J=0$, for which
the data corresponding to $v_-=1$, and to the attractor $v_*<K$
if it occurs, is rotation invariant; in this case, the intermediate
rest points extend by rotation to yield a circle of rest points.
Associated intermediate shocks are called degenerate type \cite{FS};
in case $J\ne 0$, they are called nondegenerate type.
The double-cone configuration arising from rotation of the four
rest point parallel configuration, and associated interesting
bifurcations, are discussed in \cite{FS}.

Considered as waves of the full three-dimensional system,
shocks connecting rest points in decreasing order
of $v$ are of Lax $3$-type for both values $<K$, of
Lax $1$-type for both values $>K$.  Shocks connecting the
largest $v$ value to the smallest are overcompressive $1$-$3$
type, while shocks connecting the largest $v$-value $>K$ to the
largest $v$-value $<K$ are overcompressive $1$-$2$ type and shocks connecting
the smallest $v$-value $>K$ to the smallest $<K$ are overcompressive
$2$-$3$ type.
Shocks connecting the two middle (nonextremal) values of $k$,
undercompressive when considered as two-dimensional waves,
are in three dimensions of Lax $2$-type, or {\it Alfven waves}.
Thus, in three dimensions, only Lax or overcompressive shocks appear.
That is, undercompressivity is an artifact of the restriction
to two dimensions.

In our main parameter range (monatomic
or diatomic gas with standard viscosity ratio for
nonmagnetic gas),
Lax $2$-shocks do not appear to
have profiles when resricted to two dimensions, for
the $\sigma=\infty$ case considered here.  
In the full, three-dimensional $\sigma=\infty$ case, 
therefore, Lax $2$-shock profiles if they exist 
must be {\it nonplanar} in the sense that they leave the plane of 
the rest point configuration.
Likewise, overcompressive shocks, besides the planar connections
studied here, admit also nonplanar connections when considered
in the full, three-dimensional setting.  We shall not study
such genuinely three-dimensional profiles here, restricting
attention to planar profiles that can be studied within the
two-dimensional framework.
For discussion of fully three-dimensional phenomena in
the related nonisentropic case, see \cite{G,CS1,CS2,FS}.

\begin{figure}[t]
\includegraphics[width=7.5cm]{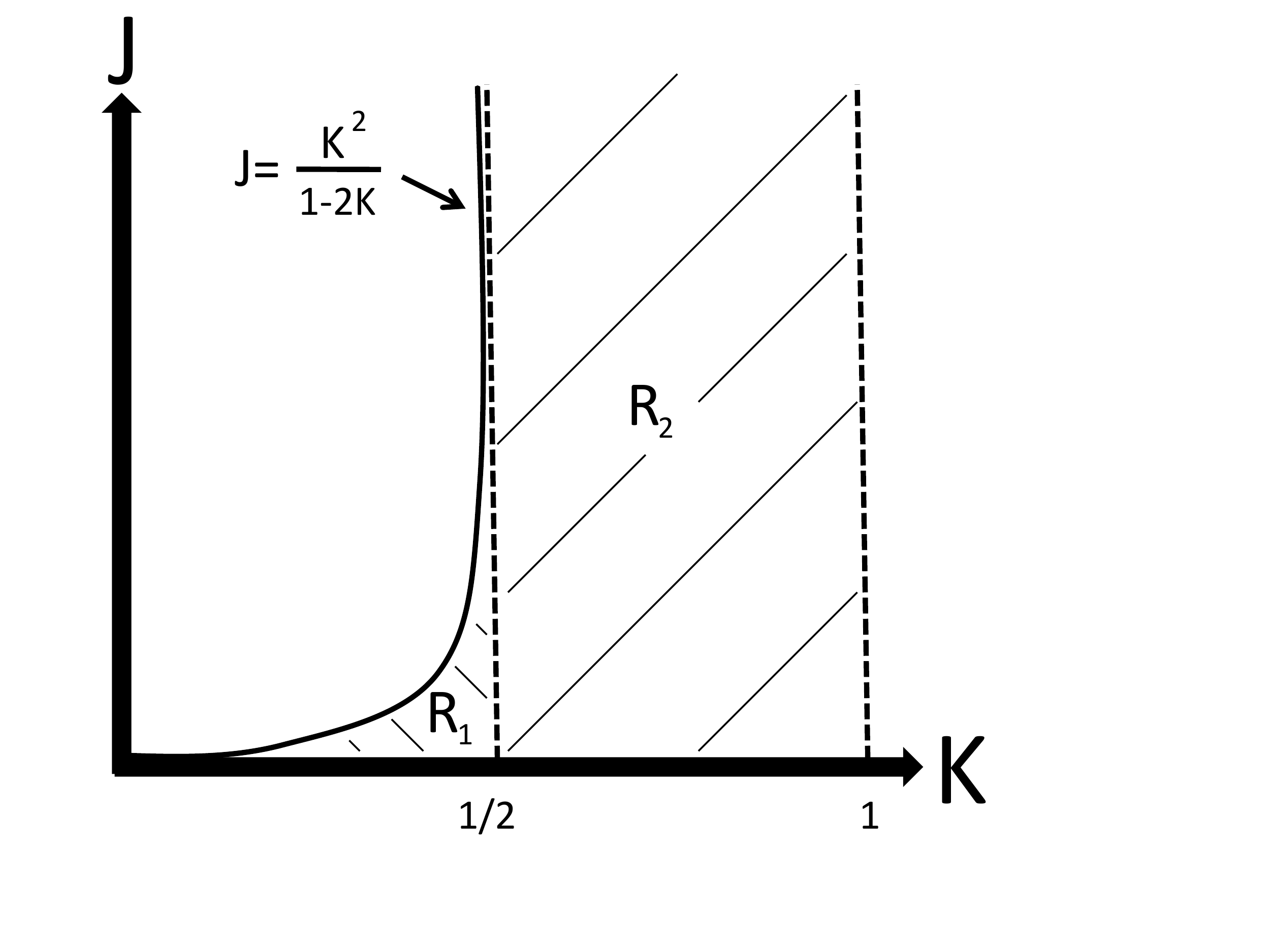}
\caption{Region of four rest point configurations, $a=0$,
in $(K,J)$ space.}
\label{regions}
\end{figure}

\section{Existence of profiles}\label{profexist}

In this section, we describe the possible 
viscous shock profile connections for the various
rest point configurations described in Section \ref{s:RH}.
Typical phase portraits (determined numerically)
for the two variable system \eqref{redode} with $\sigma=\infty$ 
are graphed in Figures \ref{phaseA}, \ref{phaseB}, and \ref{phaseC}. 

\subsection{The parallel case, $J=0$}\label{s:Lax}

\begin{proposition}\label{parexist}
In the parallel case $J=0$, for $0<a\ne 1$ and $0\le K\ne 1$,
and $\sigma=\infty$, assuming without loss of generality
that $1$ is the largest rest point of the traveling-wave equation,
there is always a profile connecting $v_-=1$ and the
unique parallel rest point $v_*\ne 1$, $w_*=0$, $B_{2*}=0$,
which is of Lax $1$-type if $K<v_*$, Lax $3$-type if $K>1$,
and overcompressive type if $v_*<K<1$.
In the latter case, there are Lax connections from repellor $v_-=1$ to
the additional saddle-type rest points
$$
v=K, \quad u=K-1,\quad  B=\pm \sqrt{2\mu_o (p(K)-p(1)+ K-1)}, \quad w=KB/I,
$$
and from these saddle-type rest points to the attractor $v_*$, whose
orbits bound a four-sided region foliated by overcompressive connections.
\end{proposition}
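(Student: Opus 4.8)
The plan is to exploit the generalized gradient structure of the planar system \eqref{redode} together with the fact that in the parallel case $J=0$ we have $B_-=0$, so that the axis $\{w=0\}$ is invariant. Setting $B_-=0$ in \eqref{redode}, \eqref{ncheck}, \eqref{sigphi}, the system takes the gradient form $(2\mu+\eta)v'=v\,\partial_v\hat\phi$ and $\mu w'=v\,\partial_w\hat\phi$, where $\hat\phi(v,w)=\Phi(v)+\tfrac{w^2}{2}(1-K/v)$ with $\Phi(v)=\int_1^v p(z)\,dz-p(1)(v-1)+\tfrac12(v-1)^2$ satisfying $\Phi'=g$, $g(v):=p(v)-p(1)+v-1$. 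Since $v>0$, differentiating gives $\tfrac{d}{dx}\hat\phi=\tfrac{v}{2\mu+\eta}(\partial_v\hat\phi)^2+\tfrac{v}{\mu}(\partial_w\hat\phi)^2\ge 0$, strictly off rest points, so $\hat\phi$ is a strict Lyapunov function; this rules out periodic and homoclinic orbits, and via Poincar\'e--Bendixson forces every bounded orbit to limit on rest points. The $w$-equation reduces to $\mu w'=w(v-K)$, exhibiting $\{w=0\}$ as invariant.

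First I would produce the central connection. On $\{w=0\}$ the flow is the scalar equation $(2\mu+\eta)v'=v\,g(v)$; since $g$ is convex with roots $v_*$ and $1$, it is strictly negative between them, so the segment joining $(1,0)$ and $(v_*,0)$ is a monotone heteroclinic orbit, the asserted profile, present in all three cases. Its type then follows from the rest-point classification of Proposition \ref{parprop} via the correspondence recalled in Section \ref{shocktype}: a repellor$\to$saddle or saddle$\to$attractor connection (when $K\notin(v_*,1)$) is of saddle--node, hence Lax, type, with the precise family index read off from \eqref{led2} according to whether $K<v_*$ or $K>1$; a repellor$\to$attractor connection (when $v_*<K<1$) is overcompressive.

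Next, in the overcompressive case $v_*<K<1$, I treat the four Lax connections through the two saddles $S_\pm$ at $v=K$. The key observation is that the coefficient $(1-K/v)$ of the $w$-term in $\hat\phi$ vanishes at $v=K$, so $\hat\phi(S_\pm)=\Phi(K)$; since $\Phi'=g<0$ on $(v_*,1)\ni K$, this yields the ordering $\hat\phi(R)=\Phi(1)<\Phi(K)=\hat\phi(S_\pm)<\Phi(v_*)=\hat\phi(A)$, where $R=(1,0)$ and $A=(v_*,0)$. A one-line linearization at $S_\pm$, using that the Jacobian entry $\partial_w(w')=(v-K)/\mu$ vanishes at $v=K$, confirms $\det<0$ (saddle) and gives an unstable eigendirection with $\delta v\,\delta w>0$ and a stable one with $\delta v\,\delta w<0$. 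Thus $W^u(S_+)$ has a branch entering $\{v<K\}$ toward $A$ and $W^s(S_+)$ a branch in $\{v>K\}$ issuing from $R$ (symmetrically for $S_-$); the Lyapunov ordering forces the forward limit of the former to be $A$ and the backward limit of the latter to be $R$, producing the four Lax connections $R\to S_\pm$ and $S_\pm\to A$.

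Finally, the four orbits together with $R,S_\pm,A$ bound a compact quadrilateral region $\mathcal{D}$ (upper path $R\to S_+\to A$, lower path $R\to S_-\to A$); being built from orbits and rest points, $\partial\mathcal{D}$ is invariant, $\mathcal{D}$ contains no interior rest point, and $W^{s,u}(S_\pm)\cap\mathcal{D}^{\circ}=\emptyset$, so Poincar\'e--Bendixson plus the gradient property give that every interior orbit runs from $R$ to $A$: these overcompressive connections foliate $\mathcal{D}$, a one-parameter family modulo translation consistent with $\ell=2$. The main obstacle is confinement, i.e.\ ruling out that a separatrix (or the above limits) escapes to the boundary of phase space. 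The Lyapunov function does most of the work: forward escape requires $\hat\phi\to+\infty$ (only $v\to\infty$ or $|w|\to\infty$ with $v>K$) while backward escape requires $\hat\phi\to-\infty$ (only $v\to0^+$ or $|w|\to\infty$ with $v<K$), so the sole genuine risk is a separatrix drifting across $v=K$ and running off to large $v$. I would close this by a short sign analysis of the nullclines, noting that the $v$-nullcline $\{g(v)=-I^2w^2/2\mu_0v^2\}$ is a compact oval through $R,A,S_\pm$ contained in $\{v_*\le v\le 1\}$ while the $w$-nullcline is $\{w=0\}\cup\{v=K\}$, yielding a bounded invariant neighborhood trapping the relevant separatrices; this is the one genuinely technical step.
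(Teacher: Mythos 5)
Your proposal is correct, and it reorganizes the same planar geometry around a different engine than the paper's proof. The paper argues bare-handedly: at $J=0$ the system \eqref{redode} reduces to $(2\mu+\eta)v'=vh(v)+Kw^2/2v$, $\mu w'=(v-K)w$ with $h(v)=(v-1)+(p-p_-)$; the central connection is produced on the invariant axis $\{w=0\}$ exactly as you do; and then everything rests on the lens-shaped region $\CalR$ bounded by the $v'$-nullclines $w=\pm v\sqrt{-2\mu_0 h(v)}/I$ --- your ``compact oval'' through $R$, $A$, $S_\pm$ --- inside which $v'<0$. Since $\sgn w'=\pm\sgn w$ for $v\gtrless K$, the halves $\CalR\cap\{v\gtrless K\}$ are invariant in backward, respectively forward, $x$; the saddle separatrices and all orbits launched from the segment $\{v=K\}$ between the saddles are thereby trapped, and monotonicity of $v$ inside $\CalR$ forces convergence to the rest points, with no Lyapunov function or Poincar\'e--Bendixson needed. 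You instead let the generalized gradient structure of Section \ref{prelim} do the dynamical work: the strict Lyapunov function $\hat\phi$ excludes periodic and homoclinic behavior, the ordering $\hat\phi(R)=\Phi(1)<\Phi(K)=\hat\phi(S_\pm)<\Phi(v_*)=\hat\phi(A)$ (a clean observation the paper leaves implicit, consistent with Proposition \ref{globalprop}) identifies the direction of each connection, and the explicit saddle linearization pins the separatrix sectors for the foliation argument. What your route buys is exactly what the paper gestures at in Remark \ref{Gilbargrmk}: a relative-entropy argument of this shape is not intrinsically planar and could survive finite $\sigma$; what the paper's route buys is brevity and monotonicity of the connecting profiles as a free byproduct. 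Notably, both proofs hinge on the identical confinement ingredient --- the nullcline oval --- so the difference is in organization, not in the key geometric fact.

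Two points in your sketch deserve tightening, neither fatal. First, your escape dichotomy (``forward escape requires $\hat\phi\to+\infty$, backward requires $\hat\phi\to-\infty$'') is not literally forced: an escaping orbit need only leave compact sets, and along a putative excursion with $|w|\to\infty$, $v\to K$ the term $\tfrac{w^2}{2}(1-K/v)$ degenerates in sign, so the Lyapunov function alone does not close confinement --- as you yourself acknowledge by deferring to the nullcline analysis. Second, the full lens is \emph{not} invariant in a single time direction; only the halves $\CalR\cap\{v<K\}$ (forward) and $\CalR\cap\{v>K\}$ (backward) are, so your phrase ``bounded invariant neighborhood'' should be stated in this one-sided form, after which your Poincar\'e--Bendixson endgame (boundary of the quadrilateral made of orbits, no interior rest points, stable/unstable separatrix branches of $S_\pm$ excluded from the interior) goes through as written.
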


\begin{proof}
In the parallel case, \eqref{redode} reduces to
\ba\label{parredode}
(2\mu+\eta) v'&=vh(v)
+\frac{Jw^2}{2 v},\\
\mu w'&=(v-K) w,
\ea
where $h(v):=(v-1)+ (p-p_-)$ is convex and vanishing at $v_*$, $1$,
hence negative for $v\in (v_*,1)$.
Setting $w\equiv 0$, we find that there is a monotone decreasing
solution connecting $v_-=1$ to $v_*$, which has the type described
by the results of Proposition \ref{parprop}.

As $h<0$, the nullclines $w=\pm \sqrt{-v^2h(v)/J}$ for $v'$ are
well-defined for $v\in (v_*,1)$, bounding a lens-shaped set
$\CalR$
between $v_*$ and $1$, passing through the saddle-type
rest points at $v=K$ and pinching to a single point at $v=1,v_*$,
within which $v'<0$.
Noting that $\sgn w'=\pm \sgn w$ for $v\gtrless K$, we find
that this region is invariant in backward (resp. forward) $x$,
whence, starting at the saddles and integrating in backward (resp. forward)
$x$ along the stable (resp. unstable) manifold, we find that the
orbit remains for all $x$ in $\CalR$, hence must connect to
$1$ (resp. $v_*$), verifying existence of the bounding Lax-type connections.
Starting at any point $(K,w)$ lying on the open interval between the
two saddles and integrating in both forward and backward $x$, we likewise
find that the orbits are all trapped in $\CalR$ for all $x$, so generate
a one-parameter family of overcompressive connections filling up $\CalR$.
See Figure \ref{paranull}.
\end{proof}

\begin{figure}[t]
\includegraphics[width=7.5cm]{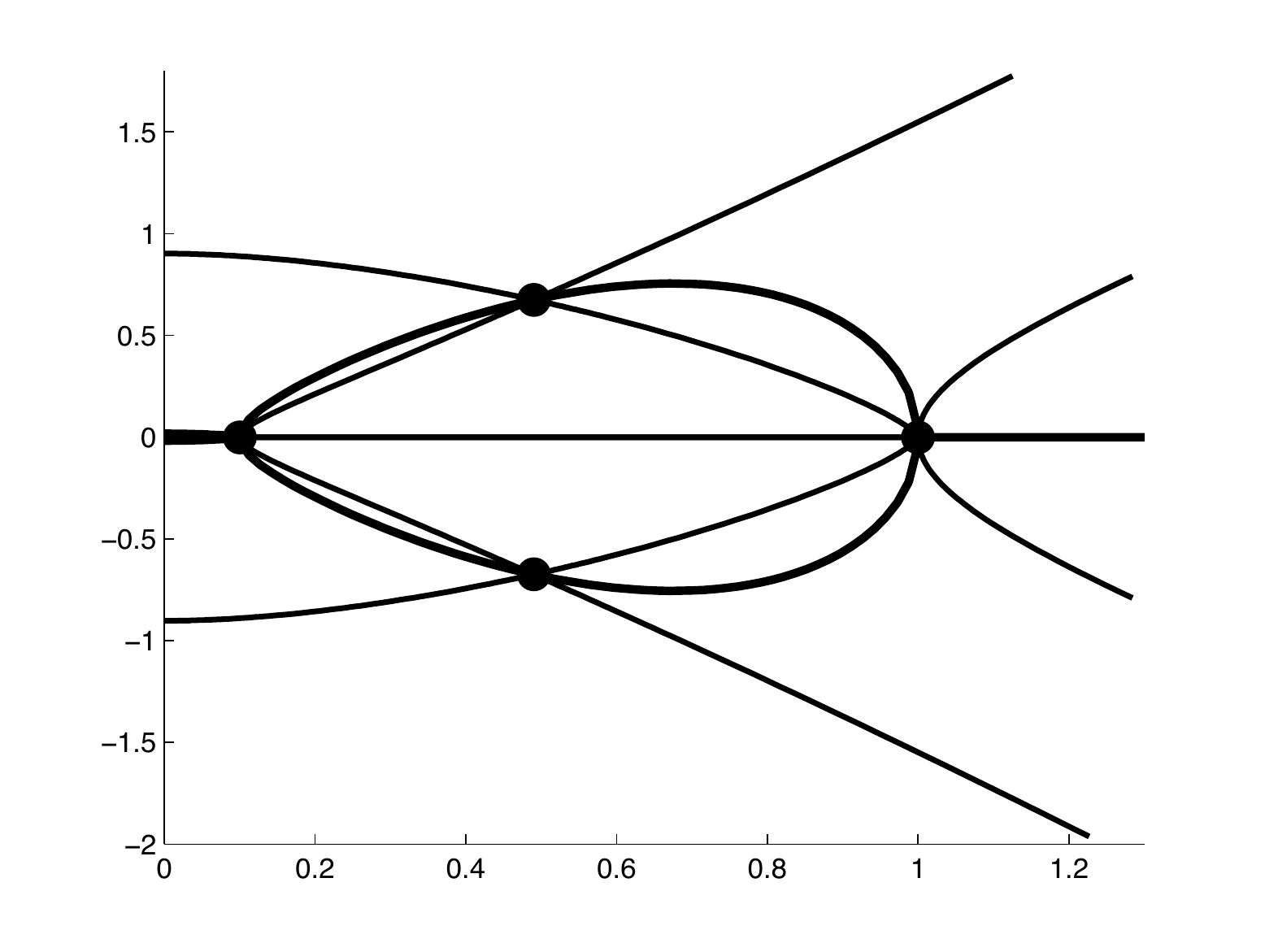}
\caption{Nullclines and phase portrait for typical parallel case,
parameters
 $v_+=0.1$, $I=0.7$, $B_+=0$, and $\mu=\tau=1$.}
\label{paranull}
\end{figure}

\subsection{Existence of Lax-type profiles, $J>0$}\label{s:Laxexist}

\begin{proposition}\label{Laxexist}
In the nonparallel case $J>0$, for $a>0$ and $0\le K\ne 1$, 
with $\sigma=\infty$,
rest points $v_i<v_j$ of traveling-wave ODE \eqref{redode}
lying on the same side of $K$ always admit a Lax-type profile,
which, moreover, is monotone in both $\hat v$ and $\hat w$.
\end{proposition}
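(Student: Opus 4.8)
The plan is to exploit the generalized gradient-flow structure of \eqref{redode} together with a trapping-region argument in the phase plane. By Proposition \ref{globalprop}, the two rest points on a fixed side of $K$ form a node--saddle pair, ordered either $v_1<v_2<K$ (attractor $v_1$, saddle $v_2$) or $K<v_3<v_4$ (saddle $v_3$, repellor $v_4$), and the relative entropy satisfies $\check\phi(v_1)>\check\phi(v_2)$, respectively $\check\phi(v_3)>\check\phi(v_4)$. Since $\check\phi$ is strictly increasing along trajectories of \eqref{redode} (see \eqref{redflow} and the discussion following it), the sought Lax connection must run from the saddle to the node in the $v<K$ case and from the repellor to the saddle in the $v>K$ case; in either case $\hat v$ decreases along the orbit. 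I would first record that this gradient structure already excludes periodic and homoclinic orbits, since $\check\phi$ cannot return to a value it has already attained.

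Next I would carry out the nullcline analysis. The $w$-nullcline $N_w=\{w'=0\}$ is the curve $w=\frac{IB_-(1-v)}{\mu_0(v-K)}$, monotone on each side of the asymptote $v=K$, while the $v$-nullcline $N_v=\{v'=0\}$, obtained by zeroing the right-hand side of the first equation of \eqref{redode}, meets $N_w$ exactly at the two rest points on that side of $K$ (the consecutive roots of $\tilde f$ of \eqref{tildef}). The key sign computations are: from the second equation of \eqref{redode}, $\partial_w w'=(v-K)/\mu$ has constant sign on each side of $K$, so $N_w$ is crossed monotonically in $w$; and along $N_w$ one has $d\check\phi/dv=\tilde f(v)$ (proof of Proposition \ref{globalprop}), which keeps one sign strictly between the two rest points because they are consecutive roots of the function $\tilde f$, convex on each side of $K$. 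These pin down the signs of $(v',w')$ in each of the four regions that $N_v$ and $N_w$ cut out near the rest-point pair.

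With the signs in hand I would construct the connecting orbit. Following the one-dimensional unstable manifold of the saddle (for $v<K$), respectively its stable manifold integrated backward in $x$ (for $v>K$), I would show that the branch heading toward the node enters the lens-shaped region $\mathcal{R}$ bounded by the arcs of $N_v$ and $N_w$ joining the two rest points, on whose interior $v'<0$ and $w'$ has a fixed sign. This lens is invariant in the relevant time direction: trajectories cannot exit across $N_v$ or $N_w$ by the sign analysis above, and cannot escape toward the asymptote $v=K$ nor toward $v\to0,\infty$ because $v'<0$ confines $v$ to the bounded interval between the rest points. Being trapped in $\overline{\mathcal{R}}$, which contains no other rest points and (by the gradient structure, or simply by $v'<0$) no closed orbits, the branch has $\omega$-limit (resp. $\alpha$-limit) equal to the node. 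Since $\mathcal{R}$ lies off both nullclines in its interior, $\hat v$ and $\hat w$ are each monotone along the connection, giving the final assertion.

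I expect the main obstacle to be the trapping-region construction: verifying that the two nullcline arcs between the rest points genuinely bound a region invariant in the correct time direction, controlling the behavior near the blowup of $N_w$ at $v=K$, and ruling out escape of the saddle trajectory to $v\to0$ or $v\to\infty$. Treating the $v<K$ and $v>K$ cases uniformly—where the node/saddle roles and the orientation of $N_w$ are interchanged—will also require care, though the gradient structure and the fixed sign of $\tilde f$ between consecutive roots should render the two cases parallel.
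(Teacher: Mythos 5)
Your proposal is correct and takes essentially the same route as the paper's proof: both build the lens-shaped trapping region bounded by the arcs of the $v'=0$ and $w'=0$ nullclines between the two rest points, follow the saddle's stable (resp.\ unstable) manifold backward (resp.\ forward) in $x$ according to whether the pair lies above or below $K$, and read off monotonicity of $\hat v$ and $\hat w$ from the fixed signs of $(v',w')$ in the interior of the lens. Your extra appeal to the gradient structure (to orient the connection via $\check\phi$ and exclude closed orbits) is a harmless supplement the paper does not need, since the strict sign of $v'$ inside the lens already rules out recurrence.
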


\begin{proof}
Without loss of generality, let the rest points
be $v_+<1$ and $v_-=1$. Rewriting \eqref{redode} as
\ba\label{rredode}
(2\mu+\eta) v'&=v\tilde h(v) +\frac{ (B_-+Iw)^2 }{2\mu_0 v} ,\\
\mu w'&=(v-K) w-\frac{IB_-(1-v)}{\mu_0},
\ea
where $\tilde h(v):= p(v)-p_- + v-1 -J$ is convex and
(since $v\tilde h(v)+ \frac{ (B_-+Iw)^2 }{2\mu_0 v} =0$)
negative at $v=v_+,1$, hence negative on $(v_+,1)$,
we find that the nullclines 
$Iw= -B_- \pm \sqrt{-2\mu_0 v^2h(v)}$ for $v'$ 
for $v'$ are well-defined for $v\in (v_+,1)$.
Likewise, the nullclines $w=\frac{IB_-(1-v)}{\mu_0(v-K)}$
for $w'$ are well-defined for $v$ on either side of $K$,
forming two disconnected branches asymptotic to the
line $v=K$.


{\it Case $K<v_+$.} In this case we find that the rest points
$v_+$, $1$ must lie on the intersection of the lower branch of the nullcline
$v'=0$, and the righthand ($>K$) branch of the nullcline $w'=0$,
and these nullcline branches have no other intersection (else there
would be a third rest point for $v>K$, impossible by Proposition 
\ref{globalprop}).
Looking at asymptotics, we find that the nullcline $w'=0$ must lie above 
the nullcline $v'=0$ for $v\in t(v_+,1)$, with the two curves forming
a lens-shaped region $\tilde \CalR$ between $v_+$ and $1$, within which
$v'<0$ and $w'<0$.
Looking along the boundaries, we find that the vector field
$(v',w')$ points out of $\tilde \CalR$, so that $\tilde \CalR$ is
invariant in backwards $x$.
Thus, integrating backward in $x$ from $v=v_+$ along the stable manifold,
we find that there exists a connection to $v=v_-=1$, which
is monotone decreasing in $\hat v$ and $\hat w$.

{\it Case $K>1$.} A symmetric argument yields existence in case $K>1$,
again with $\hat v$ monotone decreasing and $\hat w$ monotone
increasing, this time via invariance in forward $x$.  
See Figure \ref{nullclines}.
\end{proof}

\begin{figure}[t]
\begin{center}
$\begin{array}{lr}
\includegraphics[width=7.5cm]{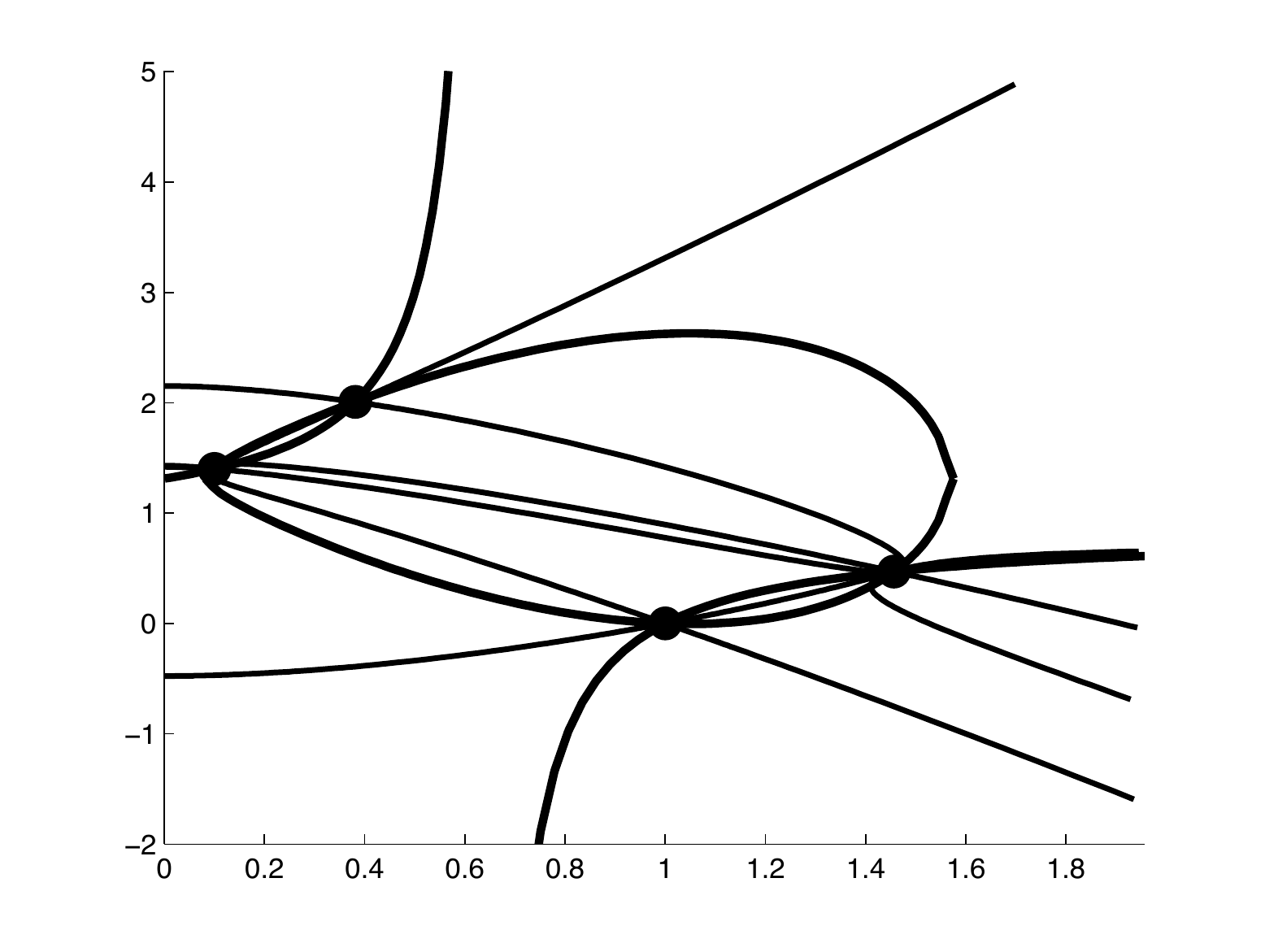}(a) & \includegraphics[width=7.5cm]{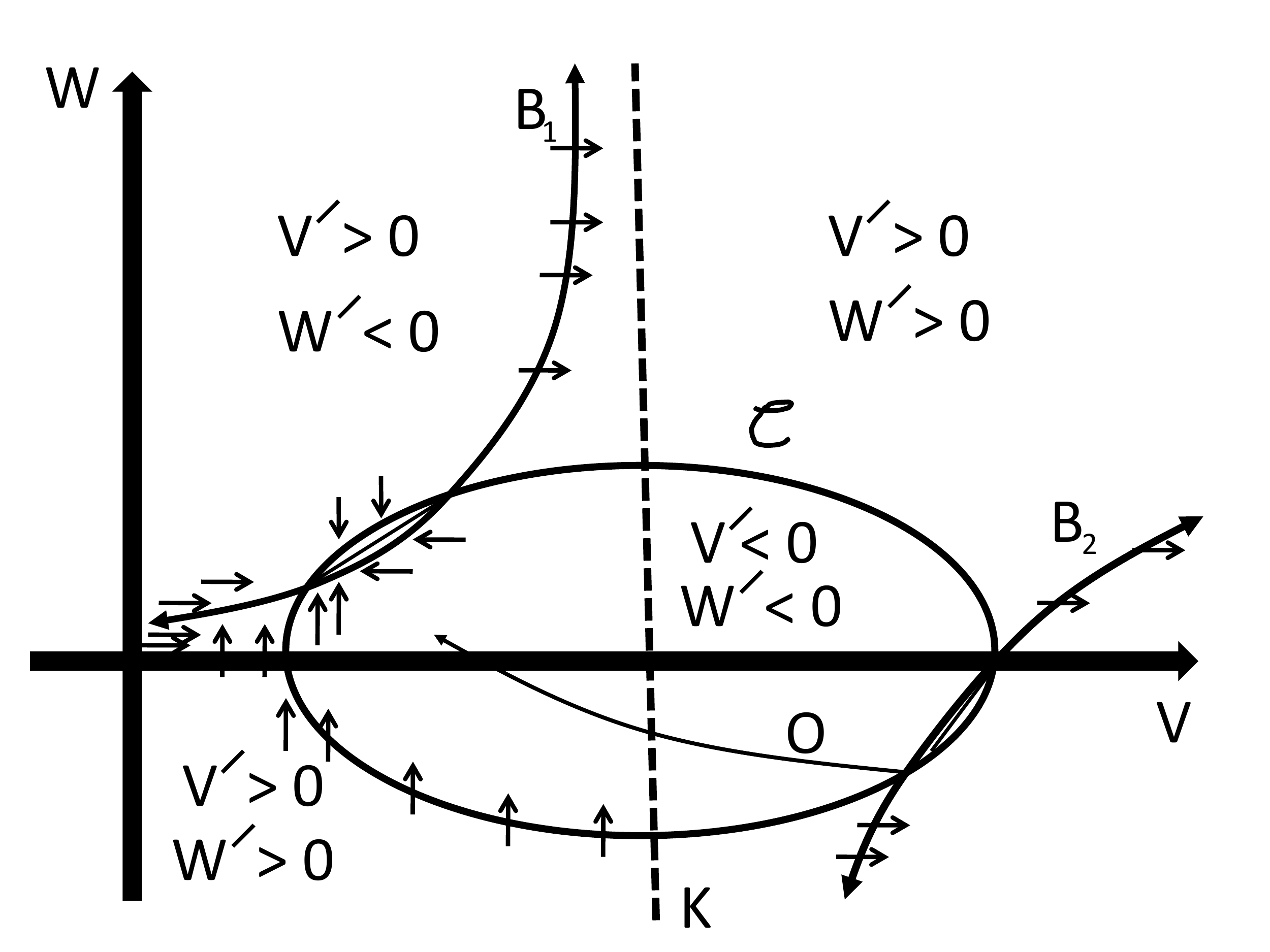}(b)
\end{array}$
\end{center}
\caption{Nullclines and phase portrait for typical
four rest point configuration, nonparallel case;
parameters $v_+=0.1$, $I=0.8$, $B_+=0.7$ and $\mu=\tau=1$.}
\label{nullclines}
\end{figure}

\br\label{Gilbargrmk}
\textup{
The argument above may be recognized as the same one
used to prove existence of nonisentropic gas-dynamical
profiles in \cite{Gi}.
It should be possible to obtain this result alternatively
by a relative entropy argument as in \cite{G} for the nonisentropic
case, showing in case $K<v_+$ that the level set
of $\phi$ through $v_+$ encloses $v_-=1$, yielding existence by
a Lyapunov-function argument in backward $x$; this would apply
also for $\sigma$ finite.
}
\er

\subsection{Existence of intermediate shock profiles, $J>0$} \label{s:sing}

\begin{proposition}\label{intexist}
Set $r:=\mu/\tau$.
In the nonparallel case $J>0$, with $\sigma=\infty$,
for each fixed $(a,J,K)$ with $a>0$, $J>0$, and $0\le K\ne 1$ for which
there exist four rest points $v_1<v_2<K<v_3<v_4$ of 
traveling-wave ODE \eqref{redode},
there exists a value $r_*=r_*(J,K,a)>0$ such that: (i) for $r<r_*$, there exist
no intermediate shock profiles (i.e., the only connections
are regular Lax profiles between $v_2$ and $v_1$ and $v_4$ and
$v_3$ as described in Proposition \ref{Laxexist});
(ii) for $r=r_*$, there exists an undercompressive profile connecting
$v_3$ to $v_2$, monotone decreasing in $v$ and increasing in $w$,
and no other intermediate shock profiles;
(iii) for $r>r_*$, there exist intermediate Lax connections from $v_3$ to $v_1$
and $v_4$ to $v_2$, in general not monotone in $v$ or
else not montone in $w$, and a one-parameter family of overcompressive
profiles from $v_4$ to $v_1$, in general not monotone in 
$v$ or $w$, 
with no other intermediate shock profiles.
\end{proposition}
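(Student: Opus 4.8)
The plan is to treat the viscosity ratio $r=\mu/\tau$ (with $\tau=2\mu+\eta$ the coefficient in the first line of \eqref{rredode}) as a single bifurcation parameter and to read the whole trichotomy off the motion of two separatrices. Rescaling the independent variable $x$ by $\tau$ puts \eqref{rredode} in standard slow--fast form $\dot v=F(v,w)$, $r\,\dot w=G(v,w)$, where $F(v,w):=v\tilde h(v)+\frac{(B_-+Iw)^2}{2\mu_0 v}$ and $G(v,w):=(v-K)w-\frac{IB_-(1-v)}{\mu_0}$ are the right-hand sides of \eqref{rredode} (with $\tilde h$ as in the proof of Proposition \ref{Laxexist}); only $r$ enters, consistent with the scaling invariance noted after \eqref{etaform}. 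By Proposition \ref{globalprop} the rest points $v_1<v_2<K<v_3<v_4$ are an attractor, a saddle, a saddle and a repellor, and by the dictionary of Section \ref{shocktype} the intermediate objects sought are a saddle--saddle connection $v_3\to v_2$ (undercompressive), saddle--node connections $v_3\to v_1$ and $v_4\to v_2$ (intermediate Lax), and a repellor--attractor family $v_4\to v_1$ (overcompressive). The crucial observation is that all are controlled by one scalar quantity: the signed separation $d(r)$, measured along a fixed transversal to $v=K$, between the branch of the unstable separatrix $W^u(v_3)$ entering $\{v<K\}$ and the branch of the stable separatrix $W^s(v_2)$ arriving from $\{v>K\}$. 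Since a saddle--saddle connection is codimension one, the whole statement reduces to showing $d(r)=0$ for exactly one $r=r_*$, with $d$ changing sign there.

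First I would fix the sign of $d$ at the two ends of the $r$-interval by geometric singular perturbation theory. As $r\to0^+$ the critical manifold is $\{G=0\}$, the two branches of the $w$-nullcline $w=\frac{IB_-(1-v)}{\mu_0(v-K)}$, which are normally hyperbolic away from, and separated by, the asymptote $v=K$; the reduced flow on each branch is the scalar flow governed by $\tilde f$ (see the proof of Proposition \ref{globalprop}), which connects the two same-side rest points and cannot cross $v=K$. Hence for small $r$ only the same-side Lax profiles of Proposition \ref{Laxexist} survive, $W^u(v_3)$ stays strictly on the $\{v>K\}$ side of $W^s(v_2)$, and $d(r)$ has one fixed sign: this is case (i). As $r\to\infty$, rescaling time to make $v$ fast shows the critical manifold is instead the $v$-nullcline $\{F=0\}$, i.e. the curve $Iw=-B_-\pm\sqrt{-2\mu_0v^2\tilde h(v)}$, which, being the boundary of the lens region of Proposition \ref{Laxexist}, spans $v=K$ and carries all four rest points; the slow flow along it now drives $W^u(v_3)$ across $v=K$ into the basin of the attractor $v_1$, so $d$ takes the opposite sign and case (iii) appears. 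Continuity of $d(r)$ then produces at least one $r_*$ carrying a saddle--saddle connection.

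The main obstacle, and the crux of the argument, is to show that $r_*$ is unique and the transition monotone. Here I would invoke the theory of rotated vector fields: since $F$ is independent of $r$, one computes $F\,\partial_r(G/r)-(G/r)\,\partial_r F=-FG/r^2$, which has a fixed sign wherever $FG$ does. The key point is that the nullclines, and hence the lens region $\CalR$ of Proposition \ref{Laxexist} in which $F<0$ and $G>0$, are independent of $r$; the saddle--saddle orbit $\Gamma$ at $r_*$ joins $v_3$ to $v_2$ through the interior of $\CalR$, so along it $FG<0$ with a fixed sign for every $r$. Thus $\{(F,G/r)\}_{r>0}$ is a rotated family on $\CalR$, and by Duff's theorem the separatrices turn monotonically with $r$ and can coincide at most once; equivalently, the Melnikov integral of $-FG/r^2$ (against the usual integrating factor) along $\Gamma$ is nonzero, so the crossing at $r_*$ is transversal. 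This pins down a single $r_*$ with $d(r)<0$ for $r<r_*$ and $d(r)>0$ for $r>r_*$, giving the trichotomy (i)--(iii). The delicate step is precisely the control of the separatrices up to the boundary of $\CalR$, so that the rotated-field monotonicity governs $d(r)$ over the entire range of $r$ rather than only near $r_*$.

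It remains to record the qualitative details. At $r=r_*$ the orbit $\Gamma$ lies in $\CalR$, where $v'<0$ and $w'>0$, so the undercompressive profile $v_3\to v_2$ is monotone decreasing in $v$ and increasing in $w$. For $r>r_*$ the crossing sends $W^u(v_3)$ to the attractor $v_1$ (intermediate Lax $v_3\to v_1$), and the dual backward tracking of $W^s(v_2)$, governed by the same rotated-field monotonicity, makes it emanate from the repellor $v_4$ (intermediate Lax $v_4\to v_2$); outside $\CalR$ these connections may fail to be monotone in $v$ or $w$, as asserted. These four Lax connections, together with the same-side connections $v_2\to v_1$ and $v_4\to v_3$, bound a quadrilateral between the source $v_4$ and the sink $v_1$ that is swept by a one-parameter family of orbits, the overcompressive profiles $v_4\to v_1$, exactly as in the repellor--attractor picture of Proposition \ref{parexist}. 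Finally, the compressivity labels follow from the connection-type relation \eqref{led2} of Section \ref{shocktype}.
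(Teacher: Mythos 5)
Your architecture (a scalar separation function $d(r)$, endpoint signs from the slow--fast limits $r\to 0^+$ and $r\to\infty$, transversal breaking at zeros) is close in spirit to what the paper sketches in its ``singular perturbation analysis'' subsection, which the paper deliberately keeps \emph{outside} the proof, remarking only that it could be made rigorous as in \cite{FS}. The paper's actual proof is a global trapping argument, and that is exactly what your proposal is missing. Concretely: your appeal to Duff's rotated-vector-field theorem is not valid as stated. The rotation quantity you compute, $-FG/r^2$, changes sign across the nullclines $\CalC$ and $\CalB_1$, so $\{(F,G/r)\}_{r>0}$ is a rotated family only on subregions where $FG$ has one sign --- and in case (iii) the separatrices demonstrably leave the lens region $\CalR$ (they are non-monotone, as the proposition itself asserts), so the hypothesis fails precisely where you need the monotone-turning conclusion. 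What survives is the local Melnikov statement: a saddle--saddle connection \emph{lying inside the fixed-sign region} breaks transversally with a fixed sign of $d'$. But to convert this into uniqueness of $r_*$ you must first prove (a) that \emph{every} $v_3\to v_2$ connection is confined to the fixed-sign region (you assert this for $\Gamma$ without proof), and (b) that $d(r)$ is defined and continuous on all of $(0,\infty)$, i.e.\ that $W^u(v_3)$ and $W^s(v_2)$ actually reach your transversal at $v=K$ for every $r$ --- not obvious, since a priori the orbit from $v_3$ could turn and escape to $v\to+\infty$ before crossing. Both points are supplied in the paper by the barrier construction (arcs of $\CalC$, of $\CalB_1$, the coordinate axes, and the $v_2$--$v_1$ Lax connection), together with the exhaustive case analysis (a)--(d) of where the orbit $\CalO$ from $v_3$ can exit; uniqueness then comes not from a local transversality computation but from the global comparison that inside the fixed-sign region the slope field $dw/dv=r^{-1}(G/F)$ is strictly monotone in $r$, so the exit point of $\CalO$ moves strictly monotonically (clockwise) along the exit arc $\CalC'$ as $r$ increases --- a statement valid for all $r$ at once, which is what your rotated-field step was meant to deliver but does not.

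There is a second, independent gap in case (i): $d(r)\neq 0$ for $r<r_*$ only rules out the saddle--saddle connection $v_3\to v_2$. The proposition asserts there are \emph{no} intermediate profiles, so you must also exclude the intermediate Lax connections $v_3\to v_1$ and $v_4\to v_2$ and the overcompressive family $v_4\to v_1$; knowing the sign of the separation between $W^u(v_3)$ and $W^s(v_2)$ does not by itself forbid node-to-node or node-to-saddle connections. In the paper this is automatic from the same exit-point analysis: for $r<r_*$ the orbit $W^u(v_3)$ exits past $v_2$ and escapes to $v\to+\infty$, and the barrier then prevents any orbit from re-entering, killing all intermediate connections simultaneously (with the backward-$x$ argument handling $W^s(v_2)$). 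Your slow--fast endpoint analysis is a legitimate, rigorizable alternative to the paper's determination of the behavior for extreme $r$, but without the trapping/barrier analysis --- the actual content of the paper's proof --- both the uniqueness of $r_*$ and the completeness of case (i) remain unproven, a point you yourself flag as ``the delicate step'' without resolving it.
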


\begin{proof}
Referring to Figure \ref{nullclines}(b), rewrite \eqref{redode}
again as
\ba\label{rrredode}
\tau v'&=v\tilde h(v) +\frac{ (B_-+Iw)^2 }{2\mu_0 v} ,\\
\mu w'&=(v-K) w-\frac{IB_-(1-v)}{\mu_0},
\ea
$\tau=2\mu+\eta$,
where (see proof of Proposition \ref{Laxexist}) 
$\tilde h(v):= p(v)-p_- + v-1 -J$ is convex,
negative on $(v_1,v_4)$, and goes to $+\infty$ as $v\to 0,+\infty$.
Denote by $\underline v<v_1$ and $\overline v>v_4$ the two
points at which $h$ vanishes.

The nullclines 
$$
Iw= -B_- \pm \sqrt{-2\mu_0 v^2h(v)}
$$
for $v'$ evidently are well-defined on $v\in (\underline v,\overline v)$,
together forming a simple closed curve $\CalC$
enclosing a region on which $v'<0$, as seen in Figure \ref{nullclines}(b).
Likewise, the nullclines $w=\frac{IB_-(1-v)}{\mu_0(v-K)}$
for $w'$ are well-defined for $v$ on either side of $K$,
forming two disconnected branches $\CalB_1$ and $\CalB_2$ asymptotic to the
line $v=K$, Figure \ref{nullclines}(b).

The arc formed by the portion of $\CalC$ from the rest point
at $v_3$ to $\underline v$
together with the portion of the $v$ axis from $\underline v$ to $0$,
the portion of the $w$ axis from $0$ to the intersection of the
$w$ axis with $\CalB_1$, 
and the portion of $\CalB_1$ between 
the intersection of the $w$ axis with $\CalB_1$ and the rest point
at $v_1$, the Lax connection between the rest points at $v_1$
and $v_2$, and the portion of $\CalB_1$ between the rest point at $v_2$
out to $w\to +\infty$ form a barrier to the flow in forward $x$,
through which an orbit initiating inside $\CalC$ cannot cross,
as, likewise, does the arc formed by the Lax shock between the
rest points at $v_1$ and $v_2$ together with the portion of $\CalB_2$
extending from the rest point at $v_2$ to $w\to -\infty$.

Thus, the orbit $\CalO$ initiating along the unstable manifold of the rest
point at $v_3$ pointing in decreasing $v$-$w$ directions, and thus
initially lying inside $\CalC$, must either (a) strike the arc between
$(\underline v,0)$, after which, being trapped between $\CalB_1$
and $\CalC$, it must asymptotically approach the rest point at $v_1$;
(b) strike the arc of $\CalB_1$ between the rest points at $v_1$ and
$v_2$, after which, being trapped between this arc and the portion
of $\CalC$ between the rest points at $v_1$ and $v_2$, it must
asymptotically approach the rest point at $v_1$; (c) remain within
the interior of $\CalC$ and to the right of $\CalB_1$ for all time,
asymptotically approaching the rest point at $v_2$; or, (d)
exit the interior of $\CalC$ along the arc between the rest point
at $v_2$ and the rest point at $v_1$, after which it remains trapped
outside of $\CalC$ with $v$ increasing monotonically to $ +\infty$.

Depending whether the orbit approaches the rest point at $v_1$,
approaches the rest point at $v_2$, or takes $v\to +\infty$,
we are in cases (iii), (ii), or (i) of the proposition.
But, these cases are distinguished by the
location along the arc $\CalC'$ formed by the portion of the
upper branch of $\CalC$ lying below $\CalB_1$ together with
the portion of $\CalB_1$ lying below $\CalC$ at which the orbit
$\CalO$ exits the part of the interior of $\CalC$ lying below
$\CalB_1$, with the locations corresponding to different cases 
ordered in clockwise fashion along $\CalC'$.
Noting that the signs of $v'$ and $w'$ are constant 
while $\CalO$; remains inside $\CalC'$ (recall that it is trapped to the right
of $\CalB_1$), and are given respectively by $\tau^{-1}$
and $\mu^{-1}$ times the righthand sides in \eqref{rrredode},
we find that the exit point moves strictly clockwise along $\CalC'$
monotonically as $r= \tau^{-1}/\mu^{-1}=\mu/\tau$ increases.

Thus, as asserted, there is a unique value $r=r_*$ for which
$\CalO$ exits at the rest point at $v_2$, corresponding to an
undercompressive connection. For $r<r_*$, $\CalO$ exits to the
right of the rest point at $v_2$, going off to infinity, and for
$r>r_*$, $\CalO$ exits to the left of the rest point at $v_2$,
asymptotically approaching the rest point at $v_1$, corresponding
to an intermediate Lax connection and case (iii).

Note, in case (iii), that the existence of this intermediate
Lax connection means that, applying a symmetric argument in
backward $x$ to the orbit originating from the saddle rest point at $v_2$,
we find that it remains trapped within $\CalC$ for all negative
$x$, approaching asymptotically as $x\to-\infty$ the rest point at $v_1$
The four Lax connections enclose an invariant region, within which all
orbits must be overcompressive profiles connecting the rest points
at $v_1$ and $v_4$.
It is clear that in general the intermediate
Lax profile may leave either the interior of $\CalC$
or the region below $\CalB_1$, hence
may be nonmonotone in $v$ or $w$ but not both.
Similarly, we find that the members of the
family of overcompressive profiles
are in general nonmonotone in $v$ or $w$ (and sometimes both).

In case (ii), or case (c) above, the profile remains for all $x$ in
a region for which $v'<0$ and $w'>0$, hence the profile is monotone
decreasing in $v$ and increasing in $w$. 
This completes the description of the phase portrait in cases (iii)
and (ii), finishing the proof.
\end{proof}

\br\label{nonun}
\textup{
Evidently, there is nonuniformity in the behavior of $r_*$,
in view of fact that the parallel case $J=0$ 
is always in case (iii) of Proposition \ref{intexist},
by the result of Proposition \ref{parprop}.  
That is, $r_*\to \infty$ as $J\to 0$ for fixed $K$ and $a$.
}
\er

\subsubsection{Singular perturbation analysis}
The results of Proposition \ref{intexist} may be illuminated
somewhat by formal singular perturbation analyses
as $r\to 0$ and $r\to +\infty$: equivalently, taking $\mu\to 0$
with $\tau=1$ fixed, or $\tau\to 0$ with $\mu=1$ fixed.
%
In the limit as $\mu\to 0$, the phase portrait for $J> 0$ reduces
to slow flow along the $w'=0$ nullcline $\CalB$ (notation
of the proof above),
with fast flow involving jumps in the vertical $w$ direction.  
We find that regular Lax connections are accomplished by slow flow
along $\CalC$, but there are no further intermediate shock connections
since the branches $\CalB_1$ and $\CalB_2$ of $\CalB$ are separated
by the vertical line $v=K$.  See Figure \ref{phaseS}(b).
In the special case $J=0$, the hyperbolae $\CalB_j$ degenerate to
the connected union of $v=K$ and $w=0$, allowing intermediate
connections both from the rest point at $v_4$ to the rest points
at $v_2=v_3$ and from the rest point at $v_4$ to the rest point at $v_1$.

In the limit as $\tau\to 0$, the phase portrait reduces
to slow flow along the $v'=0$ nullcline $\CalC$ (notation
of the proof above),
with fast flow involving horizontal jumps in $v$.  
We find that Lax and intermediate Lax connections 
may all be accomplished by slow flow along $\CalC$,
with fast flow filling in the overcompressive family.
See Figure \ref{phaseS}(a).

Finally, note that as $r$ goes from $0^+$ ($\mu\to 0$ limit)
to $+\infty$ ($\tau\to 0$ limit) the relative orientation as
measured by a Melnikov separation function along an appropriate
transversal of the
unstable manifold pointing to the left at $r=0^+$
of the rest point associated with $v_3$ and the stable manifold
entering from the right at $r=0^+$ of the rest point associated with $v_2$
changes sign. 
In plain language, the former passes below and to the left of 
the latter for $r=0^+$ and above and to the right for $r\to +\infty$.
By the Intermediate Value Theorem and continuous dependence, therefore,
there exists at least one value $r_0$ for which they meet, i.e., there
exists an undercompressive profile from the rest point associated with
$v_3$ to the rest point associated with $v_2$.

\br\label{singrmk}
\textup{
The above, formal arguments, may be made rigorous as done in \cite{FS}
for the general nonisentropic case.
They give slightly less information in the planar case
(note that we lose the monotonicity/uniqueness of $r_*$ information
obtained by phase plane analysis) but have the advantage of applying also 
to more general, nonplanar situations.
}
\er

\subsubsection{The undercompressive bifurcation}
There is an interesting bifurcation as $r=\mu/\tau$ decreases,
between the situation of case (iii) in which there is a family
of overcompressive connections between the rest points at $v_1$
and $v_4$, bounded by Lax connections, and the situation of
case (i), in which there exist no intermediate shock connections.
This occurs at the point $r=r_*$ where an undercompressive connection
appears.
As illustrated in Figures \ref{phaseUC} and \ref{ucfig},
this occurs through squeezing of the infinite overcompressive
family to a single undercompressive--Lax profile pair, after
which, as $r$ is decreased past $r_*$, the undercompressive
connection breaks, leaving only the regular Lax connection
and no intermediate profiles remain.

Note that this occurs for the example in Figure
\ref{ucfig} for value $r_*=0.17$, substantially less
than the ``physical''
value predicted by \eqref{etaform} of $r=\mu/(2\mu+\eta)= 0.75$.
For the value $r=.75$ and $\gamma=5/3, 7/5$ (monatomic or
diatomic gas), we find numerically that undercompressive shocks
do not occur.

\subsubsection{Composite-wave limits}\label{composite}
In the limit as $r\to r_*^+$, the intermediate Lax shock connecting the
rest points at $v_3$ and $v_1$ approaches a ``doubly composite wave''
formed by an approximate superposition of the limiting undercompressive
profile between the rest points at $v_3$ and $v_2$ and the Lax
profile between rest points at $v_2$ and $v_1$ at value $r=r_*$,
separated by an interval of length going to infinity as $r\to r_*^+$
on which the solution is approximately equal to the value of the
saddle-type rest point at $v_2$ to which it passes nearby.
Likewise, as $r\to r_*^+$,
the family of intermediate overcompressive profiles 
connecting the rest points at $v_4$ and $v_1$ approaches a 
triply composite wave consisting of the approximate superposition of the
limiting Lax profile between rest points at $v_4$ and $v_3$,
undercompressive profile between rest points at $v_3$ and $v_2$,
and Lax profile between rest points at $v_2$ and $v_1$ at value $r=r_*$,
separated by intervals of length going to infinity on which the
solution stays near the saddle-type rest points at $v_3$ and $v_2$.

In either case,
because the resulting profiles require larger and larger intervals in
$x$ to converge to limits $U_\pm$, both the profiles and their associated
Evans functions are numerically impractical to compute, requiring larger
and larger computational domains, and must be handled separately
taking into account the underlying limiting structure.
We discuss this issue in Section \ref{s:compstab}

 \begin{figure}[htbp]
\begin{center}
$\begin{array}{lr}
\includegraphics[width=7.5cm]{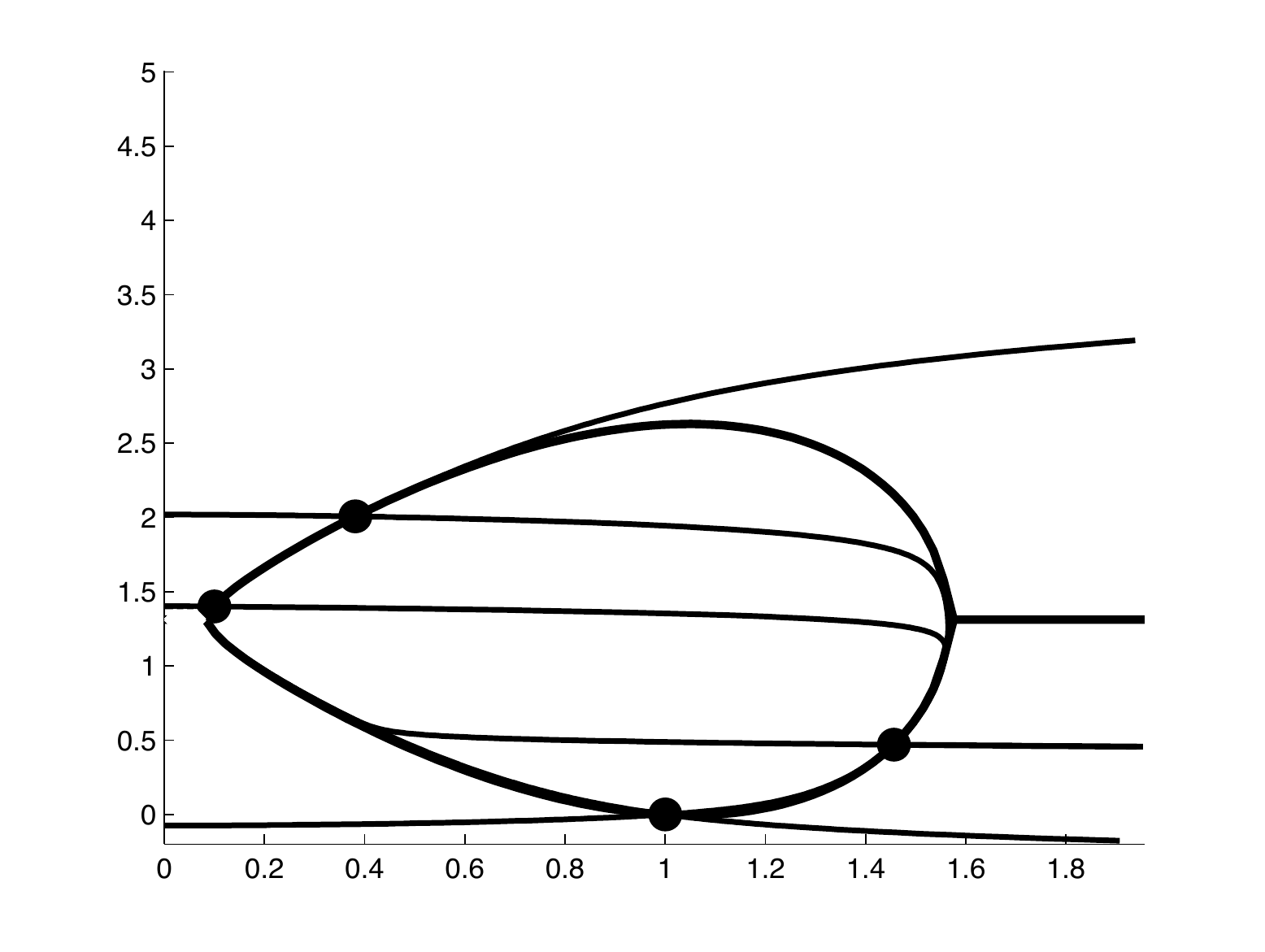}(a) & \includegraphics[width=7.5cm]{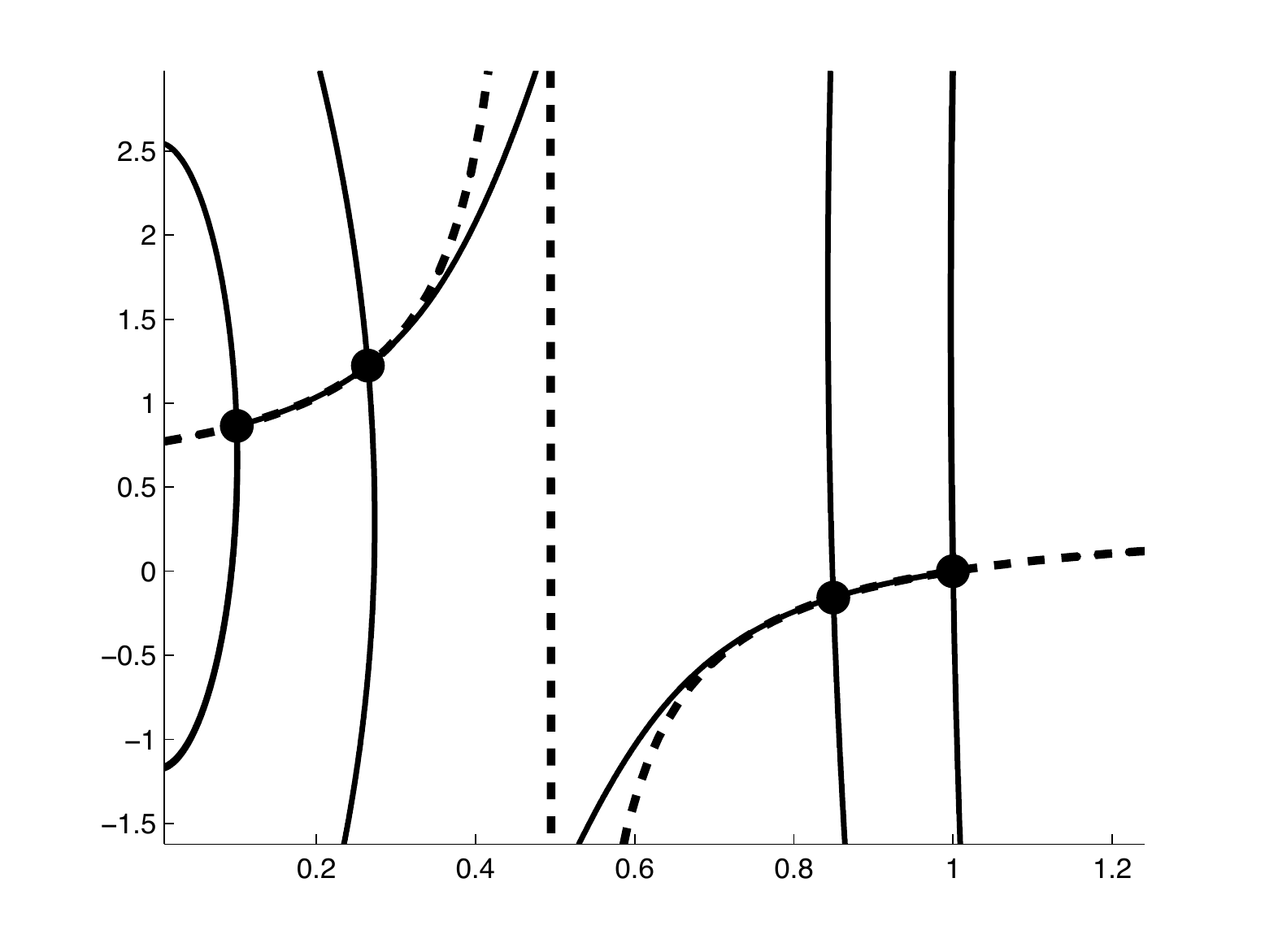}(b)
\end{array}$
\end{center}
	\caption{Phase portraits in singular limits.
Figure (a) $\mu=1$, $\tau=.1$,
Figure (b) $\mu=.005$, $\tau=1$;
 parameters $\gamma=5/3$, $v_+=0.1$, $I=0.7$, $B_{2+}=0.7$.}
\label{phaseS}
\end{figure}

 \begin{figure}[htbp]
\begin{center}
$\begin{array}{lr}
\includegraphics[width=7.5cm]{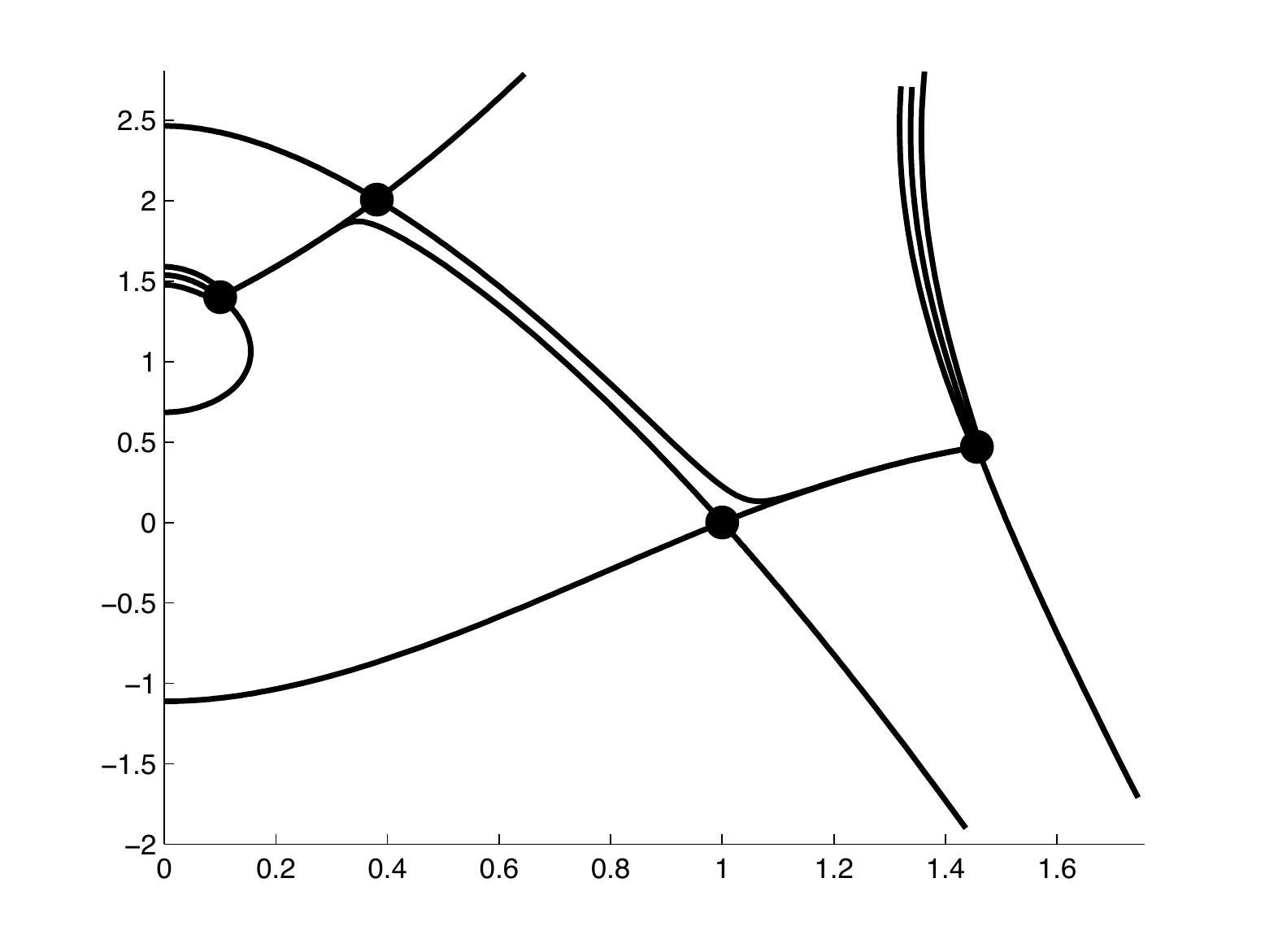}(a) & 
\includegraphics[width=7.5cm]{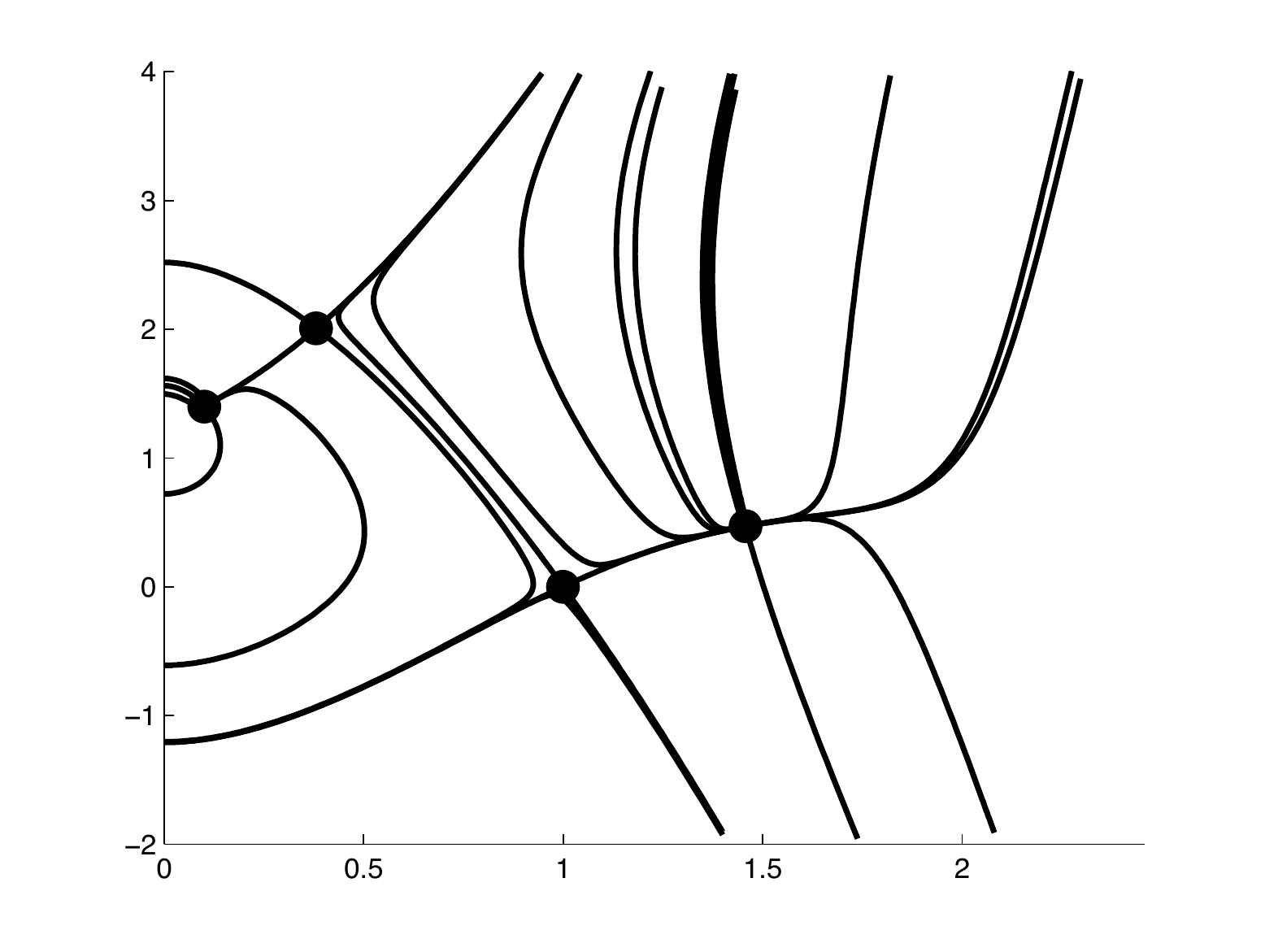}(b) 
\end{array}$
\end{center}
	\caption{Transition to nonexistence:
keeping $\tau=2\mu+\eta=1$ and letting 
$\mu \to 0$, we find that the overcompressive family is
squeezed more and more
but still connects until somewhere between 
$\mu=0.185$ (Figure (a)) and $\mu=0.16$ (Figure (b)).
At that point the flows switch sides so that 
neither overcompressive nor undercompressive connections exist;
 parameters $\gamma=5/3$, $v_+=0.1$, $I=0.7$, $B_{2+}=0.7$.
}
\label{phaseUC}
\end{figure}

\begin{figure}[t]
\includegraphics[width=7.5cm]{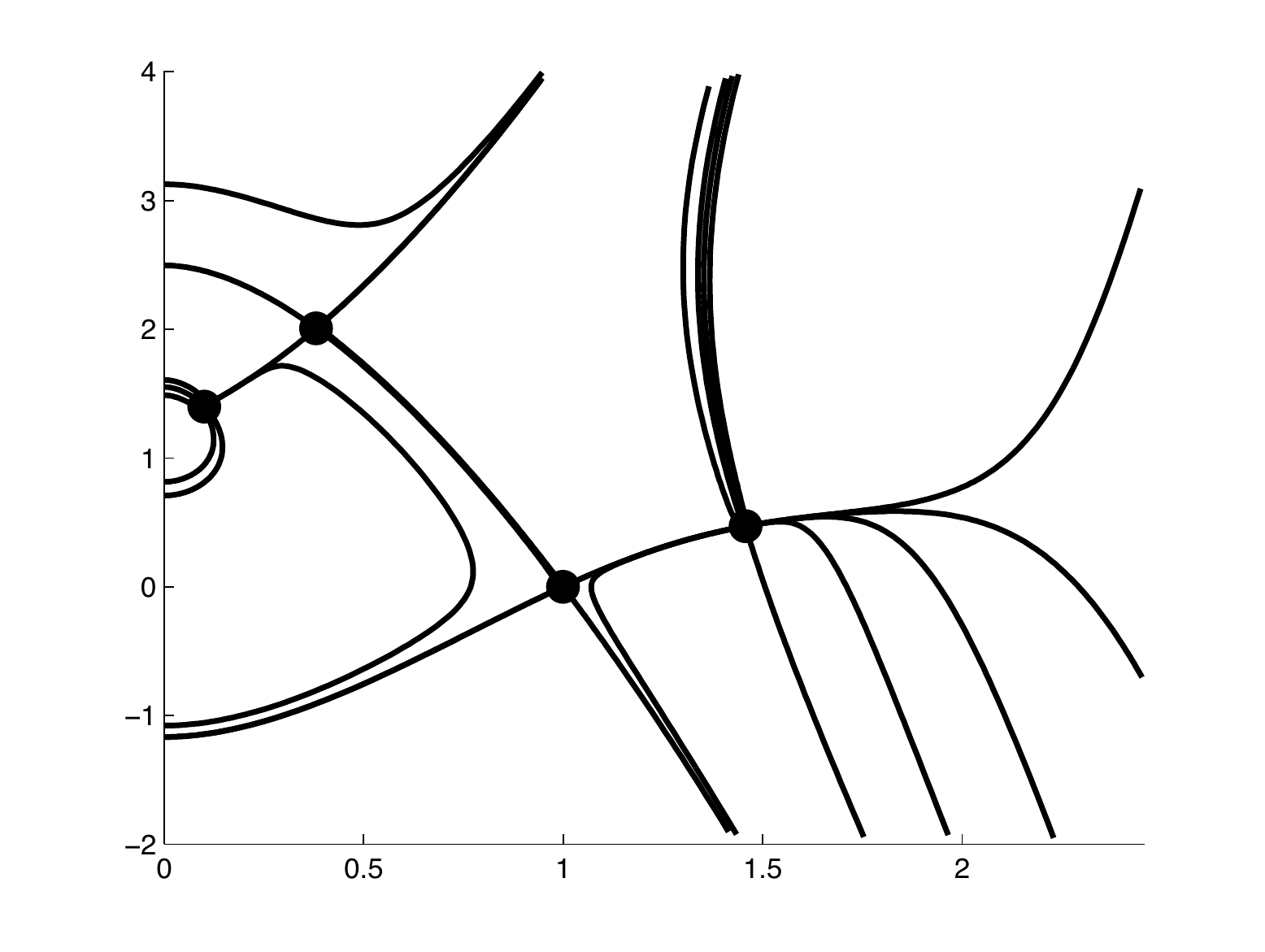}
\caption{Undercompressive connection: 
at the transition point $\approx \mu=0.17$,
an undercompressive profile appears;
parameters $\gamma=5/3$, $v_+=0.1$, $I=0.8$, $B_+=.7$.}
\label{ucfig}
\end{figure}

\begin{figure}[htbp]
\begin{center}
$\begin{array}{lr}
\includegraphics[width=7.5cm]{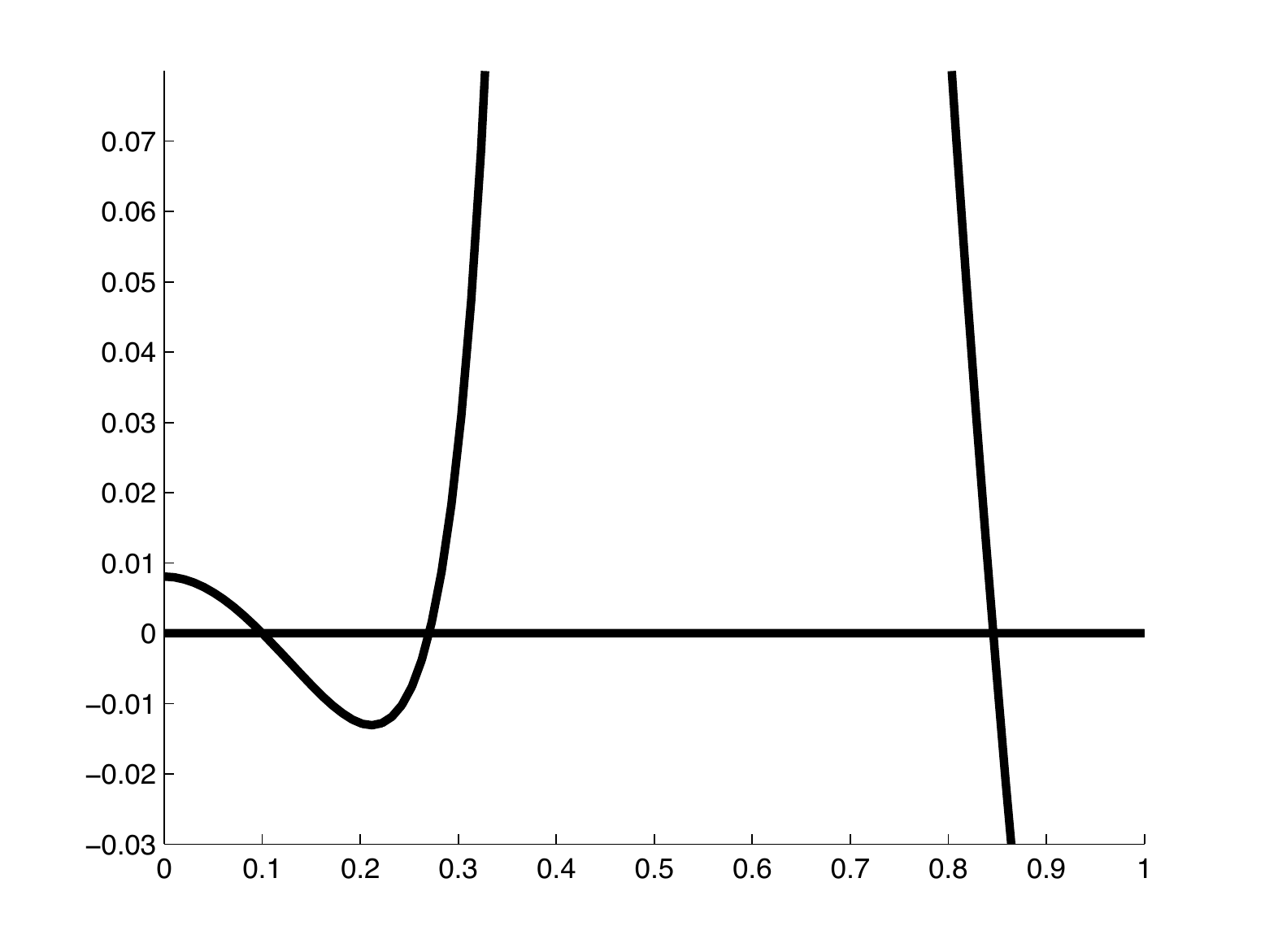}(a) & \includegraphics[width=7.5cm]{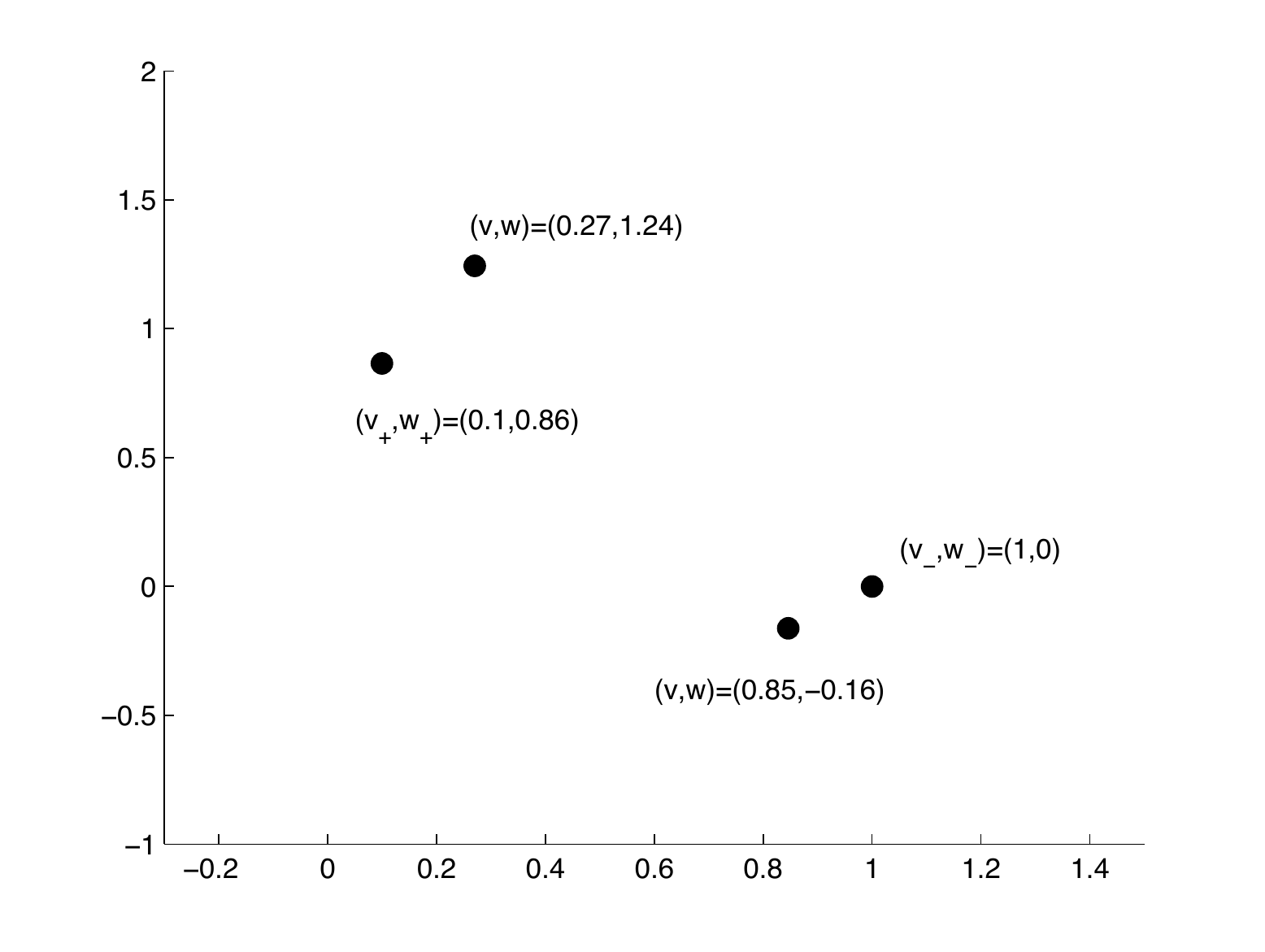}(b)\\
\includegraphics[width=7.5cm]{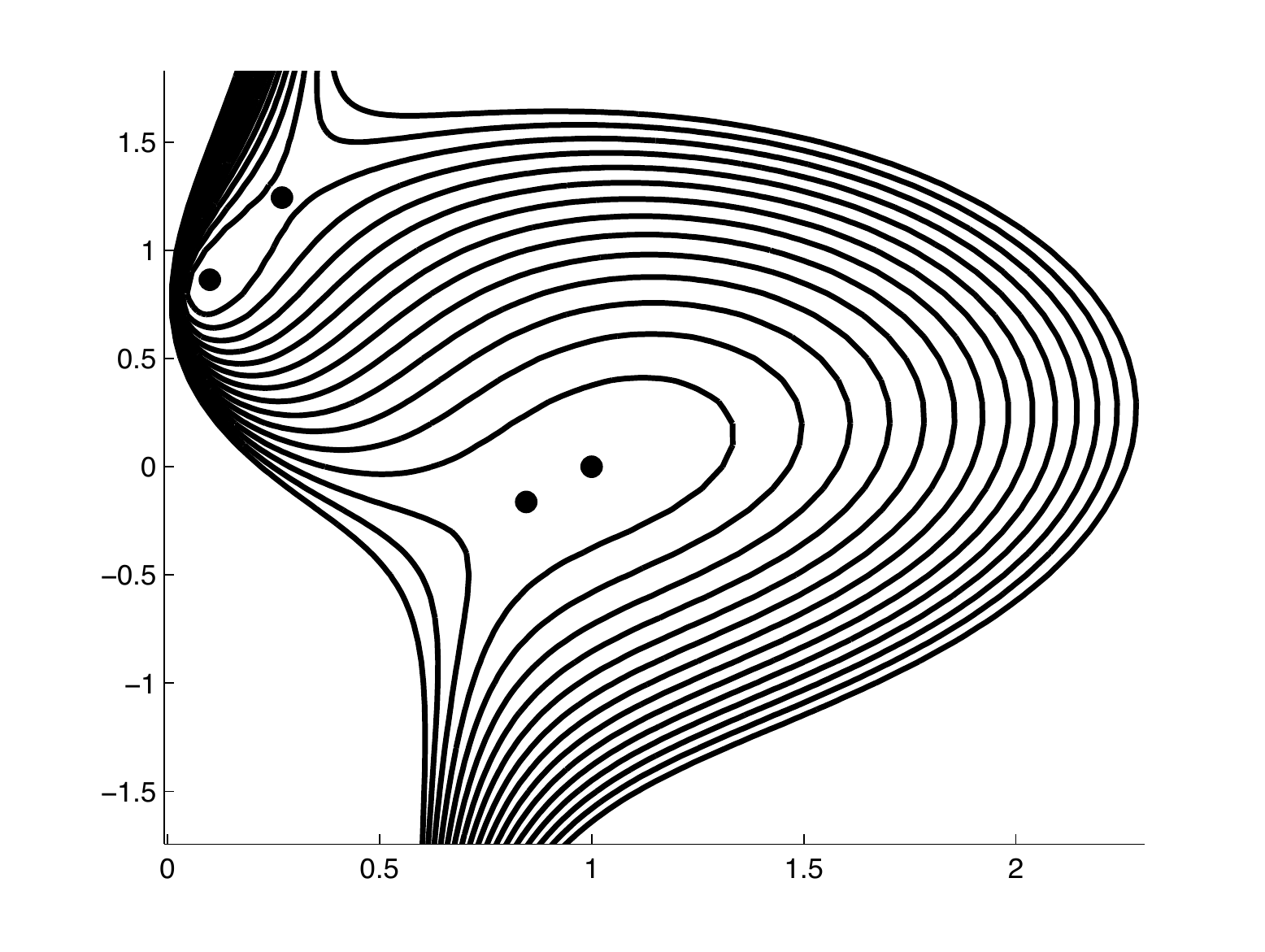}(c) & \includegraphics[width=7.5cm]{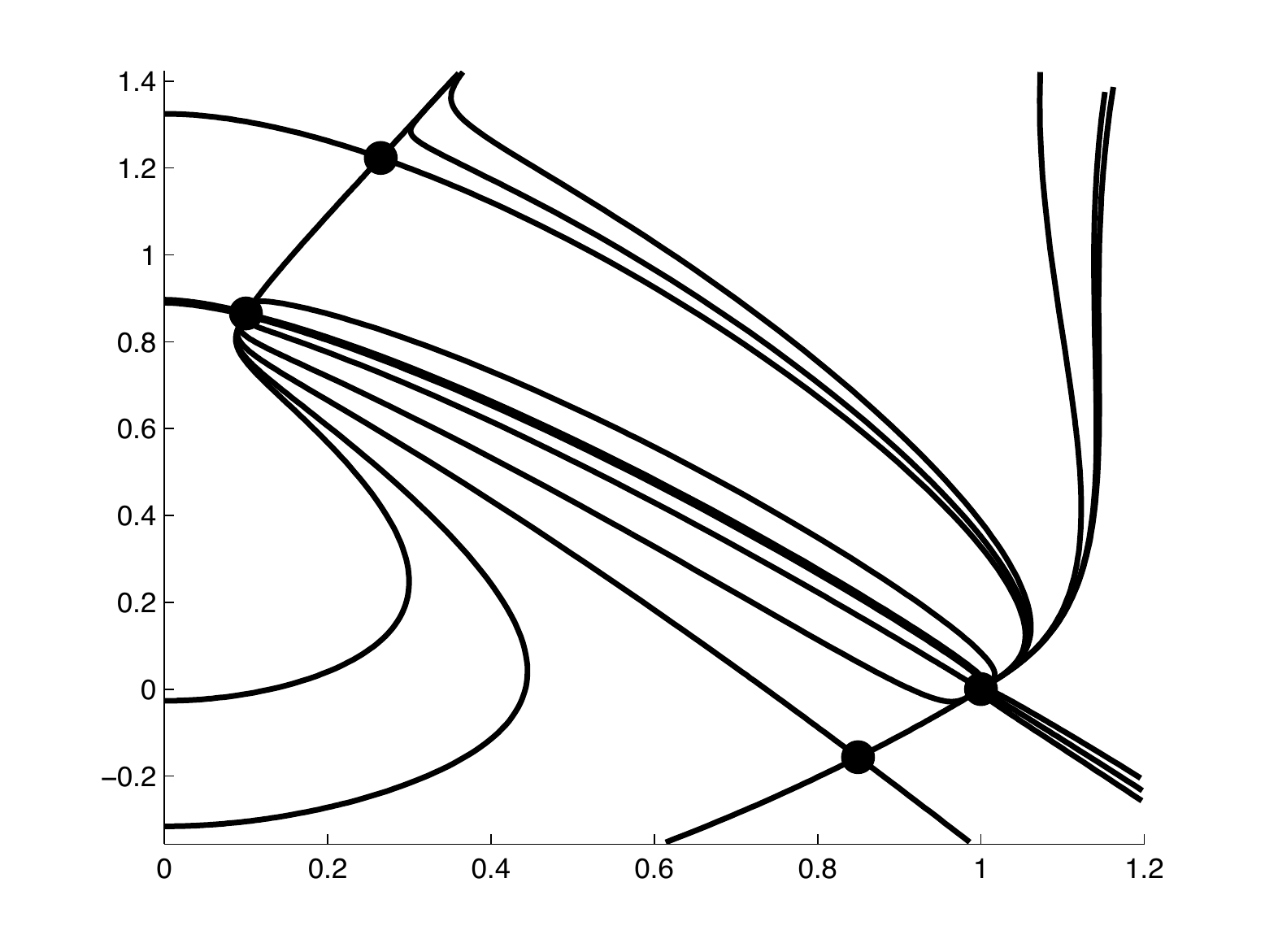}(d)
\end{array}$
\end{center}
\caption{Typical phase portrait for MHD with two variables and infinite electric resistivity ($\sigma=\infty$). Parameter values are $\gamma=2$, $v_+=0.1$, $I=0.7$, $B_{2+}=0.7$, and $\mu_0=1$. In Figure (a) we graph $f(v)$ given by \eqref{f}.  The Rankine--Hugoniot solutions corresponding to the roots in Figure (a) are given in Figure (b). In Figure (c) we plot level sets of $\check \phi(v,w)$ given 
by \eqref{sigphi} and in Figure (d) we draw the phase portrait.}
\label{phaseA}
\end{figure}

\begin{figure}[htbp]
\begin{center}
$\begin{array}{lr}
\includegraphics[width=7.5cm]{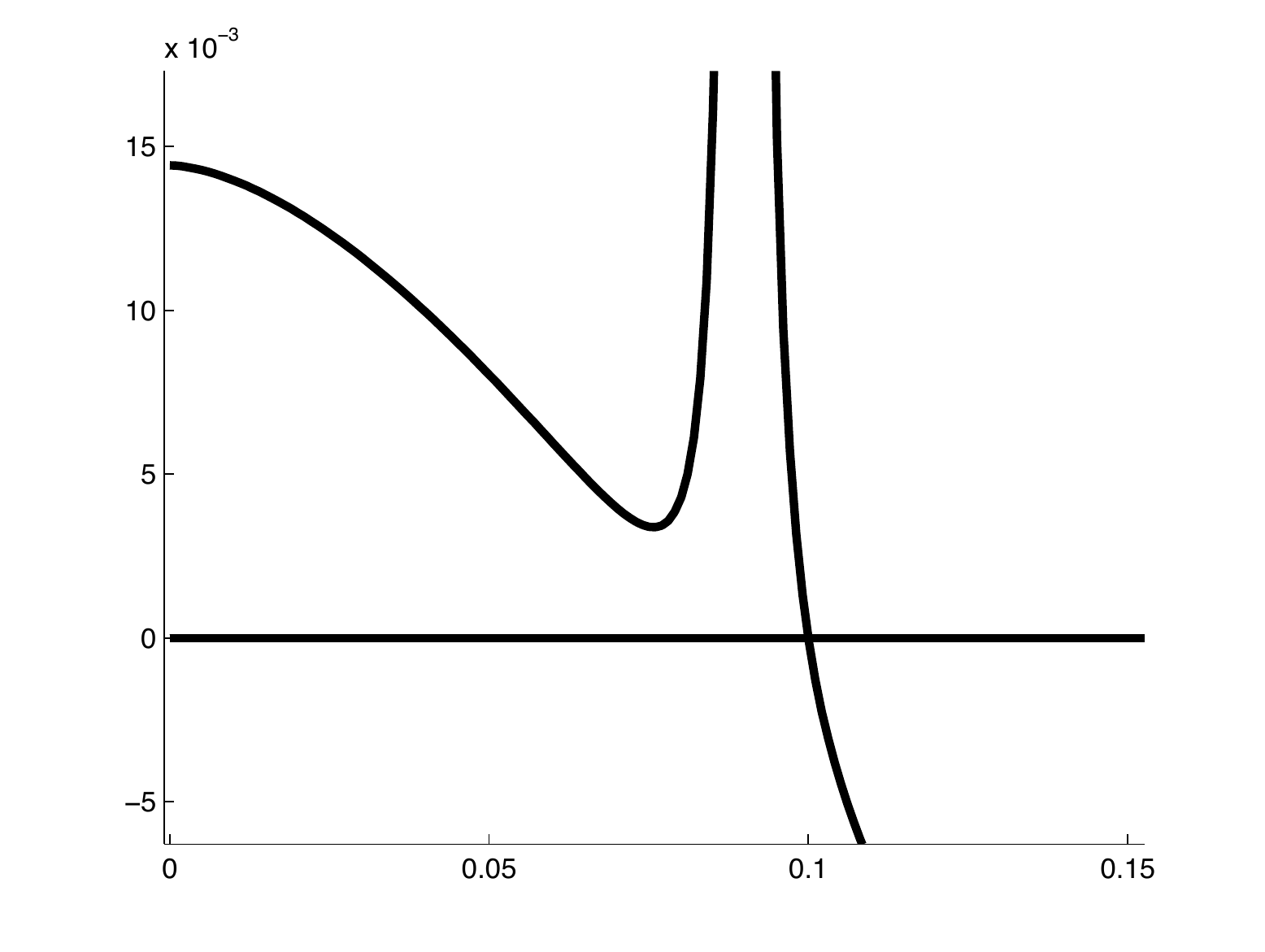}(a) & \includegraphics[width=7.5cm]{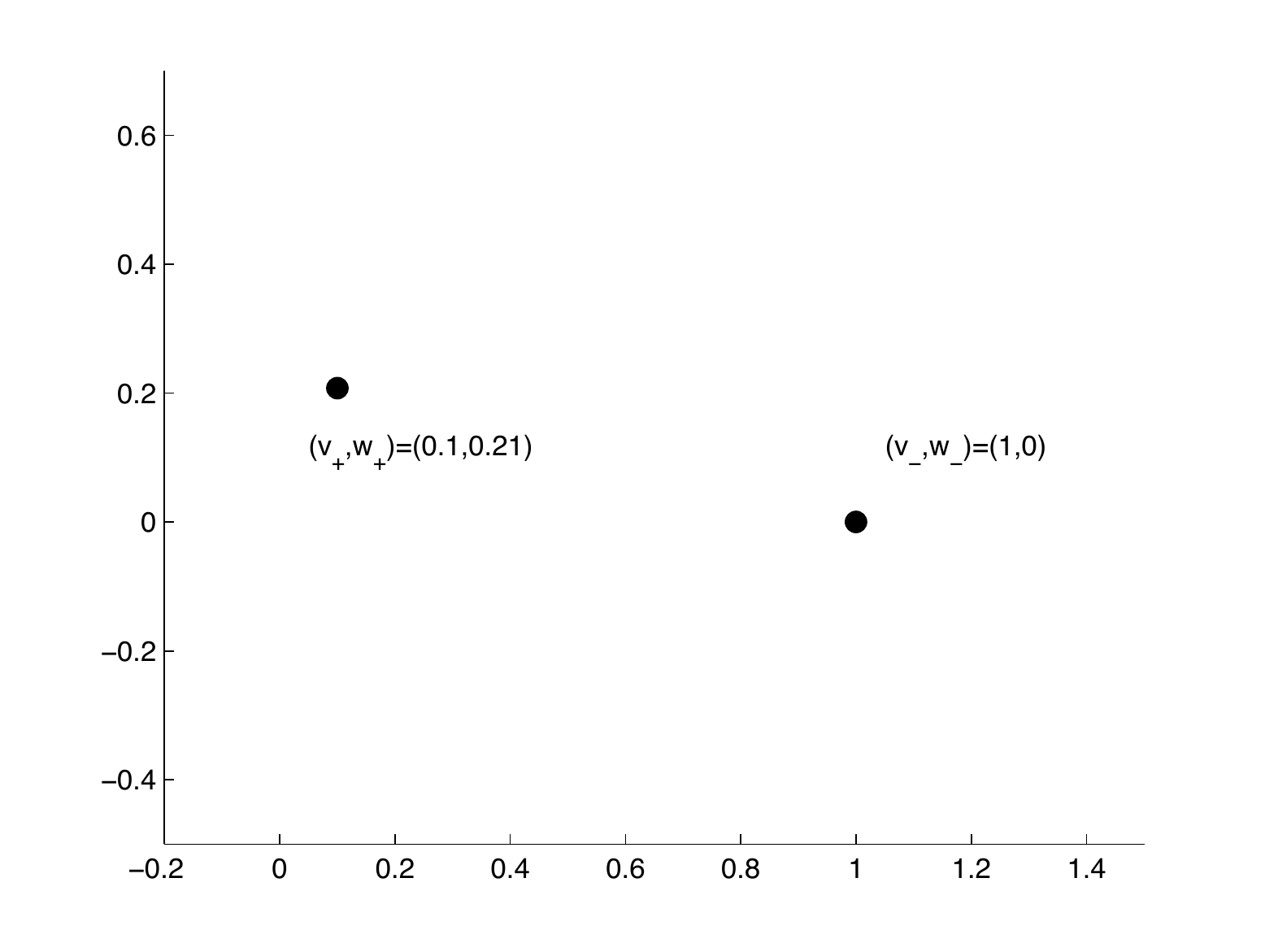}(b)\\
\includegraphics[width=7.5cm]{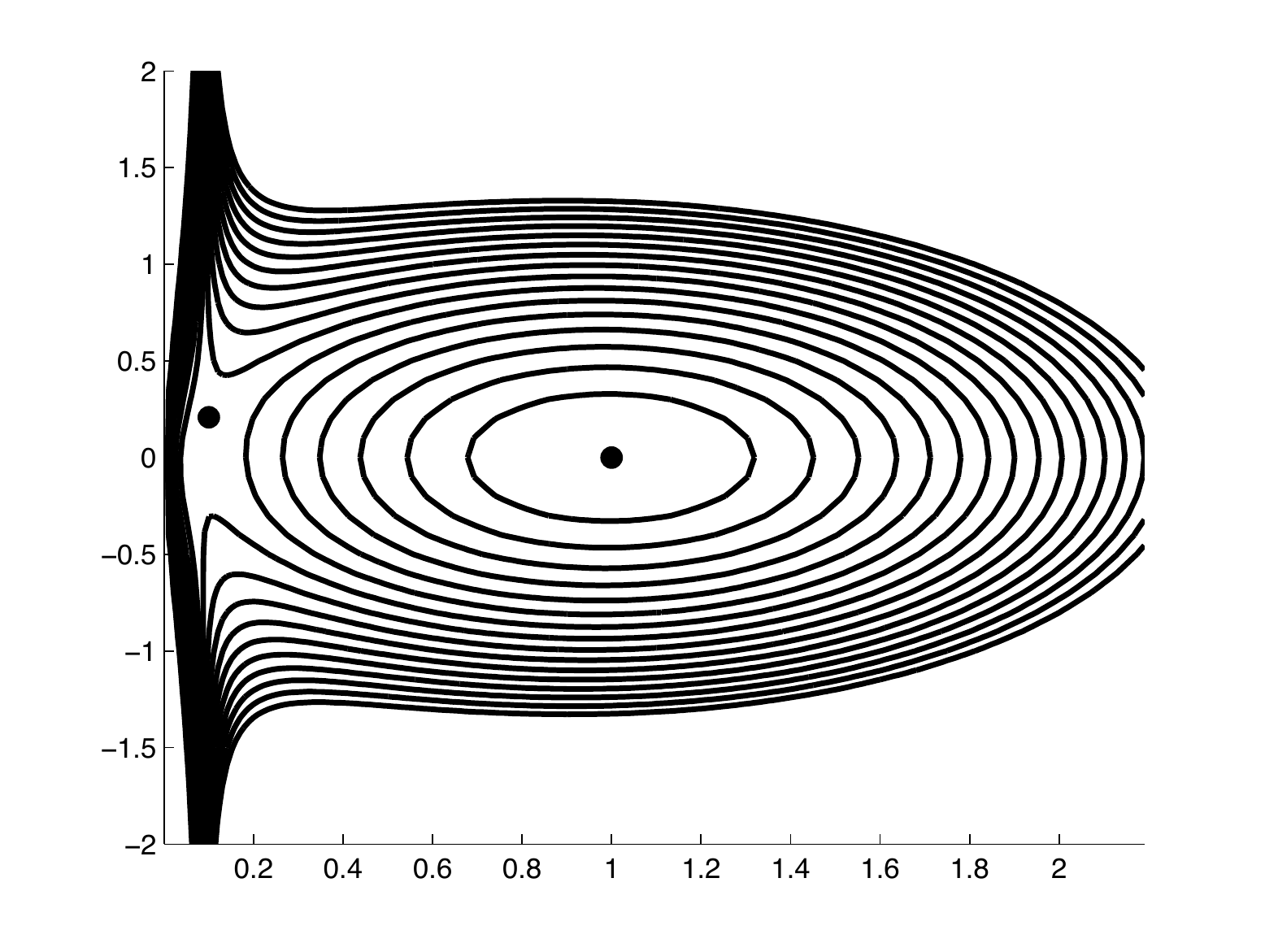}(c) & \includegraphics[width=7.5cm]{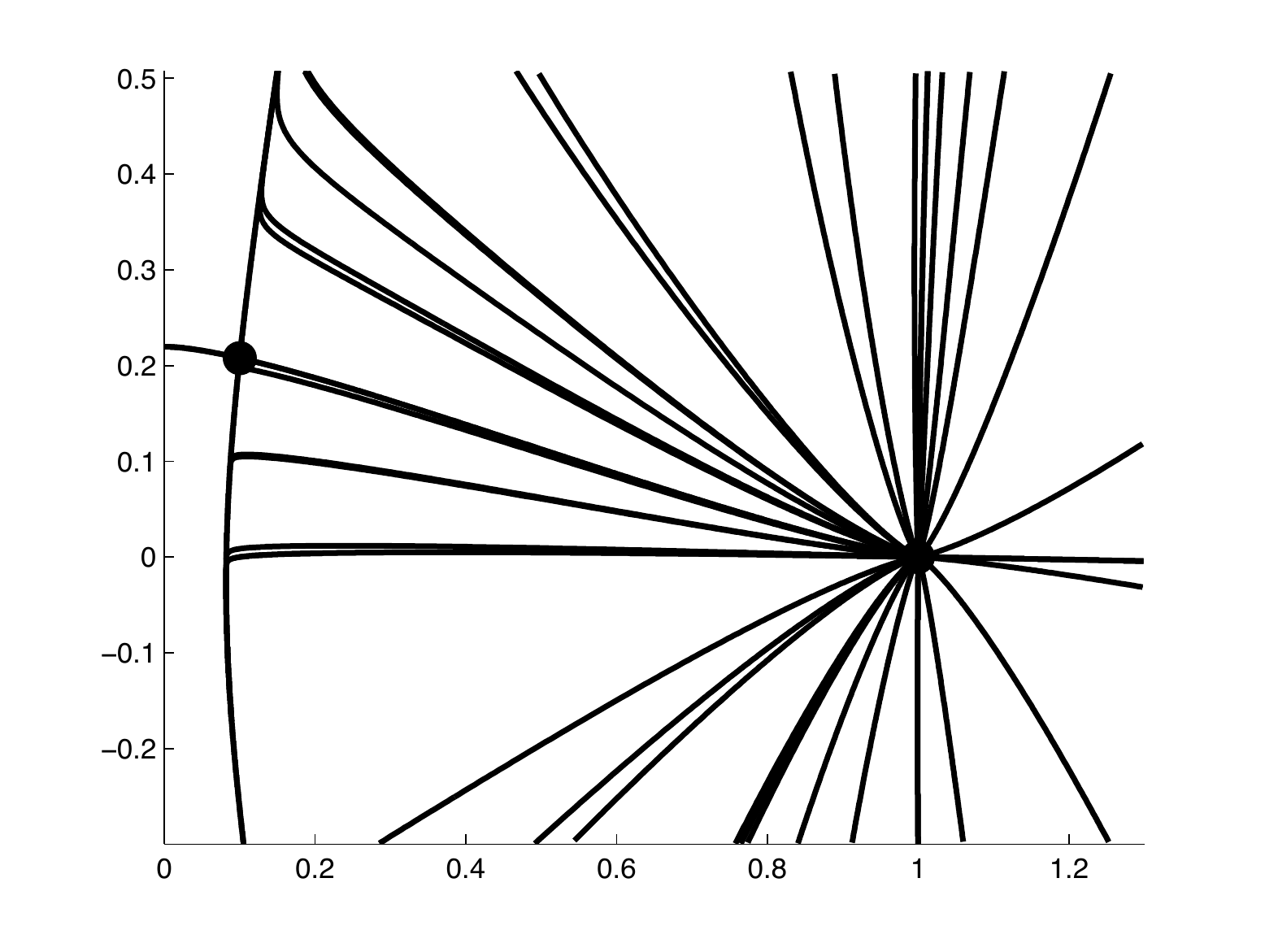}(d)
\end{array}$
\end{center}
\caption{Typical phase portrait for MHD with two variables and infinite electric resistivity ($\sigma=\infty$). Parameter values are $\gamma=5/3$, $v_+=0.1$, $I=0.3$, $B_{2+}=0.7$, and $\mu_0=1$. In Figure (a) we graph $f(v)$ given by \eqref{f}.  The Rankine--Hugoniot solutions corresponding to the roots in Figure (a) are given in Figure (b). In Figure (c) we plot level sets of
$\check \phi(v,w)$ given by \eqref{sigphi} and in Figure (d) 
we draw the phase portrait.}
\label{phaseB}
\end{figure}


\begin{figure}[htbp]
\begin{center}
$\begin{array}{lr}
\includegraphics[width=7.5cm]{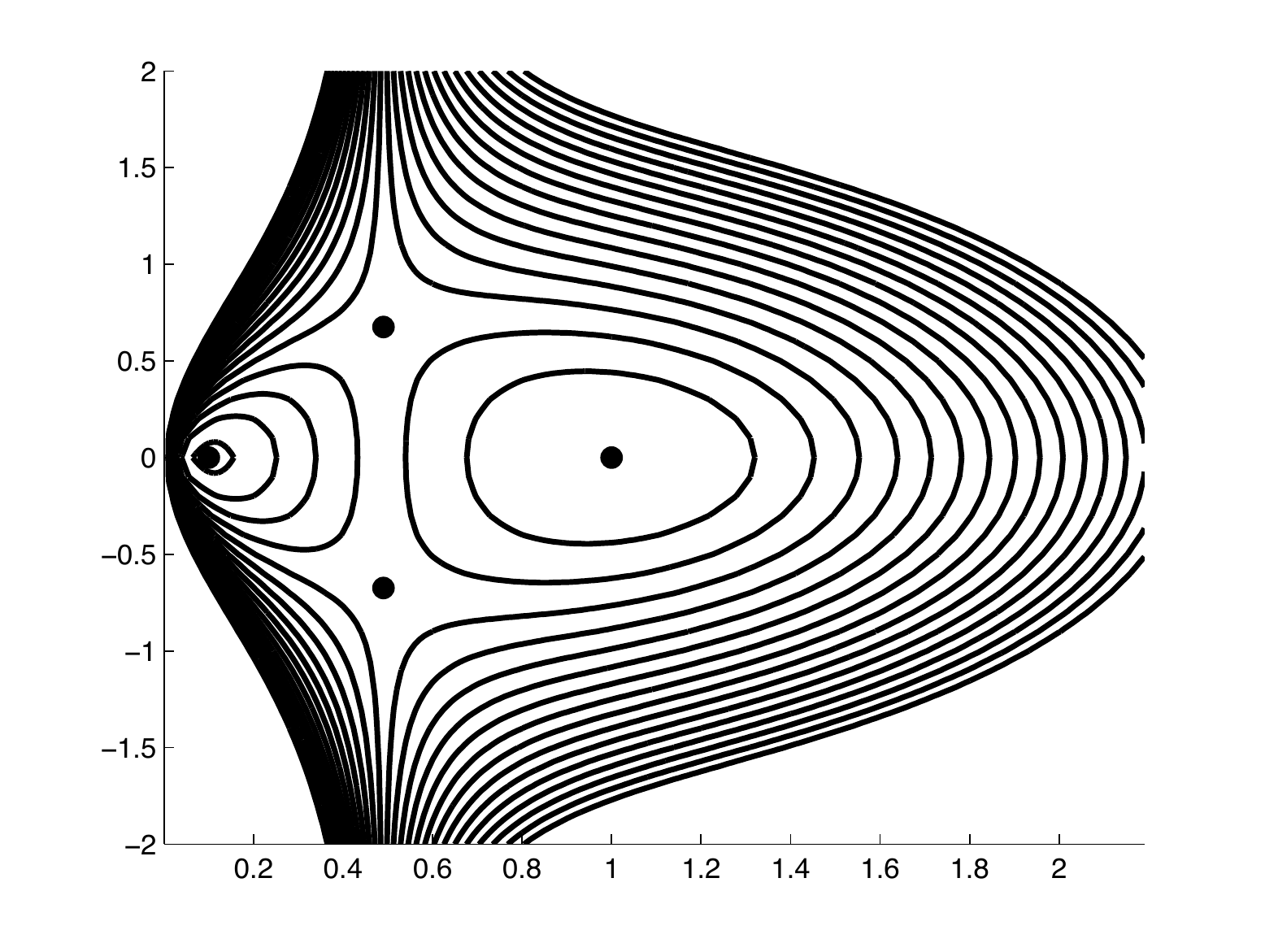}(a) & \includegraphics[width=7.5cm]{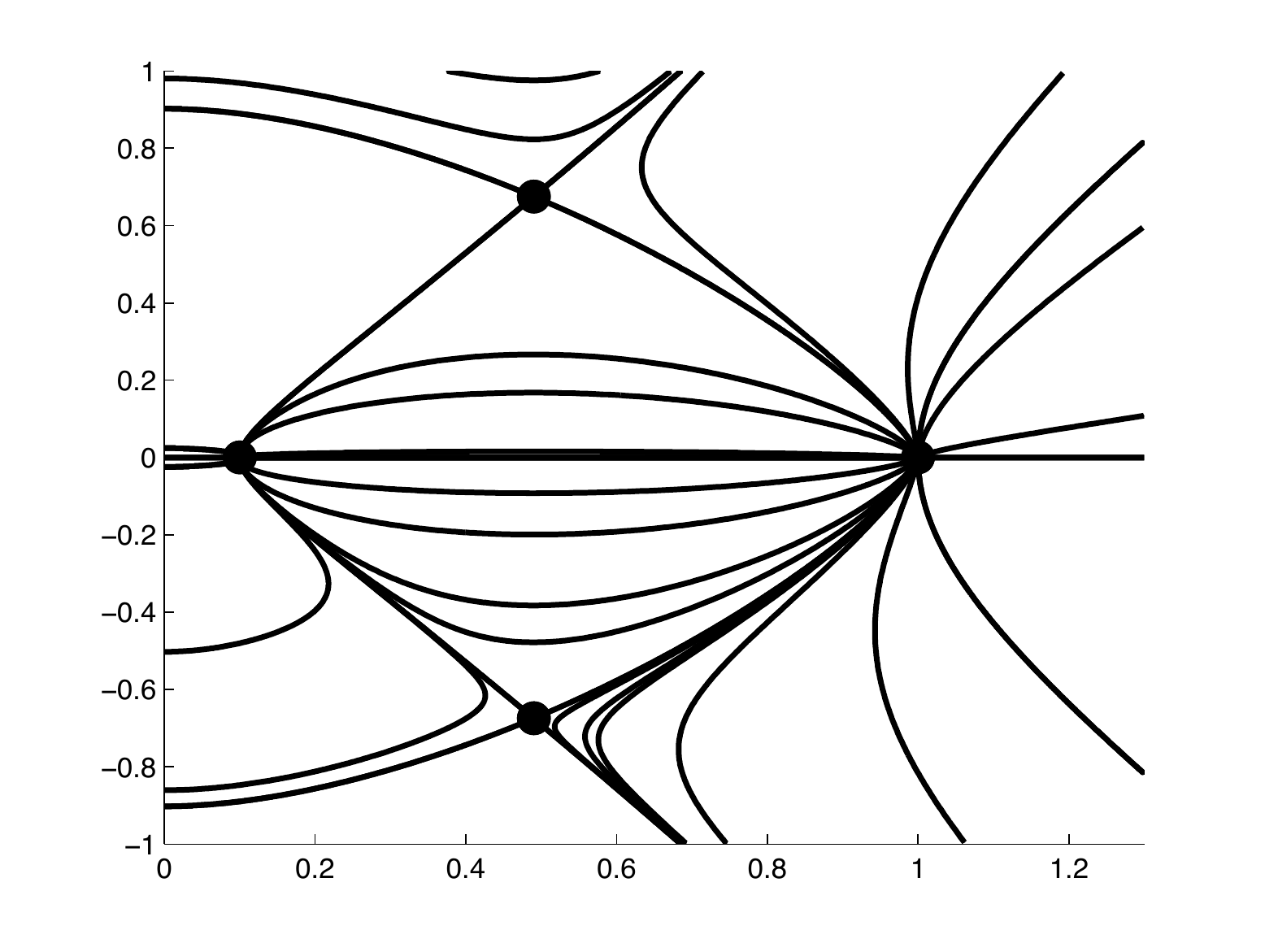}(b)
\end{array}$
\end{center}
\caption{Phase portrait in the planar case for MHD with two variables and infinite electric resistivity ($\sigma=\infty$). Parameter values are $\gamma=5/3$, $v_+=0.1$, $I=0.7$, $B_{2+}=0$, and $\mu_0=1$. In Figure (a) we plot level
sets of $\check \phi(v,w)$ given by \eqref{sigphi} and in Figure (b) we draw the phase portrait.}
\label{phaseC}
\end{figure}

%
%
%
%

\section{Evans function formulation}\label{s:lin}

\subsection{Two-dimensional MHD}\label{s:2d}
In the two-dimensional case $u_3\equiv B_3\equiv 0$,
\eqref{redeqs} becomes
\begin{equation}
\begin{split}
v_t+v_x-u_{1x}&=0\\
u_{1t}+u_{1x}+(av^{-\gamma}+B_2^2/(2\mu_0))_x&=\tau(u_{1x}/v)_x\\
u_{2t}+u_{2x}-(I/\mu_0)B_{2x}&=\mu(u_{2x}/v)_x\\
vB_2)_t+(vB_2)_x-Iu_{2x}&=(\sigma \mu_0)^{-1}(B_{2x}/v)_x,
\end{split}
\end{equation}
where $\tau=2\mu+\eta$. Linearizing about the profile solution $(\hat v,\hat u_1, \hat u_2, \hat B_2)$ we have
\begin{equation}
\begin{split}
 v_t+v_x-u_{1x}&=0\\
 u_{1t}+u_{1x}+(-a\gamma \hat v^{-\gamma -1}v+(1/\mu_0)(\hat B_2 B_2))_x&=\tau(u_{1x}/\hat v-\hat u_{1x}v/\hat v^2)_x\\
 u_{2t}+u_{2x}-(I/\mu_0)B_{2x}&=\mu(u_{2x}/\hat v-\hat u_{2x}v/\hat v^2)_x\\
 \tilde \alpha_t+\tilde \alpha _{x}-Iu_{2x}&=(\sigma \mu_0)^{-1}(B_{2x}/\hat v-\hat B_{2x}v/\hat v^2)_x,
\end{split}
\end{equation}
where $\tilde \alpha =\hat vB_2+v\hat B_2$, 
so that $B_2=(\tilde \alpha-\hat B_2v)/\hat v$. 
Substituting for $B_2$ we obtain the eigenvalue problem
\begin{equation}
\begin{split}
\lambda v+v'-u_1'&=0\\
\lambda u_1+u_1'-(h(\hat v)v/\hat v^{\gamma+1})'&=-(\hat B_2 (\tilde \alpha -\hat B_2 v)/(\mu_0\hat v))'+\tau(u_1'/\hat v)'\\
\lambda u_2+u_2'-(I/\mu_0)(\tilde \alpha/\hat v -\hat B_2v/\hat v)'&=\mu(u_2'/\hat v -\hat u_2'v/\hat v^2)'\\
\lambda \tilde\alpha +\tilde \alpha'-I u_2'&=(\sigma \mu_0)^{-1}(\hat v^{-1}(\tilde \alpha/\hat v -\hat B_2 v/\hat v)'-\hat B_2' v/\hat v^2)',
\end{split}
\end{equation}
where
\begin{small}
\begin{equation}
\begin{split}
	h(\hat v)&=-\hat v^{\gamma+1}(\tau\hat u_1'/\hat v^2-a\gamma \hat v^{-\gamma-1})\\
	&=-\hat v^{\gamma+1}(\tau\hat v'/\hat v^2-a\gamma \hat v^{-\gamma-1})\\
	&=-\hat v^{\gamma+1}(\hat v^{-2}(\hat v(\hat v-1)+a \hat v^{1-\gamma}-a\hat v+(2\mu_0\hat v)^{-1}((B_{2-}+I\hat u_2)^2-\hat v^2B_{2-}^2))-a\gamma \hat v^{-\gamma-1}).
\end{split}
\end{equation}
\end{small}

We let $u(x)=\int_{-\infty}^x u_1(z)dz$, $w=\int_{-\infty}^x u_2(z)dz$, $V=\int_{-\infty}^xv(z)dz$, and $\alpha=\int_{-\infty}^x  \tilde \alpha (z)dz$ to transform to integrated coordinates. Substituting we have
\begin{equation}
\begin{split}
\lambda V'+V''-u''&=0\\
\lambda u'+u''-(h(\hat v)V'/\hat v^{\gamma+1})'&=-(\hat B_2 ( \alpha' -\hat B_2 V')/(\mu_0\hat v))'+\tau(u''/\hat v)'\\
\lambda w'+w''-(I/\mu_0)(\alpha'/\hat v -\hat B_2V'/\hat v)'&=\mu(w''/\hat v -\hat u_2'V'/\hat v^2)'\\
\lambda \alpha' + \alpha''-I w''&=(\sigma \mu_0)^{-1}(\hat v^{-1}(( \alpha' -\hat B_2 V')/\hat v)'-\hat B_2' V'/\hat v^2)'.
\end{split}
\end{equation}

Integrating from $-\infty$ to $x$ we obtain
\begin{equation}\label{inteval}
\begin{split}
\lambda V+V'-u'&=0\\
\lambda u+u'-h(\hat v)V'/\hat v^{\gamma+1}&=-\hat B_2 ( \alpha' -\hat B_2 V')/(\mu_0\hat v)+\tau(u''/\hat v)\\
\lambda w+w'-(I/\mu_0)(\alpha'/\hat v-\hat B_2 V'/\hat v)&=\mu(w''/\hat v -\hat u_2'V'/\hat v^2)\\
\lambda \alpha + \alpha'-I w'&=(\sigma \mu_0)^{-1}(\hat v^{-1}( \alpha'/\hat v -\hat B_2 V'/\hat v)'-\hat B_2' V'/\hat v^2).
\end{split}
\end{equation}
We use the coordinates $(u,V,V',w,\mu w', \alpha, \alpha'/(\sigma \mu_0\hat v))^T$ for the Evans function formulation. Solving for the desired variables, 
and using $u''=\lambda V'+V''$, we have, finally,
\begin{small}
\begin{equation}
\begin{split}
u'&=\lambda V+V'\\
V''&=\frac{\lambda \hat vV}{\tau} +\left(-\frac{h(\hat v)}{\tau\hat v^{\gamma}} -\lambda +\frac{\hat v}{\tau} -\frac{\hat B_2^2}{\mu_0\tau}\right)V' + \frac{\lambda \hat vu}{\tau}+\frac{\hat B_2\alpha'}{\mu_0\tau}\\
w''&=\left(\frac{\hat u_2'}{\hat v}+\frac{I\hat B_2}{\mu_0\mu}\right)V' +\frac{\lambda \hat v w}{\mu}+\frac{\hat v w'}{\mu}-\frac{I\alpha'}{\mu_0 \mu}\\
\left(\frac{\alpha'}{\sigma \mu_0\hat v}\right)'&=\frac{\lambda \hat B_2 u}{\sigma \mu_0 \tau}+\frac{\lambda \hat B_2 V}{\sigma \mu_0\tau}-I\hat v w'+
\lambda \hat v \alpha  +\left(\hat v+ \frac{\hat B_2^2}{\sigma \mu_0^2 \tau \hat v}\right)\alpha'+
\\ &\left( \frac{2\hat B_2'}{\sigma \mu_0 \hat v}  -\frac{\lambda\hat B_2}{\sigma \mu_0 \hat v} +\frac{\hat B_2}{\sigma \mu_0\tau}-\frac{\hat B_2^3}{\sigma \mu_0^2\tau\hat v} -\frac{\hat B_2\hat v'}{\sigma \mu_0\hat v^2}-\frac{\hat B_2h(\hat v)}{\sigma \mu_0\tau\hat v^{\gamma+1}} \right)V'.
\end{split}
\end{equation}
\end{small}

This may be written as a first-order system $W'=A(x,\lambda)W$
from which the Evans function may be computed as described
in Section \ref{s:stabtheory}, where
\begin{equation}
A(x,\lambda)=
\begin{pmatrix}
0&\lambda &1&0&0&0&0\\
0&0&1&0&0&0&0\\
\frac{\lambda \hat v}{\tau}&\frac{\lambda \hat v}{\tau}&f(\hat v)-\lambda-\frac{\hat B_2^2}{\mu_0\tau}&0&0&0&\frac{\sigma \hat B_2 \hat v}{\tau}\\
0&0&0&0&\frac{1}{\mu}&0&0\\
0&0&\frac{\mu\hat u_2'}{\hat v}+\frac{I \hat B_2}{\mu_0}&\lambda \hat v&\frac{\hat v}{\mu}&0&-I\sigma \hat v\\
0&0&0&0&0&0&\sigma \mu_0\hat v\\
\frac{\lambda \hat B_2}{\sigma \mu_0\tau}&\frac{\lambda \hat B_2}{\sigma \mu_0\tau}&a_{73}&0&\frac{-I\hat v}{\mu}&\lambda \hat v&\sigma \mu_0\hat v^2 +\frac{\hat B_2^2}{\mu_0\tau} 
\end{pmatrix},
\end{equation}
\begin{equation}
a_{73}=\left( \frac{2\hat B_2'}{\sigma \mu_0 \hat v}  -\frac{\lambda\hat B_2}{\sigma \mu_0 \hat v} +\frac{\hat B_2}{\sigma \mu_0\tau}-\frac{\hat B_2^3}{\sigma \mu_0^2\tau\hat v} -\frac{\hat B_2\hat v'}{\sigma \mu_0\hat v^2}-\frac{\hat B_2h(\hat v)}{\sigma \mu_0\tau\hat v^{\gamma+1}} \right),
\end{equation}
  $W=(u,v,v',w,\mu w', \alpha, \alpha'/(\sigma \mu_0\hat v))^T,$
and $f(\hat v)=\tau^{-1}(\hat v-\hat v^{-\gamma}h(\hat v)). $

\subsubsection{The case $\sigma=\infty$}
In the case $u_3\equiv B_3\equiv 0$, with $\sigma=\infty$,
the integrated eigenvalue equation \eqref{inteval} becomes
\begin{equation}
\begin{split}
\lambda V+V'-u'&=0\\
\lambda u+u'-h(\hat v)V'/\hat v^{\gamma+1}&=-\hat B_2 ( \alpha' -\hat B_2 V')/(\mu_0\hat v)+\tau(u''/\hat v)\\
\lambda w+w'-(I/\mu_0)(\alpha'/\hat v-\hat B_2 V'/\hat v)&=\mu(w''/\hat v -\hat u_2'V'/\hat v^2)\\
\lambda \alpha + \alpha'-I w'&=0.
\end{split}
\end{equation}
We use the coordinates $(u,v,v',w,\mu w', \alpha)^T$. Solving for the desired variables, and using $u''=\lambda V'+V''$, $K=I^2/\mu_0$, we have

\begin{small}
\begin{equation}
\begin{split}
u'&=\lambda V+V'\\
V''&=\frac{\lambda \hat vV}{\tau} +\left(-\frac{h(\hat v)}{\tau\hat v^{\gamma}} -\lambda +\frac{\hat v}{\tau} -\frac{\hat B_2^2}{\mu_0\tau}\right)V' + \frac{\lambda \hat vu}{\tau}+\frac{I \hat B_2 w'}{\mu_0 \tau}-\frac{\lambda \hat B_2 \alpha}{\mu_0 \tau}\\
w''&=\left(\frac{\hat u_2'}{\hat v}+\frac{I\hat B_2}{\mu_0\mu}\right)V' +\frac{\lambda \hat v w}{\mu}+\frac{\hat v w'}{\mu}-\frac{Kw'}{\mu}+\frac{\lambda I\alpha}{\mu_0\mu}\\
\alpha'&=Iw'-\lambda \alpha.
\end{split}
\end{equation}
\end{small}

This may be written as a first-order system $W'=A(x,\lambda)W$, where
\begin{equation}
A(x,\lambda)=
\begin{pmatrix}
0&\lambda &1&0&0&0\\
0&0&1&0&0&0\\
\frac{\lambda \hat v}{\tau}&\frac{\lambda \hat v}{\tau}&f(\hat v)-\lambda-\frac{\hat B_2^2}{\mu_0\tau}&0&\frac{I\hat B_2}{\mu_0 \mu \tau}&-\frac{\lambda \hat B_2}{\mu_0 \tau}\\
0&0&0&0&\frac{1}{\mu}&0\\
0&0&\frac{\mu\hat u_2'}{\hat v}+\frac{I \hat B_2}{\mu_0}&\lambda \hat v&\frac{\hat v-K}{\mu}&\frac{\lambda I}{\mu_0}\\
0&0&0&0&\frac{I}{\mu}&-\lambda \end{pmatrix},\label{Amatrix2}
\end{equation}
  $W=(u,V,V',w,\mu w', \alpha)^T,$
and $f(\hat v)=\tau^{-1}(\hat v-\hat v^{-\gamma}h(\hat v)). $

\subsection{Three-dimensional stability}\label{s:3d}
Finally, we consider the question of {\it transverse stability},
or stability with respect to three-dimensional perturbations,
of a two-dimensional profile $\hat u_3\equiv \hat B_3\equiv 0$.

Carrying third components through the computations of Section
\ref{s:2d}, we obtain the two-dimensional integrated eigenvalue
equations \eqref{inteval} augmented with the additional equations
\be
\begin{split}
\lambda w_3+w_3'-(I/\mu_0)(\alpha_3'/\hat v-\hat B_3 V'/\hat v)&=\mu(w_3''/\hat v -\hat u_3'V'/\hat v^2),\\
\lambda \alpha_3 + \alpha_3'-I w_3'&=(\sigma \mu_0)^{-1}(\hat v^{-1}( \alpha_3'/\hat v -\hat B_3 V'/\hat v)'-\hat B_3' V'/\hat v^2).
\end{split}
\ee
In the case $\hat u_3\equiv \hat B_3\equiv 0$, these decouple from
the rest of the equations, reducing to a {\it transverse system}
\begin{equation}\label{3inteval}
\begin{split}
\lambda w_3+w_3'-(I/\mu_0 \hat v)\alpha_3' &=\mu w_3''/\hat v  ,\\
\lambda \alpha_3 + \alpha_3'-I w_3'&=
(\sigma \mu_0 \hat v)^{-1}( \alpha_3'/\hat v )',
\end{split}
\end{equation}
that may be studied separately.
This is similar to the situation of the parallel case
$\hat u_2=\hat u_3=\hat B_2=\hat B_3$ studied in \cite{FT,BHZ}.

\subsubsection{The case $\sigma=\infty$}\label{s:3dinfty}
For $\sigma=\infty$, the transverse equations \eqref{3inteval} become
\begin{equation}\label{3intevalinfty}
\begin{split}
\alpha_3'&=Iw_3'-\lambda \alpha_3,\\
\mu w_3''&=\lambda \hat v w_3 +\hat v w_3' -Kw_3' +\frac{I\lambda}{\mu_0}\alpha_3.
\end{split}
\end{equation}
This may be written as a first-order system $W'=A(x,\lambda)W$, where 
\begin{equation}\label{transverseA}
A(x,\lambda)=\begin{pmatrix}
0 &\frac{1}{\mu}&0\\
\lambda \hat v&\frac{\hat v-K}{\mu}&\frac{I \lambda}{\mu_0}\\
0 & \frac{I}{\mu}& -\lambda
\end{pmatrix}
\end{equation}
and $W=(w_3\ \mu w_3'\ \alpha_3)^T$, and used to compute a
{\it transverse Evans function} determining stability with respect
to perturbations in components $u_3$, $B_3$.

\subsection{Construction of the Evans function}\label{evansconst}
As described in \cite{MaZ3,Z1}, the above procedure may be
carried out for general hyperbolic--parabolic systems under
the standard assumptions 
\eqref{B}, \eqref{anon}, \eqref{nonchar}, and \eqref{diss}, 
to express the eigenvalue problem as a first-order
system $W'=A(x,\lambda)W$ with exponentially converging
coefficient
\be\label{expcoeff}
|A(x,\lambda)-A_\pm(\lambda)|\le Ce^{-\theta |x|},
\quad x\gtrless 0,
\ee
where the constant $C>0$ is uniformly bounded on bounded domains
in $\lambda$.

Moreover, defining
$ \Omega:= \{\Re \lambda \ge 0\}, $
we have under the same standard hypotheses 
the general fact \cite{MaZ3} that, on  $\Omega \setminus \{0\}$,
the system satisfies the {\it consistent splitting hypothesis}
of \cite{AGJ}:
the limiting coefficient matrices $A_\pm(\lambda)$
have no center subspaces, and the dimensions of their stable
and unstable subspaces agree and (by homotopy, using absence
of center subspace) are constant throughout $\Omega \setminus \{0\}$.
Further, the associated eigenprojections, analytic on
$\Omega\setminus \{0\}$ by spectral separation (again, absence
of center subspace), {\it extend analytically to $\lambda=0$},
so are analytic on all of the simply connected set $\Omega$.
By a standard construction of Kato \cite{Kato}, there exist
analytically chosen bases $(R_1^-,\dots, R_k^-)(\lambda)$ and
$(R_{k+1}^+,\dots R^-+N)(\lambda)$ of the unstable subspace of $A_-$ and
the stable subspace of $A_+$, respectively.

Appealing to the general construction of Appendix \ref{s:conj},
we may thus define the Evans function as
\ba\label{Ddef2}
D(\lambda)&:=\det (W_1^-, \dots, W_k^-, W_{k+1}^+, \dots, W_N^+)|_{x=0},
\ea
where, for $\lambda \in \Omega\setminus \{0\}$,
 $\{W_j^+\}$ and $\{W_j^-\}$ are analytically chosen 
bases of the manifolds of solutions of $W'=A(x,\lambda)W$
decaying as $x\to +\infty$
and $x\to -\infty$, respectively, with
\be\label{Wasympt}
W_j^\pm(\lambda, x) \sim e^{A_\pm(\lambda) x}R_j^\pm(\lambda)
\quad \hbox{\rm as } \; x\to \pm \infty.
\ee
Evidently, (i) $D$ is analytic for $\lambda\in \Omega$,
and (ii) $\lambda \in \Omega\setminus \{0\}$ is an eigenvalue
if and only if $D(\lambda)=0$; see \cite{MaZ3,Z1} for further discussion.
Moreover (see Appendix \ref{s:conj}), $D$ is continuous with respect
to model parameters $(a,J,K)$ or $(v_+,B_+, I)$.
The asymptotics \eqref{Wasympt} may be used as the basis for
numerical approximation of $D$; see \cite{Br1,Br2,BrZ,BDG,HuZ,Z3,Z4}.

\section{Analytical stability results}\label{s:analytical}

We begin by recording some analytical stability analyses
in special cases,
in particular certain asymptotic limits (small-amplitude, composite-wave,
and high-frequency limits) that are numerically difficult.

\subsection{Small-amplitude stability}\label{s:smallamp}
Consider first the small-amplitude limit, without loss of
generality (after rescaling) $v_-=1$ and $v_+\to 1$.

\begin{proposition}
For $J$, $K$ such that $0$ is a simple, genuinely nonlinear 
characteristic speed of the inviscid system for $v_+=1$, there is a unique
Lax-type profile connecting the rest points associated with $v=1$ and $v=v_+$ 
for $v_+$ sufficiently close to $1$, and this profile is Evans,
hence linearly and nonlinearly,
stable (both with respect to coplanar and transverse perturbations).
\end{proposition}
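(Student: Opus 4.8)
The plan is to pin down the profile by a small-amplitude (weak-shock) reduction, and then to verify the Evans condition (D) by reducing the Evans function to that of a scalar viscous Burgers shock, treating the coplanar and transverse blocks separately.

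First I would set $\epsilon := 1 - v_+ \to 0^+$ as amplitude parameter. By hypothesis, at the coincidence $v_+ = v_- = 1$ a single characteristic speed of the inviscid system meets the shock speed $s=-1$, is simple, and is genuinely nonlinear. Near this coincidence the traveling-wave ODE \eqref{redode} possesses a one-dimensional center manifold on which the reduced flow is, to leading order, a scalar Burgers flow $\dot z = z^2 - c^2 + O(\epsilon)$, the quadratic coming from genuine nonlinearity. This yields a monotone heteroclinic of amplitude $O(\epsilon)$ and width $O(\epsilon^{-1})$ connecting the rest points associated with $v=1$ and $v=v_+$. Since the shock is of Lax type, $\ell = d = 1$ by \eqref{led}, the orbit is a transversal saddle--node connection and hence unique up to translation; hyperbolicity of the endstates and exponential decay \eqref{expdecay} follow from Lemma \ref{hyp}. (Existence within the plane is in any case already furnished by Proposition \ref{Laxexist}.)

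For stability I would rescale the first-order eigenvalue system $W' = A(x,\lambda)W$ of \eqref{Amatrix2} by the diffusive Burgers scaling $x = \tilde x/\epsilon$, $\lambda = \epsilon^2 \tilde\lambda$, under which $A$ splits into a fast block governing the noncharacteristic transverse-hyperbolic and fast-parabolic modes and a slow scalar block carrying the genuinely nonlinear field. The fast block has uniformly spectrally separated growth and decay rates, so it contributes an Evans factor analytic and nonvanishing on $\{\Re\tilde\lambda \ge 0\}$ for $\epsilon$ small. The slow block converges, uniformly on compact subsets of $\{\Re\tilde\lambda \ge 0\}$, to the integrated eigenvalue equation of the scalar viscous Burgers shock, whose Evans function is explicit and nonvanishing on $\{\Re\tilde\lambda \ge 0\}$ (the origin being cleared by integration, cf.\ Proposition \ref{intcond} in the scalar case). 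Combining the two factors via the conjugation and tracking estimates \eqref{expcoeff}--\eqref{Wasympt} together with continuity of $D$ in the model parameters, the integrated Evans function $\tilde D$ is nonvanishing on $\{\Re\lambda \ge 0\}$ for $\epsilon$ sufficiently small; equivalently $D$ has exactly $\tilde\ell = 1$ root there, at $\lambda = 0$, which is precisely condition (D) with $\tilde\ell$ as in \eqref{tildel}. For the three-dimensional perturbations, the transverse system \eqref{3intevalinfty}--\eqref{transverseA} decouples; as $v_+\to 1$ the profile degenerates and $(\hat v,\hat u_2,\hat B_2)$ tend to constants, so \eqref{transverseA} tends to a constant-coefficient system whose endstate spectrum is strictly stable on $\{\Re\lambda \ge 0\}$ by dissipativity \eqref{diss}, and a perturbation argument gives nonvanishing of the transverse Evans function for small $\epsilon$. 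With (S) holding for ideal-gas MHD under noncharacteristicity, Proposition \ref{orbital} then upgrades (D) to linearized and nonlinear orbital stability.

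The hard part will be the uniform reduction of the coupled six-dimensional Evans system to the scalar Burgers model: one must prove uniform convergence on compact $\tilde\lambda$-sets while simultaneously ruling out zeros escaping to large $|\lambda|$ by a high-frequency tracking bound, and verify that the slow--fast splitting is clean, i.e.\ that the fast invariant subspaces stay uniformly transverse to the slow one throughout the transition regime. Controlling this intermediate-frequency range, where neither the scalar Burgers limit nor the high-frequency estimate is decisive on its own, is the delicate technical step; it is exactly here that genuine nonlinearity (ensuring the leading-order scalar flux is quadratic, hence the Burgers model) and the genuine-coupling consequence of \eqref{diss} (ensuring the transverse modes remain strictly stable) are both used.
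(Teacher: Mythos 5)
Your outline is correct, but it relates to the paper's proof in an instructive way: the paper's entire proof is two sentences of citation --- existence by Pego's center-manifold reduction \cite{Pe}, stability by the general small-amplitude stability theorem of \cite{HuZ1} --- whereas you in effect re-derive the content of those cited results. Your existence step (center manifold, genuine nonlinearity giving a quadratic, hence Burgers, reduced flow, a monotone heteroclinic of amplitude $O(\epsilon)$ and width $O(\epsilon^{-1})$, uniqueness from $\ell=d=1$) is exactly Pego's argument. For stability, your diffusively rescaled slow--fast Evans decomposition converging to the Burgers Evans function is the route of Plaza--Zumbrun \cite{PZ} (which is in the paper's bibliography), not of \cite{HuZ1}: the latter closes the intermediate- and high-frequency regimes by Kawashima-type energy estimates, which is what lets it avoid precisely the ``delicate technical step'' you flag --- uniform transversality of the slow/fast splitting and control of the frequency range where neither the Burgers limit nor high-frequency tracking is decisive --- at the price of using the symmetric-dissipative structure (S), which holds here for ideal-gas MHD. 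Two smaller points: (i) your transverse argument by perturbation off a constant-coefficient system needs uniformity down to $\lambda=0$ as the profile degenerates, which is not automatic since the low-frequency behavior lives at scale $\lambda\sim\epsilon^2$; within this paper there is a cleaner shortcut, namely that the small-amplitude Lax profile is monotone decreasing in $\hat v$ by Proposition \ref{Laxexist}, so transverse Evans stability is immediate from the energy estimate of Proposition \ref{enprop}, with no smallness needed; (ii) citing \cite{HuZ1} delivers the result for the full system in all components at once, so the coplanar and transverse cases need not be treated by separate mechanisms as in your sketch. In short, your approach is viable and essentially reconstructs the proofs of the cited theorems, but as written it is a program (with the intermediate-frequency uniformity left open) rather than a complete argument; the paper closes the same gap by appeal to \cite{Pe,HuZ1}.
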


\begin{proof}
The existence result follows by a more general result of Pego \cite{Pe}
obtained by center manifold reduction.
The stability result follows by a more general stability result
of \cite{HuZ1}.
\end{proof}



\subsection{Transverse stability of monotone profiles}\label{s:transmon}
Similarly as observed in the parallel case in \cite{BHZ},
in the case $\sigma=\infty$, transverse stability holds automatically
for profiles that are monotone decreasing in $\hat v$.
Thus, in our numerical stability study, it is necessary to test
transverse stability {\it only for nonmonotone profiles}.

\begin{proposition}[\cite{BHZ}]\label{enprop}
For $\sigma=\infty$, monotone-density profiles, $\hat v_x<0$,
are Evans stable with respect to transverse perturbations:
that is, they are three-dimensionally Evans stable if and only
if they are two-dimensionally Evans stable.
\end{proposition}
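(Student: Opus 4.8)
The plan is to exploit the fact that, for a planar background with $\hat u_3\equiv\hat B_3\equiv 0$, the transverse block \eqref{3intevalinfty} decouples from the coplanar equations. Hence the full three-dimensional first-order system $W'=A(x,\lambda)W$ is block diagonal, and its Evans function factors as a product $D=D_{2d}\cdot D_\perp$ of the two-dimensional Evans function and a transverse Evans function $D_\perp$ assembled from \eqref{3intevalinfty} with $W=(w_3,\mu w_3',\alpha_3)^\trans$. Once I show $D_\perp$ is nonvanishing on $\{\Re\lambda\ge 0\}$, the nonstable zero sets of $D$ and $D_{2d}$ coincide, so the Evans condition (D) holds in three dimensions if and only if it holds in two. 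Thus the whole proposition reduces to showing that \eqref{3intevalinfty} has no nontrivial solution $(w_3,\alpha_3)$ decaying at both $\pm\infty$ for any $\lambda$ with $\Re\lambda\ge 0$.

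The first reduction is to eliminate the \emph{undifferentiated} magnetic coupling, which carries no dissipation and is the source of trouble. Using the transport equation $\alpha_3'=Iw_3'-\lambda\alpha_3$ and $K=I^2/\mu_0$, I rewrite $\tfrac{I\lambda}{\mu_0}\alpha_3=Kw_3'-\tfrac{I}{\mu_0}\alpha_3'$, so that the cancellation $-Kw_3'+Kw_3'$ collapses the second equation of \eqref{3intevalinfty} to
\[
\mu w_3''=\lambda\hat v\,w_3+\hat v\,w_3'-\tfrac{I}{\mu_0}\alpha_3'.
\]
The coupling now enters only through $\alpha_3'$, which integrates by parts cleanly against $w_3$.

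Next I would run the energy estimate. Multiplying the displayed equation by $\overline{w_3}$, integrating over $\RR$, and taking real parts, the principal term gives $-\mu\int|w_3'|^2$; the convection term gives $\Re\int\hat v\,w_3'\overline{w_3}=-\tfrac12\int\hat v'|w_3|^2$ after one integration by parts; and the coupling term, after integrating by parts and substituting $Iw_3'=\alpha_3'+\lambda\alpha_3$ (so that $\Re\int\alpha_3\overline{\alpha_3'}=0$), contributes $\tfrac{\lambda_R}{\mu_0}\int|\alpha_3|^2$ with $\lambda_R:=\Re\lambda$. Collecting and moving the $\lambda_R$ terms to one side yields
\[
\lambda_R\Big(\int\hat v\,|w_3|^2+\tfrac{1}{\mu_0}\int|\alpha_3|^2\Big)=-\mu\int|w_3'|^2+\tfrac12\int\hat v'\,|w_3|^2,
\]
all boundary terms vanishing by the exponential decay of the profile (Lemma \ref{hyp}) and the consequent exponential decay of $L^2$ solutions of the eigenvalue ODE.

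To finish, I read off signs: since $\hat v>0$ the left coefficient is a genuine positive-definite energy, while monotonicity $\hat v'<0$ makes \emph{both} right-hand terms nonpositive, forcing $\lambda_R\le 0$; and when $\lambda_R=0$ the right side vanishes, giving $w_3'\equiv 0$, hence $w_3\equiv 0$ by decay and then $\alpha_3\equiv 0$ from the transport equation. Thus $D_\perp\neq 0$ on $\{\Re\lambda\ge0\}$ and transverse stability is automatic. The main obstacle — and the reason one must work in the integrated coordinates of \eqref{3intevalinfty} rather than the raw perturbation equations — is precisely this sign bookkeeping: only after substituting the transport relation do the convection term and the (nondissipative) magnetic contribution reorganize so that the single surviving indefinite term, $\tfrac12\int\hat v'|w_3|^2$, inherits the favorable sign of $\hat v'$. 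A naive estimate on the unintegrated system produces the analogous term with the opposite sign, and the argument does not close.
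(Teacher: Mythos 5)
Your proof is correct and follows essentially the same route as the paper's: your substitution $\tfrac{I\lambda}{\mu_0}\alpha_3=Kw_3'-\tfrac{I}{\mu_0}\alpha_3'$ via the transport equation is exactly the symmetrization \eqref{infsigmares} of the paper, and your energy identity is (up to an overall factor $\mu_0$) the paper's estimate $\Re \lambda \int (\hat v \mu_0 |w|^2+|\alpha|^2) = -\int \mu \mu_0 |w'|^2 +\int \hat v_x |w|^2$, with the same sign conclusion from $\hat v_x<0$. Your handling of the degenerate case $w_3\equiv 0$ via decay in the transport equation is the same observation the paper phrases as Evans stability of the resulting constant-coefficient equation for $\alpha_3$, so no further comment is needed.
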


\begin{proof}
Dropping subscripts,
we may rewrite \eqref{3intevalinfty} in symmetric form as
\begin{equation}\label{infsigmares}
\begin{aligned}
 \mu_0 \hat v \lambda w + \mu_0 \hat v w'  - I \alpha' &=\mu \mu_0 w'',\\
 \lambda \alpha  + \alpha' - I w' &=0.
\end{aligned}
\end{equation}
Taking the real part of the complex $L^2$-inner product of 
$w$ against the first equation and
$\alpha$ against the second equation and summing gives
$$
\Re \lambda( \int (\hat v \mu_0 |w|^2+|\alpha|^2) =
-\int \mu \mu_0 |w'|^2 +\int \hat v_x |w|^2 <0,
$$
a contradiction for $\Re \lambda \ge 0$ and $w$
not identically zero.  If $w\equiv 0$ on the other
hand, we have a constant-coefficient equation for $\alpha$,
which is therefore Evans stable. 
\end{proof}

\br
\textup{
In the above proof, we are using implicitly the fact that vanishing
of the Evans function on $\Re \lambda \ge 0$, $\lambda\ne 0$, 
away from essential spectrum of $L$, implies existence
of an eigenfunction decaying as $x\to \pm \infty$ \cite{GJ1,GJ2},
while vanishing at the point $\lambda=0$ embedded in the essential
spectrum implies existence of an $L^\infty $ eigenfunction \cite{ZH,MaZ3}.
}
\er

\br
\textup{
In the one-dimensional, parallel, case $B_2=B_3\equiv 0$,
$u_2=u_3\equiv 0$, the same argument yields stability if and
only if the corresponding gas-dynamical shock is stable \cite{BHZ}.
(Recall that in this case MHD profiles reduce to gas dynamical profiles
in $(v,u_1)$
\cite{FT,BHZ}.)
}
\er

\subsection{Stability of composite waves}\label{s:compstab}
We consider next the numerically difficult situation as
described in Section \ref{composite} of a family
of profiles passing closer and closer to one or more intermediate
rest points, i.e., composite
wave consisting of the approximate superposition of two or
more component profiles separated by a distance going to infinity.
This requires a computational domain $[-L_-,L_+]$ of size going
to infinity, hence arbitrarily large computational effort to
resolve directly.  However, it may be treated in straightforward
fashion by a singular perturbation analysis taking account of 
the limiting structure.

\subsubsection{Double Lax configuration}
Consider first the simplest case noted already in \cite{Br1,Br2}
of a family of overcompressive profiles in a four rest point configuration,
bounded by two Lax/intermediate Lax profile pairs.
Considering a family of overcompressive profiles $\bar U^\eps$ 
connecting $U_-$ and $U_+$ and passing closer and closer 
to an intermediate saddle $U_*$, parametrized by the distance
$\eps$ of the profile from $U_*$,
we find that the profiles approach composite waves consisting 
of the approximate superposition of the bounding Lax profiles 
$\bar U^1$ and $\bar U^2$ connecting
$U_-$ to $U_*$ and $U_*$ to $U_+$, separated by a distance $d(\eps)$
going to infinity as $\eps \to 0$.

\begin{proposition}\label{doublelax}
For the double-Lax configuration described,
stability of $\bar U^1$ and $\bar U^2$ implies 
stability of $\bar U^\eps$ for $\eps>0$ sufficiently small.
\end{proposition}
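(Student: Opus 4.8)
The plan is to verify the Evans condition (D) for $\bar U^\eps$ for all sufficiently small $\eps>0$ by relating its Evans function $D^\eps(\lambda)$ to the Evans functions $D^1(\lambda)$, $D^2(\lambda)$ of the bounding Lax profiles $\bar U^1$ (connecting $U_-$ to $U_*$) and $\bar U^2$ (connecting $U_*$ to $U_+$). The geometric input is that $\bar U^\eps$ spends a long interval, of length $d(\eps)\to\infty$, within an exponentially small neighborhood of the hyperbolic saddle $U_*$, while away from this plateau it is exponentially close to translates of $\bar U^1$ and $\bar U^2$. Writing the integrated eigenvalue problem as the first-order system \eqref{firstorder}, $W'=A(x,\lambda)W$, with $D^\eps$ the Wronskian \eqref{Ddef2} of the solution subspace decaying at $-\infty$ against that decaying at $+\infty$, I would track these two subspaces as they are transported across the plateau.

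On a compact subset of $\Omega\setminus\{0\}$ ($\Omega=\{\R\lambda\ge0\}$), the limiting coefficient matrix $A_*(\lambda):=A(U_*,\lambda)$ is hyperbolic with a spectral gap, by the consistent-splitting property of Section \ref{evansconst} together with the fact (Lemma \ref{hyp}) that $U_*$ is a hyperbolic rest point of the reduced flow. Since $A(x,\lambda)$ is exponentially (in $d(\eps)$) close to $A_*(\lambda)$ along the plateau, the Gap Lemma and roughness of exponential dichotomies imply that the span of solutions decaying at $-\infty$ is transported forward onto the unstable eigenspace of $A_*$, and the span of those decaying at $+\infty$ is transported backward onto its stable eigenspace. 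After normalizing the transported bases against analytically chosen (Kato) bases of these eigenspaces at $U_*$, the defining determinant decouples at the junction, yielding, for a nonvanishing analytic prefactor $c(\lambda)$ and some $\theta>0$,
\be
D^\eps(\lambda)=c(\lambda)\,D^1(\lambda)\,D^2(\lambda)\,\big(1+O(e^{-\theta d(\eps)})\big),
\ee
uniformly on the compact set. Since $\bar U^1,\bar U^2$ are Evans stable, $D^1D^2$ is bounded away from zero there, so $D^\eps$ is nonvanishing on compact subsets of $\Omega\setminus\{0\}$ once $\eps$ is small; a standard uniform high-frequency bound excludes nonstable zeros outside a fixed compact set, leaving only a neighborhood of $\lambda=0$.

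That neighborhood is the delicate part, and the \textbf{main obstacle}: there the spectral gap of $A_*(\lambda)$ closes as slow modes cross the imaginary axis, the dichotomy estimate above degenerates, and $\lambda=0$ sits embedded in the essential spectrum, so the factorization cannot simply be pushed to the origin. A lower bound on the order of the zero is nevertheless automatic, since differentiating the $\tilde\ell$-parameter overcompressive family of Proposition \ref{intexist} with respect to its parameters produces $\tilde\ell$ genuine eigenfunctions at $\lambda=0$; the real content is the matching upper bound, namely that no further zeros of $D^\eps$ approach $\Omega$ near the origin as $\eps\to0$. I would establish this by a Lyapunov--Schmidt / reduced-Evans-function reduction isolating the finitely many slow directions attached to the translational zeros of $D^1$ and $D^2$, carried out in the joint regime $\lambda\to0$, $\eps\to0$ with $\lambda$ rescaled on the scale $d(\eps)^{-1}$. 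The aim is to show the reduced $\tilde\ell\times\tilde\ell$ determinant has an isolated zero of order exactly $\tilde\ell$ at $\lambda=0$ and none nearby, the two component translation modes combining into the translation and internal-family directions of $\bar U^\eps$. Matching this local count with the nonvanishing on $\Omega\setminus\{0\}$ established above yields (D) for $\bar U^\eps$, hence by the results of Section \ref{s:stabsec} its linearized and nonlinear stability.
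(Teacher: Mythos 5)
There is a genuine gap, and it is the one you yourself flag: the entire content of the proposition, on your formulation, gets deferred to an unexecuted program near $\lambda=0$ (the Lyapunov--Schmidt/rescaled reduction showing the reduced $\tilde\ell\times\tilde\ell$ determinant has a zero of order exactly $\tilde\ell$ and no nearby roots entering $\{\Re\lambda\ge 0\}$). You state the aim of that reduction but do not carry it out, and it is precisely the delicate part, since in your unintegrated formulation each stable Lax component has $D^j(0)=0$ from translation invariance, so the product $D^1D^2$ vanishes at the origin and nonvanishing of the factorization tells you nothing there; the lower bound on the order of the zero that you correctly extract from the overcompressive family does not rule out extra roots of $D^\eps$ sliding into the closed half-plane as $\eps\to 0$.

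The missing idea, which is how the paper's proof makes the difficulty evaporate, is to run the same multi-wave argument on the \emph{integrated} Evans functions and invoke Proposition \ref{intcond}. Since $\bar U^1$, $\bar U^2$ are Lax and $\bar U^\eps$ is overcompressive, integration removes all translational zeros on both sides of the comparison: Evans stability of the components means $\tilde D^1$ and $\tilde D^2$ are nonvanishing on \emph{all} of $\{\Re\lambda\ge 0\}$, origin included, while the stability condition (D) for $\bar U^\eps$ is equivalent to nonvanishing of $\tilde D^\eps$ on the same closed half-plane. The paper then cites the standard multi-wave spectral perturbation result (\cite{Z7}, see also \cite{OZ}) that $\tilde D^\eps$ converges to a nonvanishing analytic multiple of $\tilde D^1\tilde D^2$, so that the zeros of $\tilde D^\eps$ approach the (empty) union of the component zeros, and stability follows immediately --- no separate treatment of a neighborhood of the origin, no mode counting, and no reduced-determinant analysis. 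Your plateau/dichotomy tracking is essentially the mechanism behind that cited factorization, so the first half of your argument is sound; but as written the proof is incomplete, and the obstruction you identify as fundamental is an artifact of working with the unintegrated Evans function rather than passing to integrated coordinates, which the Lax/overcompressive structure of this configuration makes available.
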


\begin{proof}
By a standard multi-wave argument, as described in the conservation
law setting in \cite{Z7} and (in slightly different periodic context) 
\cite{OZ}, the spectrum of any such composite wave $\bar U^\eps$ approaches 
the direct sum of the spectra of its component waves 
$\bar U^1$ and $\bar U^2$ as $\eps \to 0$.
More precisely, the Evans function $\tilde D^\eps$ associated
with $\bar U^\eps$ approaches a nonvanishing analytic multiple
of the product of the Evans functions $\tilde D^1$ and $\tilde D^2$
associated with $\bar U^1$ and $\bar U^2$, and so the
zeros of $\tilde D^\eps$ approach the union of
the zeros of $\tilde D^1$ and $\tilde D^2$; see \cite{Z7} for details.
In the case that $\bar U^1$ and $\bar U^2$ are stable Lax waves,
$\tilde D^1$ and $\tilde D^2$ are nonvanishing on $\Re \lambda \ge 0$,
and so the the union of their zeros is empty.
It follows that $D^\eps$ for $\eps$ sufficiently small is nonvanishing
on $\Re \lambda \ge 0$, giving the result.
\end{proof}

\subsubsection{Undercompressive configurations}
Next, consider the more complicated examples of 
Section \ref{composite}, of doubly composite Lax profiles $\bar U^\eps$ 
composed of the approximate superposition of
a Lax profile $\bar U^1$ and an undercompressive profile $U^2$,
and of triply composite overcompressive profiles $\bar U^{\eps_1,\eps_2}$
composed of the approximate superposition of
a Lax profile $\bar U^1$, an undercompressive profile $U^2$,
and a Lax profile $\bar U^3$, where the parameter $\eps$ (resp. 
$\eps=(\eps_1,\eps_2)$) indexes
distance of the profile from the intermediate rest point (resp. points).

\begin{proposition}\label{doubleuc}
For either the Lax--undercompressive or Lax--undercompressive--Lax
configurations described,
stability of the component waves $\bar U^j$ implies
that $\tilde D^\eps$ has at most one unstable root.
\end{proposition}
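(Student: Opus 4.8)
The plan is to follow the multi-wave factorization argument of \cite{Z7,OZ} already used in the proof of Proposition \ref{doublelax}, now combined with the type-dependent stability characterization recorded in Proposition \ref{intcond}. Recall from \cite{Z7} that the integrated Evans function $\tilde D^\eps$ of the composite wave $\bar U^\eps$ converges, as $\eps\to 0$, to a nonvanishing analytic multiple of the product of the integrated Evans functions of its component waves: to a nonvanishing multiple of $\tilde D^1\tilde D^2$ in the Lax--undercompressive case, and of $\tilde D^1\tilde D^2\tilde D^3$ in the Lax--undercompressive--Lax case, uniformly on compact subsets of $\{\Re \lambda\ge 0\}$. Multiplying by a nonvanishing analytic factor does not move zeros, so it suffices to count the zeros of the limiting product.

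Next I would carry out that count on the nonstable half-plane. By Proposition \ref{intcond}, stability of the Lax components $\bar U^1$ (and, in the triple case, $\bar U^3$) is equivalent to condition $(\tilde D)$, that $\tilde D^1$ (and $\tilde D^3$) be nonvanishing on $\{\Re \lambda\ge 0\}$, whereas stability of the undercompressive component $U^2$ is equivalent to condition $(\tilde D')$, that $\tilde D^2$ have on $\{\Re \lambda\ge 0\}$ exactly one zero, of multiplicity one, located at $\lambda=0$. Hence in both configurations the limiting product has on $\{\Re\lambda\ge 0\}$ precisely one zero, simple, at $\lambda=0$.

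Finally I would transfer this count to $\tilde D^\eps$ for small $\eps>0$ via Hurwitz's theorem. Since the limiting zero sits on the boundary $\{\Re\lambda=0\}$, I would work on a slightly enlarged open domain $\Omega_\delta:=\{\Re\lambda>-\delta\}$ intersected with a large disk, chosen — using the high-frequency bound that $\tilde D^\eps$ is nonvanishing for $|\lambda|$ large, uniformly in $\eps$ — so that the limiting product has exactly the one zero $\lambda=0$ in $\Omega_\delta$ and none on $\partial\Omega_\delta$. Rouché's theorem then yields that $\tilde D^\eps$ has exactly one zero in $\Omega_\delta$ for $\eps$ sufficiently small, while the high-frequency bound excludes zeros of $\tilde D^\eps$ on $\{\Re\lambda\ge 0\}$ outside the disk; as $\{\Re\lambda\ge 0\}\cap\{\text{disk}\}\subset\Omega_\delta$, it follows that $\tilde D^\eps$ has at most one root on $\{\Re\lambda\ge 0\}$, as claimed.

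The reason the conclusion is only ``at most one,'' rather than the ``exactly one'' (equivalently, ``none'') obtained in the double-Lax case, is exactly this boundary effect: the single simple zero contributed by the undercompressive factor lies on the imaginary axis, and once $\eps$ is turned on it may perturb either into the stable half-plane $\{\Re\lambda<0\}$, leaving no nonstable roots, or into the closed nonstable half-plane, leaving one. I expect the main obstacle to sharpening the statement to be precisely the determination of which alternative occurs, since the leading-order factorization is blind to it; resolving it would require a finer, Melnikov- or bifurcation-type expansion of the splitting of the embedded zero as the component waves are pulled apart.
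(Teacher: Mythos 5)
Your proposal is correct and takes essentially the same route as the paper: both arguments rest on the multi-wave factorization of \cite{Z7} already invoked for Proposition \ref{doublelax}, under which the zeros of $\tilde D^\eps$ approach the union of the zeros of the component Evans functions, combined with the characterization of Proposition \ref{intcond} that stable Lax factors are nonvanishing on $\{\Re \lambda \ge 0\}$ while the stable undercompressive factor has a single simple zero at $\lambda=0$. Your explicit Rouch\'e/high-frequency bookkeeping, and your closing remark that the embedded zero at $\lambda=0$ may perturb into either half-plane (which is exactly why only ``at most one'' can be concluded), merely spell out what the paper leaves implicit and agree with its subsequent stability-index resolution in Corollary \ref{doubleucindex}.
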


\begin{proof}
This follows by the observation, as in the proof of Proposition
\ref{doublelax}, that the zeros of $\tilde D^\eps$ approach
the union of the zeros of $\tilde D^j$ as $\eps\to 0$, together
with the fact that for 
stable Lax waves the associated Evans function has no zeros
on $\Re \lambda \ge 0$, while for stable undercompressive waves,
the associated Evans function has a single zero at $\lambda=0$
(see Proposition \ref{intcond}).
From this we may conclude that $\tilde D^\eps$ has at most one
zero on $\Re \lambda \ge 0$, giving the result.
\end{proof}

With the reduction to at most a single root, stability 
could in principle be decided as in \cite{CHNZ,Z7} by 
examination of the
mod two {\it stability index} of \cite{GZ,MaZ3}, a product
$\Gamma=\gamma \Delta$ of a transversality coefficient
$\gamma$ for the traveling wave connection and a low-frequency
stability determinant $\Delta$, both real-valued,
whose sign determines the parity of the number of unstable roots.
The boundary case $\Gamma=0$ corresponds to instability through
an extra root at $\lambda=0$ \cite{ZH,MaZ3}; hence, Evans stability
implies nonvanishing of $\Gamma$, $\gamma$, and $\Delta$.

The transversality coefficient $\gamma$ is a Wronskian of
the linearized traveling-wave ODE measuring
transversality of the intersection of the unstable manifold at
$U_-$ with the stable manifold at $U_+$ of the traveling-wave ODE,
with $\gamma\ne 0$ corresponding to transversality.
From the composite wave structure, we may deduce that transversality
of the component waves (a consequence of Evans stability, as noted
above) implies transversality of $\bar U^\eps$ for $\eps>0$
sufficiently small, or nonvanishing of $\gamma$; with further
effort, the sign of $\gamma$ may be deduced as well.

The low-frequency stability determinant $\Delta$ is for Lax
shocks equal to the Lopatinski determinant determining inviscid
stability; see the discussions of \cite{ZS,Z1}.
In particular, it is independent of the nature of the viscous
regularization, and readily computable.
For overcompressive shocks, it involves also certain variations
associated with the linearized traveling-wave ODE, as described
in \cite{ZS,Z1}, which though more complicated
can also be computed in the limit $\eps\to 0$, deciding stability.

In the Lax--undercompressive case, 
we can avoid such computations by the following observation.

\begin{corollary}\label{doubleucindex}
For the Lax--undercompressive configurations, 
suppose that the limiting endstates 
$U^0_\pm=\lim_{\eps \to 0}U_\pm^\eps$ have a stable connecting
viscous profile for some choice of viscosity ratios $r=\mu/\tau$.
Then, stability of the component waves $\bar U^j$, together
with stability (resp. instability) 
of {\bf some} composite wave $\bar U^{\eps_0}$ for
$\eps_0>0$ sufficiently small implies
stability (resp. instability) 
of {\bf all} composite waves $\bar U^\eps$ for $\eps>0$ sufficiently
small.
\end{corollary}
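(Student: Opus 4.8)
The plan is to use the mod-two stability index $\Gamma=\gamma\Delta$ discussed above to convert the count of unstable roots into a sign condition, and then to show that this sign is constant along the family $\{\bar U^\eps\}_{\eps>0}$. By Proposition \ref{doubleuc}, stability of the component waves $\bar U^j$ guarantees that the integrated Evans function $\tilde D^\eps$ has at most one root on the nonstable half-plane $\{\Re\lambda\ge0\}$; that is, the number $N(\eps)$ of unstable roots lies in $\{0,1\}$ for all $\eps>0$ sufficiently small. Since $\sgn\Gamma^\eps=(-1)^{N(\eps)}$ by the defining property of the stability index \cite{GZ,MaZ3}, it suffices to prove that $\Gamma^\eps$ is nonvanishing with a sign independent of $\eps$ for $\eps>0$ small: then $N(\eps)$ is constant on this range, and $N(\eps_0)=0$ (resp.\ $=1$) for one such $\eps_0$ forces $N(\eps)=0$ (resp.\ $=1$) for all, which is exactly the asserted transfer of (in)stability.

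First I would treat the transversality factor $\gamma^\eps$. As recorded in the discussion preceding this corollary, Evans stability of each component wave $\bar U^j$ forces $\gamma^j\ne0$, i.e.\ transversality of the corresponding traveling-wave connection; and, by the composite-wave structure, transversality of the components implies transversality of $\bar U^\eps$ for $\eps>0$ sufficiently small, with $\gamma^\eps$ moreover of a fixed sign. Thus $\gamma^\eps\ne0$ with constant sign on the relevant range of $\eps$, and the whole burden of the sign analysis is shifted onto the low-frequency factor $\Delta^\eps$.

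Next I would treat the low-frequency determinant $\Delta^\eps$. The key structural input is that, for a Lax shock such as the intermediate Lax connection $\bar U^\eps$, the factor $\Delta$ coincides with the inviscid Lopatinski determinant and hence depends only on the endstates $U_\pm^\eps$, not on the viscous regularization (see \cite{ZS,Z1}). Consequently $\Delta^\eps$ depends continuously on $\eps$ through $U_\pm^\eps$ and converges, as $\eps\to0$, to the Lopatinski determinant $\Delta^0$ of the limiting inviscid shock with endstates $U^0_\pm$. The hypothesis enters precisely here: since $U^0_\pm$ admits a stable connecting viscous profile for some viscosity ratio $r$, Evans stability of that profile gives $\Gamma=\gamma\Delta^0\ne0$, whence $\Delta^0\ne0$; and because $\Delta^0$ is the inviscid determinant, this nonvanishing is independent of the choice of $r$ and therefore applies to the present composite-wave limit (whose limiting ratio $r_*$ need not coincide with $r$). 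By continuity, $\Delta^\eps\ne0$ with the sign of $\Delta^0$ for $\eps>0$ sufficiently small.

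Combining the two steps, $\Gamma^\eps=\gamma^\eps\Delta^\eps$ is nonvanishing and of constant sign for $\eps>0$ small, so $N(\eps)$ is constant and the conclusion follows. The main obstacle is the $\Delta$-step: one must justify that the composite-wave low-frequency determinant extends continuously to $\eps=0$ with limit the inviscid Lopatinski determinant $\Delta^0$, and that the hypothesized existence of a stable profile at $U^0_\pm$ (for a possibly different viscosity ratio) legitimately certifies $\Delta^0\ne0$ via the regularization-independence of $\Delta$. The $\gamma$-step, by contrast, is handled directly by the multi-wave/composite structure already invoked in the proof of Proposition \ref{doubleuc}, and it is the reduction to at most a single unstable root supplied by that proposition that makes the parity argument decisive rather than merely heuristic.
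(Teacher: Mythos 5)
Your proposal is correct and follows essentially the same route as the paper's own proof: both arguments run through the stability index $\Gamma=\gamma\Delta$, obtaining $\gamma\neq 0$ from transversality of the component waves (a consequence of their Evans stability) and $\Delta\neq 0$ from the regularization-independence of the Lopatinski determinant for Lax-type shocks together with the hypothesized stable profile connecting the limiting endstates $U^0_\pm$ for some viscosity ratio. The conclusion then follows exactly as you say, from the constant sign of $\Gamma^\eps$ combined with the at-most-one-unstable-root reduction supplied by Proposition \ref{doubleuc}.
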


\begin{proof}
Since the composite wave is of Lax type, $\Delta$
is independent of the viscous regularization.  It follows that 
$\Delta\ne0$ for $\eps>0$ sufficiently small
 if there is an Evans stable profile for some choice
of viscous regularization connecting the limiting
endstates $U^0_\pm$.  (Recall from the discsussion above
that Evans stability implies $\Delta \ne 0$ \cite{MaZ3,Z1}.)
Since $\gamma\ne 0$ for $\eps>0$ sufficiently small, as observed
previously, we thus have that $\Gamma \ne 0$ for $\eps>0$ sufficiently
small, and thus $\Gamma $ is of fixed sign.
It follows that either {\it all} profiles $\bar U^\eps$ are stable
for $\eps>0$ sufficiently small, or
{\it no} profiles $\bar U^\eps$ are stable
for $\eps>0$ sufficiently small, yielding the result.
\end{proof}

Using Corollary \ref{doubleucindex}, we may conclude by numerical
evaluations of component waves and
 a sample of composite waves with $\eps$ small but nonzero
the stability of composite Lax--undercompressive waves in the 
numerically inaccessible $\eps\to 0$ limit. 

\br\label{triplermk}
\textup{
Supposing that both standard Lax and composite Lax waves composed of 
Lax--undercompressive waves have
been determined to be stable, and viewing the
Lax--undercompressive--Lax composites as the composition
of Lax$=$Lax--undercompressive and Lax waves,
we obtain the partial result that triply composite
waves $\bar U^{\eps_1, \eps_2}$ are stable for $\eps_1>0$
sufficiently small and $0<\eps_2< E(\eps_1)$, where $E>0$
depends on $\eps_1$.
However, to obtain a full result, it appears that one must carry
out the more complicated computations described in the 
introductory discussion above, and so we do not complete this case.
}
\er

\subsection{The large-amplitude limit}\label{s:largeamp}
We now consider behavior as shocks of different types 
approach their maximal amplitudes.
As computed in Appendix \ref{s:liim}, for four rest point
configurations, taking without loss of generality
$v_1 < v_2<K<v_3<v_4=1$, the minimal value of $v_1$ is $0$ and the maximum
value of $v_2$ is 
$$
\underline v(J, K)=K+\frac{J}{2}-\sqrt{\frac{J^2}{4}+J(1-K)}\le K.
$$
Likewise, the minimum value of $v_3$ is
$$
\overline v(J, K)=K+\frac{J}{2}+\sqrt{\frac{J^2}{4}+J(1-K)}\ge K.
$$

Thus, for fixed $J>0$, $K\ge 0$, the maximum-amplitude Lax $1$-shock
connects the rest points associated with $v_4=1$ and
$v_3=\overline v >K$, and the maximum-amplitude intermediate
Lax $1$-shock the rest points associated with
$v_4=1$ and $v_2=\underline v <K$.
The maximum-amplitude Lax $2$-shock connects the rest
points associated with $v_2=\underline v$ and $v_1\to 0$,
and the maximum-amplitude intermediate Lax $2$-shock the rest
points associated with $v_3=\overline v$ and $v_1\to 0$.
The maximum-amplitude (intermediate) overcompressive shock connects
the rest points associated with $v_4=1$ and $v_1\to 0$.
For each of these limits, also $a\to 0$.

(Here and below, we refer to two-dimensional shock types.)

\begin{proposition}\label{p:conv}
For fixed $J$, $K$, the Evans function 
associated with Lax $1$-shocks or intermediate Lax $1$-shocks
converges in the large-amplitude limit, uniformly on compact
subsets of $\Re \lambda \ge 0$, to the Evans function
associated with the zero-pressure limit $a=0$.
\end{proposition}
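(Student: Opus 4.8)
The plan is to deduce the stated convergence from three ingredients: convergence of the profiles, convergence of the Evans-system coefficient matrix $A^a(x,\lambda)$ of \eqref{Amatrix2}, and the general continuity of the Evans function with respect to its coefficients developed in Section \ref{evansconst} and Appendix \ref{s:conj}. The feature that makes the Lax $1$- and intermediate Lax $1$-families tractable --- and that singles them out in the statement --- is that along them the specific volume $\hat v$ remains confined, for $a$ sufficiently small, to a fixed compact subinterval of $(0,\infty)$ bounded away from $0$ (between the endstate $v_3=\overline v$, resp. $v_2=\underline v$, and $v_4=1$; one checks that $\overline v,\underline v>0$ throughout the four rest point region $\CalR$). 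Consequently the pressure contribution $a\hat v^{-\gamma}$ and each of its $\hat v$-derivatives appearing in $h(\hat v)$ and $f(\hat v)$ are bounded and tend to $0$ uniformly as $a\to0$; this is exactly what fails for Lax $2$-shocks, where $v_1\to0$ and the product $a\hat v^{-\gamma}$ need not vanish, so a different analysis is required there.

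First I would establish profile convergence. By Proposition \ref{rhprop} the endstates depend smoothly on $a$ and converge as $a\to0$ to the zero-pressure endstates, which remain distinct and strictly positive in the range considered. Treating \eqref{redode} as an ODE depending on $a$ and on the endstates, I would show that the connecting orbit $(\hat v,\hat w)^a$ --- which exists and is monotone by Proposition \ref{Laxexist} --- converges to the $a=0$ orbit uniformly on compacts, and, crucially, with a uniform exponential decay rate to the endstates; the remaining components $\hat u_2=\hat w$ and $\hat B_2=(B_-+I\hat w)/\hat v$ are then determined algebraically and converge as well. The uniform rate is supplied by Lemma \ref{hyp}, provided its hypotheses \eqref{nonchar} and \eqref{diss} survive the limit (see below); granting this, the endstates stay uniformly hyperbolic rest points of the reduced ODE, and a standard Gronwall/invariant-manifold argument upgrades pointwise convergence to convergence in a uniformly exponentially weighted norm.

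Given profile convergence, the coefficient matrix converges. Since $\hat v$ is bounded away from $0$, every entry of $A^a(x,\lambda)$ in \eqref{Amatrix2} is a smooth function of $(\hat v,\hat v',\hat u_2',\hat B_2,a,\lambda)$ with no singularity along the profile, so $A^a(x,\lambda)\to A^0(x,\lambda)$ and $A^a_\pm(\lambda)\to A^0_\pm(\lambda)$ uniformly for $\lambda$ in compact subsets of $\{\Re\lambda\ge0\}$, with the exponential tails in \eqref{expcoeff} controlled uniformly in $a$. I would then invoke the construction of Section \ref{evansconst}: on $\{\Re\lambda\ge0\}$ the limiting matrices satisfy the consistent splitting hypothesis with locally constant stable/unstable dimensions, the Kato bases $R_j^\pm(\lambda)$ depend continuously on $A_\pm(\lambda)$ and extend analytically to $\lambda=0$, and the conjugation (gap) lemma of \cite{MaZ3,Z1} represents the decaying solutions $W_j^\pm$ as $e^{A_\pm(\lambda)x}R_j^\pm(\lambda)$ times a factor tending to the identity, uniformly on compacts. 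Evaluating the Wronskian \eqref{Ddef2} at $x=0$ then yields $D^a(\lambda)\to D^0(\lambda)$ uniformly on compact subsets of $\{\Re\lambda\ge0\}$ (and likewise for the integrated version), as claimed.

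The main obstacle is the degeneracy of the limit $a\to0$: the sound speed $c^2=a\gamma\hat v^{-\gamma-1}$ vanishes, so the pressure characteristic field collapses and, by \eqref{phi}, a pair of inviscid characteristic speeds goes to $0$. The work is to verify that the Evans construction nonetheless extends continuously to $a=0$, i.e., that the limiting endstates remain strictly noncharacteristic \eqref{nonchar} and that the stable/unstable dimensions of $A^a_\pm(\lambda)$ do not jump as $a\to0$ on $\{\Re\lambda\ge0\}$. Noncharacteristicity is the point to check carefully: because the shock speed is $s=-1\ne0$, in the shock frame the two vanishing speeds are shifted to $\pm1\ne0$, while the remaining pair $\pm\sqrt{(I^2+B^2)/\mu_0\hat v}$ stays bounded away from $s=-1$ for the endstate values at hand, so \eqref{nonchar} persists and Lemma \ref{hyp} applies uniformly down to $a=0$. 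Securing this uniform splitting is the only nonroutine step; once it is in hand, continuous dependence of the Kato basis and of the conjugated solutions closes the argument.
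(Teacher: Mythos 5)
Your proposal is correct and follows essentially the same route as the paper: the paper's proof is a one-paragraph appeal to continuous dependence of the Evans function on parameters (via the conjugation lemma of Appendix \ref{s:conj}), valid because along Lax $1$- and intermediate Lax $1$-profiles $v$ stays bounded away from the pressure singularity at $v=0$ (since $\underline v,\overline v\ne 0$) and $a=0$ is bounded away from the characteristic boundaries $a_*(J,K)>0$ and $A(J,K)>0$ of Appendix \ref{s:char}. You simply unwind the same argument in more detail (profile and coefficient convergence, Kato bases, uniform consistent splitting), with your direct computation of the shifted characteristic speeds at $a=0$ playing the role of the paper's citation of the characteristic boundaries.
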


\begin{proof}
An immediate consequence of the general property of
continuous dependence on parameters of the Evans function,
so long as the profile remains noncharacteristic and $v$
remains bounded from the value $v=0$ at which the pressure
function becomes singular.
Noting that $a=0$ is bounded from the values $a_*(J,K)>0$
and $A(J,K)>0$ at which profiles become characteristic
(see Appendix \ref{s:char}), and that $\underline v$, $\overline v\ne 0$,
we obtain the result.
\end{proof}

The important implication of Proposition \ref{p:conv} is
that stability of $1$-shocks may be assessed numerically by computations
on a finite mesh, even in the large-amplitude limit.

\medskip
{\bf Conjecture.}  {\it We conjecture that, similarly, the Evans
function associated with Lax $2$-shocks or overcompressive shocks
converge in the large-amplitude limit $v_+,a\to 0$ to an Evans
function associated with the zero-pressure limit $a=0$.}

{\it Motivation}.  In the parallel case $J=0$, this was shown by
a delicate asymptotic ODE analysis in \cite{HLZ,BHZ}.
Our numerics (Section \ref{s:num}) indicate similar
behavior in the general case; moreover, the limiting 
structure of the equations is quite similar, suggesting that
the proof of \cite{HLZ,BHZ} might extend with further care
to nonzero values of $J$.

\subsubsection{Large-amplitude limit for transverse equations}
As observed in \cite{BHZ},
the coefficient matrix $A(x,\lambda)$ for
the transverse eigenvalue system \eqref{transverseA}
is smooth (indeed, linear!) in the profile variable $\hat v$,
hence we obtain convergence in the large-amplitude limit
of the transverse Evans function 
by the standard property of continuous dependence of the Evans
function on parameters, Appendix \ref{s:conj}, so long as
the profile $\hat v$ converges uniformly exponentially
to its endstates, independent of $a\ge 0$, as it 
does in the regular limit arising for Lax $1$-shocks, and
appears numerically to do for Lax $2$-shocks and overcompressive
shocks as well.
Our numerics (Section \ref{s:num}) indeed suggest convergence.

\subsection{The high-frequency limit}\label{HF}
Finally, we recall the following high-frequency asymptotics
established in \cite{HLyZ1}, which we will use in our
numerical studies to truncate
the computational domain in $\lambda$.  

\begin{proposition}[\cite{HLyZ1}] \label{hflem}
Let $\tilde D$ be the (integrated) Evans function associated with a
noncharacteristic shock profile of
\eqref{MHD} (with either $\sigma=\infty$ or $\sigma $ finite). 
Then, for some constants $C$, $\alpha$, 
\begin{equation}\label{hflim}
\lim_{|\lambda|\to \infty}
\tilde D(\lambda)/ e^{ \alpha \lambda^{1/2}}=C,
\; \hbox{\rm uniformly on $\Re \lambda\ge 0$}.
\end{equation}
In particular, $\tilde D$ does not vanish for $\Re \lambda \ge 0$
and $|\lambda|$ sufficiently large.
\end{proposition}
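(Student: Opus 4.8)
\emph{Plan.} The statement is a large-$|\lambda|$ asymptotic analysis of the integrated Evans function $\tilde D$, and I would carry it out by a WKB/tracking argument built on the fast--slow splitting of the first-order system $W'=A(x,\lambda)W$ of Section \ref{evansconst}. First I would introduce the high-frequency rescaling, writing $\lambda=\rho^2\hat\lambda$ with $\rho=|\lambda|^{1/2}$ and $\Re\lambda^{1/2}\ge 0$, and examine the eigenvalues of $A(x,\lambda)$ as $|\lambda|\to\infty$. These separate into two groups: \emph{diffusive} modes arising from the parabolic block $\tilde\CalB\,w''\approx\lambda w$, with local rates $\mu\sim\pm\sqrt{\lambda/b_k(\hat v(x))}$ scaling like $\sqrt\lambda$ and depending on the profile through the diffusion eigenvalues $b_k(\hat v(x))$; and \emph{transport} (hyperbolic) modes coming from the first-order part, with local rates scaling like $\lambda$ but, to leading order, \emph{independent of $x$}. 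The dissipativity/genuine-coupling hypothesis \eqref{diss} guarantees that no diffusive rate degenerates and that the two groups stay spectrally separated on $\Re\lambda\ge0$ for $|\lambda|$ large, so that $A(x,\lambda)$ admits a uniform block-diagonalization into fast and slow parts.

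Next I would apply a high-frequency tracking/conjugation lemma (in the spirit of the gap lemma of \cite{GZ} and the estimates of \cite{HLyZ1,MaZ3}) to show that, after block-diagonalization, the decaying solution manifolds at $\pm\infty$ are approximated, with error $o(1)$ uniformly on $\Re\lambda\ge0$ as $|\lambda|\to\infty$, by the WKB modes $\exp\big(\int^x\mu_j(y,\lambda)\,dy\big)\,r_j(y)$. Tracking these to $x=0$ and renormalizing against the reference exponentials $e^{A_\pm(\lambda)x}R_j^\pm$ of \eqref{Wasympt}, the transport modes contribute only $O(1)$, \emph{analytic} factors, since their leading rates are $x$-independent and hence the phase mismatch $\int(\mu_j-\mu_{j,\pm})\,dy$ is $O(1)$; whereas each diffusive mode contributes a factor $\exp\!\big(\sqrt\lambda\int(b_k^{-1/2}(\hat v(y))-b_{k,\pm}^{-1/2})\,dy\big)$, the integrand being absolutely integrable by the exponential convergence \eqref{expdecay} of the profile. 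Assembling these column contributions in the Wronskian \eqref{Ddef2} then yields $\tilde D(\lambda)=e^{\alpha\lambda^{1/2}}\big(C+o(1)\big)$, with $\alpha$ the signed sum of the diffusive integrals and $C$ the limit of the surviving slow/transport determinant.

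It then remains to verify that $C\ne0$, which is where noncharacteristicity \eqref{nonchar} enters: the surviving $O(1)$ factor is, up to nonvanishing analytic multiples, a reduced purely hyperbolic connection determinant built from the transport modes of $dF(U_\pm)-sI$, and its nonvanishing is precisely the noncharacteristic condition. Given $C\ne0$ and $e^{\alpha\lambda^{1/2}}\ne0$, the ``in particular'' conclusion that $\tilde D\ne0$ for $\Re\lambda\ge0$ and $|\lambda|$ large is then immediate.

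\textbf{Main obstacle.} The delicate point is establishing the remainder estimates \emph{uniformly up to the boundary} $\Re\lambda=0$ and out to $|\lambda|=\infty$ at once, including the correct branch of $\lambda^{1/2}$ and any turning-point region where the $\sqrt\lambda$ diffusive scale and the $\lambda$ transport scale would otherwise interact. Making the tracking lemma quantitative so that the $o(1)$ error is genuinely uniform on all of $\{\Re\lambda\ge0\}$, rather than only on sectors bounded away from the imaginary axis, is the heart of the matter and the step I expect to require the most care; this is exactly what is carried out in \cite{HLyZ1}, whose method I would follow.
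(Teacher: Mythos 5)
Your plan is essentially the paper's: the paper gives no new high-frequency analysis, but observes that the tracking-lemma proof of \cite{HLyZ1} (built on the conjugation/tracking estimates of \cite{MaZ3,PZ}) applies verbatim to any hyperbolic--parabolic system satisfying \eqref{B}, \eqref{anon}, \eqref{nonchar}, \eqref{diss}, \emph{provided the hyperbolic modes are convected at constant speed}. That proviso is precisely the one assertion you make without justification, namely that the transport rates are, to leading order, independent of $x$. This is false for general systems --- the hyperbolic characteristic speeds vary along the profile, and the same tracking argument then yields only $\tilde D(\lambda)\sim Ce^{\alpha\lambda^{1/2}+\beta\lambda}$, exactly as the paper notes parenthetically --- and verifying it for the present system is the entire content of the paper's proof: in Lagrangian coordinates the hyperbolic modes of \eqref{MHD} are the specific volume $v$ and, when $\sigma=\infty$, the magnetic field $B$, each convected at the constant speed $-s=1$, as is visible from the $v_t+v_x-u_{1x}=0$ and $(vB_2)_t+(vB_2)_x-\cdots$ equations of \eqref{redeqs}. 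So your architecture is right, and the difficulty you single out as the main obstacle (uniformity of the remainder estimates up to $\Re\lambda=0$ and out to $|\lambda|=\infty$) is legitimate but already handled once and for all in \cite{HLyZ1}; the step a complete write-up must instead make explicit is this constant-convection check, without which the clean $e^{\alpha\lambda^{1/2}}$ asymptotic, and hence the specific form of \eqref{hflim}, does not follow.
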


\begin{proof}
This was proved in \cite{HLyZ1} for isentropic gas dynamics
in Lagrangian coordinates by an argument using the tracking
lemma of \cite{MaZ3,PZ}.
However, the same argument applies to general hyperbolic--parabolic
systems satisfying the standard hypotheses
\eqref{B}, \eqref{anon}, \eqref{nonchar}, \eqref{diss}, 
with the additional property that convection in hyperbolic modes
is at constant speed.
In this case, hyperbolic modes are
specific volume $v$ and, when $\sigma=\infty$, magnetic field
$B$, each of which in Lagrangian coordinates
are convected with constant speed $-s=1$.
Thus, the hypotheses are satisfied, and the result follows.
(In the general case, 
$\tilde D(\lambda) \sim  Ce^{ \alpha \lambda^{1/2} +\beta \lambda}$
for some $\alpha$, $\beta$, $C$.)
\end{proof}

%
%

\section{Numerical stability investigation}\label{s:num}
\label{numerics}
In this section, we discuss our approach to Evans function computation, 
which is used to determine whether any unstable eigenvalues 
exist in our system.
Our approach follows the polar-coordinate method developed in \cite{HuZ}; 
see also \cite{BHRZ,HLZ,HLyZ1,BHZ}.  
Since the Evans function is analytic in the region of interest, 
we can numerically compute its winding number in the right-half plane 
around a large semicircle $B(0,R)\cap \{\Re \lambda \ge 0\}$ 
appropriately chosen, thus enclosing all possible unstable roots.  
This allows us to systematically locate roots (and hence unstable eigenvalues) 
within.  
As a result, spectral stability can be determined, and in the case of instability, 
one can produce bifurcation diagrams to illustrate and observe its onset.  
This approach was first used by Evans and Feroe \cite{EF} and has been applied to various systems since; 
see for example \cite{PSW,AS,Br2,BDG}.

\subsection{Approximation of the profile}
\label{profnum}

Following \cite{BHRZ,HLZ}, we approximate the traveling wave profile 
using one of \textsc{MATLAB}'s boundary-value solvers {\tt bvp4c} \cite{SGT}, {\tt bvp5c} \cite{KL}, or {\tt bvp6c} \cite{HM}, which are adaptive Lobatto quadrature schemes and can be interchanged for our purposes.  These calculations are performed on a finite computational domain $[-L_-,L_+]$ with projective boundary conditions $M_\pm (U-U_\pm)=0$.  
The values of approximate plus and minus spatial infinity $L_\pm$ are determined experimentally by the requirement that the absolute error $|U(\pm L_\pm)-U_\pm|$ be within a prescribed tolerance, say $TOL=10^{-3}$.
For rigorous error/convergence bounds for these algorithms, 
see, e.g., \cite{Be1,Be2}.

\subsection{Approximation of the Evans function}\label{evansnum} 

Throughout our numerical study, we use 
the polar-coordinate method described in \cite{HuZ}, 
which encodes $\cW=r\,\Omega$, 
where 
$$
\mathcal{W}=W_1\wedge \cdots \wedge W_k 
$$
is the exterior product encoding the minors of $W_1,\dots, W_k$,
``angle'' $\Omega=\omega_1\wedge \cdots \wedge \omega_k$ is the exterior product of an orthonormal basis $\{\omega_j\}$ of $\Span \{W_1, \dots, W_k\}$ evolving independently of $r$ by some implementation (e.g., Drury's method) of continuous orthogonalization and ``radius'' $r$ is a complex scalar evolving by a scalar ODE slaved to $\Omega$, related to Abel's formula for evolution of a full Wronskian; see \cite{HuZ,Z3,Z4} for further details.
The Evans function is then recovered through
$$
D(\lambda)=\mathcal{W}^- \wedge \mathcal{W}^+|_{x=0}=
\det(W_1^-, \dots, W_k^-, W_{k+1}^+, \dots, W_{N})|_{x=0}.
$$

Here, $\mathcal{W}^\pm$ are approximated 
at $x=-L_-,L_+$ using asymptotics \eqref{Wasympt} for $W_j^\pm$ by
$$
\mathcal{W}^-(-L_-) \sim
e^{-\mu L_-}(R_1^-\wedge \cdots \wedge R_k^-)
$$
where $\{R_j^-\}$ is an analytically chosen basis for the
unstable subspace $U(A_-)$ of $A_-$ and 
$\mu= \Trace A_-|_{U(A_-)}$, and then evolved using the polar coordinate
ODE toward the value $x=0$ where the Evans function is evaluated.
The requirements on approximate plus and minus spatial infinity $L_\pm$ 
needed for accuracy are in practice the same as the
requirement already imposed in the approximation of the profile
that the absolute error $|U(\pm L_\pm)-U_\pm|$ be within prescribed tolerance
$TOL=10^{-3}$; see \cite[Section 5.3.4]{HLyZ1} for a complete discussion.  
$L_\pm=10$ sufficed for most parameter values.

\subsubsection{Shooting and initialization}

The ODE calculations for individual $\lambda$ are carried out using \textsc{MATLAB}'s {\tt ode45} routine, which is the adaptive 4th-order Runge-Kutta-Fehlberg method (RKF45).  This method is known to have excellent accuracy with automatic error control.  Typical runs involved roughly $60$ mesh points per side, with error tolerance set to {\tt AbsTol = 1e-8} and {\tt RelTol = 1e-6}.  
To produce analytically varying Evans function output, the initializing
bases $\{R^\pm_j\}$ are chosen analytically using Kato's ODE; see
\cite{GZ,HuZ,BrZ,BHZ} for further discussion.  
Numerical integration of Kato's ODE is carried out using a simple
second-order algorithm introduced in \cite{Z3,Z4}, a generalization
of the first-order algorithm of \cite{BrZ}.

\subsubsection{Winding number computation}
\label{windingalg}
We compute the winding number of the integrated Evans function
$\tilde D$ around the 
around the semicircle 
$$
S:= \partial \big( B(0,R)\cap \{\Re \lambda \ge 0\} \big)
$$
by varying values of $\lambda$ 
along $20$ points of the contour $S$, with mesh size taken quadratic in modulus to concentrate sample points near the origin where angles change more quickly,
and summing the resulting changes in ${\rm arg}(\tilde D(\lambda))$, using $\Im \log \tilde D(\lambda) = {\rm arg} \tilde D(\lambda) ({\rm mod} 2\pi)$, available in \textsc{MATLAB} by direct function calls.
As a check on winding number accuracy, we test a posteriori that the change in 
$\tilde D$ for each step is less than $0.2$, and add mesh points, as necessary, to achieve this.  Recall, by Rouch\'e's Theorem, that accuracy is preserved 
so long as relative variation of $\tilde D$ along each mesh interval remains
less than $1.0$. 
In Table \ref{tb_error} we give as a triple the radius of the domain contour, the number of mesh points, and the relative error for change in argument of $\tilde D(\lambda)$
 between steps.

Care must be taken to choose $R$ sufficiently large to ensure any unstable eigenvalues lie inside the domain contour $S$.
Recall, Proposition \ref{hflem}, that
\begin{equation}\label{Rlimit}
\lim_{|\lambda|\to \infty} \frac{\tilde D(\lambda)}{e^{\alpha \lambda^{1/2}}}=C
\; \hbox{\rm uniformly on} \; \Re \lambda\ge 0,
\end{equation}
where $\alpha$ and $C$ are constants. 
The knowledge that limit \eqref{Rlimit} exists allows us to determine $\alpha,\ C$ by curve fitting of $\log \tilde D(\lambda)=\log C+\alpha \lambda^{1/2}$ with respect to $z:=\lambda^{1/2}$, for $|\lambda|>>1$. When $\tilde D$ is initialized in the standard way on the real axis, so that $\tilde D(\lambda)=\tilde D(\bar \lambda)$, $\alpha$
and $C$ are necessarily real. 
We then determine the necessary size $R$ of the radius by a convergence
study, taking $R$ to be a value for which the relative error
between $\tilde D(\lambda)$ and $Ce^{\alpha \sqrt{\lambda}}$ becomes
less than $.1$ on the entire semicircle with $\Re \lambda \ge 0$, 
indicating sufficient convergence to ensure nonvanishing.  
(Relative error $<1$ implies nonvanishing.)
For many parameter combinations, $R=2$ was sufficiently large, though some required a much larger radius.

\br
\textup{
Alternatively, we could use energy estimates or direct tracking bounds
as in \cite{HLZ} and \cite{HLyZ1}, respectively, to eliminate
the possibility of eigenvalues of sufficiently high frequency.
However, we have found the convergence study to be much more efficient
in practice; see \cite{HLyZ1}.
}
\er

\begin{table}[!b]
\begin{tabular}{|c||c|c|c|c|c|}
\hline
$v_+$&$B_+=0.2$&$B_+=0.8$&$B_+=1.2$&$B_+=1.6$&$B_+=2$\\
\hline
\hline
0.1&(2,20,1.5(-1)&(4,20,1.8(-1)&(8,64,9.1(-2))&(16,64,1.3(-1)&(2,20,7.6(-2))\\
\hline
0.4&(2,20,1.1(-1))&(2,20,3.3(-2))&(2,20,3.1(-2))&(2,20,3.1(-2))&(2,20,3.2(-2))\\
\hline
0.6&(2,20,7.7(-2))&(2,20,1.6(-2))&(2,20,1.8(-2))&(2,20,1.7(-2))&(2,20,1.8(-2))\\
\hline
0.8&(2,20,6.6(-2))&(2,20,2.0(-2))&(2,20,1.9(-2))&(2,20,1.9(-2))&(2,20,2.0(-2))\\
\hline
\end{tabular}
\caption{Table demonstrating contour radius, number of mesh points, and relative error. Here $I=1.2$ and $\gamma=5/3$.}
\label{tb_error}
\end{table}

\subsection{Description of experiments: broad range}

In our numerical study, we covered a broad intermediate parameter range to demonstrate stability of Lax and overcompressive profiles. 
To avoid redundancy, we discarded 
four rest-point configurations for which $v=1$ was not the largest 
($v$-value of a) rest point,
since these can always be rescaled to an equivalent configuration
for which $v=1$ is largest, hence otherwise would be counted twice.
The following parameter combinations were examined, when physically meaningful, for Evans stability:

\begin{align*}
(\gamma,v_+,I,B_{2+}, \mu_0) &\in \{7/5, 5/3\}\\
&\quad \times \{0.8, 0.7, 0.6,0.5, 0.4,0.3, 0.2, 10^{-1},10^{-2}\}\\
&\quad \times \{0.2, 0.4, 0.6, 0.8, 1.2, 1.4, 1.6, 1.8, 2.0\}\\
&\quad \times \{0.2, 0.4, 0.6, 0.8, 1.0, 1.2, 1.4, 1.6, 1.8, 2.0\}\\
&\quad \times \{1.0\}.
\end{align*}

For $v_+=10^{-2}$ above,
the Mach number, as computed in appendix \ref{Machnum}, typically varies between $20$ and $40$. For a little over 30 of the parameter combinations
above for which $I>1$, we took $v_+=10^{-3},\ 10^{-4},$ and $10^{-5}$ attaining a Mach number of over $10,000$ in some cases. All Evans function computations were consistent with stability. 

We also covered a broad intermediate 
range in terms of the parameters $(K,J,a)$. When physically relevant we examined the parameter combinations:
\begin{align*}
&(\gamma,K,J,v_+, \mu_0) \in \{7/5, 5/3\}\\
&\quad \times \{0.1, 0.2, 0.3,0 .4, 0.5,0 .6,0 .7,0 .8,0 .9,0 .95, 1.05, 1.1, 1.2, 1.3, 1.4, 1.5, 1.6, 1.7, 1.8, 1.9, 2.0\}\\
&\quad \times \{0.1,0 .2,0 .3,0 .4,0 .5,0 .6,0 .7,0 .8 ,0.9 ,1.0, 1.1, 1.2, 1.3, 1.4, 1.5, 1.6, 1.7, 1.8, 1.9, 2.0\}\\
&\quad \times \{0.1\}\\
&\quad \times \{1.0\}.
\end{align*}

Finally, we examined the stability of the whole family of over compressive profiles for the relevant parameters belonging to
\begin{align*}
&(\gamma,K,J,a, \mu_0) \in \{7/5, 5/3\}\\
&\quad \times \{0.1, 0.2, 0.3,0 .4, 0.5,0 .6,0 .7,0 .8,0 .9,0 .95\}\\
&\quad \times \{0.1,0 .2,0 .3,0 .4,0 .5,0 .6,0 .7,0 .8 ,0.9 ,1.0, 1.1, 1.2, 1.3, 1.4, 1.5, 1.6, 1.7, 1.8, 1.9, 2.0\}\\
&\quad \times \{a_1,a_2,a_3,a_4,a_5\}\\
&\quad \times \{1.0\}.
\end{align*}
where $a_1=10^{-3}$ and  $a_5$ is the largest value of $a$ such that the system has $4$ fixed points of the form $(v,w)$ with $v\leq 1$. For each value $a_j$ we examined the stability of $5$ profiles chosen by requiring they pass through evenly spaced points along the line in the phase plane connecting the two rest points with intermediate $v_+$ coordinates, thus insuring our profiles be representative of the family of over compressive traveling waves. In Figure \ref{phaseD} we plot in bold some profiles examined in our over compressive study.

 \begin{figure}[htbp]
\begin{center}
$\begin{array}{lr}
\includegraphics[width=7.5cm]{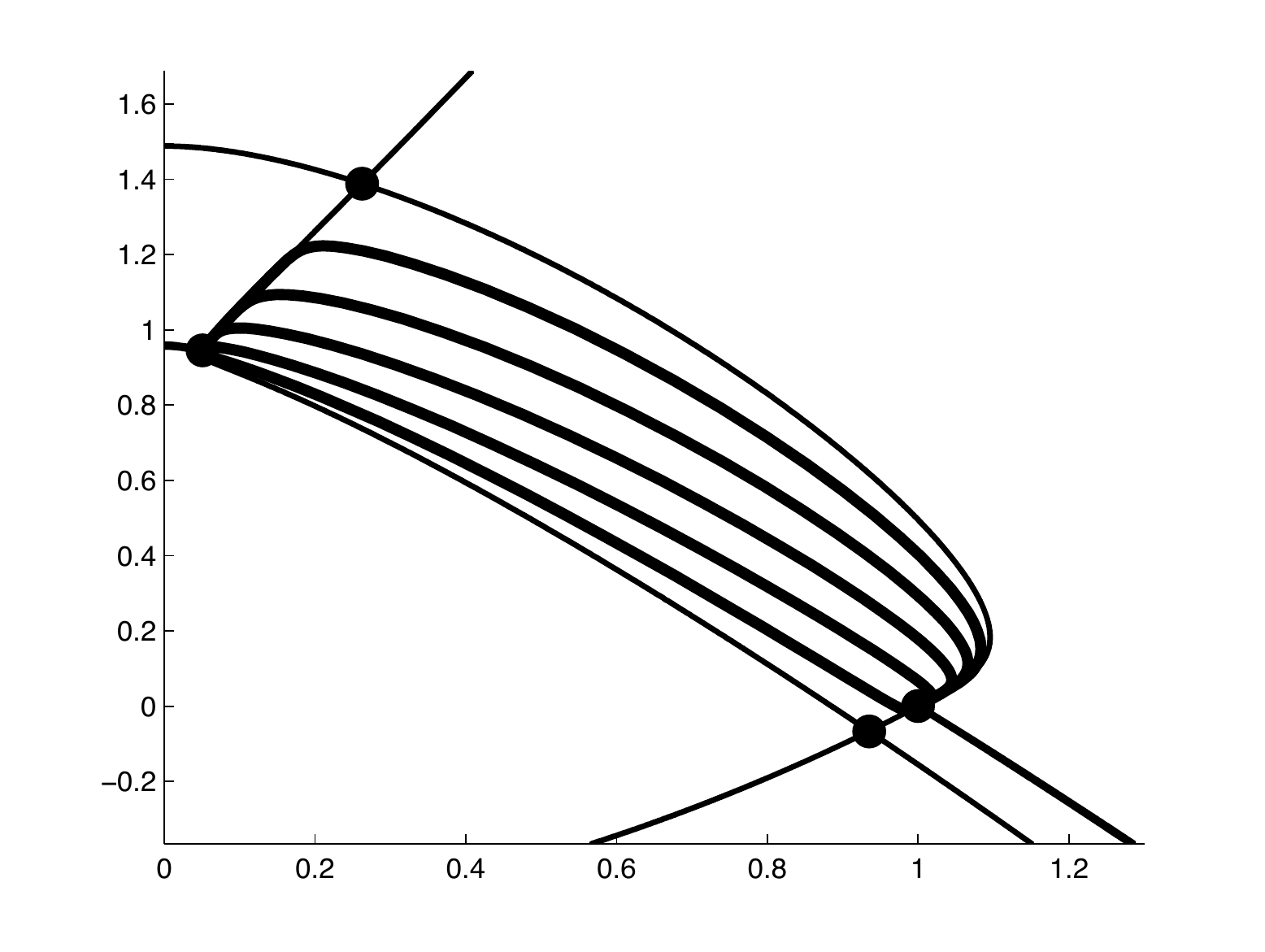}(a) & \includegraphics[width=7.5cm]{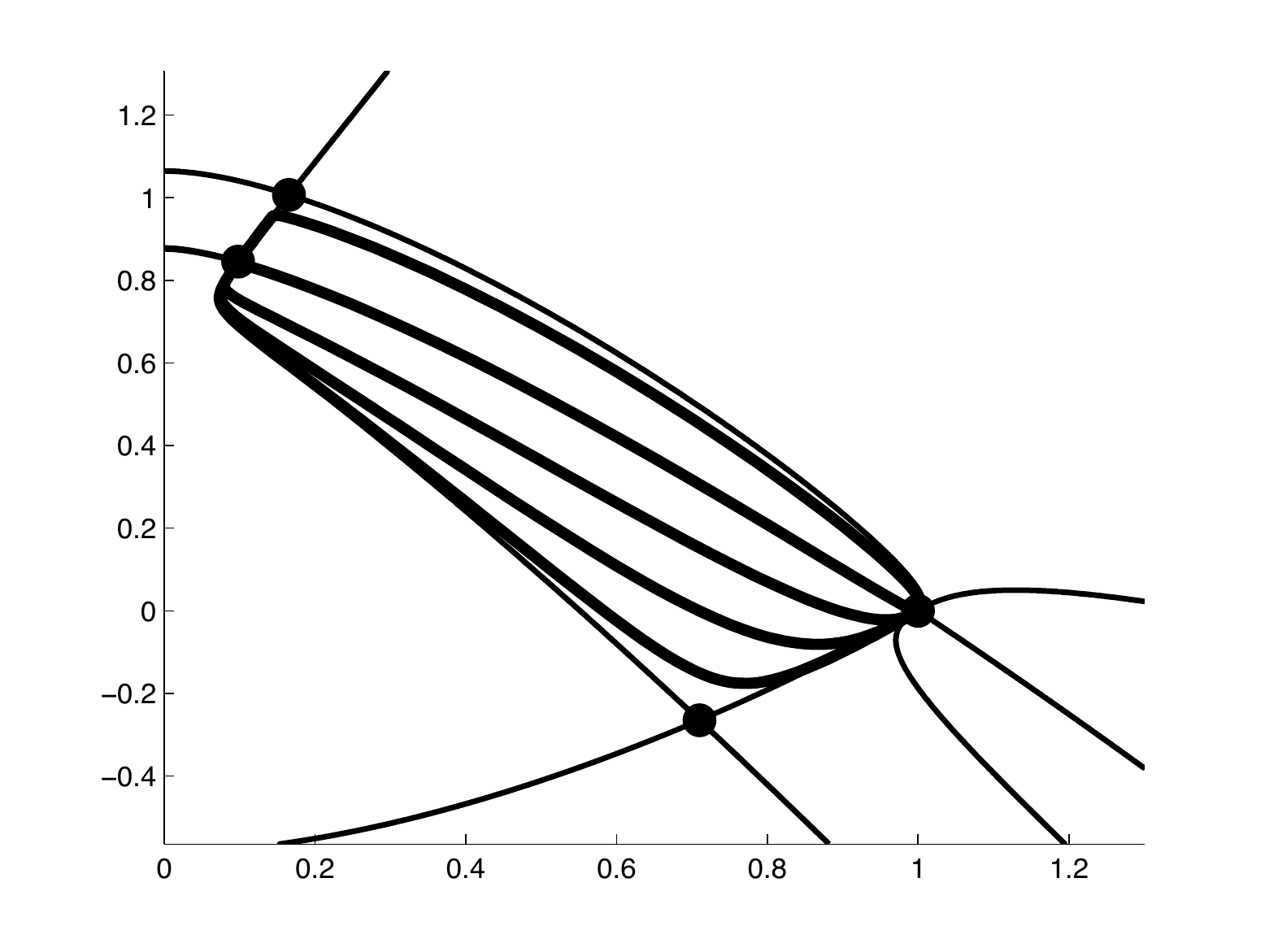}(b)
\end{array}$
\end{center}
	\caption{The bold curves in the phase portrait are the over compressive profiles for which the integrated
Evans function $\tilde D(\lambda)$ was computed. The parameter values for Figure (a) are $\gamma=5/3$, $v_+=0.1$, $I=0.7$, $B_{2+}=0.7$, and $\mu_0=1$. In Figure (b) we have $\gamma=5/3$, $v_+=0.1$, $I=0.6$, $B_{2+}=0.9$, and $\mu_0=1$.}
\label{phaseD}
\end{figure}

In the whole investigation, each contour computed consisted of at least 40 points in $\lambda$.  
{\it In all cases, we found the system to be Evans stable}.  
Typical output is given in Figure \ref{EvansA}.
We remark that the Evans function is symmetric under reflections along the real axis (conjugation).  Hence, we only needed to compute along half of the contour (usually 20 points in the first quadrant) to produce our results.

 \begin{figure}[htbp]
\begin{center}
$\begin{array}{lr}
\includegraphics[width=7.5cm]{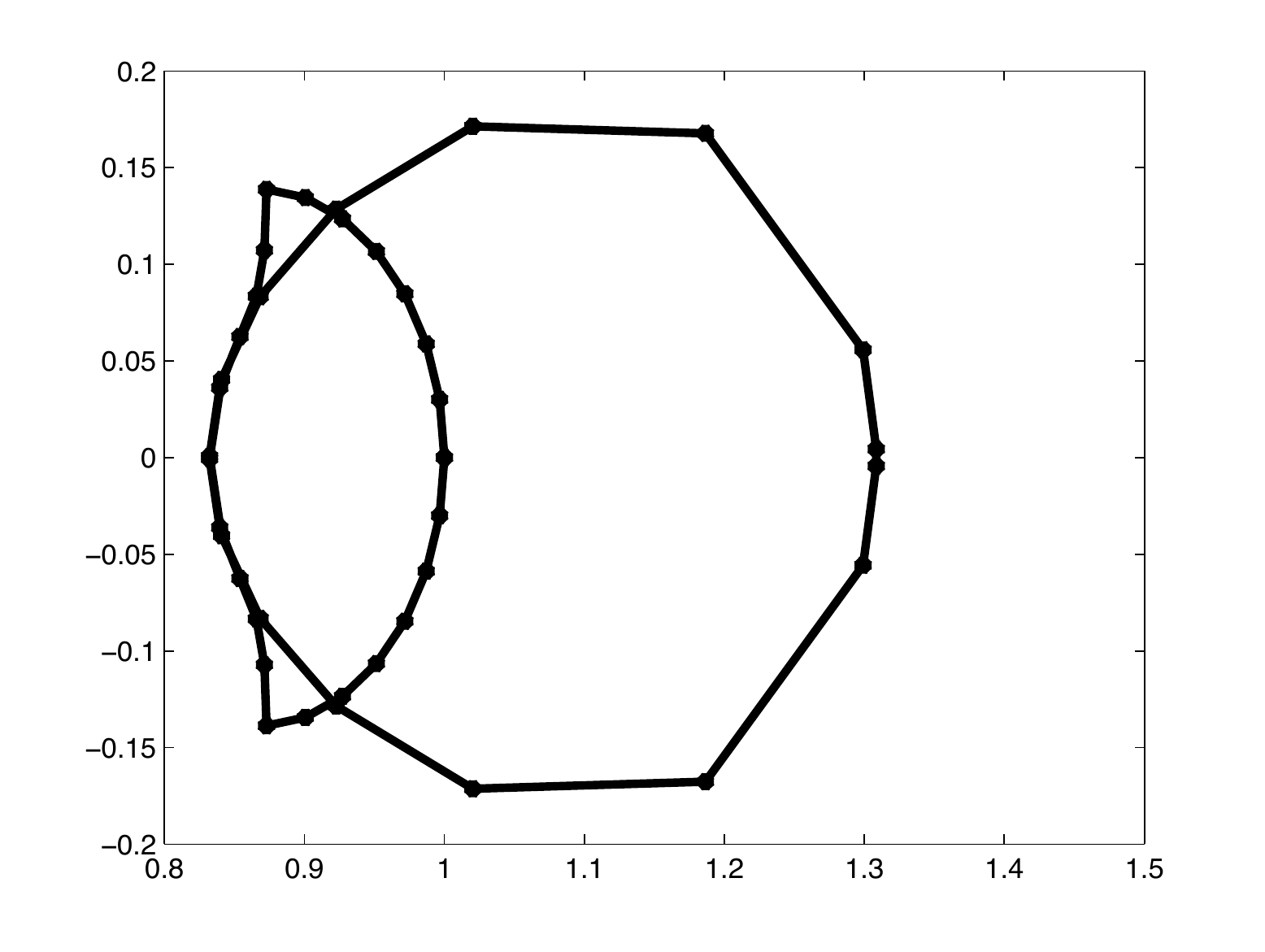}(a) & \includegraphics[width=7.5cm]{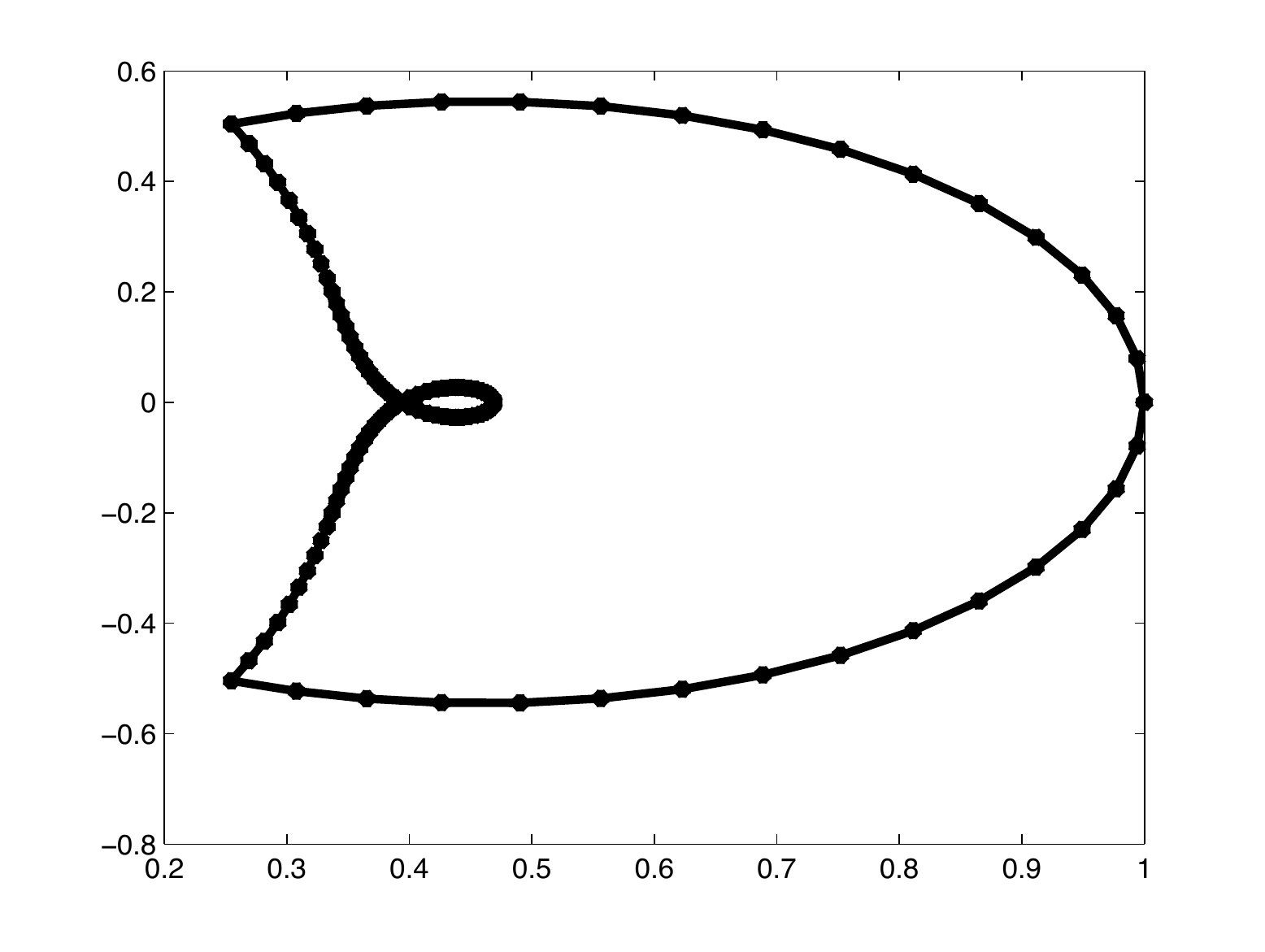}(b)
\end{array}$
\end{center}
	\caption{Typical Evans function output. The parameter values for Figure (a) are $\gamma=5/3$, $v_+=0.1$, $I=0.7$, $B_{2+}=0.7$, and $\mu_0=1$. In Figure (b) we have $\gamma=5/3$, $v_+=0.1$, $I=1.4$, $B_{2+}=1.4$, and $\mu_0=1$.}
\label{EvansA}
\end{figure}

\subsection{Composite limit}
As described in Section \ref{s:compstab}, 
as overcompressive shocks approach
the limit of a composite wave formed by the approximate superposition
of the bounding Lax $1$ and $3$-shocks, separated by larger and larger
distance, the Evans function computation becomes prohibitively costly.
However, the analytical result of Proposition \ref{doublelax} shows that
we need not carry that out, since stability in the composite limit
follows by stability of the component Lax waves, already tested.

\subsection{Large-amplitude limit}
As shown analytically in Section \ref{s:largeamp}, the Evans functions for
Lax $1$-shocks and intermediate Lax $1$-shocks converge in
the large-amplitude limit $a\to 0$, both for coplanar and transverse
perturbations, in this case the nonphysical boundary $a\to 0$
need not be treated in any special way.

We carried out numerical case studies suggesting 
that the Evans function converges
in the large-amplitude limit also for the more singular
cases of Lax $2$-shocks, intermediate Lax $2$-shocks,
and intermediate overcompressive shocks, left unresolved
in the analytical treatment of Section \ref{s:largeamp}.
We conjecture that convergence holds also in these cases,
as shown in the parallel case $J=0$ in \cite{HLZ,BHZ}.

A case study of the Lax $2$-shock and intermediate Lax
$2$-shock cases is
displayed in Figure \ref{large2}, corresponding to a two-rest point
configuration, with parameters $K=2$, $J=1$, $\gamma=5/3$,
$a=10^{-3}, 10^{-4}, 10^{-5}, 10^{-6}, 10^{-7}, 10^{-8}$.
We found stability for all amplitudes in each of these cases. 
We only had to take contour radius $R=16$ for $a$ as small as $a=10^{-8}$,
so these runs were not computationally expensive.
The Mach number for $a=10^{-8}$ is $ \approx 10,954$.

 \begin{figure}[htbp]
\begin{center}
$\begin{array}{lr}
\includegraphics[width=7.5cm]{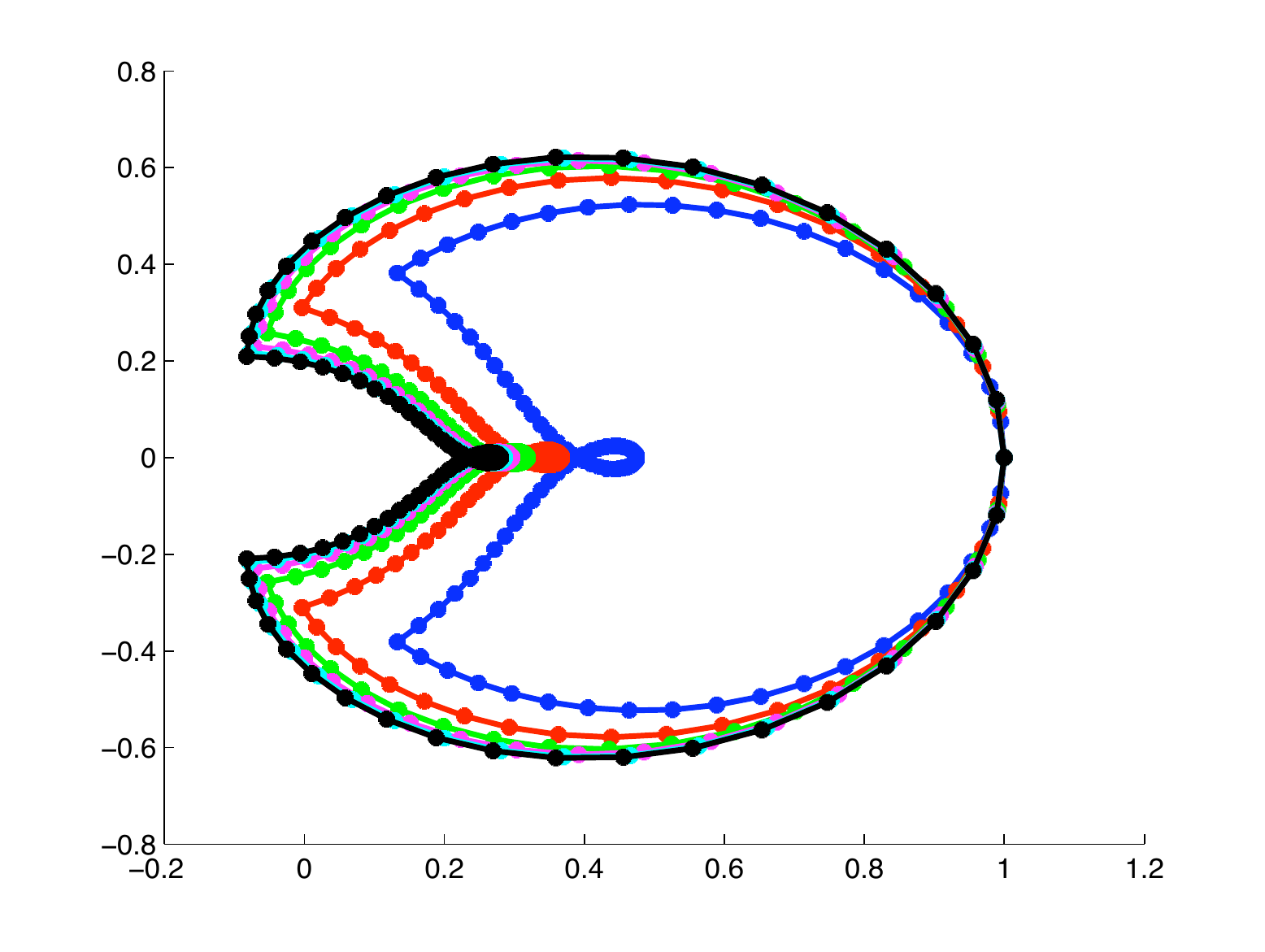}(a) & \includegraphics[width=7.5cm]{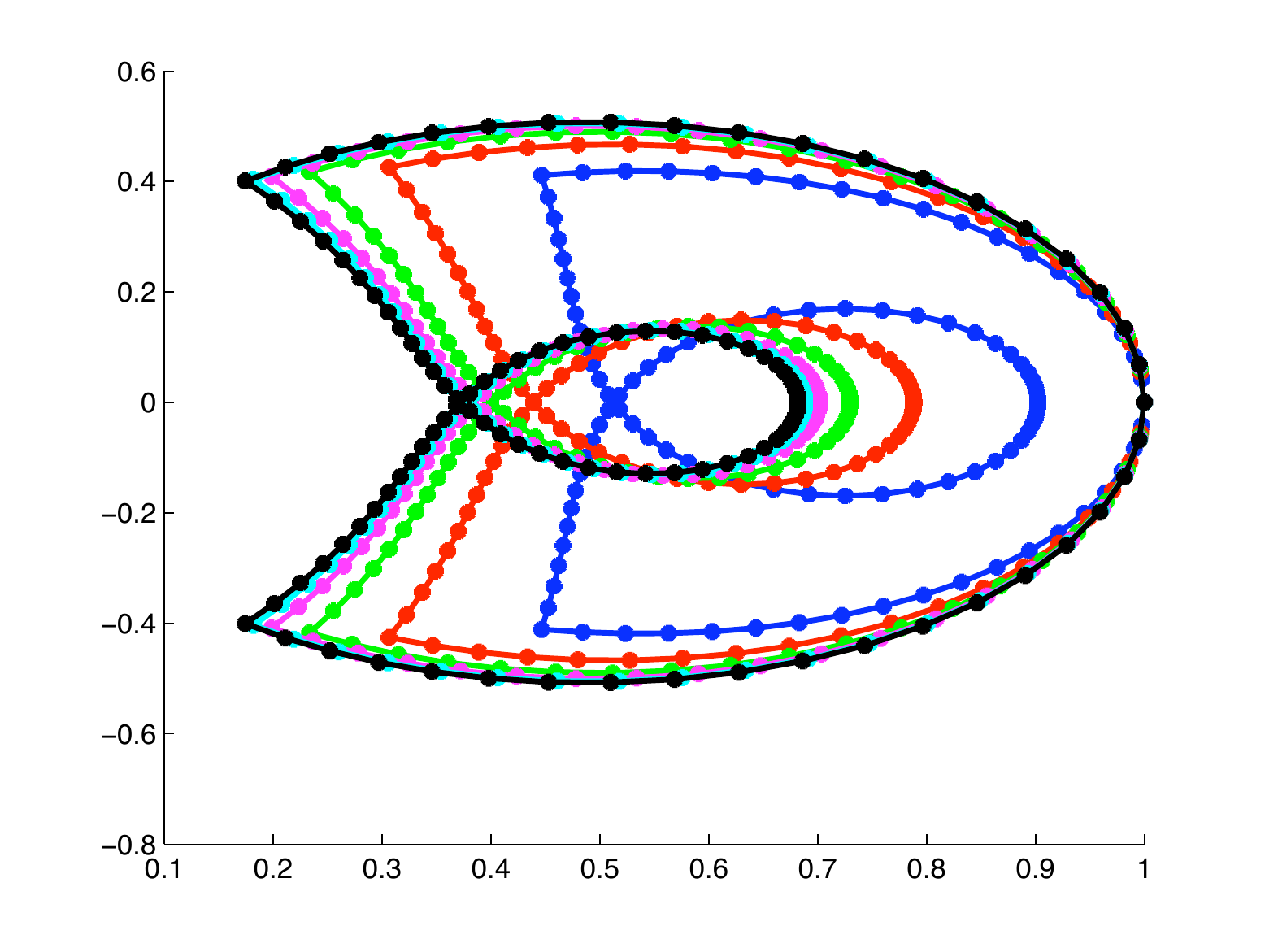}(b)
\end{array}$
\end{center}
	\caption{
Large-amplitude limits, parameters $K=2$, $J=1$, $\gamma=5/3$.
In Figure (a), we display the image of the semicircle under $\tilde D$
for a Lax $2$-shock in two-rest point configuration in the
$a\to 0$ limit,
$a=10^{-3}, 10^{-4}, 10^{-5}, 10^{-6}, 10^{-7}, 10^{-8}$.
where $a=10^{-8}$ corresponds to Mach number $\approx 10,954$.
Convergence of contours appears to occur at $a\sim 10^{-6}$.
or Mach number $\approx 1,095$.
In Figure (b), for the same sequence of $a$-values, we display the
images under the transverse Evans function, again suggestive of convergence. 
}
\label{large2}
\end{figure}

A case study of the Lax $2$-shock, intermediate Lax
$2$-shock, and intermediate overcompressive cases is
displayed in Figure \ref{large4}, corresponding to a four-rest point
configuration, with parameters 
$K=0.7$, $J=0.5$, and $a=10^{-1}, 10^{-2},\dots , 10^{-k}$, 
taking $a$ as small as necessary to achieve convergence:
for example, in the overcompressive case, $a=10^{-7}$, or 
Mach number $\approx 3,817$.
In each case, convergence was achieved; likewise,
we again found stability for all amplitudes. 
See Figure \ref{phaselimit} for the corresponding phase portrait
with $K=0.7$, $J=0.5$, and $a=10^{-8}\sim 0$, approximating the
$a\to 0$ limit.
Note that each of the Lax $2$-shock, intermediate
Lax $2$-shock, and intermediate overcompressive shock profiles
appear to lie on a straight line orbit.  It would be interesting
to check whether the $a=0$ traveling-wave ODE, 
a polynomial (cubic) vector field, indeed supports exact straight
line connections.

\begin{figure}[htbp]
\begin{center}
$\begin{array}{lr}
\includegraphics[width=7.5cm]{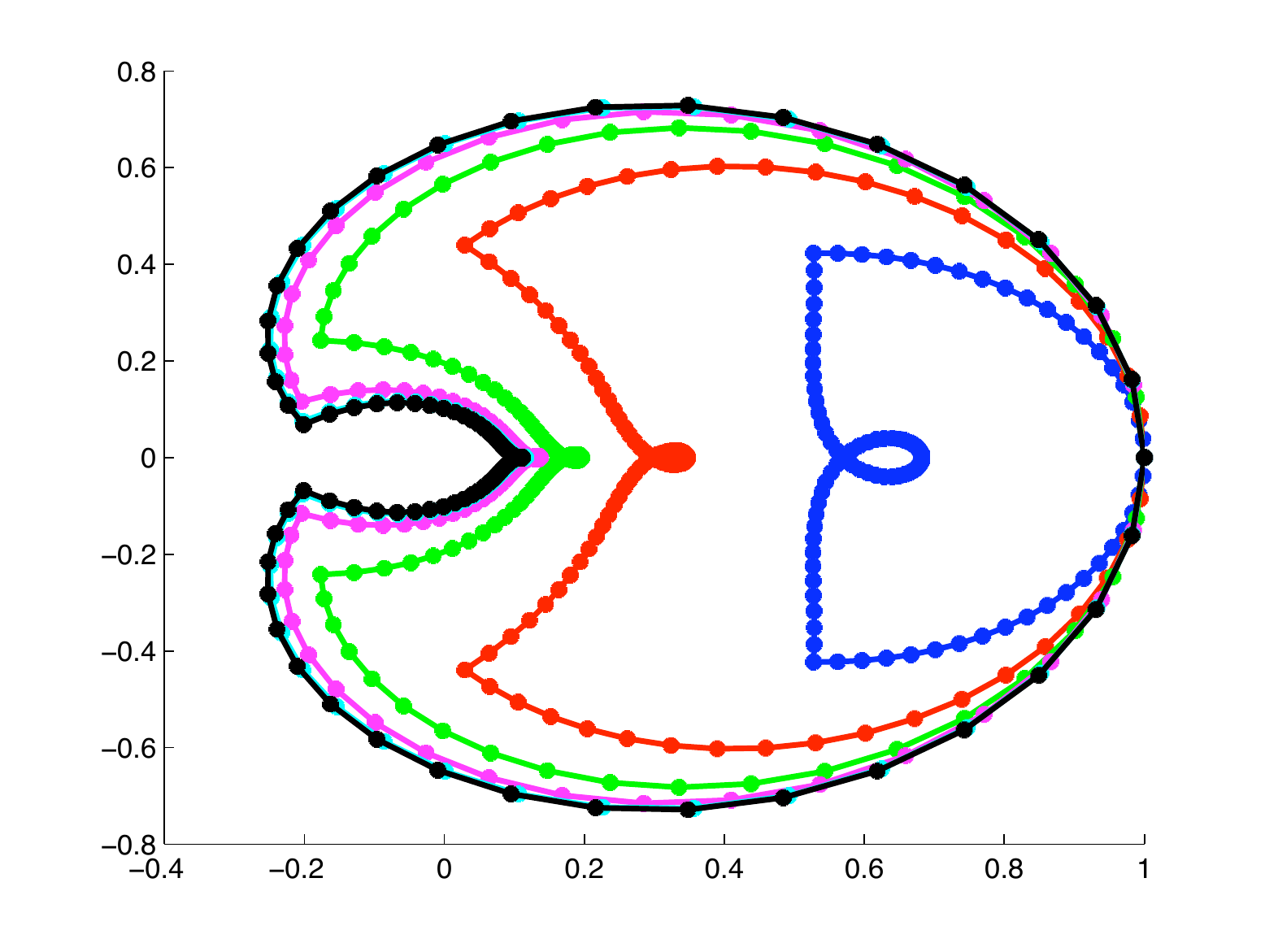}(a) & 
\includegraphics[width=7.5cm]{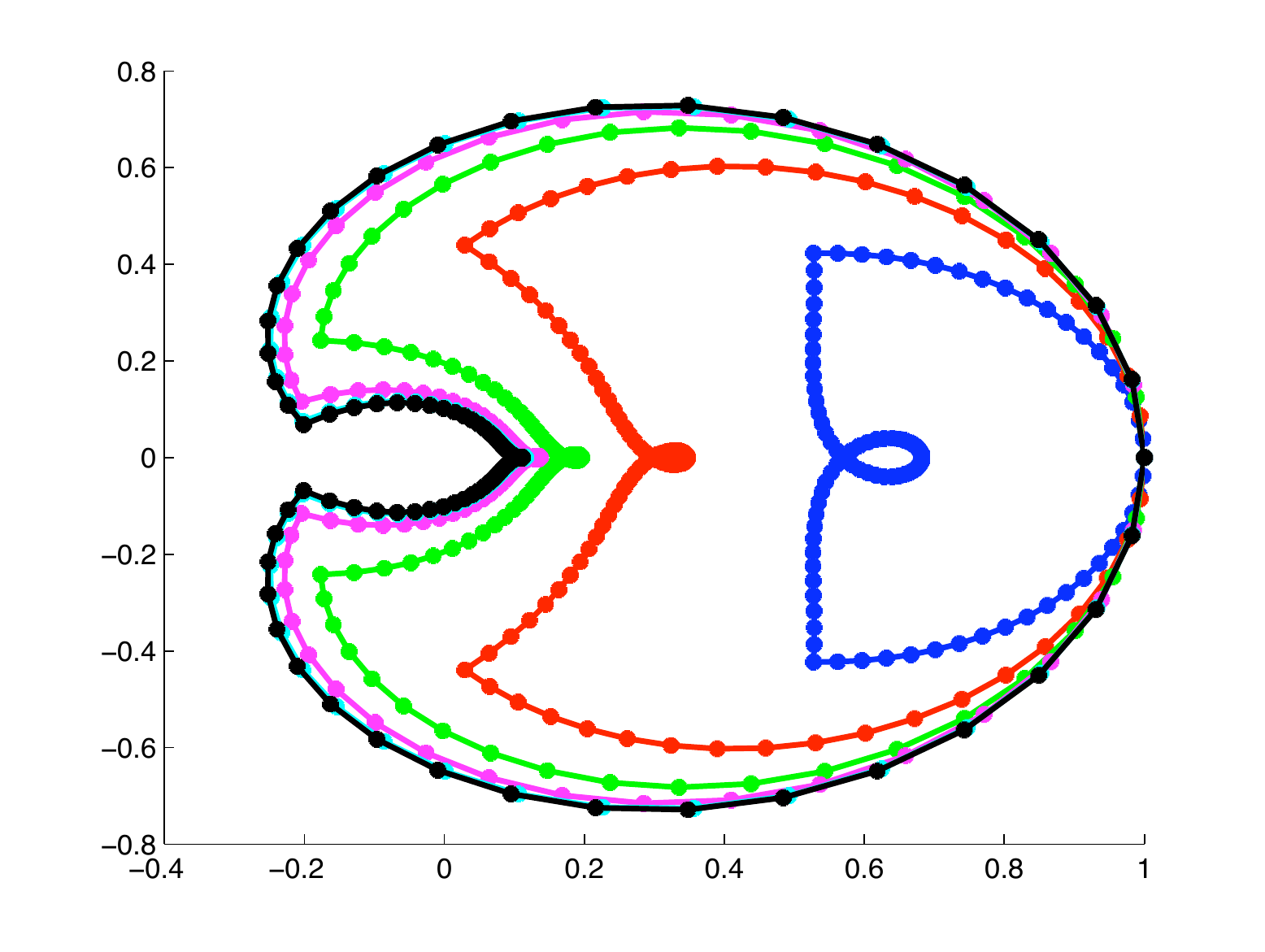}(b)\\
\includegraphics[width=7.5cm]{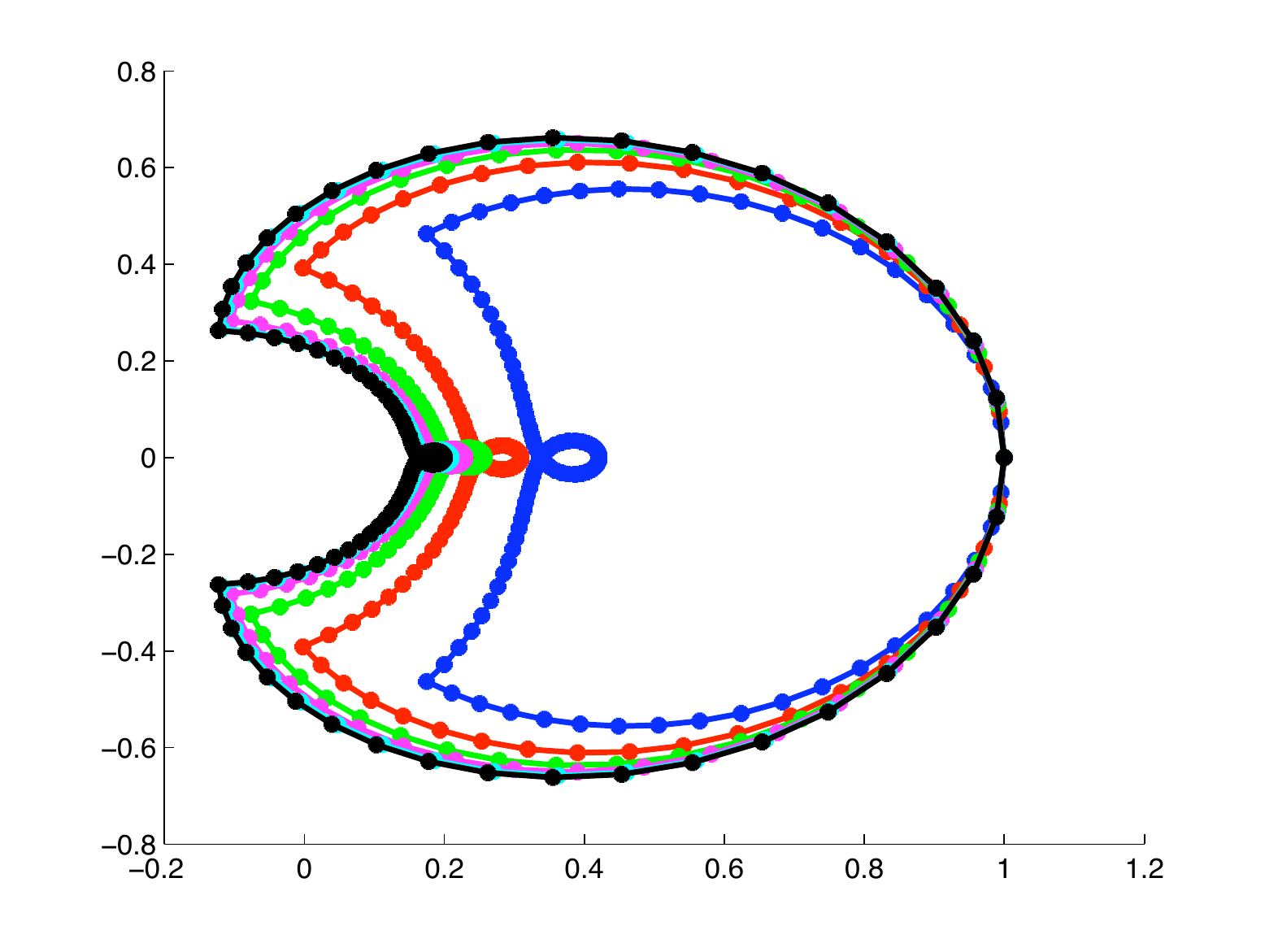}(c) & 
\includegraphics[width=7.5cm]{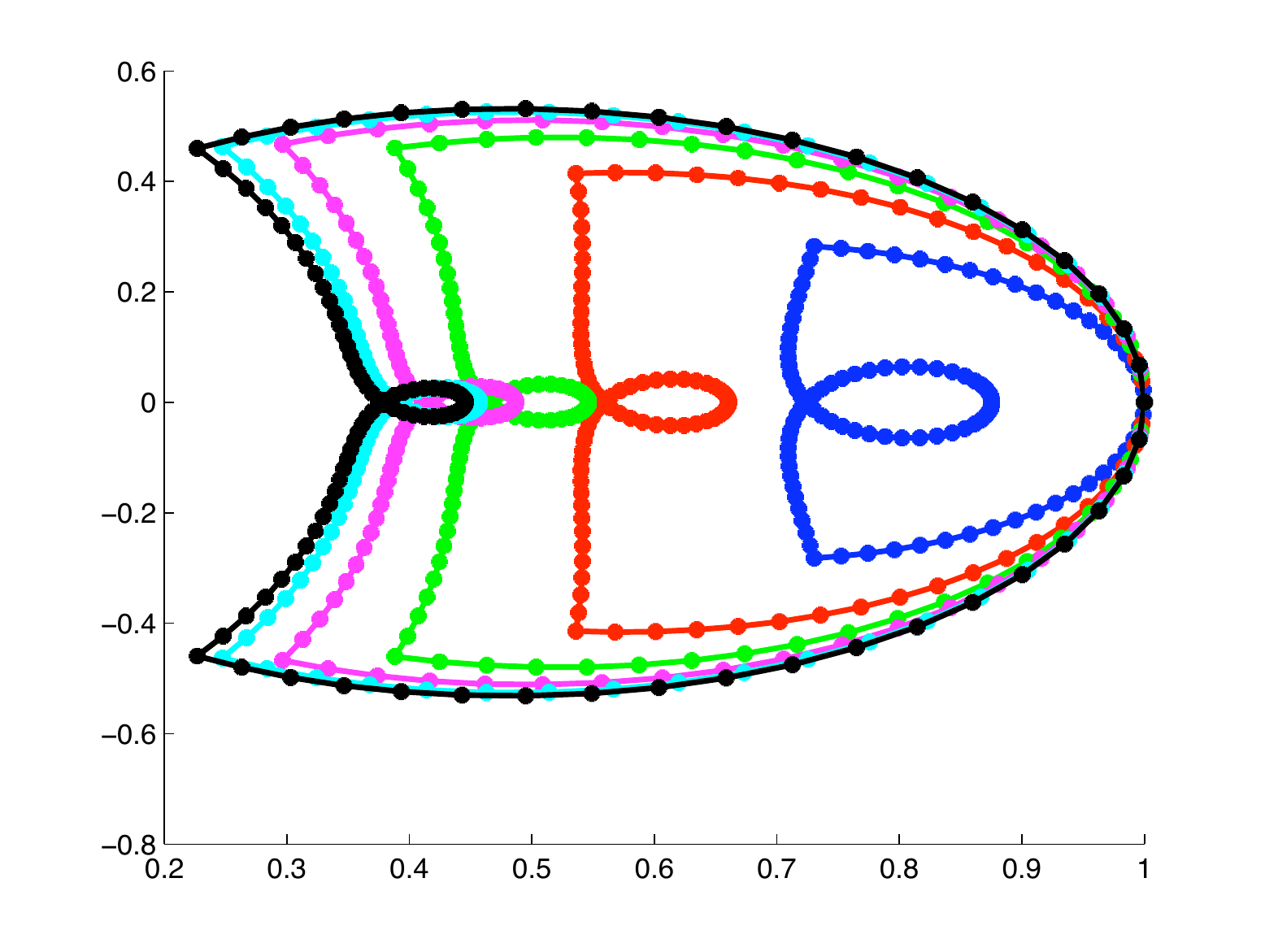}(d)
\end{array}$
\end{center}
\caption{Large amplitude limits, parameters
$K=0.7$, $J=0.5$, and $a=10^{-1}, 10^{-2},\dots , 10^{-k}$, 
getting smaller as necessary to see what appear to be convergence to a 
limit.
(a).  Lax $2$-shock, $v_2$ to $v_1$.
(b). Intermediate Lax $2$-shock, $v_3$ to $v_1$.
(c). overcompressive $1$-$2$ shock, $v_4$ to $v_1$.
(d). Transverse Evans study for (c).
In each case, we appear to obtain convergence at $a=10^{-7}$, corresponding to Mach number $\approx 3,817$.
}
\label{large4}
\end{figure}

\begin{figure}[t]
\includegraphics[width=7.5cm]{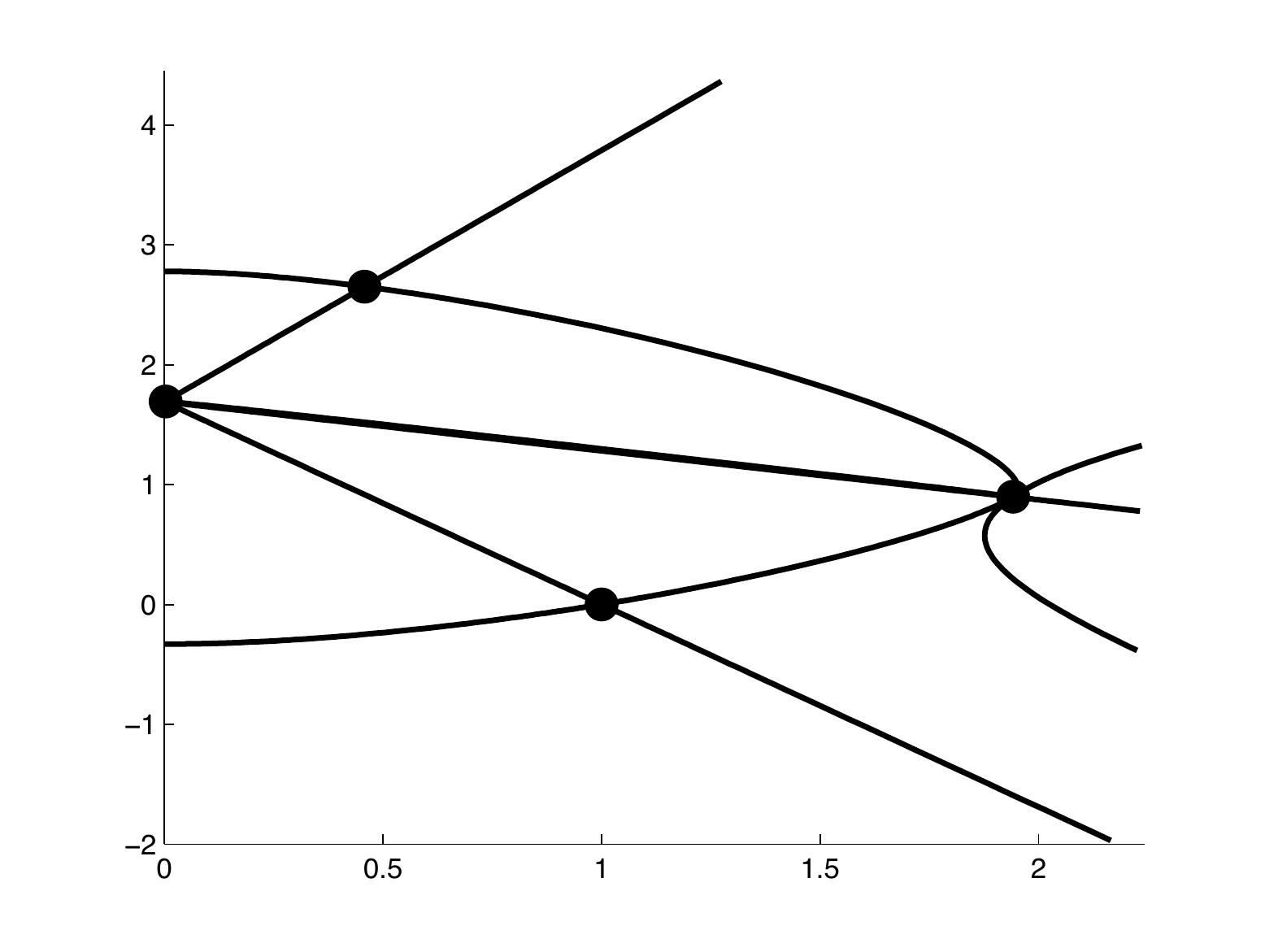}
\caption{
Phase portrait corresponding to Figure \ref{large4}, parameter values
$K=0.7$, $J=0.5$, and $a=10^{-8}\sim 0$.
}
\label{phaselimit}
\end{figure}

\subsection{Three-dimensional stability}\label{s:3dexp}
As discussed in section \ref{s:analytical}, transverse stability holds automatically in the case $\sigma=\infty$ for profiles that are monotone 
decreasing in $\hat v$, so for all the studies described previously, we examined stability of \eqref{3intevalinfty} only in the case of
a nonmonotone profile. All computations were consistent with stability.

\begin{figure}[htbp]
\begin{center}
$\begin{array}{lr}
\includegraphics[width=7.5cm]{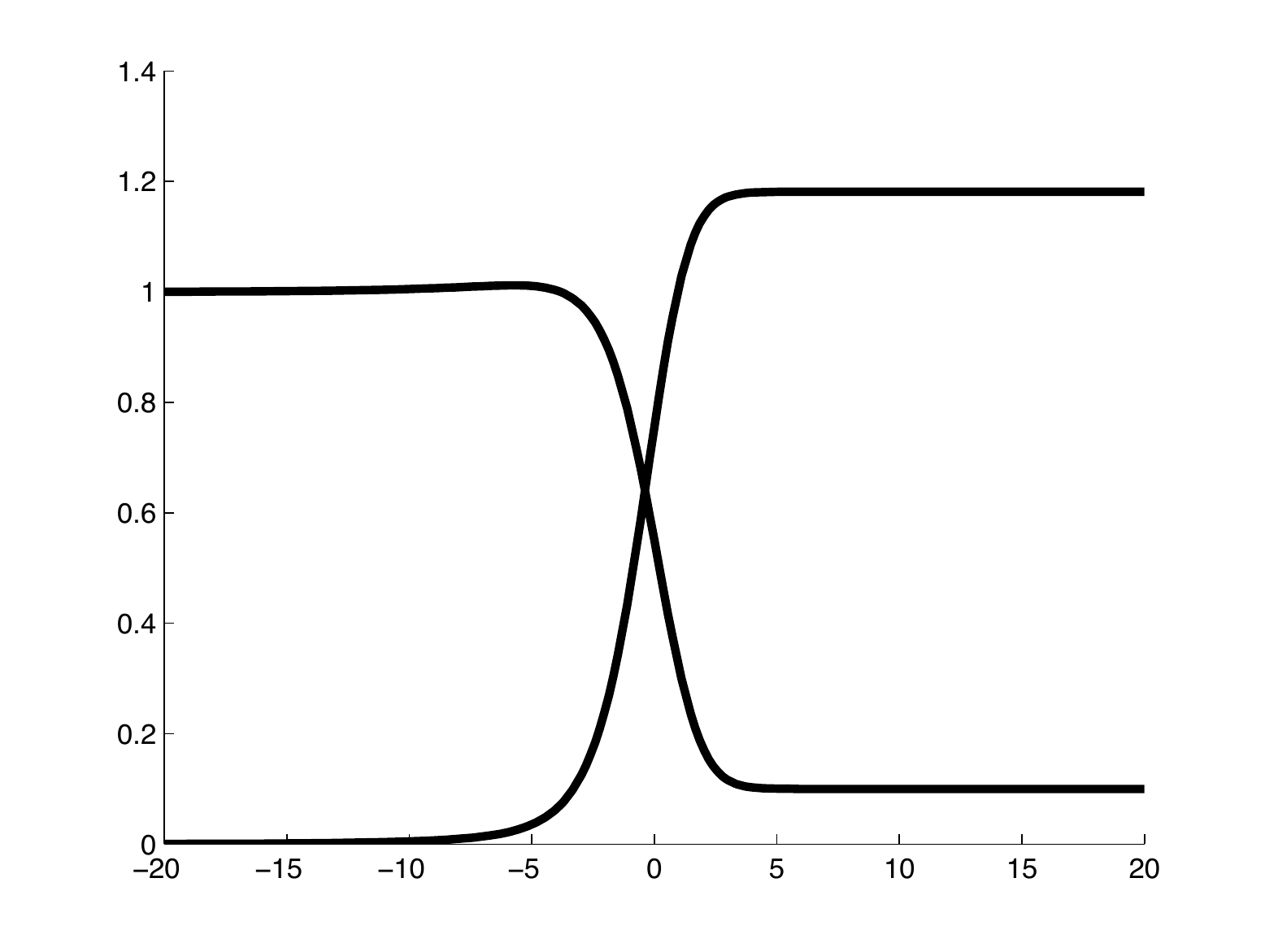}(a) & \includegraphics[width=7.5cm]{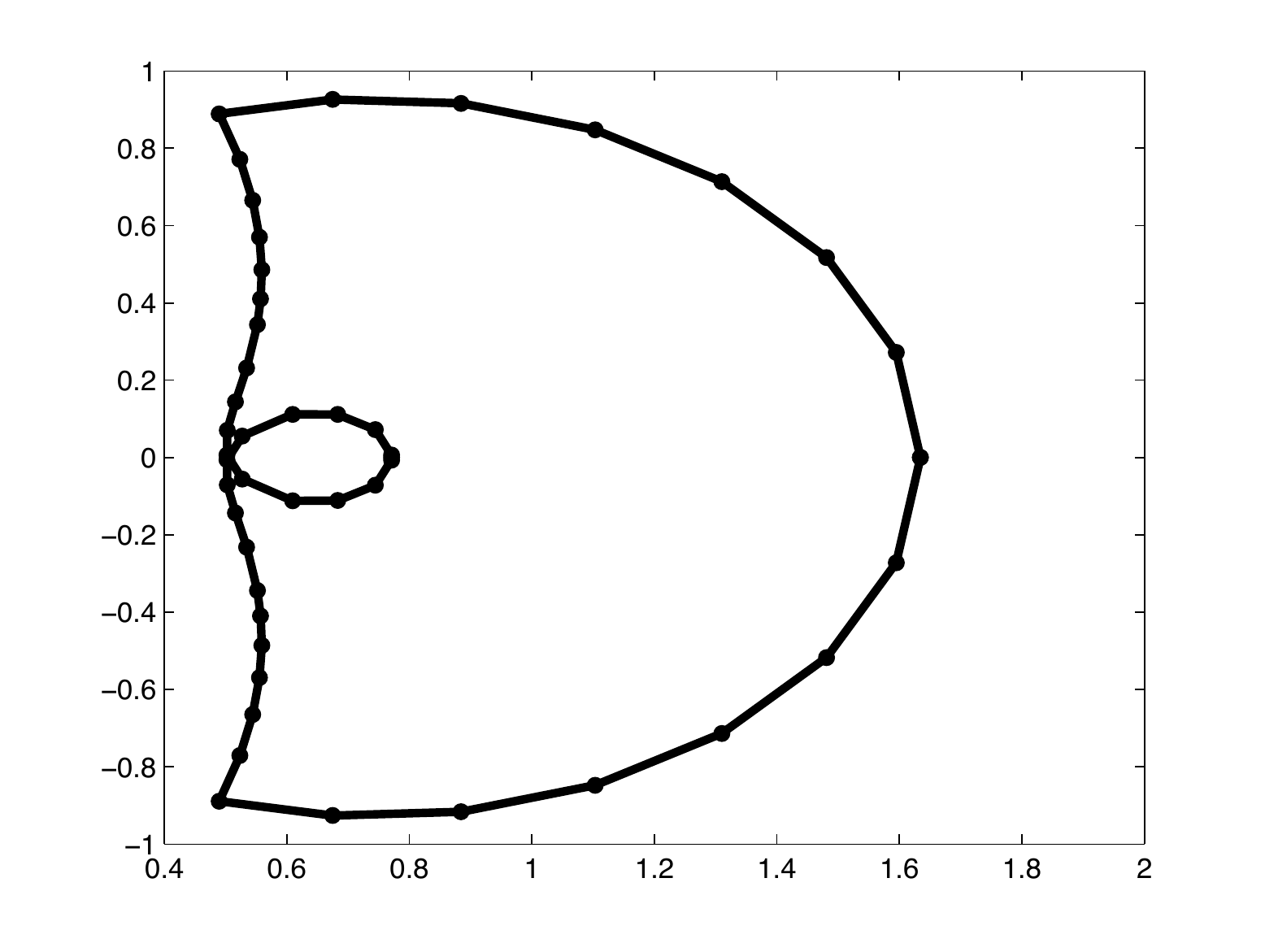}(b)
\end{array}$
\end{center}
\caption{Typical transverse Evans function output, parameter
values $\gamma=5/3$, $I=0.6$, $B_+=1.4$, and $\mu_0=1$.  
In Figure (a) we display the nonmonotone profile.
In Figure (b) we display the winding number computation.}
\label{EvansT}
\end{figure}

\subsection{The undercompressive case}\label{s:ucexp}
For our undercompressive study we, taking care to avoid repetitions, considered the parameter combinations
\begin{align*}
(\gamma,v_+,K,J) &\in \{7/5, 5/3\}\\
&\quad \times \{0.1, 0.2, 0.3, 0.4, 0.5, 0.6, 0.7, 0.8, 0.9\}\\
&\quad \times \{0.1, 0.2, 0.3, 0.4, 0.5, 0.6, 0.7, 0.8, 0.9\}\\
&\quad \times \{0.1, 0.2, 0.3, 0.4, 0.5, 0.6, 0.7, 0.8, 0.9\}
\end{align*}
for which a four rest point configuration exists in phase space. We fixed  $\eta$ and let $\mu$ be a free parameter in the boundary value problem allowing us to solve simultaneously the value of $\mu$ for which a traveling wave connects the saddle points and for the profile itself. We successfully found and examined
 over 250 undercompressive profiles for Evans stability. Since undercompressive profiles are monotone implying stability in the transverse case, we only computed the Evans function associated with \eqref{Amatrix2}.  Because the Evans function output for undercompressive waves has a zero at the origin, we used a small half circle of radius $10^{-3}$ as part of the domain contour to skirt around the origin. All winding number results were consistent with stability. Typical output is displayed in Figure \ref{EvansUC}.

 \begin{figure}[htbp]
\begin{center}
$\begin{array}{lr}
\includegraphics[width=7.5cm]{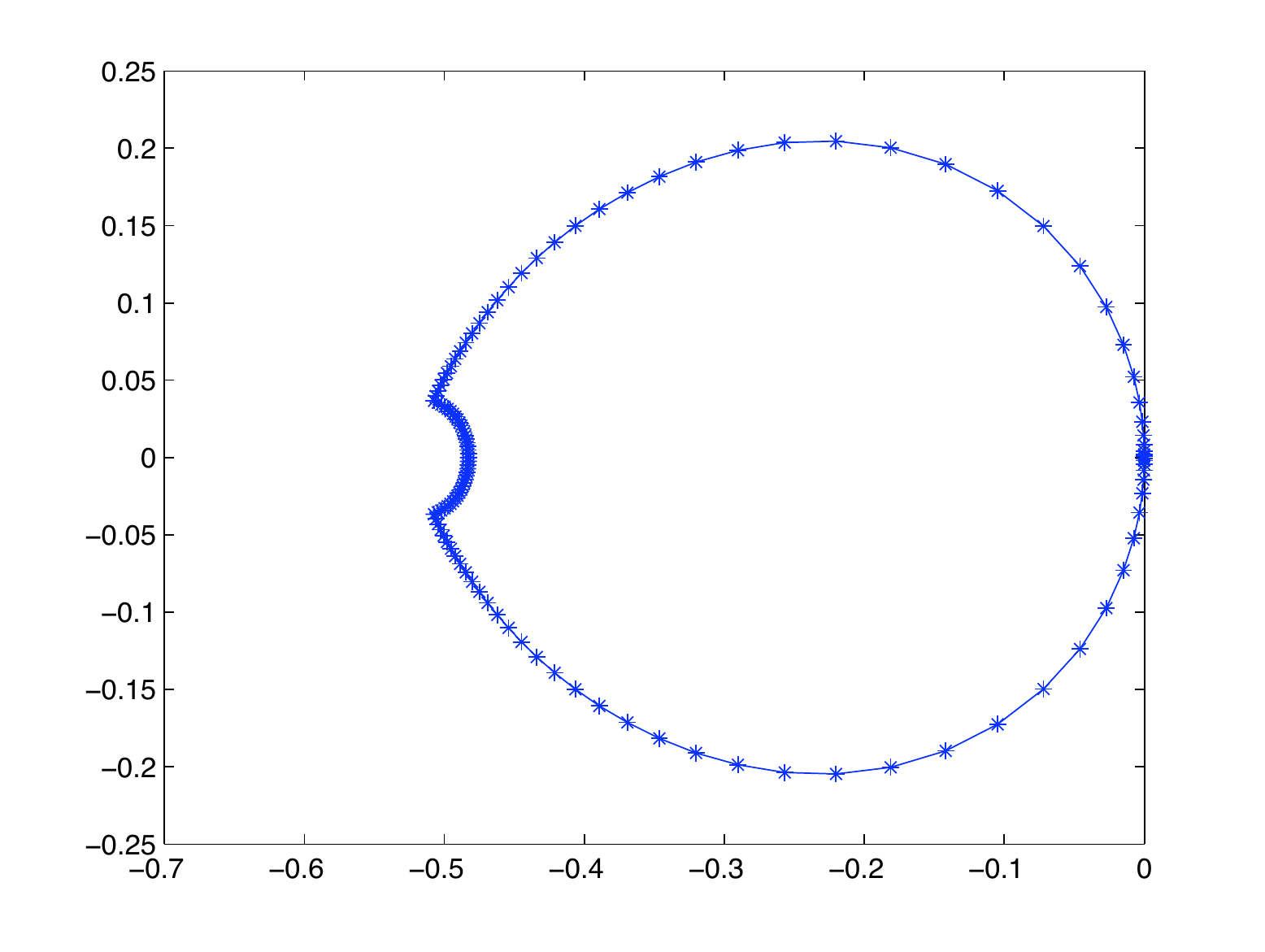}(a) & \includegraphics[width=7.5cm]{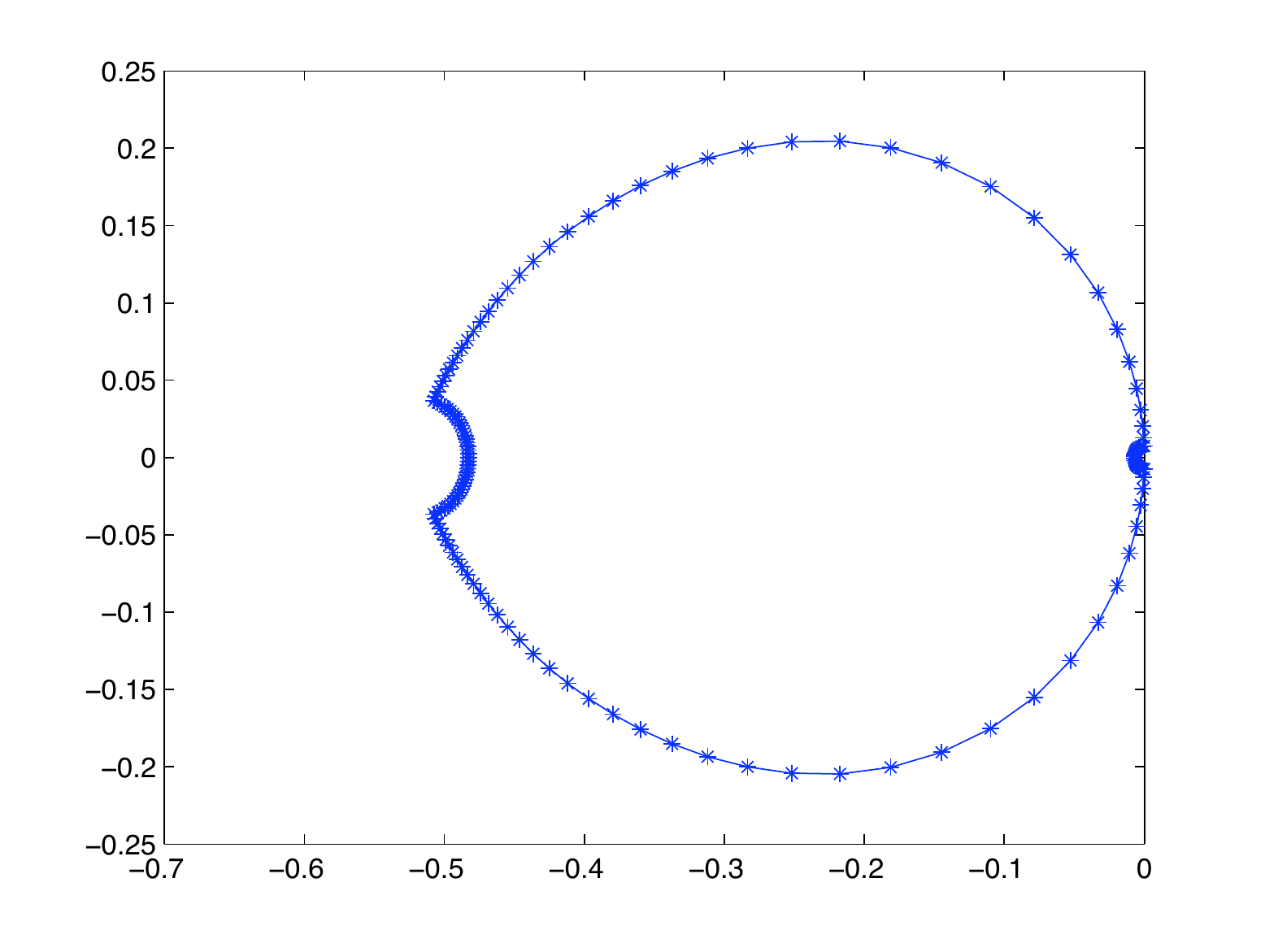}(b)
\end{array}$
\end{center}
	\caption{Typical Evans function output, undercompressive case; parameter
values $v_+=.3809$; $I=0.8$; $B_+=1.1692$; $\gamma=5/3$; $\mu=0.2381$; and $2\mu+\eta=1$.
In Figure (a), our domain contour comes within $10^{-4}$ of the origin, 
with a gap between mesh points $i10^{-4}$ and $-i10^{-4}$,
and our range contour comes within $10^{-6}$ of the origin. 
In Figure (b), our domain contour comes within 
$10^{-3}$ of the origin, then follows a small semicirle around it,
the image of which may be seen at the far righthand side of the figure.
(For undercompressive shocks, the integrated Evans function has a zero
at the origin \cite{ZH,MaZ3}; see Proposition \ref{intcond}.)
}
\label{EvansUC}
\end{figure}

\section{Discussion and open problems}\label{s:discussion}
In this paper, we have carried out by a combination of asymptotic
ODE analysis and numerical Evans function computations
a global existence/stability study for viscous shock profiles of 
two-dimensional isentropic magnetohydrodynamics with infinite
electrical resistivity.
For a monatomic $\gamma$-gas equation of state, and standard
viscosity ration $\eta=-2\mu/3$, we find that Lax and overcompressive
profiles appear but undercompressive profiles do not.
A systematic numerical Evans function investigation indicates
that all profiles are nonlinearly stable both with respect to
two-dimensional and three-dimensional perturbations.
For different viscosity ratios, undercompressive shocks can appear,
and these appear also to be stable with respect to two- and 
three-dimensional perturbations.

Our stability analysis generalizes previous viscous studies of
the viscous stability problem in \cite{FT,BHZ} for
the parallel case.  
See also the investigations of stability in the small-magnetic field limit 
in \cite{MeZ,GMWZ2}.
For analyses of the related inviscid stability problem, 
see, e.g., \cite{T,BT,MeZ} and references therein.

Much of the analysis carries over to the full three-dimensional case;
in particular, the Rankine--Hugoniot analysis is completely general.
It would be very interesting to carry out a systematic analysis in
three dimensions, following the approach laid out here.
Genuinely three-dimensional profiles, having a richer structure
and more degrees of freedom, would appear to be a good place to
look for possible instability or bifurcation.
(Here, three-dimensional refers as in the present paper
to the dependent variables and not the independent variable $x$.)
As noted in \cite{TZ}, instability,
by stability index considerations, would for an ideal gas equation
of state imply the interesting phenomenon of
Hopf bifurcation to time-periodic, or ``galloping'' behavior
at the transition to instability.

Likewise, it would be interesting to carry out the full, nonisentropic
case, building on the Rankine--Hugoniot study of \cite{FR1}.
This should in principle be straightforward using the methods
developed here, but more computationally intensive.
We suspect that, as in the gas-dynamical case \cite{HLyZ1}, 
the large-amplitude limit may in fact be straightforward in this case,
with $v_+$ bounded from the value zero at which the pressure
function has a singularity.
Further interesting generalizations would be to consider a
Van der Waal or other ``real gas'' equation of state,  
or to include third-order dispersive effects modeling
a ``Hall effect'' as in \cite{DR}.

Interesting boundary cases left open in the present analysis
are the $K\to \infty$ singular perturbation problem discussed
in Section \ref{inflim} and the large-amplitude limit $v_+\to 0$
for Lax $2$-shocks and overcompressives.
We conjecture that in each of these cases, the Evans function converges
in the limit to the Evans function for the formal pressureless
gas limit $a=0$.
(As shown in Appendix \ref{s:liim}, all of these limits
coincide with $a\to 0$.)
Our numerics are consistent with this conjecture, while
the related analysis of \cite{HLZ} gives an idea how to prove it.
At the physical, modeling, level, an important problem is to
determine physically interesting values
of $J$, $K$, $a$, and the viscosity ratio $r=\mu/(2\mu + \eta)$,
which strongly affects solution structure as we have seen.

The absence of instabilities in our experiments suggests perhaps
the more general question whether shock
profiles for systems possessing a convex entropy are always stable.
We do not at the moment see why this should be so,
and suspect that an ideal gas equation of state is perhaps
too simple an example on which to base conclusions.
However, to verify or produce a counterexample to this conjecture
takes on a larger importance in light of the growing body of
stable examples.

Another interesting direction for further investigation would be
a corresponding comprehensive study of multi-dimensional stability,
as carried out for gas-dynamical shocks in \cite{HLyZ2}. 
(Here, multi-dimensional refers to the independent variable $x$.)
As pointed out in \cite{FT}, instability results
of \cite{BT,T} for the corresponding inviscid problem imply 
that parallel shock layers become multi-dimensionally unstable for
large enough magnetic field,
by the general result \cite{ZS,Z1,Z2} that inviscid stability
is necessary for viscous stability, so that in multi-dimensions
instability definitely occurs.
The question in this case is whether viscous effects can hasten
the onset of instability, that is, whether
viscous instability can occur in the presence of inviscid stability.

\appendix
\section{Signature of $\nabla^2\check \phi$}\label{signature}
Taking the Hessian of the relative entropy $\check \phi$
defined in \eqref{ischeckphi}, using \eqref{gradcheckphi},
we readily obtain
\be\label{hessian}
\nabla^2_{(v,w,B/\mu_0)}\check \phi=
\bp
p'(v)+1& 0 & \mu_0 (B/\mu_0)\\
0 & 1 & -I\\
\mu_0 (B/\mu_0)& -I & v\mu_0
\ep,
\ee
yielding after a brief computation
$$
\det \nabla^2_{(v,w,B/\mu_0)}\check \phi=
\mu_0(p'(v)+1)(v-K) -|B|^2.
$$
At rest points of the traveling wave equation,
we have $B=J((1-K)^2/(v-K)^2$, by \eqref{rhsoln},
yielding
$\det \nabla^2_{(v,w,B/\mu_0)}\check \phi=
\mu_0(v-K)\tilde f'(v)$,
where $\tilde f$ is as defined in \eqref{tildef}.
This gives an explicit connection between the signature
of $\nabla^2\check \phi$ and the sign of the derivative
of the reduced Rankine--Hugoniot function $\tilde f$.

\section{Computing the Mach number}\label{Machnum}
The Mach number of a Lax shock is defined as
\[
M = \frac{u_- - \sigma}{c_-},
\]
where $u_-$ is the downwind velocity, 
$\sigma$ is the shock speed, and $c_-$ is the downwind 
sound speed (in the characteristic family of the shock), 
all in Eulerian coordinates. 
(Here, the downwind side is at $-\infty$, since we consider a
left-moving shock.)
By considering the conservation of mass equation, we have $\rho_t + (\rho u)_x = 0$, where $\rho=1/v$ is density.
Hence, the jump condition is given by $\sigma [\rho] = [\rho u]$, 
which implies, in the original scaling for \eqref{MHD}, that
\[
\sigma = \frac{u_+ v_- - u_- v_+}{v_- - v_+}.
\]
Hence, 
\[
M = \frac{u_- - \sigma}{c_-} = \frac{v_-(u_- - u_+)}{c_-(v_- - v_+)} 
= \frac{v_- [u]}{c_- [v]} = -s \frac{v_-}{c_-}.
\]
Noting that $0<v_+<v_-=1$, we simplify to get
\[
M = \frac{1}{c_-},
\]
where $c_-$ is the sound speed in Eulerian coordinates
at $v-=1$, $u_-=0$, $w_-=0$.

Here, for Lax $1$-shocks (resp. $2$-shocks),
$$
\begin{aligned}
c_-^2&=
\frac{1}{2}\Big( [c_s^2 + (\rho \mu_0)^{-1} (B_-^2+ I^2) ]
\pm \sqrt{
[c_s^2 + (\rho\mu_0)^{-1}(B_-^2+ I^2)]^2 -4 c_s^2 (\rho\mu_0)^{-1}I^2
}
\Big)\\
&=
\frac{1}{2}\Big( [\gamma a + 2J + K ]
\pm \sqrt{ [\gamma a + 2J + K ]^2 -4 \gamma a K } \Big)
\end{aligned}
$$
by \eqref{phi},
where $c_s=\sqrt{ dp/d\rho}|_{\rho=1}=\sqrt{a\gamma}$ denotes sound speed \cite{A,MeZ}. 
In the parallel case $J=0$, this gives
$c_-= \sqrt{ \gamma a}$,
or $M=\frac{1}{\sqrt { \gamma a}}$, for 
Lax $2$-shocks or Lax $1$-shocks with $\gamma a>K$, 
in agreement with the standard Mach number for gas-dynamical shocks.
However, for Lax $1$-shocks with $\gamma a<K$, it gives the
anomalous value $M=1/K$. (Recall that MHD profiles in the parallel
case reduce to gas-dynamical profiles.)

It is readily verified that the Mach number is invariant under the
rescaling \eqref{scaling},
hence gives a useful measure of shock strength in the original
unrescaled coordinates.
However, the example of the parallel case shows that it can give
anomalous values for other than the simple gas-dynamical case.
Note also that this measure of shock strength involves only the left state
at $-\infty$, so does not distinguish between intermediate vs.
regular types of shocks.  
We therefore use the Mach number only to give a rough idea of the strength
of shocks considered in our studies, and not as a systematic measure of
strength across all parameters.

\section{Limiting cases for Lax and overcompressive shocks}\label{s:liim}

\subsection{$a\to 0$}
One may ask the questions of the limits of positivity of $a$ given by 
\eqref{asoln}. One has
\be\label{again}
a=\frac{1-v_+}{v_+^{-\gamma}-1}\Big(1-J\frac{1+v_+-2K}{(v_+-K)^2}\Big).
\ee
The two roots of the term between brackets are real if and only if 
$\frac{J}{4}>1-K$, the bracket being always positive when $J<4(1-K)$. 
In the former case, there are two roots 
$\underline v< \overline v$, with $a<0$ for $\underline v<v_+<\overline v$.
Moreover, $\underline v(J, K)=K+\frac{J}{2}-\sqrt{\frac{J^2}{4}+J(1-K)}$, 
so $\underline v(J, K)>0$ 
for $K^2-J(1-2K)>0$. If $K>\frac12$, this is always true. 
If $K<\frac12$, it is true for $J< \frac{K^2}{1-2K}$, 
with $\underline v=0$ precisely on the limiting curve
$J=\frac{K^2}{1-2K}$ above which there are no rest points with values
$v_+<K$.

Returning to the discussion,
we have thus, for the two roots between $0$ and $K$, 
that the limiting values of $v_+$ for which $a\to 0$ are $v_+\to 0$,
for which the factor $\frac{1-v_+}{v_+^{-\gamma}-1}$ 
goes to zero while the factor in brackets remains bounded, 
and $v_+\to \underline v\ge 0$, with the two limits coinciding
precisely in the case $J=\frac{K^2}{1-2K}$.
This means, for fixed $(J,K)$, 
that $v_+\to 0$ in the large-amplitude limit for Lax $2$-shocks
or overcompressives (the ones involving the rest point $v_1$ with
smallest $v$-value, and for which $v_1<K$), except on the measure zero boundary
$J=\frac{K^2}{1-2K}$.
On the other hand, noting that $K$ (since the term in brackets
is negative there) lies always between $\underline v$ and $\overline v$,
values $v_+$ for Lax $2$-shocks and overcompressives remain bounded
away from the value $v_+=K$ at which $a$ becomes singular,
so long as $J\ne 0$.

{\it That is, for Lax $2$-shocks and overcompressives,
the boundary of existence for fixed $J$, $K$ is marked
by the singular limit $v_+\to 0$ on one side,
and $v_+\to \underline v(J,K)\ge 0$ on the other side.}
In the measure-zero case $J=\frac{K^2}{1-2K}$, the two
limits agree.
For Lax $1$-shocks on the other hand, 
$K<v_+$, hence $v_+\to 0$ only
in the limit as $J$, $K\to 0$, and so this singular limit
does not arise for $J$, $K$ fixed.
Neither does $v_+\to K$, unless $J=0$.

\subsection{$a\to \infty$?}\label{inflim}
For $K<1$, the upper limit for $a$ is the characteristic boundary
$a\le A(J,K)$ described in Proposition \ref{4desc}.
For $K>1$,
may take without loss of generality $a<a_{min}(J,K)$, 
where $a_{min}$ is defined as the minimum value of $a$ 
at which rest points $v>K$ appear,
since in the latter case one can then rescale to the case
$K<1$ already treated.
Thus, for $J\ge 0$ and $0\le K\ne 1$ bounded, $a$ may be
taken always finite.

In the case $K<1$, we've already seen in Remark \ref{largermk}
that $J$ is finite for fixed $K\ge 0$ bounded from $1$. 
In the case $K>1$, by Remark \ref{Kg1}, we have $J<4(K-1)$
to begin with, once we eliminate four rest point configurations
(as we may do by rescaling so that $K<1$).
So, for fixed $K>1$, we get bounded $J$, $K$, hence bounded
$a$ by \eqref{asoln} and $0\le v_+\le 1$.
Combining these observations, we find for any bounded $K$ that
is also bounded from $1$ that $J$ and $a$ may be taken
bounded as well.
{\it Thus, we need only consider finite parameter values $(a,J)$ for
$K$ bounded and bounded from $1$.}

The sole remaining case is that of a Lax $2$-shock,
$v_+<1<K$, with $K$ going to infinity and $J<4(K-1)$.
Consulting again \eqref{again}, we see that in this
case $a\sim J/K\lesssim 4$ as $K\to \infty$, so that
$a$ is again uniformly bounded.
(On the other hand, this case can certainly occur for
$a$ sufficiently small.)
The conclusion is that, without loss of generality (i.e., rescaling
four rest point configurations to $K<1$ whenever they occur),
$a$ may be taken uniformly bounded, independent of $J$, $K$,
so that {\it $a\to \infty$ does not occur}.
On the other hand, the case $K\to \infty$ can occur, and
even $J\to \infty$, $K\to \infty$ simultaneously, with
$a$ remaining finite.

In this latter case, the profile ODE, and the associated stability
problem, should be treatable by a singular perturbation analysis,
rescaling $x$, in which the pressure term disappears.
However, we do not carry out this analysis here.

\subsection{Characteristic boundaries}\label{s:char}
Other important limits are the parameter values
for which the shock becomes characteristic at
$U_+$ or $U_-$, since the rate
of exponential convergence of the shock profile goes
to zero as they are approached, so that the length of the computational
domain $[-L_-,L_+]$ needed for accurate numerical approximation
goes to infinity.
These are given by the surfaces
$a=a_*(J,K) =\frac{1-K-2J}{\gamma (1-K)}$ (corresponding to 
$v_+=1$) and $a=A(J,K)$ (corresponding to $v_1=v_2$). 
The first is resolved by the analytical result of small-amplitude
stability.  The second requires a refined analysis outside
of the scope of this paper, involving stability of characterstic shocks.
For results in this direction, see \cite{HoZ2,Ho}.

\section{The conjugation lemma}\label{s:conj}

Consider a general first-order system
\be \label{gen}
W'=A^p(x,\lambda)W
\ee
with asymptotic limits $A^p_\pm$ as $x\to \pm \infty$,
where $p\in \RR^m$ denote model parameters.
\begin{lemma} [\cite{MeZ1,PZ}]\label{conjlem}
Suppose for fixed $\theta>0$ and $C>0$ that 
\be\label{udecay}
|A^p-A^p_\pm|(x,\lambda)\le Ce^{-\theta |x|}
\ee
for $x\gtrless 0$ uniformly for $(\lambda,p)$ in a neighborhood of 
$(\lambda_0)$, $p_0$ and that $A$ varies analytically in $\lambda$ 
and smoothly (resp. continuously) in $p$ 
as a function into $L^\infty(x)$.
Then, there exist in a neighborhood of $(\lambda_0,p_0)$
invertible linear transformations $P^p_+(x,\lambda)=I+\Theta_+^p(x,\lambda)$ 
and $P_-^p(x,\lambda) =I+\Theta_-^p(x,\lambda)$ defined
on $x\ge 0$ and $x\le 0$, respectively,
analytic in $\lambda$ and smooth (resp. continuous) in $p$ 
as functions into $L^\infty [0,\pm\infty)$, such that
\begin{equation}
\label{Pdecay} 
| \Theta_\pm^p |\le C_1 e^{-\bar \theta |x|}
\quad
\text{\rm for } x\gtrless 0,
\end{equation}
for any $0<\btheta<\theta$, some $C_1=C_1(\bar \theta, \theta)>0$,
and the change of coordinates $W=:P^p_\pm Z$ reduces \eqref{gen} to 
the constant-coefficient limiting systems
\begin{equation}
\label{glimit}
Z'=A^p_\pm Z 
\quad
\text{\rm for } x\gtrless 0.
\end{equation}
\end{lemma}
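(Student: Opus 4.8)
The plan is to build the conjugating transformations as solutions of a \emph{homological equation} and to solve that equation by a contraction-mapping argument in a space of exponentially decaying matrix-valued functions. I treat the $+$ case on $x\ge 0$; the $-$ case is symmetric. Writing $W=(I+\Theta_+^p)Z$ and demanding that $Z$ solve the limiting system $Z'=A_+^pZ$, substitution into \eqref{gen} shows that $P_+^p=I+\Theta_+^p$ conjugates the two flows if and only if $\Theta:=\Theta_+^p$ satisfies
\[
\Theta'=A_+^p\Theta-\Theta A_+^p+E(I+\Theta),\qquad E:=A^p-A_+^p ,
\]
where by hypothesis \eqref{udecay} we have $|E(x,\lambda)|\le Ce^{-\theta x}$ for $x\ge 0$. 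This is a linear (Sylvester-type) matrix ODE with a forcing that is itself exponentially small plus a term linear in $\Theta$.

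Next I would recast this as a fixed-point problem. Let $\mathcal{A}\Theta:=A_+^p\Theta-\Theta A_+^p$ be the commutator operator on matrices, whose flow is $\mathcal{S}(x)\Theta_0=e^{A_+^px}\Theta_0e^{-A_+^px}$ and whose eigenvalues are the differences $\mu_i-\mu_j$ of eigenvalues of $A_+^p$. Splitting the matrix space by the sign of the real parts of these eigenvalues relative to $-\bar\theta$ (here using the slack $0<\bar\theta<\theta$), I define a Duhamel operator $\mathcal{T}$ by integrating each spectral block of $\mathcal{S}(x-y)\,E(y)\bigl(I+\Theta(y)\bigr)$ from whichever endpoint ($y=+\infty$ or a fixed finite point) makes the kernel decay. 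On the Banach space $X_{\bar\theta}:=\{\Theta\in C^0([0,\infty)):\ \|\Theta\|:=\sup_{x\ge0}e^{\bar\theta x}|\Theta(x)|<\infty\}$, the exponential gain from $|E(x)|\le Ce^{-\theta x}$ with $\theta>\bar\theta$ makes $\mathcal{T}$ map a small ball into itself and act as a contraction there, for $x$ restricted to $[L,\infty)$ with $L$ large. The contraction mapping principle then yields a unique fixed point $\Theta_+^p\in X_{\bar\theta}$ on $[L,\infty)$, which is extended to $[0,\infty)$ by solving the (globally well-posed, linear) homological ODE backward; this yields the decay bound \eqref{Pdecay}.

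Invertibility of $P_+^p$ is then obtained as follows. Since $\Theta_+^p(x)\to0$ as $x\to+\infty$, the map $P_+^p=I+\Theta_+^p$ is invertible for $x$ large. Because $P_+^p$ intertwines the two linear flows, for any solution $w$ of \eqref{glimit} the product $z:=P_+^pw$ solves \eqref{gen}; hence a nontrivial vector in $\ker P_+^p(x_1)$ at some $x_1$ would force, by uniqueness for \eqref{gen}, a solution of \eqref{gen} to vanish identically, contradicting invertibility at large $x$. Thus $P_+^p(x)$ is invertible for all $x\ge0$. Finally, parameter dependence is inherited from the fixed-point construction: the map $\Theta\mapsto\mathcal{T}[\Theta]$ depends analytically on $\lambda$ and smoothly (resp. continuously) on $p$, the contraction constant is uniform on a neighborhood of $(\lambda_0,p_0)$, and a uniform limit of analytic functions is analytic (via Morera's theorem and Cauchy estimates), so $\Theta_+^p$ is analytic in $\lambda$ and smooth (resp. continuous) in $p$ as a map into $L^\infty[0,\infty)$.

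I expect the main obstacle to be the regime where $A_+^p$ has eigenvalues with equal or nearly equal real parts, so that $\mathcal{A}$ has eigenvalues on or near the imaginary axis: there the homogeneous Sylvester flow neither decays nor grows, and one must exploit the strict gap $\bar\theta<\theta$ and choose the integration direction block-by-block to secure both boundedness and the rate-$\bar\theta$ decay. A related subtlety is that the spectral projections used to choose those directions need not vary analytically in $\lambda$ where eigenvalue real parts cross; this is circumvented by appealing to \emph{uniqueness} of the bounded solution, so that analyticity of $\Theta_+^p$ follows from analytic dependence of the defining equation rather than from analyticity of the auxiliary splitting.
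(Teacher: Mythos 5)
Your proposal is correct and takes essentially the same route as the paper's proof: there the conjugator is likewise constructed as the fixed point of an integral equation associated with the homological equation \eqref{homolog}, made contractive on $L^\infty[L,\pm\infty)$ for $L$ large by the exponential decay \eqref{udecay}, with analyticity in $\lambda$ and smoothness/continuity in $p$ inherited from parameter dependence of fixed points (using that \eqref{udecay} gives continuity of the weighted coefficient $e^{\tilde\theta|x|}(A^p-A^p_\pm)$ into $L^\infty$). The additional details you supply --- the block-by-block choice of integration endpoints for the Sylvester flow, the backward extension from $[L,\infty)$ to $[0,\infty)$, the kernel/uniqueness argument for invertibility, and the uniqueness device circumventing non-analytic spectral splittings --- are precisely the points the paper's sketch elides and defers to \cite{MeZ1,PZ}.
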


\begin{proof}
The conjugators $P^p_\pm$ are constructed by a
fixed point argument \cite{MeZ1}
as the solution of an integral equation
corresponding to the homological equation
\be\label{homolog}
P'=A^pP-A^p_\pm P.
\ee
The exponential decay \eqref{udecay} is needed to 
make the integral equation contractive with respect to $L^\infty[M,+\infty)$ 
for $M$  sufficiently large.  Continuity of $P_\pm$ with respect 
to $p$ (resp. analyticity with respect to $\lambda$) then follow
by continuous (resp. analytic) dependence on parameters of 
fixed point solutions.
Here, we are using also the fact that \eqref{udecay} plus continuity
of $A^p$ from $p\to L^\infty$ together imply continuity of
$e^{\tilde \theta |x|}(A^p-A^p_\pm)$ from $p$ into $L^\infty[0,\pm\infty)$
for any $0<\tilde \theta < \theta$, in order to obtain the needed
continuity from $p\to L^\infty$ of the fixed point mapping.
See also \cite{PZ}.
\end{proof}

%

\begin{definition}[Abstract Evans function]\label{abevans}
Suppose that on the interior of a set $\Omega$ in $\lambda$, $p$, the 
dimensions of the stable and unstable subspaces of $A_\pm^p(\lambda)$
remain constant, and agree at $\pm \infty$ 
(``consistent splitting'' \cite{AGJ}), and that these subspaces
have analytic bases $R_j^\pm$ extending
continuously to boundary points of $\Omega$.  Then, 
the Evans function is defined on $\Omega$ as
\be\label{eq:evans}
\begin{aligned}
D^p(\lambda)&:=
\det( P^+R_1^+,\dots,P^+R_k^+, P^-R_{k+1}^-,\dots, P^-R_N^-)|_{x=0},\\
\end{aligned}
\ee
where $P_\pm^p$ are as in Lemma \ref{conjlem}.
\end{definition}

Evidently, $W_j^+:=P^+R_j^+$, $j=1, \dots, k\}$
and $W_j^-:=P^-R_j^-$, $j=k+1, \dots, N\}$
are bases for the manifolds of solutions $W$ of
$W'=A^pW$ decaying as $x\to +\infty$ and $x\to -\infty$,
respectively, analytic in $\lambda$ and smooth in $x$,
with $W_j^\pm(x)\sim e^{A_\pm x}R_j\pm$ as $x\to \pm \infty$.
(Here, we suppress $p$ for notational convenience.)
Thus, $D^p$ has the alternative representation
$$
D^p(\lambda)=
\det( W_1^+, \dots, W_k^+, W_{k+1}^-, \dots, W_N )_{x=0}
$$
given in \eqref{Ddef1}.

Using the fact that $P^\pm \to I$ as $x\to \pm \infty$,
and the fact that modes $W_j^\pm$ are growing as $x$
goes from $\pm \infty$ to $0$ with undesired modes
exponentially decaying,
it is not difficult to see 
that the Evans function can be well-approximated
by replacing $W_j^\pm$ with solutions $W_j^{\pm, approx}$ 
of \eqref{gen} with data
$$
W_j^{\pm,approx}(\pm L)= e^{A_\pm(\lambda) (\pm L)}R_j^\pm
=
P^\pm(\pm L)^{-1}W_j^\pm(\pm L).
$$
(Compare solutions
$Z_j^\pm=P^\pm W_j^\pm$ and $Z_j^{\pm,approx}=P^\pm W_j^{\pm,approx}$
of the constant-coefficient equations $Z'=A_\pm Z$.)
This can be used as the basis for numerical approximation 
of the Evans function, as described, e.g., in 
\cite{Br1,Br2,BrZ,BDG,HuZ,Z3,Z4}.

\medskip

{\bf Acknowledgement.}
Thanks to Heinrich Freist\"uhler and Christian Roehde for
helpful orienting discussions preliminary to this project
and for generously sharing information regarding their numerical
existence studies in the full nonisentropic case.
We gratefully acknowledge the contribution of Jeffrey Humpherys
in the development of the STABLAB package with which our
numerical Evans studies were carried out and in his collaboration
on the series of parallel investigations \cite{BHRZ,HLZ,CHNZ,HLyZ1,HLyZ2,BHZ}, 
and to Tom Bridges,
Gianne Derks, Jeffrey Humpherys, Bjorn Sandstede, and others,
for ongoing stimulating discussions on numerical Evans function
computations.

\end{document}